\documentclass[twoside]{amsart}
\usepackage{mathrsfs}
\usepackage{amsfonts}
\usepackage{amssymb}
\usepackage{amssymb}
\usepackage{amssymb}
\usepackage{amssymb}
\usepackage{amssymb}
\usepackage{amssymb}
\usepackage{amssymb}
\usepackage{amssymb}
\usepackage{amsmath}
\usepackage[all]{xy}

\usepackage{lastpage}

\RequirePackage{amsmath} \RequirePackage{amssymb}
\usepackage{amscd,latexsym,amsthm,amsfonts,amssymb,amsmath,amsxtra}
\usepackage[colorlinks=true, urlcolor=blue,bookmarks=true,bookmarksopen=true,
citecolor=blue,hypertex]{hyperref}

     
    \newcommand{\BC}{{\mathbb {C}}}

     \newcommand{\BR}{{\mathbb {R}}}

      \newcommand{\bB}{{\textbf {B}}}


     \newcommand{\CB}{{\mathcal {B}}}
    \newcommand{\CC}{{\mathcal {C}}}

    \newcommand{\CO}{{\mathcal {O}}} \newcommand{\CP}{{\mathcal {P}}}
     
    \newcommand{\CS}{{\mathcal {S}}} \newcommand{\CT}{{\mathcal {T}}}
     \newcommand{\CV}{{\mathcal {V}}}
    \newcommand{\CW}{{\mathcal {W}}}


       \newcommand{\fp}{{\mathfrak{p}}}

       \newcommand{\fD}{{\mathfrak{D}}}


    \newcommand{\RG}{{\mathrm {G}}}

    \newcommand{\RU}{{\mathrm {U}}}

    \newcommand{\diag}{{\mathrm {diag}}}

     \newcommand{\Nm}{{\mathrm {Nm}}}
    \newcommand{\lenth}{{\mathrm {\lenth}}}

    \newcommand{\End}{{\mathrm{End}}} 
    \newcommand{\Gal}{{\mathrm{Gal}}} \newcommand{\GL}{{\mathrm{GL}}}
    \newcommand{\Hom}{{\mathrm{Hom}}} 
    \newcommand{\Ind}{{\mathrm{Ind}}} \newcommand{\ind}{{\mathrm{ind}}}

    \newcommand{\Ker}{{\mathrm{Ker}}}

    \renewcommand{\Re}{{\mathrm{Re}}} 
    \newcommand{\Res}{{\mathrm{Res}}}

 \newcommand{\Vol}{{\mathrm{Vol}}}

\newcommand{\Supp}{\mathrm{Supp}}\newcommand{\supp}{\mathrm{supp}}

 \newcommand{\SL}{{\mathrm{SL}}}

 \newcommand{\tr}{{\mathrm{tr}}}
  
\newcommand{\vol}{{\mathrm{vol}}}

  \newcommand{\Mp}{{\mathrm{Mp}}}
 \newcommand{\Sp}{{\mathrm{Sp}}}

    \newcommand{\pair}[1]{\langle {#1} \rangle}
    \newcommand{\wpair}[1]{\left\{{#1}\right\}}

    \newcommand{\incl}{\hookrightarrow}
    
     \newcommand{\ra}{\rightarrow}

    \theoremstyle{plain}

    \newtheorem{thm}{Theorem}[section] \newtheorem{cor}[thm]{Corollary}
    \newtheorem{lem}[thm]{Lemma}  \newtheorem{prop}[thm]{Proposition}

 \newtheorem{rem}[thm]{Remark}
    %


    \numberwithin{equation}{section}

\usepackage[top=1in, bottom=1in, left=1.25in, right=1.25in]{geometry}

\title{A Local Converse Theorem for $\RU(1,1)$}
\author{Qing Zhang}

\begin{document}

\maketitle
\begin{abstract}
In this paper, we define a $\gamma$-factor for generic representations of $\RU(1,1)\times \Res_{E/F}(\GL_1)$ and prove a local converse theorem for $\RU(1,1)$ using the $\gamma$-factor we defined. We also give a new proof of the local converse theorem for $\GL_2$ using a $\gamma$-factor of $\GL_2\times \GL_2$ type which was originally defined by Jacquet in \cite{J}.
\end{abstract}

\setcounter{tocdepth}{1}
\tableofcontents

\section*{Introduction}
Let $E/F$ be a quadratic extension of number fields and $H=\RU(1,1)$ be the quasi-split unitary group of rank $2$ defined for $E/F$. In $\S$7 of \cite{GRS}, Gelbart, Rogawski and Soudry defined a global zeta integral for $\RU(1,1)\times \Res_{E/F}(\GL_1)$, proved that it is Eulerian and did the unramified calculation. This global zeta integral is an analogue of the integral defined for $\Sp_{2n}\times \GL_n$ case considered by Gelbart and Piatetski-Shapiro in \cite{GPS} (the method C in \cite{GPS}).

In this paper, we will show the existence of the local $\gamma$-factors for $\RU(1,1)\times \Res_{E/F}(\GL_1)$ and obtain a local converse theorem for $\RU(1,1)$ except when $E/F$ is ramified and the residue characteristic of $F$ is $2$. In the following, we describe our results in more details. For the local group $H(F_v)$, there are 2 cases to consider. At a non-split place $v$ of $F$, we have $H(F_v)=\RU(1,1)(F_v)$, where $\RU(1,1)$ is defined for the local extension $E_w/F_v$ where $w$ is the unique place of $E$ above $v$. At a split place $v$ of $F$, we have $H(F_v)=\GL_2(F_v)$. In this paper, we consider the two local cases separately.

We first consider the local $\RU(1,1)$ case.  Assume that $E/F$ is a quadratic extension of $p$-adic fields and $H=\RU(1,1)(F)$. We fix an additive character $\psi$ of $F$, which is also viewed as a character of the upper triangular unipotent subgroup $N$ of $H$. We also fix an element $\kappa\in F^\times -\Nm_{E/F}(E^\times)$ and consider the character $\psi_\kappa$ defined by $\psi_{\kappa}(x)=\psi(\kappa x)$. Let $\pi$ be an infinite dimensional irreducible smooth representation of $H$. Then $H$ is either $\psi$-generic or $\psi_\kappa$-generic, see $\S$1. Let $\chi$ be a character $E^1$, where $E^1$ is the norm one subgroup of $E$, and  $\mu$ be a character of $E^\times$ such that $\mu|_{F^\times}$ is the local class field theory character associated with $E/F$, we then have an irreducible Weil representation $\omega_{\mu,\psi^{-1},\chi}$ of $\RU(1,1)$. Let $\eta$ be a quasi-character of $E^\times$ and $s\in \BC$, we can consider the induced representation $\Ind_B^H(\eta|~|^{s-1/2})$, where $B$ is the upper triangular Borel subgroup. We assume that the product of the central character of $\pi$, $\omega_{\mu,\psi^{-1},\chi}$ and $\Ind_B^H(\eta|~|^{s-1/2})$ is trivial, and we call our given datum are compatible if it is the case. Suppose $\pi$ is $\psi$-generic. Given $W\in \CW(\pi,\psi) , f_s\in \Ind_B^H(\eta_s), \phi \in \CS(E,\chi)$, where $\CS(E,\chi)$ is the space of the Weil representation of $\omega_{\mu,\psi^{-1},\chi}$, see $\S$1, then the local zeta integral of \cite{GPS} is 
$$\Psi(W,\phi,f_s)=\int_{ZN\setminus H}W(g)(\omega_{\mu,\psi^{-1},\chi}(g)\phi)(1)f_s(g)dg,$$
where $Z$ is the center of $H$.
 
The intertwining operator on the induced representations will give a $\gamma$-factor $\gamma(s,\pi, \omega_{\mu,\psi^{-1},\chi},\eta)$ by a standard uniqueness property, see $\S$2. Note that if $\pi$ is both $\psi$- and $\psi_\kappa$-generic, there are two $\gamma$-factors $\gamma(s,\pi, \omega_{\mu,\psi^{-1}},\eta)$ and $\gamma(s,\pi,\omega_{\mu, \psi_\kappa^{-1},\chi},\eta)$. Our first result shows that they are essentially the same:

\begin{thm}[Proposition \ref{prop214}]
 Let $(\pi,V)$ be an irreducible smooth representation of $H$ which is both $\psi$- and $\psi_\kappa$-generic. Then
   $$\gamma(s,\pi,\omega_{\mu,\psi_\kappa^{-1},\chi},\eta)=\eta(\kappa)|\kappa|_F^{2s-1}\gamma(s,\pi,\omega_{\mu,\psi^{-1},\chi},\eta).$$
\end{thm}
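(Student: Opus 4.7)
The plan is to prove the identity by a single change of variables in the zeta integral $\Psi(W,\phi,f_s)$ combined with the defining functional equation for $\gamma$. First I would exhibit an explicit torus element $t_\kappa\in H$, essentially a diagonal matrix with entry $\kappa\in F^\times$, whose adjoint action on the unipotent radical $N$ carries the additive character $\psi$ to $\psi_\kappa$. Such a $t_\kappa$ simultaneously relates all three ingredients of the zeta integral: for $W\in\CW(\pi,\psi)$, the translate $W_\kappa(g):=W(t_\kappa g)$ lies in $\CW(\pi,\psi_\kappa)$; the Schr\"odinger-model realizations of $\omega_{\mu,\psi^{-1},\chi}$ and $\omega_{\mu,\psi_\kappa^{-1},\chi}$ on $\CS(E,\chi)$ are intertwined by an explicit dilation operator $\phi\mapsto\phi_\kappa$; and for $f_s\in\Ind_B^H(\eta_s)$, evaluating the induction character $\eta|~|_E^{s-1/2}$ at the diagonal entry $\kappa\in F^\times\subset E^\times$ yields the scalar $\eta(\kappa)|\kappa|_F^{2s-1}$, where the exponent $2s-1$ arises from $|\kappa|_E=|\kappa|_F^2$.

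Second, I would substitute $g\mapsto t_\kappa g$ in the defining integral over $ZN\bs H$. This is legitimate because $t_\kappa$ normalizes $ZN$ and the Haar measure is invariant. Collecting the scalars produced by the three transformations above, and using the compatibility assumption on central characters to cancel the normalization coming from the Weil-representation intertwiner against the Whittaker-function translate, one obtains
$$\Psi(W,\phi,f_s)\;=\;\eta(\kappa)|\kappa|_F^{2s-1}\,\Psi_\kappa(W_\kappa,\phi_\kappa,f'_s),$$
where $\Psi_\kappa$ is the analogous zeta integral built from the $\psi_\kappa$-data and $f'_s\in\Ind_B^H(\eta_s)$ is the translated section. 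An identical computation on the dual side, after applying the standard intertwining operator $M(s,\eta):\Ind_B^H(\eta_s)\to\Ind_B^H(\eta_s^*)$, yields the corresponding identity for the integral paired against $M(s)f_s$. Dividing the two relations and invoking the uniqueness-of-functional-equation property that defines $\gamma$ produces the claimed ratio of $\gamma$-factors.

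The main obstacle is the Weil-representation bookkeeping: one must verify that the explicit dilation intertwiner between $\omega_{\mu,\psi^{-1},\chi}$ and $\omega_{\mu,\psi_\kappa^{-1},\chi}$, when combined with the $t_\kappa$-translate of the Whittaker function, introduces no residual scalar beyond what is absorbed by the central-character compatibility. Once that cancellation is confirmed, the only surviving factor is $\eta(\kappa)|\kappa|_F^{2s-1}$, matching the statement. The argument is essentially an $\RU(1,1)$-flavoured version of the classical identity for $\GL_2\times\GL_1$ twists, with the Weil representation playing only a passive role thanks to the compatibility hypothesis on the triple of central characters.
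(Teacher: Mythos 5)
Your proposal breaks down at the very first step. You claim to exhibit a torus element $t_\kappa\in H$ whose conjugation action on $N$ carries $\psi$ to $\psi_\kappa$; no such element exists. The diagonal torus of $H=\RU(1,1)$ consists of elements $t(a)=\diag(a,\bar a^{-1})$ with $a\in E^\times$, and conjugation by $t(a)$ sends $n(b)$ to $n(a\bar a\,b)$, hence twists $\psi$ only by a \emph{norm} $a\bar a\in\Nm_{E/F}(E^\times)$. Since $\kappa$ is chosen precisely to be a non-norm, $\psi_\kappa$ is unreachable this way — that is exactly what makes this proposition nontrivial, as opposed to the easy Lemma \ref{lem28} which handles the norm case by the change of variables you describe. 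The matrix $\diag(\kappa,1)$ you are implicitly thinking of lies in $\GL_2(E)$ but not in $H$, so $\pi(\diag(\kappa,1))$ is undefined and the ``translate'' $W(\diag(\kappa,1)g)$ is not a priori a $\psi_\kappa$-Whittaker function for $\pi$.

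The missing ingredient is the content of the paper's Theorem that $\pi\cong\pi^\kappa$ whenever $\pi$ is both $\psi$- and $\psi_\kappa$-generic, where $\pi^\kappa(h)=\pi(\alpha(\kappa)h\alpha(\kappa)^{-1})$ with $\alpha(\kappa)=\diag(\kappa,1)\notin H$. This is proved by a Gelfand--Kazhdan distribution argument and is genuinely nontrivial; it feeds into Corollary \ref{cor213}, which is what legitimizes the assignment $W\mapsto W^\kappa$, $W^\kappa(h)=W(h^\kappa)$, as a bijection $\CW(\pi,\psi)\to\CW(\pi,\psi_\kappa)$. Only with that input in hand can one compare zeta integrals, and even then the actual bookkeeping in the paper goes through the section-side: $f(s,h^{\kappa^{-1}},\Phi,\eta)=f(s,h,\Phi^\kappa,\eta)$ with $\Phi^\kappa=\diag(\kappa,1)\Phi$, and the relation $\widehat{\Phi^\kappa}=|\kappa|_F^{-1}\diag(\kappa^{-1},\kappa^{-1})\widehat\Phi^{\,\kappa}$ is what produces the scalar $\eta(\kappa)|\kappa|_F^{2s-1}$, rather than evaluating the inducing character at a torus element as you suggest. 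Without Theorem \ref{cor213}'s bijection of Whittaker models, your argument has no way to start.
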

 
The main result of this paper is the following 
\begin{thm}[Theorem \ref{thm39}, Local converse theorem and the stability of $\gamma$-factors for $\RU(1,1)$]\label{thm02}
Suppose that $E/F$ is unramified, or $E/F$ is ramified but the residue characteristic is not $2$. Let $\pi,\pi'$ be two irreducible smooth $\psi$-generic representations of $\RU(1,1)(F)$ with the same central character.
\begin{enumerate}
\item If $\gamma(s,\pi, \omega_{\mu,\psi^{-1},\chi},\eta)=\gamma(s,\pi', \omega_{\mu,\psi^{-1},\chi},\eta)$ for all compatible quasi-characters $\eta$ of $E^\times$ and characters $\mu$ of $E^1$, then $\pi\cong \pi'$. 
\item If $\eta$ is highly ramified, then $\gamma(s,\pi,\omega_{\mu,\psi^{-1},\chi},\eta)=\gamma(s,\pi', \omega_{\mu,\psi^{-1},\chi},\eta)$.
\end{enumerate}
\end{thm}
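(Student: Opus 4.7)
The plan is to analyze the local functional equation that defines $\gamma(s,\pi,\omega_{\mu,\psi^{-1},\chi},\eta)$, namely
$$\Psi(1-s,\widetilde{W},\widetilde{\phi},M(s)f_s)=\gamma(s,\pi,\omega_{\mu,\psi^{-1},\chi},\eta)\,\Psi(s,W,\phi,f_s),$$
where $M(s)$ is the standard intertwining operator on $\Ind_B^H(\eta|~|^{s-1/2})$ and $(\widetilde{W},\widetilde{\phi})$ is the contragredient data. Using an Iwasawa decomposition $H=NAK$ with $A$ a maximal split complement of $Z$, and the identification $A/(A\cap Z)\cong F^\times$, the zeta integral $\Psi(s,W,\phi,f_s)$ reduces (modulo an integral over $K$ that handles a finite $K$-type) to a one-dimensional Mellin-type transform in $\eta|~|^{s}$ against the kernel
$$a\longmapsto W(a)\,(\omega_{\mu,\psi^{-1},\chi}(a)\phi)(1)\,\delta_B(a)^{-1/2}.$$
This one-dimensional reduction, which is possible because $H$ has $F$-rank one, is the geometric backbone of both parts.

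For part (2), I would follow the familiar Jacquet--Shalika style stability argument. When the conductor of $\eta$ is large compared to the depths of $W$ and $\phi$, one can choose test data so that, after applying $M(s)$ and making a change of variables by a suitable Weyl element, the resulting zeta integrals localize to a fixed small neighborhood on $A$. In that regime the computation of $\gamma$ collapses to a product of classical Gauss sums in $\eta$ and $\psi$, multiplied by a term that depends on $\pi$ only through its central character and through the fixed auxiliary data $\mu,\chi$. Since $\pi$ and $\pi'$ share a central character by hypothesis, the highly ramified $\gamma$-factors match. The restriction on $E/F$ (unramified, or ramified with odd residue characteristic) enters precisely here: an explicit formula for $\omega_{\mu,\psi^{-1},\chi}|_A$ and for its interaction with the intertwining operator is needed, and this formula breaks in the wildly ramified dyadic case.

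For part (1), granted stability, I would use Mellin inversion on $A/(A\cap Z)\cong F^\times$. Equality of $\gamma(s,\pi,\cdots,\eta)$ and $\gamma(s,\pi',\cdots,\eta)$ for all compatible $\eta$ forces equality of the ratios of zeta integrals for matching Whittaker data. Varying $\eta$ through a sufficiently rich family (and invoking part (2) to control the behavior of the Mellin transforms at infinity, so that the contour integrals are absolutely convergent) one deduces that the Mellin transforms of $a\mapsto W(a)$ and $a\mapsto W'(a)$ agree against every character of $F^\times$, hence $W|_A=W'|_A$ after a common normalization of the Whittaker functional. Because the Bruhat cell $NAZ$ is Zariski-open in $H$, and a Whittaker function is determined by its $A$-values together with the transformation rule on $N$, this yields $\CW(\pi,\psi)=\CW(\pi',\psi)$ and thus $\pi\cong \pi'$.

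The hardest step is the stability in part (2): controlling the twist of the Weil representation $\omega_{\mu,\psi^{-1},\chi}$ by a highly ramified $\eta$, and isolating test vectors on which $\gamma$ can be evaluated in closed form. A secondary technicality is making the Mellin inversion of part (1) rigorous: one needs $\gamma(s,\pi,\omega_{\mu,\psi^{-1},\chi},\eta)$ to be a rational function of $q^{-s}$ with poles controlled uniformly so that vertical contour shifts are justified, and one must exhibit enough non-vanishing zeta integrals to separate Whittaker functions, which again relies on the detailed structure of the Weil representation and is the source of the dyadic exclusion.
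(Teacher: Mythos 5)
Your overall plan—reduce the gamma-factor equality to an identity of Mellin transforms over a one-dimensional torus and then invert—is in the same spirit as the paper, which uses Baruch's Howe-vector machinery. But there is a concrete error in the crux of part (1) that would sink the argument.

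You assert that ``the Bruhat cell $NAZ$ is Zariski-open in $H$'' and that consequently a Whittaker function is determined by its values on $A$ together with the $N$-transformation rule. Both claims are wrong. In the rank-one group $H=\RU(1,1)$ the Bruhat decomposition is $H=B\sqcup BwB$ with $B=NAZ=NT$ the \emph{closed} cell; the open cell is $NwTN$. A Whittaker function is determined by its values on $T$ \emph{and} on $Tw$ (the latter accounting for the open cell). In fact, equality on $T$ comes cheaply: in the paper this is Lemma~\ref{lem33}/Proposition~\ref{prop34}(1), where the Howe vectors force $W_{v_m}(t(a))$ to equal the central character on a small support, so matching central characters alone gives $W_{v_m}|_B = W_{v_m'}|_B$ without using any $\gamma$-factors. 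If Whittaker functions were determined by their torus values as you claim, the theorem would be an immediate consequence of the central-character hypothesis and there would be nothing to prove.

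What the $\gamma$-factor equality must actually recover is the values of $W-W'$ on $Tw$. In the paper, after choosing Howe test vectors the functional equation collapses to Eq.~(\ref{eq314}), whose right side is
$$\int_{E^1\setminus E^\times}\bigl(W_{v_L}(t(a)w)-W_{v_L'}(t(a)w)\bigr)\,\mu(a)\eta^*(a)|a|^{-s+1/2}\,F_\chi(\bar a)\,d^*a,$$
where $F_\chi$ is the partial Fourier transform in Eq.~(\ref{eq34*}) coming from the Weil representation. Mellin inversion over $E^1\setminus E^\times$ (not over $F^\times$) only yields $(W_{v_L}-W_{v_L'})(t(a)w)\,F_\chi(\bar a)=0$; to conclude equality of the Whittaker functions one must know that for every $a$ in the relevant support there is a compatible $\chi$ with $F_\chi(\bar a)\ne 0$. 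This is exactly Lemmas~\ref{lem35}--\ref{lem36}, and it is a nontrivial $p$-adic harmonic-analysis input your proposal omits entirely; without it the inversion is inconclusive. Your part (2) sketch is similarly under-specified—while a Jacquet--Shalika--style stability argument is plausible in spirit, the paper actually obtains stability directly from the explicit Howe-vector identity Eq.~(\ref{eq314}) by showing the right side vanishes once $\deg(\eta)$ exceeds a bound governed by the smoothness level $L$ of $v,v'$ and by $F_\chi$; nothing about intertwining operators or Gauss sums per se is needed.
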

In \cite{Ba1} and \cite{Ba2}, E.M Baruch proved the local converse theorem for $\RG\Sp_4$ and $\RU(2,1)$ using Howe vectors. Our proof of the above theorem follows Baruch's method closely. In particular, our main tool is also Howe vectors. The main difference is that we need to deal with the Weil representation.

Next, we consider the $\GL_2$ case, which should be viewed as the local theory of the global $\RU(1,1)$ integral at the split places as we mentioned before. In this case, the Weil representation is $\omega_{\psi^{-1},\chi}$ ($\mu$ is trivial in this case) is the induced representation $\Ind_B^H(1\otimes \chi)$. It turns out that the local zeta integral of \cite{GRS} is in fact the local zeta integral for the representation $\pi\times \Ind(1\otimes \chi)\otimes \eta$ which was originally defined by Jacquet in \cite{J}. The analogue of Theorem \ref{thm02} is also true:
\begin{thm}[Theorem \ref{thm511}, Local converse theorem and the stability of $\gamma$-factors for $\GL_2$]\label{thm03}
Let $\pi,\pi'$ be two irreducible smooth $\psi$-generic representations of $\GL_2(F)$ with the same central character.
\begin{enumerate}
\item If $\gamma(s,\pi, \omega_{\psi^{-1},\chi},\eta)=\gamma(s,\pi', \omega_{\psi^{-1},\chi},\eta)$ for all compatible quasi-characters $\eta$ of $F^\times\times F^\times$, then $\pi\cong \pi'$. 
\item If $\eta$ is highly ramified, then $\gamma(s,\pi,\omega_{\mu,\psi^{-1},\chi},\eta)=\gamma(s,\pi', \omega_{\mu,\psi^{-1},\chi},\eta)$.
\end{enumerate}
\end{thm}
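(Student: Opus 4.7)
The plan is to replay, in the split setting, the Howe-vector argument that establishes Theorem \ref{thm02} for $\RU(1,1)$. In the split case $H=\GL_2(F)$, the Weil representation $\omega_{\psi^{-1},\chi}$ coincides with $\Ind_B^H(1\otimes\chi)$, and $\eta$ runs over quasi-characters of $F^\times\times F^\times$, so the local zeta integral $\Psi(W,\phi,f_s)$ is, up to recognition, the Jacquet $\GL_2\times \GL_2$ integral of \cite{J}. The argument is structurally simpler than for $\RU(1,1)$: there is no norm-one torus $E^1$, there is a single generic character class up to diagonal conjugation, and the Weil representation degenerates to a principal series, so the template that proves Theorem \ref{thm02} can be transported to this setting and in fact streamlined.

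For part (1), I would introduce Howe vectors $W_m\in \CW(\pi,\psi)$ and $W_m'\in \CW(\pi',\psi)$ at level $m\geq 1$; these are the Whittaker functions supported essentially on a congruence neighborhood of the identity, normalized so that $W_m(1)=W_m'(1)=1$. The equality of central characters forces $W_m\equiv W_m'$ on $ZN$ at level $m$. I would then evaluate $\Psi(W,\phi,f_s)$ on a tuned pair $(\phi,f_s)$, where $\phi\in\CS(F)$ is a small bump supported near a test point $a\in F^\times$ and $f_s$ is a Bruhat-cell-supported section of $\Ind_B^H(\eta|\cdot|^{s-1/2})$ matched to that bump. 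With these choices the integral collapses to $c\cdot W_m(\diag(a,1))$ for an explicit nonvanishing constant $c$ depending only on $\phi,f_s,\eta,s$. Substituting into the functional equation that defines $\gamma(s,\pi,\omega_{\psi^{-1},\chi},\eta)$, and using the hypothesis that the $\gamma$-factors agree for every compatible $\eta$, I would cancel the common $\gamma$-factor and extract $W_m(\diag(a,1))=W_m'(\diag(a,1))$ for a growing set of $a\in F^\times$ as $m$ increases. Passing to the limit and invoking Whittaker--Kirillov uniqueness for $\GL_2$ then forces $\pi\cong\pi'$.

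For part (2), I would choose $\eta$ sufficiently ramified, coordinate its level with that of the Howe vector $W_m$, and show that the $\eta$-twisted integration kills every non-central feature of $\pi$, reducing each side of the functional equation to a universal expression in the central character, $\chi$, $\eta$, $\psi$, and $s$. Comparing the two sides then yields a closed formula for $\gamma(s,\pi,\omega_{\psi^{-1},\chi},\eta)$ depending only on the central character, giving stability. The main obstacle in both parts is the same bookkeeping difficulty present in the $\RU(1,1)$ proof: one must choose the supports of $\phi$ and $f_s$, the Howe-vector level $m$, and (for stability) the ramification level of $\eta$ in a mutually consistent way so that the zeta integral evaluates cleanly and the oscillatory cancellation is complete. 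I expect this coordination to be the most delicate step, but no obstruction of a new kind arises compared to the non-split case, so the $\GL_2$ proof should follow by direct adaptation.
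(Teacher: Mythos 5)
Your outline follows the paper's template at the level of slogans, but it skips the single step that makes the $\GL_2$ argument hard. In the paper's proof of Theorem~\ref{thm511}, the Howe-vector test data is \emph{not} ``a small bump supported near a test point $a$'': the Weil-side vector is $\phi^m=\Upsilon_{1+\CP_F^m}\otimes\Upsilon_{1+\CP_F^m}\in\CS(F^2)$ (a bump at the identity), and the section for the induced representation is Baruch's $\Phi_{i,l}$; the parameter $a$ is the variable of integration over $F^\times$, not the center of a tuned bump. After the functional equation and cancellation, one is left with the identity Eq.~(\ref{eq516}), which expresses the difference of $\gamma$-factors as a Mellin transform of $\bigl(W_{v_L}(t(a)w)-W_{v_L'}(t(a)w)\bigr)\,\theta^{m,\chi}(t(a)w)$ over $a\in F^\times$. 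Varying $\eta$ and inverting the Mellin transform kills the product, \emph{not} the Whittaker difference alone. To conclude $W_{v_L}(t(a)w)=W_{v_L'}(t(a)w)$ you must, for each $a\in\CP_F^{-C}$, produce some $\chi\neq|\cdot|^{\pm1}$ with $\theta^{m,\chi}(t(a)w)\neq 0$. This is Proposition~\ref{prop58}, and it is far from automatic: $\theta^{m,\chi}(t(a)w)$ reduces to Kloosterman-type sums $F_\chi(k,a)=\int_{|x|=q^k}\psi(x+ax^{-1})\chi(x)\,d^*x$ whose vanishing pattern (Lemmas~\ref{lem54},~\ref{lem55}, quoted from Taibleson) depends delicately on $\deg(\chi)$ and $\mathrm{ord}(a)$, with separate case analyses for $n\leq 0$, $n$ even, and $n$ odd, and a parity obstruction forcing the use of ramified $\chi$. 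You nowhere mention $\chi$, do not build in the freedom to vary it, and treat the nonvanishing as self-evident, so your proof cannot close.

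For Part~(2), your plan overreaches: you propose to derive a closed formula for $\gamma(s,\pi,\omega_{\psi^{-1},\chi},\eta)$ depending only on the central character once $\eta$ is highly ramified, but what is actually proved (and all that the Howe-vector identity Eq.~(\ref{eq516}) delivers after Corollary~\ref{cor59}) is that the \emph{difference} of the two $\gamma$-factors vanishes, because the right-hand side is supported on $a\in\CP_F^{-C}$ and both factors in the integrand are invariant under $a\mapsto a_0a$ for $a_0\in 1+\CP_F^{l_0}$. The paper explicitly remarks that this conclusion is strictly weaker than true stability, so you should not expect a universal closed form for a single $\pi$ to fall out of this method without additional input.
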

In \cite{J}, Jacquet showed the multiplicativity of the $\gamma$-factors: $ \gamma(s,\pi, \omega_{\psi^{-1},\chi},\eta)=\gamma(s,\pi, \chi \eta)\gamma(s,\pi, \eta)$. From this and Part (1) of Theorem \ref{thm03}, we get the classical local converse theorem for $\GL_2$ which was originally proved by Jacquet-Langlands, \cite{JL}.

\section*{Acknowledgement}
I would like to thank my advisor Professor James W.Cogdell for his encouragement, generous support and a lot of invaluable suggestions. Without his suggestions and support, this paper would never exist.

\section*{Notations}
Let $F$ be a local field, and $E/F$ be a quadratic extension. For $x\in E,$ let $x\ra \bar x$ be the unique nontrivial action in $\Gal(E/F)$.  Let $\epsilon_{E/F}:F^\times \ra \wpair{\pm1}$ be the class field theory character of $E/F$. Let $E^1=\wpair{x\in E: x\bar x=1}$. Let $(W,\pair{~,~})$ be the skew Hermitian vector space of dimension 2 with 
$$\pair{w_1,w_2}=w_1J_1{}^t\!\bar w_2,$$
where $w_i\in W$ is viewed as a row vector and $J_1=\begin{pmatrix} &1\\-1&\end{pmatrix}$. Let $H=\RU(1,1)$ be the isometry group of $W$, i.e., 
$$H=\wpair{g\in \GL_E(W)|\pair{w_1g,w_2g}=\pair{w_1,w_2},\forall w_1,w_2\in W}=\wpair{g\in \GL_2(E)|~gs^t\!\bar g=s}.$$
 Let $B$ be the upper triangular subgroup of $H$, then $B=TN$ with
 $$T=\wpair{t(a):=\begin{pmatrix} a&\\ &\bar a^{-1}\end{pmatrix}:a\in E^\times},~ N=\wpair{n(b):=\begin{pmatrix} 1& b\\&1\end{pmatrix},b\in F}.$$
Let $Z=\wpair{t(a):a\in E^1}$ be the center of $H$ and $R=ZN$. We denote $\bar N$ (resp. $\bar B$) the lower triangular unipotent subgroup (resp. lower triangular subgroup) of $\RU(1,1)$. For $x\in F$, we denote
$$\bar n(x)=\begin{pmatrix} 1& \\ x&1\end{pmatrix}\in \bar N.$$

 \section{Weil representations of $\RU(1,1)$}
\subsection{An exact sequence} Let $\psi$ be a nontrivial additive character of $F$ which is viewed as a character of $N$ by the natural isomorphism $N\cong F$. Fix an element $\kappa\in F^\times-\Nm(E^\times)$. Let $(\pi,V)$ be an infinite dimensional representation of $B$. As a representation of $N$, $(\pi,V)$ is smooth. As a smooth module over $N$, we have $\CS(N).V=V$, see 2.5 of \cite{BZ1}, where $\CS(N)$ is the space of Bruhat-Schwartz functions on $N$. Here we also view $\CS(N)$ the Hecke algebra on $N$. By Fourier inversion formula, we have an isomorphism $\CS(N)\cong \CS(\hat N)$, where $\hat N$ is the dual group of $N$. Under this isomorphism, we have $\CS(\hat N).V=V$. Thus there is a sheaf $\CV$ on $\hat N$ such that $\CV_c=V$, see 1.14 of \cite{BZ1}.  Consider the action of $B$ on $\hat N$ as follows: for $\psi\in \hat N$, and $b\in B$, define
$$b.\psi(n)=\psi(b^{-1}nb).$$
This gives an action of $B$ on $\CV$. The action of $B$ on $\hat N$ has 3 orbits, $\wpair{0}$, $\hat N_1=\wpair{b.\psi: b\in B}$ and $\hat N_2=\wpair{b.\psi_\kappa: b\in B}$. Note that $\hat N_1\cup \hat N_2$ is open in $\hat N$. The stabilizer of $\psi$ is $R$. By 2.23 and 5.10 of \cite{BZ1}, we have 
$$\CV(\hat N_1)=\ind_R^B(V_{N,\psi}), \textrm{ and } \CV(\hat N_2)=\ind_R^B(V_{N,\psi_{\kappa}}),$$
where $\ind_R^B$ denote the non-normalized compact induction. Now the exact sequence $$0\ra \CV(\hat N_1)\oplus \CV(\hat N_2)\ra \CV(\hat N)\ra \CV(\wpair{0})\ra 0,$$
see 1.16 of \cite{BZ1}, can be identified with
\begin{equation}{\label{eq11}}
0\ra \ind_R^B(V_{N,\psi})\oplus \ind_R^B(V_{N,\psi_\kappa})\ra V\ra V_N\ra 0.
\end{equation}
We call $(\pi,V)$ is $\psi$ (resp. $\psi_\kappa$)-generic if $V_{N,\psi}$ (resp. $V_{N,\psi_\kappa}$) is nonzero. If $V_{N,\psi}\ne 0$, then a nonzero element 
$$\lambda\in \Hom_N(V,\psi)=\Hom(V_{N,\psi},\BC)$$
is called a $\psi$-th Whittaker functional of $(\pi,V)$.
\begin{prop}{\label {prop11}}
Let $(\pi,V)$ be an irreducible smooth admissible representation of $H=\RU(1,1)(F)$. Then we have
\begin{enumerate}
\item if $V_{N,\psi}\ne 0$, then $V_{N,\psi_{a\bar a}}\ne 0$ for any $a\in E^\times ;$
\item if $V_{N,\psi}=V_{N,\psi_\kappa}=0$, then $V$ has finite dimension$;$
\item we have $\dim V_{N,\psi}\le 1$ and $\dim V_{N,\psi_\kappa}\le 1;$  if $\dim V_{N,\psi}=1$, then $\ind_R^B(V_{N,\psi})$ is an irreducible representation of $B$. Moreover, $\ind_R^B(V_{N,\psi})$ is not equivalent to $\ind_R^B(V_{N,\psi_\kappa})$ if at least one of them is nonzero.
\end{enumerate}
\end{prop}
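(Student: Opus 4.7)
The approach rests on the single computation $t(a)n(b)t(a)^{-1}=n(a\bar a\,b)$, which identifies the conjugation action of $T$ on $N\cong F$ with multiplication by $\Nm(a)=a\bar a$. Tracing this through the defining relations of the $N$-coinvariants, $\pi(t(a))$ descends to a linear isomorphism $V_{N,\psi_\alpha}\cong V_{N,\psi_{\alpha/(a\bar a)}}$ for every nontrivial character $\psi_\alpha$ of $N$. Taking $\alpha=1$ and replacing $a$ by $a^{-1}$ immediately gives part (1).

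For part (2), suppose $V_{N,\psi}=V_{N,\psi_\kappa}=0$. The exact sequence (\ref{eq11}) then collapses to a $B$-isomorphism $V\cong V_N$, so $N$ acts trivially on $V$. Because $V$ is an $H$-module and $\bar N=wNw^{-1}$ for the Weyl element $w=\begin{pmatrix}&1\\-1&\end{pmatrix}$, the opposite unipotent $\bar N$ also acts trivially. But $N$ and $\bar N$ together generate the derived subgroup $[H,H]=\SU(1,1)(F)$ (indeed, $[t(a),n(b)]=n((a\bar a-1)b)$ already shows $N\subset[H,H]$), so $\pi$ factors through the abelian quotient $H/[H,H]$. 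An irreducible smooth representation of this abelian group is one-dimensional, whence $V$ is finite-dimensional.

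For part (3), the bound $\dim V_{N,\psi_\alpha}\le 1$ for $\alpha\in\{1,\kappa\}$ is the uniqueness of Whittaker models for $\RU(1,1)$; I would establish it by the Bruhat decomposition $H=B\sqcup BwB$ together with a Gelfand--Kazhdan transpose-inverse anti-involution argument applied to the open cell, in exact analogy with the $\SL_2$/$\GL_2$ case. Granted $\dim V_{N,\psi}=1$, the $R$-module $V_{N,\psi}$ is the character $\omega_\pi\boxtimes\psi$, and $B/R\cong T/Z\cong E^\times/E^1$ via $t(a)R\mapsto aE^1$. After picking an extension $\tilde\omega_\pi$ of $\omega_\pi$ to $E^\times$, the induced representation $\ind_R^B(V_{N,\psi})$ is realized on $\CS(E^\times/E^1)$ with $T$ acting by twisted translation and $N$ acting at the point $aE^1$ pointwise by the character $n(x)\mapsto\psi(a\bar a\,x)$. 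For irreducibility: Hecke convolution by $\CS(N)$ combined with Fourier inversion on $N$ produces every pointwise multiplication operator on $\CS(E^\times/E^1)$, so the $N$-submodule generated by any nonzero section $\phi$ is $\CS(\mathrm{supp}(\phi))$, and $T$-translation then distributes the support anywhere in $E^\times/E^1$.

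Finally, the non-equivalence of $\ind_R^B(V_{N,\psi})$ and $\ind_R^B(V_{N,\psi_\kappa})$ is read off the same model: the $N$-characters occurring in the former are exactly $\{\psi_\alpha:\alpha\in\Nm(E^\times)\}$, in the latter $\{\psi_\alpha:\alpha\in\kappa\Nm(E^\times)\}$, and these two sets are disjoint because $[F^\times:\Nm(E^\times)]=2$ and $\kappa$ is a non-norm. An intertwiner would have to match $N$-isotypic components and so must vanish. The principal obstacle across the whole proposition is the multiplicity-one bound $\dim V_{N,\psi}\le 1$; once it is in hand, the remaining assertions follow formally from the concrete description of $\ind_R^B(V_{N,\psi})$ above.
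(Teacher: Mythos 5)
Your proposal is correct, and it agrees with the paper on parts (1), (2), and the non-equivalence statement of (3): part (1) is the same observation that $\pi(t(a))$ carries $\psi$-coinvariants to $\psi_{a\bar a}$-coinvariants (the paper phrases it dually on Whittaker functionals); part (2) is the paper's appeal to the exact sequence~(\ref{eq11}), which you usefully unpack (triviality of the $N$-action forces triviality of $\bar N=w N w^{-1}$, hence of the subgroup they generate, namely $\SU(1,1)(F)\cong\SL_2(F)$, so $\pi$ factors through the abelian quotient $E^1$); and the non-equivalence is decided in both treatments by comparing the twisted Jacquet modules $(\cdot)_{N,\psi}$ and $(\cdot)_{N,\psi_\kappa}$. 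Where you genuinely diverge is in the irreducibility of $\ind_R^B(V_{N,\psi})$. The paper argues abstractly: for a nonzero $B$-submodule $V_1'\subset V'=\ind_R^B(V_{N,\psi})$ it shows $(V_1')_N=(V_1')_{N,\psi_\kappa}=0$ by the Jacquet--Langlands vanishing criterion (2.33 of Bernstein--Zelevinsky), so the exact sequence~(\ref{eq11}) applied to $V_1'$ gives $V_1'=\ind_R^B((V_1')_{N,\psi})$, and exactness of the Jacquet functor together with $\dim V_{N,\psi}=1$ forces $(V_1')_{N,\psi}=(V')_{N,\psi}$, hence $V_1'=V'$. You instead realize the induced module concretely on $\CS(E^\times/E^1)$, use Fourier inversion on $N$ to show that any nonzero section generates $\CS(\supp\phi)$ as an $N$-module, and then spread the support by $T$-translation. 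Both are sound; yours is more explicit and already produces the ``Kirillov model'' that the paper effectively relies on elsewhere, while the paper's route is the exact analogue of Bernstein--Zelevinsky~5.13 and scales to higher rank. One small caution on the deferred multiplicity-one bound $\dim V_{N,\psi}\le 1$: the naive transpose-inverse does not preserve $\RU(1,1)\subset\GL_2(E)$, so the Gelfand--Kazhdan anti-involution has to incorporate the Galois action --- essentially the involution $h\mapsto h^\theta$ the paper builds in $\S 2.4$ for a different purpose --- rather than the literal $\GL_2$ transpose. This does not affect correctness since the result is standard and both you and the paper cite it, but the phrasing should be adjusted.
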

\begin{proof}
(1) If $\lambda\in \Hom_N(V,\psi)$, it is easy to check that $\lambda\circ \pi(t(a))\in \Hom_N(V,\psi_{a\bar a})$.

(2) The assertion follows from the exact sequence (\ref{eq11}).

(3) The first part is the uniqueness of the Whittaker model and is well-known. Suppose that $\dim{V_{N,\psi}}=1$. The proof of the fact that $\ind_R^B(V_{N,\psi})$ is irreducible is similar to the proof of the corresponding statement in the $\GL_n$ case, see 5.13 of \cite{BZ1}. We give a sketch here. Let $V_1'$ be a nonzero $B$ submodule of $V':=\ind_R^B(V_{N,\psi})$. It is not hard to check that $(V'_1)_N=0$ and $(V'_1)_{N,\psi_\kappa}=0$ by Jacquet-Langlands Lemma, 2.33 of \cite{BZ1}. Thus by the exact sequence (\ref{eq11}), we have $V_1'=\ind_R^B((V_1')_{N,\psi})$. In particular, we have $(V_1')_{N,\psi}\ne 0$. On the other hand, if we take $V_1'=V_1$ in the above argument, we obtained $V'=\ind_R^B(V'_{N,\psi})$. Since $\ind_R^B(V_{N,\psi})=V'=\ind_R^B(V'_{N,\psi})$ and $V_{N,\psi}$ has dimension one, we conclude $\dim V'_{N,\psi}=1$.  By the exactness of Jacquet functors we have $0\ne (V_1')_{N,\psi}\incl (V')_{N,\psi}$. Thus $(V_1')_{N,\psi}= (V')_{N,\psi}$, and $V_1'=\ind_R^B((V_1')_{N,\psi} )=\ind_R^B((V')_{N,\psi})=V'$. This shows that the only nonzero $B$-submodule of  $V'$ is $V'$ itself, and thus $V'$ is an irreducible $B$-module. 

Suppose that both of $V_{N,\psi}$ and $V_{N,\psi_\kappa}$ are nonzero. As we mentioned before, one can check that $(\ind_R^B(V_{N,\psi}))_{N,\psi_\kappa}=0$ but $(\ind_R^B(V_{N,\psi_\kappa}))_{N,\psi_\kappa}\ne 0$, thus $ \ind_R^B(V_{N,\psi})$ is not equivalent to $\ind_R^B(V_{N,\psi_\kappa})$. This completes the proof.
\end{proof}

If $(\pi,V)$ is a representation of $\RU(1,1)$ such that exactly one of $V_{N,\psi}$ and $V_{N,\psi_\kappa}$ is nonzero, we call $\pi$ is \textbf{exceptional}.

We will show that the Weil representations provide examples of exceptional representations. 
\subsection{Weil representations}
Let $V$ be a Hermitian space of dimension 1. We have the dual pair $\RU(V)\times \RU(W)$, recall that $\RU(W)=\RU(1,1)$. For a character $\mu$ of $E^\times$ with $\mu|_{F^\times}=\epsilon_{E/F}$, we have a splitting $s_\mu: \RU(V)\times \RU(W)\ra Mp(V\otimes W)$, see \cite{HKS} for example. For a nontrivial additive character $\psi$ of $F$, we then have a Weil representation $\omega_{V,\psi,\mu}$ of $\RU(V)\times \RU(W)$ on $\CS(V)=\CS(E)$. We have the following formulas
\begin{align}
\omega(g,1)\phi(x)&=\phi(g^{-1}x), g\in U(V),\\
\omega(1,t(a))\phi(x)&=\mu(a)|a|^{1/2}\phi(xa)\\
\omega(1,n(b))\phi(x)&=\psi((bx,x)_V)\phi(x)\\
\omega(1,w)\phi(x)&=\gamma_\psi\int_{V}\psi(-\tr_{E/F}(x,y)_V)\phi(y)dy, \label{eq15}
\end{align}
where in the last formula, $$w=\begin{pmatrix} &1\\ -1&\end{pmatrix}$$
is the unique nontrivial Weyl group element of $\RU(1,1)$, $\gamma_\psi$ is the Weil index, and $dy$ is the measure on $V$ which is self-dual for the Fourier transform defined by Eq.(\ref{eq15}).

\begin{prop}\begin{enumerate}
\item For $a\in \Nm(E^\times)$, we have $\omega_{V,\psi_a, \mu}\cong \omega_{V,\psi,\mu}$.
\item As a representation of $\RU(W)=\RU(1,1)$, the restriction of the representation $\omega_{V,\mu,\psi}$ to $Z\cong E^1$ is fully reducible. For a character $\chi$ of $Z\cong E^1$, let $ \omega_{V,\mu,\psi,\chi}$ be the representation of $\RU(1,1)$ on the $\chi$-eigenspace $\CS(E,\chi)$ in $(\omega_{V,\mu,\psi}, \CS(E))$, then $\omega_{V,\mu,\psi,\chi}$ is irreducible. Moreover, $\omega_{V,\mu,\psi,\chi}$ is supercuspidal if $\chi$ is nontrivial, and $\omega_{V,\mu,\psi,1}$ is a direct summand of $\Ind_B^H(\mu)$, where $1$ means the trivial character of $E^1$, and $\Ind_B^H(\mu)$ is the normalized induction.
\end{enumerate}
\end{prop}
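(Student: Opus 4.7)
For Part (1), I will exhibit an explicit intertwiner between the two Weil representations. Writing $a = c\bar{c}$ with $c \in E^\times$, I define $T_c \colon \CS(E) \to \CS(E)$ by $(T_c\phi)(x) = \phi(cx)$. Since $\RU(V) \times \RU(W)$ is generated by $\RU(V)$, the torus $T$ and unipotent $N$ of $\RU(W)$, and the Weyl element $w$, it suffices to verify the intertwining property on these. For $\RU(V)$ and the torus the identities are direct substitutions in $(1.2)$--$(1.3)$. For $n(b)$ the key identity is $(cx,cx)_V = c\bar{c}(x,x)_V = a(x,x)_V$, so $\psi$ in $(1.4)$ is automatically replaced by $\psi_a$. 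For $w$ one performs the change of variables $y \mapsto c^{-1}y$ in $(1.5)$, uses $\tr_{E/F}(cx,y)_V = a \cdot \tr_{E/F}(x,c^{-1}y)_V$, and balances the ratio of self-dual measures against $\gamma_{\psi_a}/\gamma_\psi$; these factors cancel.

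For Part (2)(a), $E^1$ is compact in the non-archimedean setting and its action on $\CS(E)$ is smooth, so the Peter--Weyl theorem yields the isotypic decomposition $\CS(E) = \bigoplus_\chi \CS(E,\chi)$ as $\chi$ runs over characters of $E^1$. For Part (2)(b), I handle each $\chi$-eigenspace separately. If $\chi$ is nontrivial in the appropriate sense, the eigenspace condition evaluated at $x = 0$ forces $\phi(0) = 0$ for every $\phi \in \CS(E,\chi)$, so each such $\phi$ is supported in $E^\times$ with support bounded away from the origin. Formula $(1.4)$ then gives $\omega(n(b))\phi(x) - \phi(x) = (\psi(b(x,x)_V)-1)\phi(x)$, and integrating the $N$-action against a sufficiently large compact open subgroup of $N$ annihilates $\phi$, since $(x,x)_V$ is bounded away from $0$ on $\supp\phi$. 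Hence the Jacquet module $(\omega_{V,\mu,\psi,\chi})_N$ vanishes, proving supercuspidality. Irreducibility follows from the Kirillov-type identification $\CS(E,\chi) \cong \CS(\Nm(E^\times))$ and a direct analysis of the $B$-action together with uniqueness of the Whittaker model (Proposition \ref{prop11}). For $\chi = 1$, I construct an $H$-equivariant map $\omega_{V,\mu,\psi,1} \to \Ind_B^H(\mu)$ by point-evaluation along the open Bruhat cell; the transformation laws $(1.2)$--$(1.3)$ supply the required $B$-equivariance (including the modulus factor $\delta_B^{1/2}(t(a)) = |a|_E^{1/2}$), and the full reducibility of $\omega|_Z$ established in (2)(a) forces the image to be a direct summand rather than merely a subrepresentation of $\Ind_B^H(\mu)$.

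The main obstacle I anticipate is proving irreducibility in both regimes. In the supercuspidal case, passing from Jacquet-module vanishing to irreducibility requires combining uniqueness of the Whittaker functional with a close analysis of the $B$-action on the Kirillov model of $\CS(E,\chi)$. In the $\chi = 1$ case, one must identify precisely which constituent of $\Ind_B^H(\mu)$ the image is, which reduces to computing the Jacquet module of $\omega_{V,\mu,\psi,1}$ (essentially one-dimensional, supported at $x=0$) and comparing with that of $\Ind_B^H(\mu)$. The remaining computation for Part (1) concerning Weil indices is standard but requires care in tracking the dependence on both the scaled measure and the character.
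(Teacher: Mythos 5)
The paper does not prove this proposition but cites it (p.\ 69 of \cite{MVW}, or Proposition 2.5.1 of \cite{GR}), so your direct argument is necessarily a different route. Parts (1) and (2)(a), and the supercuspidality claim in (2)(b), are sound as you describe them: the scaling map $T_c$ with $a=c\bar c$ realizes the isometry $V\cong V_a$ and factors through the identification $\omega_{V_a,\mu,\psi,\chi}\cong\omega_{V,\mu,\psi_a,\chi}$ of Proposition 1.3(1), and the Jacquet--Langlands annihilation criterion applied to the trivial character does kill the Jacquet module once you observe that nontrivial $\chi$ forces $\phi(0)=0$. For irreducibility, the cleanest way to close what you sketch, using the machinery the paper has already set up, is: show $V_{N,\psi_\kappa}=0$ by the same Jacquet--Langlands argument (this is exactly the computation later in Proposition 1.3(2)); observe directly that $V(N,\psi)=\{\phi\in\CS(E,\chi):\phi(1)=0\}$, so $\dim V_{N,\psi}=1$; conclude from the exact sequence (1.1) that $V=\ind_R^B(V_{N,\psi})$ as a $B$-module; and invoke the $B$-irreducibility argument of Proposition 1.1(3). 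Your reference to a ``Kirillov-type identification'' gestures at this but does not supply the closing steps.

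The genuine gap is your justification of why $\omega_{V,\mu,\psi,1}$ is a direct \emph{summand} of $\Ind_B^H(\mu)$. The full reducibility of $\omega_{V,\mu,\psi}$ under $Z$ established in (2)(a) is a statement about the $E^1$-module structure of $\CS(E)$; it has no bearing on the $H$-module structure of the principal series $\Ind_B^H(\mu)$, and cannot force an irreducible $H$-subrepresentation of the latter to split off as an $H$-direct-summand. That is a question about the internal structure of $\Ind_B^H(\mu)$ at the reducibility point $s=0$. What you actually need is the semisimplicity of $\Ind_B^H(\mu)$ there --- for instance because it is induced from a unitary character, hence unitarizable --- or the explicit decomposition $I(0,\mu)=\omega_{\mu,\psi,1}\oplus\omega_{\mu,\psi_\kappa,1}$ recorded in Theorem 1.5, which of course cannot be assumed when proving this very proposition. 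Replace the appeal to $Z$-full-reducibility with an appeal to unitarity of the degenerate principal series (or cite \cite{MVW}/\cite{GR} as the paper does).
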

This is a special case of the general theorem of Kudla in the symplectic case and then Moeglin-Vigneras-Waldspurger. See p.69 of \cite{MVW}, or Proposition 2.5.1 of \cite{GR}.

For $a\in F^\times$, let $V_a$ be the 1-dimensional Hermitian space with $V_a=E,$ and the Hermitian structure $Q_a(x,y)=ax\bar y$. We will write $\omega_{a,\mu,\psi}$ (resp. $\omega_{a,\mu,\psi,\chi}$) for $\omega_{V_a,\mu,\psi}$ (resp. $\omega_{V_a, \mu,\psi,\chi}$) and $\omega_{\mu,\psi}$ (resp. $\omega_{\mu,\psi,\chi}$) for $\omega_{1,\mu,\psi}$ (resp. $\omega_{1,\mu,\psi,\chi}$).

\begin{prop} We fix a $\kappa\in F^\times-\Nm(E^\times)$.
\begin{enumerate}
\item For $a\in F^\times$, we have $\omega_{a,\mu,\psi,\chi}\cong \omega_{\mu,\psi_a,\chi}$.
\item The representation $\omega_{\mu,\psi,\chi}$ is $\psi$-generic but not $\psi_\kappa$-generic. Similarly, $\omega_{\mu,\psi_\kappa,\chi}$ is $\psi_\kappa$-generic but not $\psi$-generic. Thus $\omega_{\mu,\psi_a,\chi}$ is an exceptional representation of $\RU(1,1)$ for all $a\in F^\times$.
\end{enumerate}
\end{prop}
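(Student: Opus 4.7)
Both $\omega_{a,\mu,\psi,\chi}$ and $\omega_{\mu,\psi_a,\chi}$ are realized on a common underlying vector space: the set $V_a=V_1=E$ is the same, the action $\omega(t(c))\phi(x)=\mu(c)\phi(xc)$ of the center $Z=E^1$ is independent of $a$ and of the additive character, and so the $\chi$-eigenspace $\CS(E,\chi)$ is the same subspace in both models. For part (1) the plan is to verify that the identity map on $\CS(E,\chi)$ intertwines the two $H$-actions on a set of generators of $H$. The $T$-action is formally identical in both representations; for $n(b)$ the agreement is the identity $\psi(ab\Nm(x))=\psi_a(b\Nm(x))$; and for the Weyl element $w$ the two Fourier kernels $\psi(-a\tr_{E/F}(x\bar y))$ and $\psi_a(-\tr_{E/F}(x\bar y))$ coincide, so the self-dual measures on $V_a$ (relative to $\psi$) and on $V_1$ (relative to $\psi_a$) coincide. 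The Weil indices match via the standard rescaling formula $\gamma_\psi(aq)=\gamma_{\psi_a}(q)$, applied to the identification $q_{V_a}=a\cdot q_{V_1}$ of the underlying $F$-quadratic form.

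For part (2), by (1) it suffices to treat $\omega_{\mu,\psi,\chi}$, as $\omega_{\mu,\psi_\kappa,\chi}\cong\omega_{\kappa,\mu,\psi,\chi}$ and the roles of $\psi$ and $\psi_\kappa$ become symmetric once (1) is established. For $\psi$-genericity, the natural candidate is the evaluation functional $\lambda(\phi):=\phi(1)$, which is $N$-equivariant with character $\psi$ because $\omega(n(b))\phi(1)=\psi(b\cdot 1\cdot\bar 1)\phi(1)=\psi(b)\phi(1)$. To check $\lambda$ is non-zero on $\CS(E,\chi)$, I would start from a small bump $\phi_0\in\CS(E)$ with $\phi_0(1)\ne 0$, project onto the $\chi$-eigenspace via $\phi_\chi:=\int_{E^1}\chi^{-1}(c)\omega(t(c))\phi_0\,dc$, and shrink the support of $\phi_0$ to ensure $\phi_\chi(1)\ne 0$. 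For $\psi_\kappa$-non-genericity, observe that $n(b)$ acts on the fiber over $x\in E$ by the scalar $\psi_{x\bar x}(b)$, so the $N$-characters occurring in $\CS(E)$ all lie in $\{\psi_c:c\in\Nm(E^\times)\cup\{0\}\}$, a set that excludes $\psi_\kappa$ since $\kappa\notin\Nm(E^\times)$. Combining this with the sheaf framework developed above, or equivalently the exact sequence (\ref{eq11}), yields $\CS(E)_{N,\psi_\kappa}=0$, and hence also $\CS(E,\chi)_{N,\psi_\kappa}=0$.

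The only step requiring real care is the Weil-index matching in~(1); this is the standard transformation $\gamma_\psi(aq)=\gamma_{\psi_a}(q)$, which I would cite rather than reprove. Everything else reduces to direct verification with the explicit Weil-representation formulas recorded above.
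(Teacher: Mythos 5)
Your proposal is correct and follows essentially the same line of argument as the paper. For part~(1) you unwind the isomorphism explicitly on generators $t(a)$, $n(b)$, $w$ and invoke the Weil-index rescaling $\gamma_\psi(aq)=\gamma_{\psi_a}(q)$; the paper simply cites Corollary~6.1 of Kudla's notes, which encodes exactly this verification. For part~(2) the $\psi$-generic direction via the evaluation functional $\phi\mapsto\phi(1)$ is precisely the paper's Lemma~\ref{lem14}, and the $\psi_\kappa$-non-genericity is the same $N$-spectrum argument: the paper runs it through Jacquet-Langlands' Lemma (2.33 of \cite{BZ1}) to show $\CS(E,\chi)(N,\psi_\kappa)=\CS(E,\chi)$, while you phrase it via the sheaf-over-$\hat N$ support being contained in $\{\psi_c : c\in\Nm(E^\times)\cup\{0\}\}$ --- equivalent formulations of the same fact. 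Your observation that part~(1) makes the $\psi/\psi_\kappa$ roles symmetric, so the second half of~(2) is immediate, is a small clarification the paper leaves implicit behind ``similarly.''
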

\begin{proof}
(1) In fact, the identity map $\CS(E,\chi)\ra \CS(E,\chi)$ defines an isomorphism $\omega_{a,\mu,\psi,\chi}\cong \omega_{\mu,\psi_a,\chi}$, see Corollary 6.1 of \cite{K}, page 40.

(2) This is in fact proved in \cite{KS}. We also include a proof here. Denote $(\pi,V)$ the representation $(\omega_{\mu,\chi,\psi},\CS(E,\chi))$ temporarily. We claim that $V=V(N,\psi_\kappa)$, i.e., $V_{N,\psi_\kappa}=0$. Given $\phi\in V=\CS(E,\chi)$, by Jacquet-Langlands' Lemma, 2.33 of \cite{BZ1}, we have $\phi\in V(N,\psi_\kappa)$ if and only if there is an open compact subgroup $N'\subset N$ such that
$$\int_{N'}\psi_\kappa^{-1}(n)\pi(n)\phi dn=0,$$
or for all $x\in \Supp(\phi)$, 
\begin{equation}{\label{eq16}}\int_{N'}\psi(n(x\bar x-\kappa))dn\equiv 0.\end{equation}
Since $\Supp(\phi)$ is compact and $\kappa \notin \Nm(E^\times)$, we can find an open compact subgroup $N'$ such that $\psi(n(x\bar x-\kappa))$ is a nontrivial character on $N'$ for all $x\in \Supp(\phi)$. Then for this $N'$, (\ref{eq16}) holds, i.e., $\phi\in V(N,\psi_\kappa)$. Thus $V_{N,\psi_\kappa}=0$ and $\omega_{\mu,\psi,\chi}$ is not $\psi_\kappa$-generic. It is clear that $\omega_{\mu,\chi,\psi}$ is $\psi$-generic. 
\end{proof}

\begin{lem}{\label {lem14}}
Define a linear functional $\lambda_{\mu,\psi,\chi}:\CS(E,\chi)\ra \BC$ by 
$$\lambda_{\mu,\psi,\chi}(\phi)=\phi(1).$$
Then $\lambda_{\mu,\psi,\chi}$ is a nonzero Whittaker functional of $\omega_{\mu,\psi,\chi}$.
\end{lem}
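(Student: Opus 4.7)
The plan is to verify two things: that $\lambda_{\mu,\psi,\chi}$ transforms by $\psi$ under the action of $N$, and that it is not identically zero on $\CS(E,\chi)$.

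First I would verify the Whittaker transformation property by a direct computation using the explicit Weil representation formula for $\omega(1, n(b))$. Since the Hermitian form on $V = V_1 = E$ is $Q_1(x,y) = x\bar y$, we have $(x,x)_V = x\bar x$, and the formula gives
\[
\bigl(\omega_{\mu,\psi,\chi}(n(b))\phi\bigr)(x) = \psi(b\,x\bar x)\,\phi(x).
\]
Evaluating at $x = 1$ yields $\bigl(\omega_{\mu,\psi,\chi}(n(b))\phi\bigr)(1) = \psi(b)\phi(1)$, so $\lambda_{\mu,\psi,\chi}(\omega(n(b))\phi) = \psi(b)\,\lambda_{\mu,\psi,\chi}(\phi)$. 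This is exactly the Whittaker transformation property.

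The only remaining point is non-vanishing: we must exhibit some $\phi\in\CS(E,\chi)$ with $\phi(1)\ne 0$. The space $\CS(E,\chi)$ is cut out of $\CS(E)$ by the eigenvalue condition $\mu(a)\phi(xa) = \chi(a)\phi(x)$ for all $a\in E^1$ (using $|a|=1$). To produce a member with nonzero value at $1$, I would start with an auxiliary $\phi_0\in\CS(E)$ supported in a small enough open neighborhood $U$ of $1$ on which both $\mu$ and $\chi$ (pulled back via multiplication by $E^1$) are constant, with $\phi_0(1)=1$, and average:
\[
\phi(x) = \int_{E^1}\chi(a)^{-1}\mu(a)\,\phi_0(xa)\,da.
\]
A change of variable shows $\phi\in\CS(E,\chi)$, and by choice of $U$ the integrand at $x=1$ is nonnegative and nonzero on an open set of $E^1$, so $\phi(1)\ne 0$. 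Hence $\lambda_{\mu,\psi,\chi}$ is a nonzero Whittaker functional.

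No step is truly difficult here; the only mild subtlety is confirming that the averaging procedure actually lands in $\CS(E,\chi)$ and produces a function that is nonzero at $1$, which requires choosing $\phi_0$ with support small enough that the unitary characters involved do not oscillate and kill the integral.
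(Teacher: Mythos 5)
Your argument is correct and follows the same route as the paper: both deduce the $\psi$-equivariance directly from the Weil-representation formula $\omega(n(b))\phi(x)=\psi(b\,x\bar x)\phi(x)$ evaluated at $x=1$. The paper simply asserts the nonvanishing of $\lambda_{\mu,\psi,\chi}$ as clear, whereas you justify it by averaging a bump function over $E^1$; this is the natural unwinding of the paper's ``it is clear'' and adds no new idea.
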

\begin{proof}
For $n(x)\in N$, we have 
$$\lambda_{\mu,\psi,\chi}(\omega(n(x))\phi)=\omega(n(x))\phi(1)=\psi(x)\phi(1).$$
Thus $\lambda_{\mu,\psi,\chi}$ is a Whittaker functional. It is clear that $\lambda_{\mu,\psi,\chi}$ is nonzero.
\end{proof}

If $\chi=1$ is the trivial character of $E^1$, we have $\omega_{\mu,\psi,1}\subset \Ind_B^H(\mu)$. Kudla and Sweet analyzed the embedding in \cite{KS}. In the $\RU(1,1)$ case, the main result of \cite{KS} is the following

\begin{thm}[Kudla-Sweet]{\label{thm15}} Let $\mu$ be a character of $E^\times$, and $s\in \BC$. We normalize $(s,\mu)$ as in page $255$ of \cite{KS}. Let $I(s,\mu)=\Ind_B^H(\mu|~|^s)$. Then
\begin{enumerate}
\item If $\mu|_{F^\times}\ne 1,$ and $\mu|_{F^\times}\ne \epsilon_{E/F}$. Then $I(s,\mu)$ is irreducible.
\item If $\mu|_{F^\times}=1$, then $I(s,\mu)$ is irreducible except when $s=\pm \frac{1}{2}$. If $s=\pm \frac{1}{2}$, $I(s,\mu)$ has length 2, and one irreducible component is has dimension 1.
\item If $\mu|_{F^\times}=\epsilon_{E/F}$, then $I(s,\mu)$ is irreducible except when $s=0$. If $s=0$, we have
$$I(0,\mu)=\omega_{\mu,\psi,1}\oplus \omega_{\mu,\psi_\kappa,1}.$$
\end{enumerate}
\end{thm}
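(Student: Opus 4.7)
My plan is to reduce the reducibility question for $I(s,\mu)$ to an analysis of the standard intertwining operator
$$M(s,\mu)\colon I(s,\mu)\longrightarrow I(-s,\mu^w)$$
attached to the unique nontrivial Weyl element $w$ of $\RU(1,1)$. Since $w\, t(a)\,w^{-1}=t(\bar a^{-1})$, the Weyl-conjugate character is $\mu^w(a)=\mu(\bar a)^{-1}$, and the basic structural observation is that $\mu^w=\mu$ if and only if $\mu$ is trivial on $\Nm_{E/F}(E^\times)$, which by local class field theory means exactly $\mu|_{F^\times}\in\{1,\epsilon_{E/F}\}$. This dichotomy is precisely what separates case (1) from cases (2) and (3). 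Reducibility of $I(s,\mu)$ is then controlled by the Jacquet module, whose semisimplification has exponents $\mu|~|^{s}$ and $\mu^w|~|^{-s}$, together with the location of poles and zeros of $M(s,\mu)$, which on a pseudo-spherical section are computable in terms of abelian local factors attached to $\mu|_{F^\times}$ via a Gindikin--Karpelevich type formula.

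For part (1), when $\mu|_{F^\times}\notin\{1,\epsilon_{E/F}\}$ the two exponents in the Jacquet module are distinct for every $s$ and the abelian local factors have no poles or zeros; hence $M(s,\mu)$ is an isomorphism for all $s$ and $I(s,\mu)$ is irreducible. For part (2), when $\mu|_{F^\times}=1$ the character $\mu$ factors through $a\mapsto a\bar a^{-1}$ and thus lifts to a character of $\RU(1,1)$ via $\det\colon \RU(1,1)\to E^1$; at $s=\pm\tfrac{1}{2}$ the intertwining operator degenerates, producing as subrepresentation or quotient precisely this character composed with $\det$, and length exactly $2$ then follows from the composition series of the Jacquet module.

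The heart of the theorem is part (3). When $\mu|_{F^\times}=\epsilon_{E/F}$, the character $\mu$ is a splitting character in the sense of \cite{HKS} for the dual pair $\RU(V_a)\times \RU(W)$ for both representatives $a\in\{1,\kappa\}$ of $F^\times/\Nm_{E/F}(E^\times)$. From the preceding propositions the two Weil representations $\omega_{\mu,\psi,1}$ and $\omega_{\mu,\psi_\kappa,1}$ are irreducible, inequivalent ($\psi$- versus $\psi_\kappa$-generic), and each is realized as a submodule of a principal series induced from a character of $T$ whose restriction to $F^\times$ is $\epsilon_{E/F}$. After verifying that the central characters on $Z\cong E^1$ match, Frobenius reciprocity applied to the nonzero Whittaker functionals of Lemma \ref{lem14} produces canonical embeddings
$$\omega_{\mu,\psi,1}\oplus \omega_{\mu,\psi_\kappa,1}\hookrightarrow I(0,\mu).$$
The opposite inclusion follows from a Jacquet-module bound: at $s=0$ both exponents collapse to $\mu$ and each appears with multiplicity one, so $I(0,\mu)$ has length at most $2$.

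The main obstacle is the last step of (3). Two issues have to be handled carefully. First, one must track the normalizations of \cite{KS} to be certain that the inducing character of $I(0,\mu)$ matches the splitting character used to construct the Weil representation, which is the purpose of the normalization convention on p.~$255$ of \cite{KS}. Second, to upgrade the length bound to an actual direct sum one must rule out a nontrivial self-extension inside $I(0,\mu)$; for this I would use either the unitarity of the inducing character at $s=0$ (forcing semisimplicity of $I(0,\mu)$) or a direct computation of $\Hom_{H}(\omega_{\mu,\psi_a,1},I(0,\mu))$ via the exact sequence (\ref{eq11}) and the uniqueness statements of Proposition \ref{prop11}.
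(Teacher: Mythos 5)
The paper does not prove this statement: it is quoted, with attribution, as the $n=1$ instance of the main theorem of Kudla--Sweet \cite{KS}, preceded only by the one-line remark that $\omega_{\mu,\psi,1}\subset\Ind_B^H(\mu)$. There is therefore no in-paper proof to compare against, and your sketch has to stand on its own.

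The broad strategy (Weyl conjugation of exponents, poles and zeros of the intertwining operator, Jacquet-module length bounds) is indeed how \cite{KS} proceed, but the pivotal step in part (3) is wrong as you have written it. You claim that ``Frobenius reciprocity applied to the nonzero Whittaker functionals of Lemma~\ref{lem14} produces canonical embeddings $\omega_{\mu,\psi,1}\oplus\omega_{\mu,\psi_\kappa,1}\hookrightarrow I(0,\mu)$.'' That does not follow: a Whittaker functional lives in $\Hom_N(\sigma,\psi)$, whereas Frobenius reciprocity for principal series reads $\Hom_H(\sigma,\Ind_B^H(\mu))\cong\Hom_T(\sigma_N,\mu\,\delta_B^{1/2})$ and sees only the \emph{untwisted} Jacquet module $\sigma_N$. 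In fact the supercuspidal $\omega_{\mu,\psi,\chi}$ with $\chi\neq 1$ also has a nonzero Whittaker functional by Lemma~\ref{lem14}, yet admits no map to any principal series; so ``nonzero Whittaker functional'' cannot be the mechanism. The functional that actually produces the embedding is evaluation at the origin, $\phi\mapsto\phi(0)$: by the Weil representation formulas, $\omega(n(b))\phi(0)=\psi(b\cdot 0\cdot\bar 0)\phi(0)=\phi(0)$, so it is $N$-\emph{invariant} rather than $(N,\psi)$-equivariant, and $\omega(t(a))\phi(0)=\mu(a)|a|^{1/2}\phi(0)$, so $T$ acts by $\mu\,\delta_B^{1/2}$. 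This identifies $(\omega_{\mu,\psi,1})_N$ as the one-dimensional $T$-module $\mu\,\delta_B^{1/2}$ and yields the unique embedding $h\mapsto(\omega(h)\phi)(0)$ into $I(0,\mu)$ (and explains why $\chi\neq 1$ gives nothing, since then $\phi(0)\equiv 0$). With that correction, the remainder of your outline for (3) --- inequivalence from $\psi$- versus $\psi_\kappa$-genericity, length at most two from the Jacquet module of $I(0,\mu)$ together with the fact that constituents of principal series are never supercuspidal, and therefore equality --- is sound. The sketches of (1) and (2) are plausible but rest on the local-factor computations you leave implicit.
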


\section{Local zeta integrals for $\RU(1,1)$}
\subsection{The Local Zeta Integral}
Let $\psi$ be a fixed nontrivial additive character on $F$.  Let $(\pi,V)$ be a $\psi$-generic irreducible smooth representation of $H=\RU(1,1)(F)$ and let $\omega_\pi$ be the central character of $\pi$. Let $\mu$ be a character of $E^\times$ such that $\mu|_{F^\times}=\epsilon_{E/F}$. For a character $\chi$ of $E^1$, we can consider the Weil representation $\omega_{ \mu,\psi^{-1},\chi}$ of $H=U(1,1)$. For a character $\eta$ of $E^\times$, $s\in \BC$, let $\eta|~|^{s-1/2}$ be the character of $B$ defined by 
$$\eta|~|^{s-1/2}(nt(a))=\eta(a)|a|_E^{s-1/2}, a\in E^\times, n\in N.$$
Let $\Ind_B^H(\eta|~|^{s-1/2})$ be the normalized induced representation of $H$, which consists smooth functions on $H$ such that
$$f(bh)=\eta|~|^s(b)f(h),b\in B, h\in H.$$
We assume that our datum $\pi, \mu, \chi,$ and $ \eta$ satisfy the condition
\begin{equation}{\label{eq21}}
\omega_\pi\cdot \mu\chi\cdot \eta|_{E^1}=1.
\end{equation}

 For $W\in \CW(\pi,\psi)$, $\phi\in \CS(E,\chi)$, $f_s\in \Ind_B^H(\eta|~|^{s-1/2})$, we consider the local zeta integral
 $$\Psi(W,\phi,f_s, \omega_{\mu,\psi^{-1},\chi})=\int_{R\setminus H}W(h)(\omega_{\mu,\psi^{-1},\chi}(h)\phi)(1)f_s(h)dh.$$
There is a projection $\CS(E)\ra \CS(E,\chi)$ defined by $\phi\mapsto \phi_\chi$, where 
$$\phi_\chi(a)=\int_{E^1}\chi^{-1}(u)\phi(ua)du.$$
If we start from an element $\phi\in \CS(E)$, and consider $\Psi'(W,\phi,f_s):=\Psi(W,\phi_\chi, f_s)$, then
$$\Psi'(W,\phi,f_s, \omega_{\mu,\psi^{-1},\chi})=\int_{N\setminus H}W(h)(\omega_{\mu,\psi^{-1}}(h)\phi)(1)f_s(h)dh.$$
\textbf{Remark:} (1) The term $\omega_{\mu,\psi^{-1},\chi}(h)\phi(1)$ is the Whittaker function of the Weil representation associated with the vector $\phi$. \\
(2) The integral $\Psi'(W,\phi, f_s, \omega_{\mu,\psi^{-1},\chi})$ is the local zeta integral in $\S$7 of \cite{GRS}.\\
(3) In the notation $\Psi(W,\phi,f_s, \omega_{\mu,\psi^{-1},\chi}) $, we add $\omega_{\mu,\psi^{-1},\chi}$ to emphasize that the integral is defined in a way depending on the Weil representation $\omega_{\mu,\psi^{-1},\chi}$. If the Weil representation $\omega_{\mu,\psi^{-1},\chi}$ is clear from the context, we will write $\Psi(W,\phi,f_s, \omega_{\mu,\psi^{-1},\chi})$ as $\Psi(W,\phi, f_s)$ for simplicity. 

\begin{lem}{\label{lem21}}
The integral $\Psi(W,\phi,f_s)$ is absolutely convergent for $\Re(s)>>0$, and defines a rational function of $q_E^{-s}$. Moreover, each $\Psi(W,\phi, f_s)$ can be written with a common denominator determined by $\pi, \omega_{\mu,\psi^{-1},\chi}$ and $\eta$.
\end{lem}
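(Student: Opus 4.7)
The plan is to reduce $\Psi(W,\phi,f_s)$, via the Iwasawa decomposition, to a finite sum of Tate-type local zeta integrals over the one-dimensional quotient $E^\times/E^1$, and then invoke Tate's theory to conclude rationality and extract a common denominator. First, using the transformation laws of $W$, $f_s$, and the explicit Weil representation formulas from Section~1, together with the compatibility condition (\ref{eq21}), one checks that the integrand is well-defined on $R\setminus H$. The Iwasawa decomposition $H = BK$ with $K$ a good maximal compact subgroup, combined with $\delta_B(t(a)) = |a|_E$, allows one to write
$$\Psi(W,\phi,f_s) = \int_K \int_{E^\times/E^1} W(t(a)k)\,\mu(a)|a|_E^{1/2}(\omega_{\mu,\psi^{-1},\chi}(k)\phi)(a)\,\eta(a)|a|_E^{s-1}f_s(k)\,d^\times a\,dk,$$
where the factor $|a|_E^{-1}$ accounts for $\delta_B^{-1}(t(a))$ in the quotient measure on $R\setminus H \cong (Z\setminus T)\times K$.

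Next, I would recall two standard features of Whittaker functions. Using the relation $t(a)n(x) = n(a\bar a x)t(a)$ combined with smoothness of $W$ and the compactness of $K$, one shows $W(t(a)k) = 0$ for $|a|_E$ larger than some uniform bound $M$ independent of $k\in K$. On the other hand, finite-dimensionality of the Jacquet module $V_N$ of $\pi$ yields a finite set of quasi-characters $\{\chi_i\}_{i\in I}$ of $T/Z$ (the exponents of $\pi$) and a uniform asymptotic expansion
$$W(t(a)k) = \sum_{i\in I}\chi_i(a)\,P_i(v_E(a);k)\qquad \text{for $|a|_E$ sufficiently small,}$$
with polynomials $P_i(\,\cdot\,;k)$ of degree bounded by $\dim V_N-1$ whose coefficients depend continuously on $k$.

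Substituting this expansion, performing the $K$-integration (which is finite since $K$ is compact and absorbs $\phi$, $f_s$, and the $P_i(\,\cdot\,;k)$ into smooth compactly-supported functions of $a$), and splitting the $a$-integral, one reduces $\Psi(W,\phi,f_s)$ to a finite sum
$$\sum_{i\in I}\sum_{j=0}^{d}\int_{E^\times/E^1}\Phi_{i,j}(a)\,\chi_i(a)\mu(a)\eta(a)\,v_E(a)^j\,|a|_E^{s-1/2}\,d^\times a,$$
with $\Phi_{i,j}\in\CS(E^\times/E^1)$. Via the norm map, $E^\times/E^1$ embeds as a subgroup of finite index in $F^\times$, so each summand is a Tate local zeta integral against the quasi-character $\chi_i\mu\eta|\,\cdot\,|_E^{s-1/2}$ twisted by a power of the valuation. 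Tate's theory gives absolute convergence for $\Re(s)\gg 0$ and rationality in $q_E^{-s}$, with denominators of the form $(1-\alpha_i q_E^{-(s-1/2)})^{j+1}$ where $\alpha_i$ depends only on $\chi_i\mu\eta$. Since $\{\chi_i\}$ is determined by $\pi$, $\mu$ by $\omega_{\mu,\psi^{-1},\chi}$, and $\eta$ is fixed, a common denominator depending only on $(\pi,\omega_{\mu,\psi^{-1},\chi},\eta)$ can be read off.

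The main technical point will be securing the uniform Whittaker asymptotic expansion above with continuous $K$-dependence; this is standard, following from Casselman's analysis of Jacquet modules in the $p$-adic setting, but it must be handled with care so that the set of exponents $\{\chi_i\}$ is independent of the specific choices of $W$, $\phi$, and $f_s$, which is precisely what makes the common denominator depend only on $\pi$, $\omega_{\mu,\psi^{-1},\chi}$, and $\eta$.
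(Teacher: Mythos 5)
Your proposal is correct and takes essentially the same approach the paper intends: the paper only cites the standard gauge estimate for Whittaker functions and refers to the $\GL_n$ arguments in \cite{JPSS} and \cite{C}, and what you have written out (Iwasawa decomposition, vanishing of $W(t(a)k)$ for $|a|$ large, Casselman's finite asymptotic expansion via the Jacquet module for $|a|$ small, reduction to a finite sum of Tate-type integrals over $E^\times/E^1$) is precisely that standard argument spelled out in this setting. The observation that the exponents $\{\chi_i\}$ and the degree bound $\dim V_N - 1$ depend only on $\pi$, while $\mu$ and $\eta$ are fixed, correctly yields the common-denominator assertion.
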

\begin{proof}
This follows from a gauge estimate of the Whittaker function $W$. The proof is similar to the proof of the well-known $\GL_n$ case which can be found in \cite{JPSS} or \cite{C} for example. We omit the details. 
\end{proof}

\subsection{The normalized local zeta integral and the local $L$-factor}
We follow \cite{Ba1} to give a parametrization of the induced representation of $\Ind_B^H(\eta|~|^{s-1/2})$ using the Bruhat-Schwartz function space $\CS(F^2)$. Let $\psi'$ be another fixed additive character of $F$ which can be the same or different with $\psi$. For $\Phi\in \CS(F^2)$, we define the Fourier transform with respect to $\psi'$ by:
$$\hat \Phi(x,y)=\int \Phi(u,v)\psi'(yu-xv)dudv.$$
Let $g\in \GL(2,F).$ Set $(g\Phi)(x,y)=\Phi((x,y)g)$. Then 
$$(g\Phi)^{\widehat~}=|\det g|_F^{-1}g'\hat\Phi,$$
where $g'=\diag(\det(g)^{-1},\det(g)^{-1})g$.
For $s\in \BC, g\in \GL(2,F),\Phi\in \CS(F^2)$ and a character $\eta$ of $E^\times$, we consider 
$$z(s,g,\Phi,\eta)=\int_{F^\times}(g\Phi)(0,r)\eta(r)|r|_E^s dr.$$
The above integral is absolutely convergent for $\Re(s)>>0$ and defines a meromorphic function on $\BC$. 

Note that $\SL_2(F)$ is a subgroup of $H$. The determinant map $\det: H\ra E^1$ induces an exact sequence
$$1\ra \SL_2(F)\ra H\ra E^1\ra 1,$$
i.e., $\SL_2=SU(1,1)$.

By Hilbert's Theorem 90, for $h\in H$, we can find $a\in E^\times$ such that $h=t(a)g$ for some $g\in \SL_2(F)$. The decomposition $h=t(a)g$ is not unique. We define
\begin{equation}
f(s,h,\Phi,\eta)=\eta(a)|a|^sz(s,g\Phi, \eta).
\end{equation}
By Lemma 2.5 of \cite{Ba1}, the definition of $f$ is independent of choice of the decomposition of $h$. It is clear that $f\in \Ind_B^H(\eta|~|^{s-1/2})$. By Lemma 4.2 of \cite{Ba1}, there exists $s_0\in \BR$ such that for every $s$ with $\Re(s)>s_0$ and $f\in \Ind_B^H(\eta|~|^{s-1/2})$, there exists $\Phi\in \CS(F^2)$ such that 
$$f(s,h,\Phi,\eta)=f(h).$$

We assume $\pi$, $\mu,\chi$ and $\eta$ satisfy the condition (\ref{eq21}). We define
\begin{equation}
\Psi(s, W,\phi,\Phi,\eta, \omega_{\mu,\psi^{-1}, \chi})=\int_{R\setminus H}W(h)(\omega_{\mu,\psi^{-1},\chi}(h)\phi)(1)f(s,h,\Phi,\eta)dh.
\end{equation}
\textbf{Remark:} Again, if the Weil representation $\omega_{\mu,\psi^{-1},\chi}$ is clear from the context, we will omit that from the notation and just write $\Psi(s,W,\phi, \Phi, \eta)$.

By Lemma \ref{lem21}, for $\Re(s)>>0$, the local zeta integral $\Psi(s,W,\phi,\Phi,\eta)$ is absolutely convergent and defines a rational function of $q_E^{-s}$.

Let $I(s,\pi,\omega_{\mu,\psi^{-1},\chi},\eta)$ be the subspace of $\BC(q_E^{-s})$ spanned by $\Psi(s,W,\phi,\Phi,\eta, \omega_{\mu,\psi^{-1}_{a\bar a},\chi})$ for $W\in \CW(\pi,\psi_{a\bar a}),\phi\in \CS(E,\chi), $, $a\in E^\times$, $\Phi\in \CS(F^2)$.  Note that if $\pi$ is $\psi$-generic, then it is also $\psi_{a\bar a}$-generic for all $a\in E^\times$.
\begin{prop}{\label{prop22}}
The space $I(s,\pi,\omega_{\mu,\psi^{-1},\chi},\eta)$ is a $\BC[q_E^s,q_E^{-s}]$-fractional ideal. There is a unique generator $L(s,\pi,\omega_{\mu,\psi^{-1},\chi},\eta)$ of $I(s,\pi,\omega_{\mu,\psi^{-1},\chi},\eta)$ of the form $P(q_E^{-s})^{-1}$ for $P(X)\in \BC[X]$ and $P(1)=1$. 
\end{prop}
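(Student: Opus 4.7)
The plan is to follow the standard template for the existence of local $L$-factors in the style of \cite{JL, JPSS}. Set $X = q_E^{-s}$, so that $\BC[q_E^s, q_E^{-s}] = \BC[X, X^{-1}]$ is a principal ideal domain with units $\BC^\times X^{\BZ}$. The proof decomposes into three ingredients: (a) $I := I(s, \pi, \omega_{\mu,\psi^{-1},\chi}, \eta)$ sits inside a single principal fractional ideal; (b) $I$ contains a nonzero constant; (c) $I$ is stable under multiplication by $X^{\pm 1}$. Once these are established, $I$ is a nonzero $\BC[X, X^{-1}]$-fractional ideal containing $\BC$, hence principal with a generator $R$ satisfying $R^{-1} \in \BC[X, X^{-1}]$. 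Absorbing the unit $cX^{k}$ yields a generator of the form $P(X)^{-1}$ with $P \in \BC[X]$ and $P(0) \ne 0$, and rescaling gives $P(1) = 1$. Uniqueness is immediate: two such normalized polynomials would differ by a unit $cX^{k}$ in $\BC[X, X^{-1}]$, and the constraints $P, P' \in \BC[X]$ (no negative powers) together with $P(1) = P'(1) = 1$ force $k = 0$ and $c = 1$.

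Step (a) is immediate from Lemma \ref{lem21}, which provides a common denominator $Q_{0}(X)$ depending only on $(\pi, \omega_{\mu,\psi^{-1},\chi}, \eta)$; hence $I \subseteq Q_{0}(X)^{-1}\BC[X, X^{-1}]$. For step (b) I would produce explicit test data whose integral is a nonzero constant. Using the open Bruhat cell $R \bar N \subset H$ to locally parametrize $R \setminus H$ near the identity, I would pick $W \in \CW(\pi, \psi)$ with $W(1) \ne 0$ supported in a small right neighborhood of $1$, $\phi \in \CS(E, \chi)$ with $\phi(1) \ne 0$, and $\Phi \in \CS(F^{2})$ supported in a small neighborhood of $(0, 1)$ with $\Phi(0, 1) \ne 0$; shrinking supports makes each factor of the integrand essentially constant on a fixed compact set, collapsing the integral to a nonzero scalar independent of $s$.

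For step (c), the key identity is the dilation behavior of the section $f$: for $a \in F^\times$ and $\Phi_{a}(x, y) := \Phi(a^{-1}x, a^{-1}y)$, changing variables $r \mapsto ar$ inside $z(s, g, \Phi_{a}, \eta)$ yields
\begin{equation*}
f(s, h, \Phi_{a}, \eta) = \eta(a)\, |a|_F\, |a|_E^{s}\, f(s, h, \Phi, \eta).
\end{equation*}
Choosing $a$ to be a uniformizer of $F$ shifts by (a scalar times) a nonzero integer power of $q_E^{s}$; in the unramified case this directly realizes multiplication by $X^{\pm 1}$. In the ramified case, where $|a|_E^{s} = q_E^{-2s}$, this operation alone produces only even shifts, so I would complement it by right-translating the triple $(W, \phi, f)$ by $t(b)$ for $b \in E^\times$ with $|b|_E = q_E^{-1}$, using the Weil formula $\omega_{\mu,\psi^{-1},\chi}(t(b))\phi(x) = \mu(b)|b|_E^{1/2}\phi(xb)$ together with the corresponding transformations of $W$ and $f$ under $t(b)$.

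The main technical obstacle will be the ramified case of step (c): verifying that the combined $\Phi$-dilation and torus-translation maneuver yields exactly the needed $X^{\pm 1}$ shift and preserves the compatibility relation (\ref{eq21}). In the unramified case the $\Phi$-dilation alone suffices, and the argument closely mirrors the $\GL_{2}$ case of Jacquet--Langlands.
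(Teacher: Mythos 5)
Your ingredients (a), (b), (c) match the logical skeleton of the paper's argument, and your step (a) is exactly the paper's use of Lemma~\ref{lem21}. But step (c) has a genuine gap in the ramified case, and this step is precisely where the paper does something nontrivial that your write-up misses.

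First, note that the ideal $I(s,\pi,\omega_{\mu,\psi^{-1},\chi},\eta)$ is by definition the span of $\Psi(s,W,\phi,\Phi,\eta,\omega_{\mu,\psi^{-1}_{a\bar a},\chi})$ over \emph{all} $a\in E^\times$, i.e., over all twists of the additive character, with $W\in\CW(\pi,\psi_{a\bar a})$. This is a deliberate choice in the paper, not cosmetic. The paper's shift mechanism exploits exactly this: putting $W_a(h)=W(t(a)h)$ and $\phi_a(x)=\phi(ax)$ transplants $(W,\phi)$ from the $\psi$-pair to the $\psi_{a\bar a}$-pair, and the computation in the proof gives
\[
\Psi(s,W_a,\phi_a,\Phi,\eta,\omega_{\mu,\psi^{-1}_{a\bar a},\chi})
=\mu(a)^{-1}\eta(a)^{-1}|a|_E^{-s-1/2}\,\Psi(s,W,\phi,\Phi,\eta,\omega_{\mu,\psi^{-1},\chi}).
\]
Taking $a$ a uniformizer of $E$ yields the factor $q_E^{s+1/2}$ times a constant, which produces the odd shift in $q_E^{-s}$ even when $E/F$ is ramified. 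Your $\Phi$-dilation mechanism on the other hand is restricted to $a\in F^\times$ (since you need $\Phi_a\in\CS(F^2)$), and a uniformizer of $F$ has $|a|_E=q_E^{-2}$ in the ramified case, so it only realizes even shifts. This is the obstruction you correctly flagged.

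Your proposed fix, however, does not work: right-translating \emph{all three} of $(W,\phi,f)$ by $t(b)$ is the diagonal $H$-action on the trilinear form, and the integral over $R\setminus H$ is right $H$-invariant, so
$\Psi(s,W(\cdot t(b)), \omega(t(b))\phi, f_s(\cdot t(b)))=\Psi(s,W,\phi,f_s)$ identically. You would get the shift from nothing, which is impossible. Translating only part of the triple doesn't obviously stay in the span either, and (for $b\in E^\times\setminus F^\times$) runs into the problem that $(0,r)t(b)^{-1}$ lands outside $F^2$, so the resulting function is not manifestly of the form $f(s,\cdot,\Phi',\eta)$. The correct move is the \emph{left} translation $W\mapsto W(t(a)\,\cdot\,)$, which changes the Whittaker type to $\psi_{a\bar a}$, and this is exactly why the ideal is defined as an aggregate over $a\in E^\times$. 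Until you engage with that definition, step (c) is not established in the ramified case.

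Two smaller points. In step (b), a Whittaker function cannot be ``supported in a small right neighborhood of $1$'' since its support is always left $N$-invariant; you mean small support in $N\setminus H$ (or in the torus variable). The paper handles this carefully via Howe vectors in $\S$3 (see Remark~\ref{rem310}) rather than relying on a compact-support heuristic, and defers the nonvanishing claim there from the proof of the proposition. Also your dilation formula for the section carries a spurious factor of $|a|_F$: with the multiplicative measure used in $z$, one has $f(s,h,\Phi_a,\eta)=\eta(a)|a|_E^s f(s,h,\Phi,\eta)$ (this is the computation in Lemma~2.3 of the paper, stated there only for $E/F$ unramified). These are harmless for the argument's shape, but the ramified step (c) needs to be replaced by the paper's $W_a$, $\phi_a$ mechanism.
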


\begin{proof}
For $\phi\in \CS(E,\chi), a\in E^\times$, define $\phi_a(x)=\phi(ax)$. One can check that $\phi\mapsto \phi_a$ defines an isomorphism $\omega_{\mu,\psi^{-1},\chi}\cong \omega_{\mu,\psi^{-1}_{a\bar a},\chi}$. In particular, we have $(\omega_{\mu,\psi^{-1},\chi}(h)\phi)_a(x)=(\omega_{\mu,\psi^{-1}_{a\bar a},\chi}(h)\phi_a)(x)$. 

Let $W\in \CW(\pi,\psi)$ and $a\in E^\times$ , we define $W_a(h)=W(t(a)h)$. Then $W_a\in \CW(\pi,\psi_{a\bar a})$. Take $\phi\in \CS(E,\chi)$, we have
 \begin{align*}&\quad \Psi(s,W_a,\phi_a,\Phi,\eta, \omega_{\mu,\psi^{-1}_{a\bar a},\chi})\\
 &=\int_{R\setminus H}W(t(a)h)\omega_{\mu,\psi^{-1}_{a\bar a},\chi}(h)\phi_a(1)f(s,h,\Phi,\eta)dh\\
 &=\int_{R\setminus H}W(t(a)h)\omega_{\mu,\psi^{-1},\chi}(h)\phi(a)f(s,h,\Phi,\eta)dh\\
 &=\mu(a)^{-1}|a|^{-1/2}\int_{R\setminus H}W(t(a)h)\omega_{\mu,\psi^{-1},\chi}(t(a)h)\phi(1)f(s,h,\Phi,\eta)dh\\
 &=\mu(a)^{-1}|a|^{-1/2}\int_{R\setminus H}W(h)\omega_{\mu,\psi^{-1},\chi}(h)f(s,t(a)^{-1}h,\Phi,\eta)dh\\
 &=\mu(a)^{-1}\eta(a)^{-1}|a|_E^{-s-1/2}\Psi(s,W,\phi,\Phi,\eta, \omega_{\mu,\psi^{-1}, \chi}).
 \end{align*}
 If we take $a$ to be a prime element of $E$, then $|a|_E=q_E^{-1}$. Thus $I(s,\pi, \omega_{\mu,\psi^{-1},\chi},\eta)$ is closed under multiplication by $q_E^{s}$. Since $\Psi(s,W,\theta, \Phi, \eta)$ has bounded denominators,  it is a $\BC[q_E^{-s},q_E^s]$-factional ideal. Since $\BC[q_E^s,q_E^{-s}]$ is a principal ideal domain, it has a generator. To show the generator has the given form, it suffices to show that we can find $W,\phi,\Phi$ such that 
 $$\Psi(s,W,\phi,\Phi, \eta)=1.$$
 This will be proved in next section using Howe vectors, see the proof of Theorem \ref{thm39} and Remark \ref{rem310}.
\end{proof}

The next Lemma says that, when $E/F$ is unramified, to obtain the whole ideal $I(s,\pi, \omega_{\mu,\psi^{-1},\chi},\eta)$, we do not have to vary $a\in E^\times$.
\begin{lem}
Suppose $E/F$ is unramified. Let $I'(s,\pi,\omega_{\mu,\psi^{-1},\chi},\eta)$ be the space spanned by $\Psi(s,W,\phi,\Phi,\eta)$ for $W\in \CW(\pi,\psi),\phi\in \CS(E,\chi),$ and $\Phi\in \CS(F^2)$. Then $I'(s,\pi,\omega_{\mu,\psi^{-1},\chi},\eta)$ is a fractional ideal and $I'(s,\pi,\omega_{\mu,\psi^{-1},\chi},\eta)=I(s,\pi,\omega_{\mu,\psi^{-1},\chi},\eta)$.
\end{lem}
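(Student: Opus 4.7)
The plan is to first show that $I'$ is stable under multiplication by $q_E^{\pm s}$, so that (being contained in the fractional ideal $I$ with bounded denominators from Lemma \ref{lem21}) it is a $\BC[q_E^{\pm s}]$-fractional ideal in its own right; the equality $I=I'$ will then follow by reinterpreting the scaling identity already derived in the proof of Proposition \ref{prop22}.

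For the first step, the idea is to shift by powers of $q_E^s$ \emph{not} by rescaling $W$ (which would take us outside $\CW(\pi,\psi)$) but by rescaling $\Phi$. Concretely, for $c\in F^\times$, set $\Phi^c(u,v):=\Phi(u/c,v/c)$. The change of variables $r\mapsto cr$ in the defining integral of $z$ yields
$$z(s,g\Phi^c,\eta)=\eta(c)|c|_E^s|c|_F\, z(s,g\Phi,\eta),$$
from which one reads off
$$\Psi(s,W,\phi,\Phi^c,\eta)=\eta(c)|c|_E^s|c|_F\cdot \Psi(s,W,\phi,\Phi,\eta).$$
Taking $c=\varpi_F$ and invoking the unramified identity $q_E=q_F^2$, we have $|\varpi_F|_E=q_E^{-1}$ and $|\varpi_F|_F=q_F^{-1}=q_E^{-1/2}$, so the scalar above is a non-zero constant times $q_E^{-s}$; symmetrically, $c=\varpi_F^{-1}$ gives a non-zero scalar times $q_E^s$. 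This endows $I'$ with the structure of a $\BC[q_E^{\pm s}]$-module, and hence of a fractional ideal.

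For the equality $I=I'$: the inclusion $I'\subset I$ is immediate, by taking $a=1$ in the definition of $I$. For the reverse, any generator of $I$ can be written in the form $\Psi(s,W_a,\phi_a,\Phi,\eta,\omega_{\mu,\psi^{-1}_{a\bar a},\chi})$ with $W\in\CW(\pi,\psi)$ and $\phi\in\CS(E,\chi)$, via the bijections $W\mapsto W_a$ and $\phi\mapsto\phi_a$ that realize the isomorphisms $\CW(\pi,\psi)\cong\CW(\pi,\psi_{a\bar a})$ and $\omega_{\mu,\psi^{-1},\chi}\cong\omega_{\mu,\psi^{-1}_{a\bar a},\chi}$. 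The identity from Proposition \ref{prop22} then expresses this generator as $\mu(a)^{-1}\eta(a)^{-1}|a|_E^{-s-1/2}$ times $\Psi(s,W,\phi,\Phi,\eta)\in I'$. Since $|a|_E^{-s-1/2}=q_E^{v_E(a)s}\cdot q_E^{v_E(a)/2}$ and $q_E^{1/2}=q_F$ is a scalar in the unramified case, the coefficient lies in $\BC[q_E^{\pm s}]$; the fractional-ideal property from the first step then keeps the product inside $I'$, giving $I\subset I'$.

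The main obstacle is precisely the role of the unramified hypothesis. In the ramified case one would have $|\varpi_F|_E=q_E^{-2}$, so the $\Phi$-rescaling trick would produce only shifts by $q_E^{\pm 2s}$ and $I'$ would fail to be a $\BC[q_E^{\pm s}]$-fractional ideal; this is the single place where unramifiedness is indispensable. The remaining verifications are routine changes of variables in the zeta and Fourier integrals.
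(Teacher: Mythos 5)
Your argument follows the paper's proof essentially verbatim: part (1) rescales $\Phi$ by a uniformizer $\varpi_F$, using unramifiedness so that $\varpi_F$ is also a uniformizer of $E$ (hence $|\varpi_F|_E=q_E^{-1}$), to endow $I'$ with a $\BC[q_E^{\pm s}]$-module structure, and part (2) reuses the scaling identity from Proposition \ref{prop22} together with that module structure to absorb the extra generators of $I$ into $I'$. One harmless slip: the measure in the defining integral of $z$ is the multiplicative Haar measure $d^*r$, as is forced by well-definedness of $f(s,h,\Phi,\eta)$ under the ambiguity $h=t(a)g=t(ab)\bigl(t(b)^{-1}g\bigr)$ with $b\in F^\times$, so the correct scaling is $z(s,g\Phi^c,\eta)=\eta(c)|c|_E^s\,z(s,g\Phi,\eta)$ with no extra factor of $|c|_F$; since that spurious factor is a nonzero constant it does not affect your conclusion.
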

\begin{proof}
Let $a\in F^\times$, consider 
$$\Phi_a(x)=\Phi(ax), x\in F^2.$$
Notice that $f(s,h,\Phi_a,\eta)=\eta^{-1}(a)|a|_E^{-s}f(s,h,\Phi,\eta)$. Thus
$$\Psi(s,W,\phi,\Phi_a,\eta)=\eta^{-1}(a)|a|_E^{-s}\Psi(s,W,\phi,\Phi,\eta).$$
Since $E/F$ is unramified, we can take $a$ to be a prime element of $E$ and $a\in F$, then $|a|_E=q_E^{-1}$. Thus $I'$ is a fractional ideal. From the calculation in the proof of Proposition \ref{prop22}, we have 
$$\Psi(s,W_a,\phi_a,\Phi,\eta, \omega_{\mu,\psi^{-1}_{a\bar a},\chi})=\mu^{-1}(a)|a|^{-1/2}\Psi(s,W,\phi,\Phi_a,\eta, \omega_{\mu,\psi^{-1},\chi}).$$
Thus $I=I'$.
\end{proof}

\subsection{The Local Functional Equation}
Consider the intertwining operator
$$M(s): \Ind_B^H(\eta|~|^{s-1/2})\ra \Ind_B^H(\eta^*|~|^{1/2-s})$$
$$(M(s)f_s)(h)=\int_N f_s(wnh)dn,$$
with $w=\begin{pmatrix}&1\\ -1& \end{pmatrix}$. It is well-known that this operator is well-defined for $\Re(s)>>0$ and can be meromorphically continued to all $s\in \BC$. 

By Lemma 14.7.1 of \cite{J}, there is a meromorphic function $c_0(s)$ such that
$$(M(s)f(s,\cdot,\Phi,\eta))(h)=c_0(s)f(1-s,h,\hat \Phi, \eta^*),$$
where $\hat \Phi$ is the Fourier transform defined by $\psi'$ and we omit $\psi'$ from the notation.

\begin{cor}{\label{cor24}}
For $\Re(s)>0$, there are unique $H$-invariant trilinear forms $\beta_s$ and $\beta_s'$ on $\CW(\pi,\psi)\times \omega_{\mu,\psi^{-1},\chi}\times \Ind_B^H(\eta|~|^{s-1/2})$ such that if $f$ is the function defined by $f(h)=f(s,h,\Phi,\eta)$, then
$$\beta_s(W,\phi,f)=\Psi(s,W,\phi,\Phi,\eta),$$
and 
$$\beta_s'(W,\phi,f)=\Psi(1-s,W,\phi,\hat \Phi,\eta^*).$$
\end{cor}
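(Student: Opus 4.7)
The plan is to prove uniqueness and existence separately, with uniqueness being immediate from surjectivity of the $\Phi$-parametrization, and existence for $\beta_s$ being direct while existence for $\beta_s'$ requires the intertwining operator identity in a nontrivial way.

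Uniqueness is essentially a consequence of Lemma 4.2 of \cite{Ba1}, which asserts that for $\Re(s) > s_0$ the map $\CS(F^2) \ni \Phi \mapsto f(s,\cdot,\Phi,\eta) \in \Ind_B^H(\eta|~|^{s-1/2})$ is surjective. Hence any $H$-invariant trilinear form whose values on triples $(W,\phi,f(s,\cdot,\Phi,\eta))$ are prescribed is determined on all triples by linearity in the third slot. For the remaining values of $s$, the forms are obtained by rational continuation in $q_E^{-s}$, using Lemma~\ref{lem21}, and uniqueness is preserved under this continuation.

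For existence of $\beta_s$, I would set $\beta_s(W,\phi,f) := \Psi(s,W,\phi,\Phi,\eta)$ whenever $f = f(s,\cdot,\Phi,\eta)$. Well-definedness is immediate because the integrand $W(h)(\omega_{\mu,\psi^{-1},\chi}(h)\phi)(1) f(s,h,\Phi,\eta)$ depends on $\Phi$ only through the section $f$, so two choices of $\Phi$ producing the same $f$ yield the same integral. For $H$-invariance, let $f' = \rho(g)f$ and write $f' = f(s,\cdot,\Phi',\eta)$ by surjectivity; then the change of variables $h \mapsto hg^{-1}$ in $\int_{R\setminus H} W(hg)(\omega(hg)\phi)(1) f(s,hg,\Phi,\eta)\, dh$, combined with the right-invariance of the quotient measure, reduces the translated zeta integral to the original $\Psi(s,W,\phi,\Phi,\eta)$.

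For existence of $\beta_s'$, the natural definition $\beta_s'(W,\phi,f) := \Psi(1-s,W,\phi,\hat\Phi,\eta^*)$ is not manifestly well-defined: two functions $\Phi_1,\Phi_2$ could produce the same section $f(s,\cdot,\Phi_i,\eta)$ while yielding a priori different sections $f(1-s,\cdot,\hat\Phi_i,\eta^*)$. The key observation, and the main obstacle, is to exploit the intertwining identity $M(s)f(s,\cdot,\Phi,\eta) = c_0(s)\, f(1-s,\cdot,\hat\Phi,\eta^*)$ of Lemma 14.7.1 of \cite{J}: applying $M(s)$ to the equality $f(s,\cdot,\Phi_1,\eta) = f(s,\cdot,\Phi_2,\eta)$ yields
\[
c_0(s)\bigl[f(1-s,\cdot,\hat\Phi_1,\eta^*) - f(1-s,\cdot,\hat\Phi_2,\eta^*)\bigr] = 0,
\]
so outside the zero set of $c_0(s)$ the two sections agree, and therefore the two candidate values of $\beta_s'$ agree; the remaining values are captured by meromorphic continuation. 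The $H$-invariance of $\beta_s'$ then follows from the same change-of-variables argument as for $\beta_s$, applied to the parameter $1-s$ and the section $f(1-s,\cdot,\hat\Phi,\eta^*)$.
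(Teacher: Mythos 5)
Your proof is correct and spells out the reasoning the paper leaves implicit: the paper gives no explicit proof of this corollary, taking it as an immediate consequence of the intertwining identity from Lemma~14.7.1 of \cite{J} and the surjectivity of the $\Phi$-parametrization from Lemma~4.2 of \cite{Ba1}. In particular, you have correctly identified the only genuine subtlety — that well-definedness of $\beta_s'$ is not automatic and must be routed through $M(s)$ — which is exactly the role the cited intertwining identity plays.
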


Let $\CT_s$ be the space of $H$-invariant trilinear forms on $\CW(\pi,\psi)\times \CS(E,\chi)\times \Ind_B^H(\eta|~|^{s-1/2})$.
\begin{prop}{\label{prop25}}
Except for a finite number of values of $q_E^{-s}$, we have 
$$\dim \CT_s= 1.$$
\end{prop}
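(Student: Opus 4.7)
The plan is to combine Frobenius reciprocity with the Bernstein-Zelevinsky filtration (\ref{eq11}) in order to reduce the dimension count to the uniqueness of the Whittaker functional on the Weil representation provided by Lemma \ref{lem14}.

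Since $I_s:=\Ind_B^H(\eta|~|^{s-1/2})$ is irreducible and admissible for all but finitely many $s$ by Theorem \ref{thm15}, its smooth contragredient is $I_s^\vee\cong \Ind_B^H(\eta^{-1}|~|^{1/2-s})$. Dualizing the $I_s$-slot and applying Frobenius reciprocity gives
$$\CT_s\cong \Hom_H\bigl(\pi\otimes \omega_{\mu,\psi^{-1},\chi},\,I_s^\vee\bigr)\cong \Hom_B\bigl(\pi\otimes \omega_{\mu,\psi^{-1},\chi},\,\chi_s\bigr),$$
where $\chi_s:=\delta_B^{1/2}\eta^{-1}|~|^{1/2-s}$ is viewed as a character of $B$ trivial on $N$ with $\chi_s|_Z=\eta^{-1}|_{E^1}$.

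Applying the exact sequence (\ref{eq11}) to $\pi$, tensoring over $\BC$ with $\omega:=\omega_{\mu,\psi^{-1},\chi}$, and applying $\Hom_B(-,\chi_s)$ produces a left-exact sequence with leftmost term $\Hom_B(\pi_N\otimes\omega,\chi_s)$ and rightmost term
$$\Hom_B\bigl(\ind_R^B(\pi_{N,\psi})\otimes\omega,\chi_s\bigr)\oplus \Hom_B\bigl(\ind_R^B(\pi_{N,\psi_\kappa})\otimes\omega,\chi_s\bigr).$$
The projection formula $\ind_R^B(A)\otimes\omega\cong \ind_R^B(A\otimes\omega|_R)$ together with Frobenius reciprocity for compact induction identify these two summands with $\Hom_R(\pi_{N,\psi}\otimes\omega|_R,\chi_s|_R)$ and $\Hom_R(\pi_{N,\psi_\kappa}\otimes\omega|_R,\chi_s|_R)$ respectively. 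The $\psi_\kappa$-summand vanishes: since $\kappa\notin \Nm(E^\times)$, the character $\psi_\kappa^{-1}$ lies in a different $B$-orbit of $\hat N$ from $\psi^{-1}$, and the same Jacquet-Langlands-type argument which proves that $\omega_{\mu,\psi,\chi}$ is not $\psi_\kappa$-generic (applied with $\psi$ replaced by $\psi^{-1}$) shows $\omega_{N,\psi_\kappa^{-1}}=0$. For the $\psi$-summand, the compatibility condition (\ref{eq21}) forces the central characters on $Z$ to agree on both sides, reducing the Hom space to $\Hom_N(\omega,\psi^{-1})$ paired with the one-dimensional $\pi_{N,\psi}$; by Lemma \ref{lem14} this contribution is exactly one-dimensional.

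It remains to show that the leftmost term $\Hom_B(\pi_N\otimes\omega,\chi_s)$ vanishes outside a finite set of $s$. This space factors through the finite-dimensional $T$-module $\pi_N\otimes\omega_N$, so its nonvanishing is cut out by finitely many polynomial identities in $q_E^{-s}$ matching the $T$-weights of $\pi_N\otimes\omega_N$ against $\chi_s|_T$. Excluding this finite set together with the finitely many $s$ at which $I_s$ fails to be irreducible admissible, we obtain $\dim\CT_s\le 1$; combined with the nontrivial trilinear form furnished by the local zeta integral from Lemma \ref{lem21} (which shows $\dim\CT_s\ge 1$), this gives $\dim\CT_s=1$ outside a finite set of values of $q_E^{-s}$. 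The main obstacle is the careful bookkeeping of the central-character matchings on $Z$ at each stage and the verification of the projection formula in the setting where one tensor factor is the non-compactly-induced Weil representation.
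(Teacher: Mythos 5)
Your proof is correct but takes a genuinely different route. The paper deduces Proposition \ref{prop25} via Frobenius reciprocity from Lemma \ref{lem26}, a comparison argument in Jacquet's style: given two $B$-invariant bilinear forms on $V_\pi\times V_\sigma$ with $\sigma$ exceptional, the irreducibility of $V_\sigma(N)=\ind_R^B((V_\sigma)_{N,\psi})$ (this is precisely where exceptionality enters) forces proportionality on $V_\pi(N)\times V_\sigma(N)$, this is propagated in two steps to all of $V_\pi\times V_\sigma$ modulo a residual form on $(V_\pi)_N\otimes(V_\sigma)_N$, and the residual dies by finite-dimensionality. You instead apply the Bernstein--Zelevinsky sequence (\ref{eq11}) to $\pi$, tensor with $\omega$, and compute each piece by the projection formula together with Frobenius reciprocity for $\ind_R^B$; exceptionality is then used through the absence of $\psi_\kappa^{-1}$-Whittaker functionals on $\omega$, the $\psi$-piece is pinned to dimension one by Lemma \ref{lem14} and (\ref{eq21}), and the $\pi_N\otimes\omega_N$ term is killed for all but finitely many $s$ by the same finite-dimensionality of Jacquet modules. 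The paper's lemma is slightly more general (it applies to any irreducible exceptional $\sigma$), while your version is a more direct Mackey-type computation tailored to $\omega$ that avoids the two-stage propagation; the price is that you must verify the $\ind_R^B$ adjunction for the closed, non-open, non-cocompact subgroup $R=ZN$, where a modulus-character twist appears (it shifts the exponent of $|~|$ in the target and is otherwise harmless to the finiteness conclusion). One genuine correction: the lower bound $\dim\CT_s\ge 1$ is not a consequence of Lemma \ref{lem21}, which establishes only convergence and rationality, not nonvanishing; as the paper's own proof notes, one needs the explicit evaluation $\Psi(s,W,\phi,\Phi,\eta)=1$, which is deferred to the Howe-vector computation in $\S$3.
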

This proposition can be viewed as a special case of the uniqueness of the Fourier-Jacobi model, which was proved by Binyong Sun in \cite{Su}. We also include a proof here based on the method Jacquet used in \cite{J}.

To prove Proposition \ref{prop25}, we need the following
\begin{lem}{\label{lem26}}
Let $(\pi,V_\pi)$ be an irreducible representation of $H$, $(\sigma,V_\sigma)$ be an irreducible exceptional representation of $H$, $\nu$ be a quasi-character of $E^\times$. Then except a finite number of values of $q_E^s$, there is at most one bilinear form $\CB_s:V_\pi\times V_\sigma\ra \BC$ such that 
\begin{equation}{\label{eq22}}
\CB_s(\pi(b)v,\sigma(b)v')=\nu_s(b)\CB_s(v,v'), \quad \forall b\in B,
\end{equation}
where for $b=t(a)n(x)\in B$, $\nu_s$ is defined by $\nu_s(b)=\nu(a)|a|_E^s$.
\end{lem}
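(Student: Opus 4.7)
The plan is to identify bilinear forms satisfying \eqref{eq22} with elements of $\Hom_B(V_\pi\otimes V_\sigma,\nu_s)$, where $B$ acts diagonally on the tensor product, and to bound this Hom space using the exceptional structure of $\sigma$ together with the exact sequence \eqref{eq11}. Since $(\sigma,V_\sigma)$ is exceptional, exactly one of $V_{\sigma,N,\psi}$ and $V_{\sigma,N,\psi_\kappa}$ is nonzero; call it $V_{\sigma,N,\psi'}$. The sequence \eqref{eq11} for $\sigma$ then collapses to
$$0\ra \ind_R^B(V_{\sigma,N,\psi'})\ra V_\sigma\ra V_{\sigma,N}\ra 0.$$
Tensoring with $V_\pi$ over $\BC$ preserves exactness, and applying the left exact functor $\Hom_B(-,\nu_s)$ yields
$$0\ra \Hom_B(V_\pi\otimes V_{\sigma,N},\nu_s)\ra \Hom_B(V_\pi\otimes V_\sigma,\nu_s)\ra \Hom_B\bigl(V_\pi\otimes \ind_R^B(V_{\sigma,N,\psi'}),\nu_s\bigr).$$
It therefore suffices to show that the third term is at most one-dimensional and the first term vanishes outside a finite set of values of $q_E^{-s}$.

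For the third term, I would combine the projection formula $V_\pi\otimes\ind_R^B(V_{\sigma,N,\psi'})\cong\ind_R^B(V_\pi|_R\otimes V_{\sigma,N,\psi'})$ with Frobenius reciprocity for the non-normalized compact induction to rewrite it as $\Hom_R(V_\pi|_R\otimes V_{\sigma,N,\psi'},\nu_s|_R)$; note that $\delta_B|_R$ is trivial because $Z$ (with $z\bar z=1$) normalizes $N$ trivially, so no modular factor appears. Since $\nu_s|_N=1$ while $N$ acts by $\psi'$ on the second factor, $N$-equivariance forces the map to factor through the twisted Jacquet quotient on the first factor, reducing the Hom to
$$\Hom_Z\bigl(V_{\pi,N,(\psi')^{-1}}\otimes V_{\sigma,N,\psi'},\nu_s|_Z\bigr),$$
whose dimension is bounded by $\dim V_{\pi,N,(\psi')^{-1}}\leq 1$ from Proposition \ref{prop11}(3); since the dimension of $V_{\pi,N,\chi}$ depends only on the $B$-orbit of the character $\chi$ on $N$, and $(\psi')^{-1}$ lies in the $B$-orbit of $\psi$ or of $\psi_\kappa$, the bound is genuinely one-dimensional.

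For the first term, admissibility of $\pi$ and $\sigma$ ensures that $V_{\pi,N}$ and $V_{\sigma,N}$ are finite dimensional $T$-modules. Since $N$ acts trivially on $V_{\sigma,N}$, any such map factors through $V_{\pi,N}\otimes V_{\sigma,N}$, giving $\Hom_B(V_\pi\otimes V_{\sigma,N},\nu_s)=\Hom_T(V_{\pi,N}\otimes V_{\sigma,N},\nu_s|_T)$. A Jordan--H\"older filtration of $V_{\pi,N}\otimes V_{\sigma,N}$ as a $T\cong E^\times$-module has only finitely many characters $\chi$, and each equation $\nu(a)|a|_E^s=\chi(t(a))$ pins down $q_E^s$ to a single value; hence the first Hom vanishes outside a finite set of $q_E^{-s}$, completing the bound. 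The main technical obstacle will be verifying the projection formula and Frobenius reciprocity in the setting of the non-normalized compact induction $\ind_R^B$ with diagonal $B$-action on the tensor product (standard but needing care since $R\backslash B\cong F^\times$ is noncompact); these are furnished by the formalism of \cite{BZ1}.
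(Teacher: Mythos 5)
Your proof is essentially correct, and it packages the argument more abstractly than the paper does, though the two hinge on identical structural inputs. The paper takes two nonzero forms $\CB_s,\CB_s'$, uses Proposition \ref{prop11}(3) to see that $V_\pi(N)$ is a direct sum of at most two non-isomorphic irreducible $B$-modules and $V_\sigma(N)$ is one such, deduces $\CB_s'=c(s)\CB_s$ on $V_\pi(N)\times V_\sigma(N)$, then uses the $N$-equivariance trick to push this agreement to $V_\pi\times V_\sigma(N)$ and $V_\pi(N)\times V_\sigma$, and finally observes that the discrepancy descends to the finite-dimensional $T$-module $(V_\pi)_N\otimes (V_\sigma)_N$, which has only finitely many exceptional $\nu_s$. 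You instead linearize everything: you compute $\dim\Hom_B(V_\pi\otimes V_\sigma,\nu_s)$ from the collapsed sequence \eqref{eq11} for $\sigma$, bounding the induced-term by one via the projection formula and the identity $\Hom_B(\ind_R^B U,\nu_s)\cong\Hom_R(U,\nu_s|_R)$ (valid here since $\delta_B|_R=\delta_R=1$), and showing the $\Hom_T((V_\pi)_N\otimes (V_\sigma)_N,\nu_s|_T)$ contribution vanishes outside a finite set. The paper's "bilinear form lives on the finite-dimensional tensor product of Jacquet modules away from the Kirillov part" step is exactly your first-term analysis, and the uniqueness-up-to-scalar on $V_\pi(N)\times V_\sigma(N)$ is exactly what your third-term bound delivers, so the approaches are equivalent in content. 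The homological packaging arguably buys a cleaner accounting of the $c(s)$ step — the paper asserts a single scalar on $V_\pi(N)\times V_\sigma(N)$ even though a priori the two irreducible summands of $V_\pi(N)$ could pair against $V_\sigma(N)$ with independent constants; your computation, which shows the relevant $\Hom$ is genuinely at most one-dimensional, makes this transparent. A couple of points to make explicit when you write this up: the contragredient formula $\widetilde{\ind_R^B U}\cong\Ind_R^B(\widetilde U\otimes\delta_B\delta_R^{-1})$ is what underlies the Frobenius step, and automaticity of smoothness of a $\nu_s$-eigenfunctional should be noted; and the invariance of $\dim V_{\pi,N,\chi}$ within a $B$-orbit (so that $(\psi')^{-1}=\psi_{-1}$ still contributes at most a line) deserves a sentence, as you indicated.
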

\begin{proof}
We suppose that $\sigma$ is $\psi$-generic. Let $\CB_s $ and $\CB_s'$ be two nonzero bilinear forms  $V_\pi\times V_\sigma\ra \BC$ satisfying (\ref{eq22}). By Proposition \ref{prop11}, $V_\pi(N)=\ind_R^B((V_\pi)_{N,\psi})\oplus \ind_R^B((V_\pi)_{N,\psi_\kappa})$ is a sum of two non-equivalent irreducible $B$-modules (one of them might be zero), $V_\sigma(N)=\ind_R^B((V_\sigma)_{N,\psi})$ is an irreducible $B$-module. Thus there is a constant $c(s)$ such that $\CB_s'|_{V_\pi(N)\times V_\sigma(N)}=c(s)\CB_s|_{V_\pi(N)\times V_\sigma(N)}$. 

For $v\in V_\pi, n\in N, v'\in V_\sigma(N)$, since $\nu_s(n)=1$, we have 
$$\CB_s(v,v'-\sigma(n^{-1})v')=\CB_s(v,v')-\CB_s(\pi(n)v,v')=\CB_s(v-\pi(n)v,v').$$
Similarly $$\CB_s'(v,v'-\sigma(n^{-1})v')=\CB_s'(v-\pi(n)v,v').$$ Since $v-\pi(n)v\in V_\pi(N),v'\in V_\sigma(N)$, we have
\begin{equation}{\label{eq23}}\CB_s'(v,v'-\sigma(n^{-1})v')=\CB_s'(v-\pi(n)v,v')=c(s)\CB_s(v-\pi(n)v,v')=c(s)\CB_s(v,v'-\sigma(n^{-1})v').\end{equation}
Since $V_\sigma^N=\wpair{0}$, we can find $v'\in V_\sigma(N), n\in N$ such that $v'-\sigma(n^{-1}v')\ne 0$. Since $V_\sigma(N)$ is irreducible, we get 
\begin{equation}{\label{eq24}}\CB_s'|_{V_\pi\times V_\sigma(N)}=c(s)\CB_s|_{V_\pi\times V_\sigma(N)}\end{equation} by (\ref{eq23}).

For $v\in V_\pi,n\in N,v'\in V_\sigma$, similar to (\ref{eq23}), we have
$$\CB_s'(v-\pi(n)v,v')=\CB'_s(v,v'-\sigma(n^{-1})v')=c(s)\CB_s(v,v'-\sigma(n^{-1})v')=c(s)\CB_s(v-\pi(n)v,v'),$$
since $v'-\sigma(n^{-1})v'\in V_\sigma(N)$ and by (\ref{eq24}). Since $V_\pi(N)$ is spanned by $ v-\pi(n)v$ for $v\in V_\pi, n\in N$, we get
\begin{equation}{\label{eq25}}
\CB_s'|_{V_\pi(N)\times V_\sigma}=c(s)\CB_s|_{V_\pi(N)\times V_\sigma}.
\end{equation}
By (\ref{eq24}), (\ref{eq25}), the bilinear form $\CC_s=\CB_s'-c(s)\CB_s$ is zero on $V_\pi\times V_\sigma(N)$ and $V_\pi(N)\times V_\sigma$, thus defines a linear form $(V_\pi)_N\otimes (V_\sigma)_N\ra \BC$ such that
\begin{equation}{\label{eq28}}\CC_s(\pi(b)v\otimes \sigma(b)v')=\nu_s(b)\CC_s(v\otimes v').\end{equation}
Denote $W$ the $B$-space $(V_\pi)_N\otimes (V_\sigma)_N$ temporally. Then (\ref{eq28}) implies that if $\CC_S$ is nonzero, then $W$ has a 1-dimensional quotient $(\nu_s,\BC)$. On the other hand, the space $W$ has finite dimension, in fact has at most dimension 2. Thus it has finite number 1-dimensional quotient $\chi_i$. Thus if $\nu_s\ne \chi_i$, we have $\CC_s=0$, i.e., except for a finite number of values of $q_E^s$, $\CC_s$ is identically zero, i.e., 
$$ \CB_s'\equiv c(s)\CB_s.$$
\end{proof}
\begin{proof}[Proof of Proposition $\ref{prop25}$]
In the following proof, we use $\Ind_B^H(\eta|~|^s)$ to denote the non-normalized induction so that the Frobenius reciprocity has a simpler form. By Frobenius reciprocity, we have
\begin{align*}
\CT_s
=&\Hom_H(\pi\otimes \omega_{\mu,\psi^{-1},\chi}, \Ind_B^H(\eta^{-1}|~|^{1-s}))\\
=&\Hom_B(\pi\otimes \omega_{\mu,\psi^{-1},\chi}, \eta^{-1}|~|^{1-s})\\
=&\Hom_B(\pi\otimes \eta|~|^{s-1}|_B, \widetilde\omega_{\mu,\psi^{-1},\chi}|_B).
\end{align*}
Note that $\omega_{\mu,\psi^{-1},\chi}$ is exceptional, thus by Lemma \ref{lem26}, we have 
$$\dim \CT_s\le 1.$$
As noted in the proof of Proposition \ref{prop22}, in next section, we will show that there exists $W\in \CW(\pi,\psi^{-1}),\phi\in \CS(E,\chi)$ and $\Phi\in \CS(F^2)$ such that
$$\Psi(s,W,\phi,\Phi,\eta)=1,$$
except a finite number value of $q_E^s$ after meromorphic continuation.
Thus $\dim \CT_s=1$.
\end{proof}
As a corollary of Corollary \ref{cor24} and Proposition \ref{prop25}, we have 
\begin{cor}
There exists a meromorphic function $\gamma(s,\pi,\omega_{\mu,\psi^{-1},\chi},\eta,\psi')$ such that
$$ \Psi(1-s,W,\theta,\hat \Phi,\eta^*)=\gamma(s,\pi,\omega_{\mu,\psi^{-1},\chi},\eta,\psi')\Psi(s,W,\phi,\Phi,\eta)$$
\end{cor}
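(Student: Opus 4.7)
The plan is to deduce the existence of $\gamma(s,\pi,\omega_{\mu,\psi^{-1},\chi},\eta,\psi')$ directly as a proportionality constant between two elements of the one-dimensional space $\CT_s$, using Corollary \ref{cor24} and Proposition \ref{prop25}.

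First, I observe that by Corollary \ref{cor24}, both trilinear forms
\[
\beta_s(W,\phi,f) = \Psi(s,W,\phi,\Phi,\eta) \quad \text{and} \quad \beta_s'(W,\phi,f) = \Psi(1-s,W,\phi,\hat\Phi,\eta^*)
\]
are $H$-invariant on $\CW(\pi,\psi)\times \CS(E,\chi)\times \Ind_B^H(\eta|~|^{s-1/2})$, where $f$ and $\Phi$ are tied together via $f(h)=f(s,h,\Phi,\eta)$. The only delicate point here is that $\beta_s'$ a priori lives naturally over the parameter $1-s$ via $\Ind_B^H(\eta^*|~|^{1/2-s})$; but the intertwining operator identity from Lemma 14.7.1 of \cite{J},
\[
\bigl(M(s)f(s,\cdot,\Phi,\eta)\bigr)(h) = c_0(s)\, f(1-s,h,\hat\Phi,\eta^*),
\]
together with the fact that $M(s)$ is an $H$-homomorphism, shows that the pullback $\beta_s' \in \CT_s$.

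Next, by Proposition \ref{prop25}, we have $\dim \CT_s \le 1$ outside a finite set of values of $q_E^{-s}$. Since both $\beta_s$ and $\beta_s'$ lie in $\CT_s$, there must exist a scalar (depending on $s$, $\pi$, $\omega_{\mu,\psi^{-1},\chi}$, $\eta$ and the auxiliary character $\psi'$ used to define the Fourier transform) such that $\beta_s' = \gamma(s,\pi,\omega_{\mu,\psi^{-1},\chi},\eta,\psi')\,\beta_s$ as elements of $\CT_s$. Spelling this out yields the stated identity
\[
\Psi(1-s,W,\phi,\hat\Phi,\eta^*) = \gamma(s,\pi,\omega_{\mu,\psi^{-1},\chi},\eta,\psi')\,\Psi(s,W,\phi,\Phi,\eta).
\]

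Finally, I need to verify that $\gamma$ is meromorphic in $s$. By the argument indicated in the proof of Proposition \ref{prop22} (via Howe vectors), there exist choices of $W, \phi, \Phi$ making $\Psi(s,W,\phi,\Phi,\eta) \equiv 1$ outside a finite set of $q_E^{-s}$; for such choices the identity exhibits $\gamma$ as $\Psi(1-s,W,\phi,\hat\Phi,\eta^*)$, which is a rational function of $q_E^{-s}$ by Lemma \ref{lem21}. This gives meromorphicity and independence of the choice of data, completing the construction. The only conceptual obstacle is confirming that the pairing $\beta_s'$ genuinely lies in $\CT_s$ (rather than $\CT_{1-s}$) under the identification supplied by $M(s)$; everything else is a formal consequence of uniqueness.
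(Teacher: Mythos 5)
Your argument is exactly the one the paper intends: the corollary is stated as an immediate consequence of Corollary~\ref{cor24} (existence of the two $H$-invariant trilinear forms $\beta_s$ and $\beta_s'$ on the same space, with $\beta_s'$ pulled back along the intertwining operator) together with Proposition~\ref{prop25} ($\dim\CT_s\le 1$ generically), and you fill in the meromorphicity via the normalization $\Psi(s,W,\phi,\Phi,\eta)\equiv 1$ and Lemma~\ref{lem21} just as the paper later does. No discrepancies.
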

Note that in the notation, $\psi$ is used to define the Weil representation $\omega_{\mu,\psi^{-1},\chi}$ and $\psi'$ is used to define the Fourier transform $\hat \Phi$.

We also define the $\epsilon$-factor
$$\epsilon(s,\pi,\omega_{\mu,\psi^{-1},\chi},\eta,\psi')=\gamma(s,\pi,\omega_{\mu,\psi^{-1},\chi},\eta,\psi')\frac{L(s,\pi,\omega_{\mu,\psi^{-1},\chi},\eta)}{L(1-s,\pi,\omega_{\mu,\psi^{-1},\chi},\eta^*)}.$$

\subsection{Dependence on $\psi$}
In the definition, the $\gamma$-factor dependa on choices of $\psi$ and $\psi'$. In this section, we shall consider the (in)dependence on $\psi$ and $\psi'$. 

\begin{lem}{\label{lem28}} For $a\in E^\times,b\in F^\times$, we have
 \begin{align*} L(s,\pi,\omega_{\mu,\psi_{a\bar a}^{-1},\chi}, \eta)&=L(s,\pi,\omega_{\mu,\psi^{-1},\chi},\eta), \\
\gamma(s,\pi,\omega_{\mu,\psi^{-1}_{a\bar a},\chi},\eta,\psi')&=\eta(a\bar a)|a\bar a|_F^{2s-1}\gamma(s,\pi,\omega_{\mu,\psi^{-1},\chi},\eta,\psi'),\end{align*}
and 
$$\gamma(s,\pi,\omega_{\mu,\psi^{-1},\chi},\eta,\psi'_b)=\eta(b)|b|_E^{s-1}\gamma(s,\pi,\omega_{\mu,\psi^{-1},\chi},\eta,\psi').$$
\end{lem}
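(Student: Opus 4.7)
The plan is to reduce all three parts to the twist identity derived during the proof of Proposition~\ref{prop22}, namely
\[
\Psi(s,W_a,\phi_a,\Phi,\eta, \omega_{\mu,\psi^{-1}_{a\bar a},\chi}) = \mu(a)^{-1}\eta(a)^{-1}|a|_E^{-s-1/2}\Psi(s,W,\phi,\Phi,\eta, \omega_{\mu,\psi^{-1}, \chi}),
\]
together with a direct computation of how $\hat\Phi$ responds when $\psi'$ is replaced by $\psi'_b$.

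For part (1), the fractional ideal $I(s,\pi,\omega_{\mu,\psi^{-1},\chi},\eta)$ is by definition spanned by local integrals in which the auxiliary character of the Weil representation ranges over all $\psi^{-1}_{b\bar b}$ for $b\in E^\times$; the identity above then shows that swapping $\psi$ for $\psi_{a\bar a}$ merely relabels this spanning set, so the ideal and its distinguished generator are unchanged. For part (2), I would apply the identity once at $(s,\Phi,\eta)$ and once at $(1-s,\hat\Phi,\eta^*)$ and take the ratio; the factors $\mu(a)^{-1}$ cancel, leaving $\frac{\eta(a)}{\eta^*(a)}|a|_E^{2s-1}$. The final ingredient is the identification $\eta^*(a)=\eta(\bar a)^{-1}$, a consequence of the Weyl-element action $wt(a)w^{-1}=t(\bar a^{-1})$ that implicitly defines $\eta^*$ through the target of $M(s)$. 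Combined with $|a|_E=|a\bar a|_F$, this produces exactly $\eta(a\bar a)|a\bar a|_F^{2s-1}$.

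For part (3) only $\hat\Phi$ is affected. Fixing the underlying measure $du\,dv$ once and for all (independently of $\psi'$), an easy change of variable in the defining integral gives $\hat\Phi^{\psi'_b}(x,y) = \hat\Phi^{\psi'}(bx,by)$; substituting this into $z(1-s,g,\cdot,\eta^*)$ and setting $u=br$ in the $F^\times$-integral yields the scalar $\eta^*(b)^{-1}|b|_E^{s-1}$. Since $\bar b=b$ for $b\in F^\times$, one has $\eta^*(b)^{-1}=\eta(b)$, and propagating this through $f$ and $\Psi$ gives $\eta(b)|b|_E^{s-1}\gamma$. The only real subtlety in the whole lemma is this measure convention: had $du\,dv$ been taken self-dual for each $\psi'$ separately, an extra $|b|_F$ would appear and the exponent on $|b|_E$ would become $s-1/2$ instead of $s-1$. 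Apart from this bookkeeping point, no genuine obstruction arises.
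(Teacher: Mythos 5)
Your proposal is correct and follows essentially the same route as the paper: the equality of $L$-factors comes from observing that the ideal $I(s,\pi,\omega_{\mu,\psi^{-1},\chi},\eta)$ is by construction insensitive to replacing $\psi$ with $\psi_{a\bar a}$; the $\gamma$-factor relation under $\psi\to\psi_{a\bar a}$ is obtained by applying the twist identity from the proof of Proposition~\ref{prop22} at $(s,\Phi,\eta)$ and at $(1-s,\hat\Phi,\eta^*)$ and taking the quotient, with the $\mu(a)^{-1}$ cancelling and $\eta^*(a)=\eta(\bar a)^{-1}$; and the $\psi'$-dependence comes from $\hat\Phi_b(x,y)=\hat\Phi(bx,by)$ followed by a change of variable in $z(1-s,\cdot,\eta^*)$. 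Your remark about the measure convention is a valid and well-spotted observation, consistent with the paper's implicit choice of a single fixed additive Haar measure in the definition of $\hat\Phi$.
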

\begin{proof} 
 By definition, it is clear that $I(s,\pi,\omega_{\mu,\psi^{-1}_{a\bar a},\chi},\eta)=I(s,\pi,\omega_{\mu,\psi^{-1},\chi},\eta)$, and thus
$$ L(s,\pi,\omega_{\mu,\psi^{-1}_{a\bar a},\chi}, \eta)=L(s,\pi,\omega_{\mu,\psi^{-1},\chi},\eta).$$

Let $W\in \CW(\pi,\psi), \phi\in \CS(E,\chi)$, as in the proof of Proposition \ref{prop22}, we have $W_a\in \CW(\pi,\psi_{a\bar a}), \phi_a\in \CS(E,\chi)$, and 
$$\Psi(s,W_a,\phi_a,\Phi,\eta, \omega_{\mu,\psi^{-1}_{a\bar a},\chi})=\mu(a)^{-1}\eta(a)^{-1}|a|_E^{-s-1/2}\Psi(s,W,\theta,\eta,\omega_{\mu,\psi^{-1},\chi}).$$
Similarly, we have 
$$\Psi(1-s,W_a,\phi_a,\hat \Phi,\eta^*, \omega_{\mu,\psi^{-1}_{a\bar a},\chi})=\mu( a)^{-1}\eta(\bar a)|a|_E^{s-3/2}\Psi(1-s,W,\theta,\hat \Phi,\eta^*,\omega_{\mu,\psi^{-1},\chi}).$$
Thus $$\gamma(s,\pi,\omega_{\mu,\psi^{-1}_{a\bar a},\chi},\eta,\psi')=\eta(a\bar a)|a|_E^{2s-1}\gamma(s,\pi,\omega_{\mu,\psi^{-1},\chi},\eta,\psi').$$
Denote the Fourier transform of $\Phi$ with respect to $\psi'_b$ by $\hat \Phi_b$, i.e., 
$$\hat \Phi_b(x,y)=\int_{F^2}\Phi(u,v)\psi'_b(yu-xv)dudv.$$
Then $\hat \Phi_b(x,y)=\hat \Phi(bx,by)$. Thus we have
$$f(1-s,h,\hat \Phi_b,\eta^*)=\eta(b)|b|_E^{s-1}f(1-s,h,\hat \Phi,\eta^*).$$
By the functional equation, we get
$$\gamma(s,\pi,\omega_{\mu,\psi^{-1},\chi},\eta,\psi'_b)=\eta(b)|b|_E^{s-1}\gamma(s,\pi,\omega_{\mu,\psi^{-1},\chi},\eta,\psi').$$
\end{proof}
\noindent\textbf{Remark:} Since the dependence of the $\gamma$-factor on $\psi'$ is very simple, we usually take $\psi'=\psi$ and drop it from the notation.

Lemma \ref{lem28} shows that the dependence of the $\gamma$-factor on $\psi_b$ for $b\in \Nm_{E/F}(E^\times)$ is also simple. Next, we need to consider the relation between $\gamma(s,\pi,\omega_{\mu,\psi^{-1},\chi},\eta)$ and $\gamma(s,\pi,\omega_{\mu,\psi^{-1}_\kappa,\chi},\eta)$ for $\kappa\in F^\times-\Nm_{E/F}(E^\times)$ if bote $\gamma$-factors are defined.

 For any $\kappa\in F^\times$, we denote $\alpha(\kappa)=\diag(\kappa,1)\in \GL_2(F)$. We can check if $h=\begin{pmatrix}a& b\\ c& d \end{pmatrix}\in H$, then 
$$h^\kappa:=\alpha(\kappa)h\alpha(\kappa)^{-1}=\begin{pmatrix} a & \kappa b\\ \kappa^{-1}c & d \end{pmatrix}\in H.$$ 
We have $(h_1h_2)^\kappa=h_1^\kappa h_2^\kappa.$ 
We now fix $\kappa\in F^\times-\Nm_{E/F}(E^\times)$.

For an irreducible smooth representation $(\pi,V)$ of $H$, let $(\pi^\kappa, V^\kappa)$ be the representation of $H$ defined by
$$V^\kappa=V, \textrm{ and }\pi^\kappa(h)=\pi(h^\kappa).$$

\begin{lem}
As a vector space, we have $$(V^\kappa)_{N,\psi_\kappa}=V_{N,\psi}.$$
In particular,  $\CW(\pi,\psi)\ne 0$ if and only if $\CW(\pi^\kappa,\psi_\kappa)\ne 0$.
\end{lem}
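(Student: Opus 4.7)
The plan is to reduce the statement to a direct comparison of the two Jacquet modules at the level of subspaces of $V$. The key observation is that conjugation by $\alpha(\kappa)$ acts on $N$ by a simple scaling: for $n(b) \in N$ one computes
$$n(b)^\kappa = \alpha(\kappa) n(b) \alpha(\kappa)^{-1} = n(\kappa b).$$
Hence $\pi^\kappa(n(b)) = \pi(n(\kappa b))$ for every $b \in F$.

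Using this, I would write down the subspaces $V(N,\psi)$ and $V^\kappa(N,\psi_\kappa)$ defining the two Jacquet modules. Recall $V(N,\psi)$ is spanned by vectors of the form $\pi(n(b))v - \psi(b)v$ with $v \in V$, $b \in F$, and analogously $V^\kappa(N,\psi_\kappa)$ is spanned by $\pi^\kappa(n(b))v - \psi_\kappa(b)v = \pi(n(\kappa b))v - \psi(\kappa b)v$. As $b$ runs over $F$, so does $\kappa b$, so these spanning sets coincide and $V^\kappa(N,\psi_\kappa) = V(N,\psi)$ as subspaces of $V$. Taking quotients, we obtain $(V^\kappa)_{N,\psi_\kappa} = V_{N,\psi}$ as vector spaces.

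For the "in particular" statement, the equality $\dim V_{N,\psi} = \dim (V^\kappa)_{N,\psi_\kappa}$ combined with the standard identification $\Hom_N(V,\psi) = \Hom(V_{N,\psi},\BC)$ (and similarly for $\pi^\kappa$) shows that $\pi$ admits a nonzero $\psi$-Whittaker functional iff $\pi^\kappa$ admits a nonzero $\psi_\kappa$-Whittaker functional, i.e.\ $\CW(\pi,\psi) \ne 0$ iff $\CW(\pi^\kappa,\psi_\kappa) \ne 0$.

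There is no real obstacle here: the whole content of the lemma is the elementary conjugation identity $n(b)^\kappa = n(\kappa b)$. The only thing one has to be slightly careful about is that the identification is genuinely as abstract vector spaces (not as $N$-modules, since the action of $N$ has been twisted by $\kappa$), which is exactly what the statement asserts.
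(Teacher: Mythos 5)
Your proof is correct and follows essentially the same route as the paper: the paper also observes that $\pi^\kappa(n)v-\psi_\kappa(n)v=\pi(n^\kappa)v-\psi(n^\kappa)v$ (your $n(b)^\kappa=n(\kappa b)$ made explicit), which immediately gives $V^\kappa(N,\psi_\kappa)=V(N,\psi)$ as subspaces of $V$.
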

\begin{proof} Consider the space
$$V^\kappa(N,\psi_\kappa)=\pair{\pi^\kappa(n)v-\psi_\kappa(n)v| n\in N,v\in V}.$$
Since $\pi^\kappa(n)v-\psi_\kappa(n)v=\pi(n^\kappa)v-\psi(n^\kappa)v$, we have $V^\kappa(N,\psi_\kappa)=V(N,\psi)$, and thus $V^\kappa_{N,\psi_\kappa}=V_{N,\psi}$.
\end{proof}



\begin{lem}
We have 
$$(\omega_{\mu,\psi^{-1},\chi})^\kappa\cong \omega_{\mu,\psi_\kappa^{-1},\chi}.$$
\end{lem}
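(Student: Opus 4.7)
The plan is to verify the isomorphism by showing that the identity map on $\CS(E,\chi)$ intertwines the two $H$-actions, checked on the generators $n(b)$, $t(a)$, and $w$. First compute the conjugates: a direct calculation gives $t(a)^\kappa=t(a)$, $n(b)^\kappa=n(\kappa b)$, and $w^\kappa=t(\kappa)w$ (the last using $\bar\kappa=\kappa$ since $\kappa\in F^\times$). For the unipotent radical, formula (1.4) (with $\psi$ replaced by $\psi^{-1}$) yields
$$(\omega_{\mu,\psi^{-1},\chi})^\kappa(n(b))\phi(x)=\omega_{\mu,\psi^{-1},\chi}(n(\kappa b))\phi(x)=\psi^{-1}(\kappa b x\bar x)\phi(x)=\psi_\kappa^{-1}(bx\bar x)\phi(x),$$
which agrees with $\omega_{\mu,\psi_\kappa^{-1},\chi}(n(b))\phi(x)$. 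For the torus, both sides act by $\mu(a)|a|_E^{1/2}\phi(xa)$ via formula (1.3), since $t(a)^\kappa=t(a)$. Thus the identity map is already a $B$-intertwining.

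The substantive part is the $w$-action. Expanding $(\omega_{\mu,\psi^{-1},\chi})^\kappa(w)=\omega_{\mu,\psi^{-1},\chi}(t(\kappa))\omega_{\mu,\psi^{-1},\chi}(w)$ and applying (1.3) and (1.5) gives
$$(\omega_{\mu,\psi^{-1},\chi})^\kappa(w)\phi(x)=\mu(\kappa)|\kappa|_E^{1/2}\gamma_{\psi^{-1}}\int_E\psi_\kappa^{-1}(-\tr_{E/F}(x\bar y))\phi(y)\,dy,$$
where $dy$ is self-dual for $\psi^{-1}$; on the other side, $\omega_{\mu,\psi_\kappa^{-1},\chi}(w)\phi(x)$ is the analogous integral with $\gamma_{\psi_\kappa^{-1}}$ and the self-dual measure $dy'$ for $\psi_\kappa^{-1}$. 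Using $dy'=|\kappa|_F\,dy$ (as $E/F$ has degree $2$) and $|\kappa|_E^{1/2}=|\kappa|_F$, the match reduces to the classical Weil-index identity $\gamma_{\psi_\kappa^{-1}}/\gamma_{\psi^{-1}}=\mu(\kappa)$ for the norm form on $E/F$, which is the analytic incarnation of $\mu|_{F^\times}=\epsilon_{E/F}$.

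The main obstacle is this last normalization identity, which is routine but delicate in its sign and measure conventions. An attractive alternative that avoids explicit computation is structural: both $(\omega_{\mu,\psi^{-1},\chi})^\kappa$ and $\omega_{\mu,\psi_\kappa^{-1},\chi}$ are irreducible Weil representations of $H$ that share the same $\mu$-parameter (from the $T$-action, which is untouched by conjugation), the same $Z$-central character $\mu\chi$, and are both $\psi_\kappa^{-1}$-generic (for the conjugated representation, by the preceding lemma). Combining this with the isomorphism $\omega_{\kappa,\mu,\psi^{-1},\chi}\cong\omega_{\mu,\psi_\kappa^{-1},\chi}$ already recorded in \S1 (and the see-saw observation that conjugation by $\alpha(\kappa)$ rescales the skew-Hermitian form on $W$ by $\kappa$, equivalently rescaling the Hermitian form on $V$), the desired isomorphism follows from the uniqueness of Weil representations with prescribed parameters.
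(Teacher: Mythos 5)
Your verification that the identity map intertwines the two actions on the Borel subgroup $B$ is correct: $t(a)^\kappa=t(a)$, $n(b)^\kappa=n(\kappa b)$, and $w^\kappa=t(\kappa)w$ are all right, and the resulting agreement of the $t(a)$- and $n(b)$-actions follows immediately from the paper's formulas (1.3) and (1.4) just as you say. The paper's own proof is a single sentence --- ``we can check the identity map defines an isomorphism'' --- so what you have done is supply the computation the author elides, and your approach coincides with the paper's.

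The one step you leave partially open is exactly the right one to isolate: after applying (1.3) and (1.5) to $w^\kappa=t(\kappa)w$ and rewriting $\psi^{-1}(\kappa\,\tr_{E/F}(x\bar y))=\psi_\kappa^{-1}(\tr_{E/F}(x\bar y))$, the two sides differ by a factor of $\mu(\kappa)|\kappa|_E^{1/2}\gamma_{\psi^{-1}}\,dy$ versus $\gamma_{\psi_\kappa^{-1}}\,dy'$. Since $dy'=|\kappa|_F\,dy$ and $|\kappa|_E^{1/2}=|\kappa|_F$, the match indeed comes down to $\gamma_{\psi_\kappa^{-1}}/\gamma_{\psi^{-1}}=\mu(\kappa)$. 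This is true: with $E=F(\sqrt d)$ and the norm form $\langle 1,-d\rangle$, one has $\gamma_{\psi_\kappa}(\Nm_{E/F})=\gamma_\psi(\kappa\Nm_{E/F})$ and $\gamma_\psi(\kappa\Nm_{E/F})/\gamma_\psi(\Nm_{E/F})=(\kappa,d)_F=\epsilon_{E/F}(\kappa)=\mu(\kappa)$, using the standard Weil-index relations $\gamma_\psi(ab)\gamma_\psi(1)=\gamma_\psi(a)\gamma_\psi(b)(a,b)_F$ and $\gamma_\psi(a)^2=(a,-1)_F\gamma_\psi(1)^2$. So your ``routine but delicate'' step does close up. Your alternative structural argument is also sound, and arguably preferable in that it sidesteps the explicit index computation: rescaling the skew-Hermitian form on $W$ by $\kappa$ is the same as rescaling the Hermitian form on $V$ by $\kappa$ (both yield the same symplectic form on $V\otimes W$), which, combined with Proposition 1.3(1) ($\omega_{\kappa,\mu,\psi^{-1},\chi}\cong\omega_{\mu,\psi_\kappa^{-1},\chi}$), gives the abstract isomorphism without tracking constants; but for that route one does need to check that the embedding of $U(W)$ via $h\mapsto h^\kappa$ lands in the correct dual pair with the same $s_\mu$-splitting, which is the same kind of normalization care in a different guise.
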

\begin{proof}
We can check the identity map $$(\omega_{\mu,\psi_\kappa^{-1},\chi}, \CS(E,\chi))\ra ((\omega_{\mu,\psi^{-1},\chi})^\kappa, \CS(E,\chi))$$
defines an isomorphism. \end{proof}

 We define an involution $h\mapsto h^\delta$ on $H$ by
$$\begin{pmatrix}a& b\\ c& d \end{pmatrix}^\delta=\begin{pmatrix}\bar a& -\bar b\\-\bar c& \bar d \end{pmatrix}.$$
This is the so-called MVW-involution. For MVW-involution for more general unitary groups, see \cite{MVW}, p91 or \cite{KS}, p270. For a smooth irreducible admissible representation $\pi$ of $H$, it is known that $\tilde \pi\cong \pi^\delta$, where $\pi^\delta(h)=\pi(h^\delta)$.

Let $w=\begin{pmatrix}&1\\ -1& \end{pmatrix}$. Let $h^\alpha=w^{-1}h^\delta w$, and let $\pi^\alpha(h)=\pi(h^\alpha)$. Then $\pi^\alpha\cong \tilde \pi.$ Explicitly, we have
$$\begin{pmatrix}a& b\\ c& d \end{pmatrix}^\alpha=\begin{pmatrix}\bar d& \bar c\\\bar b& \bar a \end{pmatrix}.$$
Recall that we use $\bar N$ to denote lower triangular unipotent subgroup of $H=\RU(1,1)$ and for $x\in F$, we denote
$$\bar n (x)=\begin{pmatrix} 1&\\ x&1 \end{pmatrix}.$$
\begin{cor}{\label{cor211}}
We have 
$$\Hom_N(\pi,\psi)\ne 0 \textrm{ if and only if }\Hom_{\bar N}(\tilde \pi,\psi)\ne 0.$$
\end{cor}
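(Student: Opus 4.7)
The plan is to leverage the MVW-type involution introduced just above: since the paper has established $\pi^\alpha \cong \tilde\pi$ with the explicit formula
$$\begin{pmatrix}a& b\\ c& d \end{pmatrix}^\alpha=\begin{pmatrix}\bar d& \bar c\\\bar b& \bar a \end{pmatrix},$$
the whole corollary should reduce to a compatibility check between the $\alpha$-conjugation and the subgroups $N$, $\bar N$ together with their generic character $\psi$.

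First I would compute the effect of $\alpha$ on the unipotent element $\bar n(x)$ for $x\in F$. Using the displayed formula we immediately get
$$\bar n(x)^\alpha=\begin{pmatrix} 1 & \bar x\\ 0 & 1\end{pmatrix}=n(x),$$
because $x\in F$ forces $\bar x=x$. So $\alpha$-conjugation maps $\bar N$ isomorphically onto $N$ and intertwines the characters, i.e.\ $\psi(\bar n(x)^\alpha)=\psi(x)$.

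Now suppose $\lambda\in\Hom_N(\pi,\psi)$ is nonzero. Define a linear functional $\lambda^\alpha$ on the space of $\pi^\alpha$ (which equals the space of $\pi$) by $\lambda^\alpha(v):=\lambda(v)$. For any $v$ and any $\bar n(x)\in\bar N$, using $\pi^\alpha(\bar n(x))=\pi(\bar n(x)^\alpha)=\pi(n(x))$,
$$\lambda^\alpha(\pi^\alpha(\bar n(x))v)=\lambda(\pi(n(x))v)=\psi(x)\lambda(v)=\psi(x)\lambda^\alpha(v),$$
so $\lambda^\alpha\in\Hom_{\bar N}(\pi^\alpha,\psi)$. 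Via the isomorphism $\pi^\alpha\cong\tilde\pi$ this produces a nonzero element of $\Hom_{\bar N}(\tilde\pi,\psi)$. The reverse implication is the identical argument, starting with a nonzero $\mu\in\Hom_{\bar N}(\tilde\pi,\psi)\cong\Hom_{\bar N}(\pi^\alpha,\psi)$ and using $n(x)^\alpha=\bar n(x)$ (the same calculation in reverse) to manufacture a nonzero Whittaker functional for $\pi$ on $N$.

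There is essentially no obstacle here: once the explicit description of the MVW-involution on matrices is in hand, the argument is a direct matching of characters on unipotent subgroups, and the only point worth writing carefully is the observation that $x\in F$ gives $\bar x=x$, so that $\psi$ on $N$ and $\psi$ on $\bar N$ correspond under $\alpha$.
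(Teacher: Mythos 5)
Your proposal is correct and follows essentially the same argument as the paper: compute that the MVW-type involution $\alpha$ swaps $N$ and $\bar N$ (with $\bar x = x$ for $x\in F$ preserving $\psi$), conclude $\Hom_N(\pi,\psi)=\Hom_{\bar N}(\pi^\alpha,\psi)$, and invoke $\pi^\alpha\cong\tilde\pi$. The paper states this in three lines; you have merely written out the intermediate steps explicitly.
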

\begin{proof}
For $n(b)\in N$ with $b\in F$, we have $n(b)^{\alpha}=\bar n(b)$. Thus $ \Hom_N(\pi,\psi)=\Hom_{\bar N}(\pi^\alpha, \psi).$ The assertion follows from the fact that $\tilde \pi \cong \pi^\alpha$.
\end{proof}
 
 The proof of the following theorem uses the Gelfand-Kazhdan's method, and is inspired by the proof of Theorem 4.4.2. of \cite{Bu}, the uniqueness of Whittaker functional for $\GL_2$.
 \begin{thm}
 Let $(\pi,V)$ be an irreducible admissible representation of $U(1,1)$ such that $\CW(\pi,\psi)\ne 0$ and $\CW(\pi,\psi_\kappa)\ne 0$, then 
 $$\pi\cong \pi^\kappa.$$
 \end{thm}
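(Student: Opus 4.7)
The strategy is to exhibit a nonzero element of $\Hom_H(\pi,\pi^\kappa)$; by Schur's lemma and the irreducibility of both sides this will yield $\pi\cong \pi^\kappa$.

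First I will verify that $\pi^\kappa$ is also both $\psi$- and $\psi_\kappa$-generic, so that the hypothesis is symmetric in $\pi$ and $\pi^\kappa$. The lemma immediately preceding the theorem gives $(V^\kappa)_{N,\psi_\kappa}=V_{N,\psi}\neq 0$. A parallel computation shows $(V^\kappa)_{N,\psi}=V_{N,\psi_{\kappa^{-1}}}$, and since $\kappa^{-1}=\kappa\cdot\kappa^{-2}\in\kappa\cdot \Nm(E^\times)$, Proposition~\ref{prop11}(1) identifies this with $V_{N,\psi_\kappa}\neq 0$. Concretely, the single linear functional $\lambda_\psi$ on $V=V^\kappa$ is a $\psi$-Whittaker functional for $\pi$ and a $\psi_\kappa$-Whittaker functional for $\pi^\kappa$; similarly $\lambda_{\psi_\kappa}\circ \pi(t(\kappa^{-1}))$ is a $\psi$-Whittaker functional for $\pi^\kappa$.

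Both $\pi$ and $\pi^\kappa$ can then be realized in the ambient space $C^\infty(N\backslash H,\psi)$ via their respective $\psi$-Whittaker models
\[
\CW(\pi,\psi)=\{h\mapsto \lambda_\psi(\pi(h)v):v\in V\},\quad \CW(\pi^\kappa,\psi)=\{h\mapsto \lambda_{\psi_\kappa}(\pi(t(\kappa^{-1})h^\kappa)v'):v'\in V\}.
\]
By uniqueness of the Whittaker model these are irreducible $H$-submodules, and if $\pi\not\cong\pi^\kappa$ they must be disjoint; hence it is enough to produce a single nonzero function lying in their intersection. Following the Gelfand--Kazhdan argument of the proof of $\GL_2$ Whittaker uniqueness in \cite{Bu}, Theorem 4.4.2, such a common function will be produced from a bi-Whittaker distribution on $H$ transforming by $\psi$ on the left and $\psi_\kappa$ on the right. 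A Bruhat decomposition analysis $H=B\cup BwB$ together with an orbit analysis of $N\times N$ acting by $(n_1,n_2)\cdot h=n_1hn_2^{-1}$ shows that any such distribution vanishes on the closed cell $B$ (on the torus orbits $Nt(a)N$ the stabilizer-compatibility condition becomes $a\bar a^{-1}=\kappa$, which has no solution since $a\bar a^{-1}\in E^1$ while $\kappa\notin E^1$) and is essentially uniquely determined on the open cell since the stabilizer of $w$ in $N\times N$ is trivial. Pairing this unique distribution against matrix coefficients of $\pi$ and of $\pi^\kappa$ via the two realizations above, and invoking the MVW involution of Corollary~\ref{cor211} together with the identity $\alpha\circ\kappa=\kappa^{-1}\circ\alpha$ on $H$, produces the desired common nonzero function in $\CW(\pi,\psi)\cap\CW(\pi^\kappa,\psi)$.

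The principal obstacle lies in the last step: a naive bi-Whittaker pairing on $V\otimes V^\kappa$ only yields an element of $\Hom_H(\pi,\widetilde{\pi^\kappa})$, not of $\Hom_H(\pi,\pi^\kappa)$. Extracting the honest intertwiner requires combining the MVW involution with the $\kappa$-conjugation in the precise manner above, and crucially uses the availability of \emph{both} Whittaker functionals on $\pi$ simultaneously; without either one, the distribution-theoretic uniqueness fails to force the two Whittaker models to coincide.
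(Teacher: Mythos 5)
Your proposal follows the same general pattern as the paper (a Gelfand--Kazhdan distribution argument making essential use of the MVW involution and the identity $(h^\kappa)^\alpha=(h^\alpha)^{\kappa^{-1}}$), and you correctly identify the central difficulty: a naive bi-Whittaker pairing produces a map to $\widetilde{\pi^\kappa}$ rather than to $\pi^\kappa$. However, two parts of the argument as written do not hold up. First, the orbit analysis on the closed cell is incorrect: for the $N\times N$-action $(n_1,n_2)\cdot h=n_1hn_2^{-1}$ with $n_2=n(y)$, the stabilizer of $t(a)$ is $\{(n(a\bar a\,y),n(y))\}$, so the compatibility condition for a $(\psi,\psi_\kappa)$-equivariant distribution to live on that orbit is $a\bar a=\kappa$, not $a\bar a^{-1}=\kappa$; the obstruction is precisely that $\kappa\notin\Nm_{E/F}(E^\times)$ while $a\bar a\in\Nm_{E/F}(E^\times)$. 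The $E^1$-based reasoning you give is both aimed at the wrong quantity and unsound even after the fix, since $\kappa$ may equal $-1\in E^1$ when $-1\notin\Nm_{E/F}(E^\times)$. Second, on the open cell you claim the distribution is "essentially uniquely determined" from the triviality of the stabilizer of $w$; this is not what is needed, nor is it true in the naive sense (distinct $T$-orbits contribute independently, so the space of admissible distributions on $BwB$ is large). What the Gelfand--Kazhdan argument actually requires is invariance under an explicit anti-involution $\theta$ adapted to $(\psi,\psi_\kappa)$ and to $\beta=\alpha\circ\kappa$, and this is where the real work lies.

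The final step of your argument --- "invoking the MVW involution... produces the desired common nonzero function" --- is where the proof is actually missing, and it is precisely the nontrivial content of the paper's proof. One must: (i) reduce to proving $\widetilde\pi\cong\pi^\beta$ via the MVW isomorphism $\widetilde\pi\cong\pi^\alpha$; (ii) define the correct distribution $T(f)=\lambda\circ\pi(f)\circ\mu^*$ with $\lambda\in\Hom_{\bar N}(\pi,\psi)$, $\mu\in\Hom_N(\widetilde\pi,\psi_\kappa)$; (iii) introduce the anti-involution $h^\theta=(h^{-1})^\beta$ and prove $T(f)=T(f^\theta)$ by a cell-by-cell analysis (including showing $T$ vanishes on $\CS(wB)$ using $\kappa b\bar b\neq 1$); and (iv) convert this into $J(\lambda)=J(\mu)^\beta$ and an intertwiner $\widetilde\pi\to\pi^\beta$. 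As written your argument asserts the conclusion of these steps without carrying them out, and the cell-vanishing statement you do supply is the incorrect one. Your closing paragraph correctly diagnoses that "the availability of both Whittaker functionals on $\pi$ simultaneously" is what powers the argument, but diagnosing the obstruction is not the same as overcoming it; the $\theta$-invariance of $T$, which you never state or prove, is what allows the MVW twist to be absorbed.
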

 \begin{proof} The condition means that $\CW(\pi,\psi')\ne 0$ for every nontrivial additive character $\psi'$ of $F$, which is equivalent to $\CW(\tilde \pi,\psi')\ne 0$ for every nontrivial additive character $\psi'$ of $F$ by the isomorphism $\tilde \pi \cong \pi^\delta$. Since each irreducible admissible representation $\pi$ is a contragradient of another irreducible admissible representation, it suffices to show that 
 $$\tilde \pi \cong (\tilde \pi)^\kappa\cong (\pi^\alpha)^\kappa.$$
 Let $h^\beta=(h^\kappa)^\alpha$, and $\pi^\beta(h)=\pi(h^\beta)$. Then $\pi^\beta=(\pi^\alpha)^\kappa$. Thus it suffices to show that $\tilde\pi\cong \pi^\beta$.
 
 Define $h^\theta=(h^{-1})^\beta=(h^\beta)^{-1}$. Explicitly, we have 
 $$\begin{pmatrix} a&b \\ c& d\end{pmatrix}^\theta=\begin{pmatrix} a&-\kappa^{-1}c \\ -\kappa b& d\end{pmatrix}.$$
We have $(h_1h_2)^\theta=h_2^\theta h_1^\theta$ and $(h^\theta)^\theta=h$. Moreover, we have $N^\theta=\bar N$ and $\bar N^\theta=N$. 

The assumption implies that $\Hom_N(\tilde \pi,\psi)\ne 0$ and $\Hom_N(\tilde \pi,\psi_\kappa)\ne 0$. We fix a nonzero element $\mu\in \Hom_N(\tilde\pi,\psi_\kappa)$. By Corollary \ref{cor211}, the condition $\Hom_N(\tilde \pi,\psi)\ne 0$ is equivalent $\Hom_{\bar N}(\pi,\psi)\ne 0$. We fix a nonzero element $\lambda\in \Hom_{\bar N}(\pi,\psi)$. 

The dual map $\mu^*$ of $\mu:\tilde V\ra \BC$ defines a map $\mu^*:\BC\ra \tilde V^*$. The smoothness of $\mu$ shows that the image of $\mu^*$ is contained in $\tilde {\tilde V}\cong V$. We define a distribution $T$ on $\CS(H)$ by $$T(f)=\lambda\circ \pi(f)\circ \mu^* \in \End(\BC)\cong\BC.$$

Let $r$ be the right translation and $l$ be the left translation action on $\CS(H)$, i.e., $r(h_0)f(h)=f(hh_0), l(h_0)f(h)=f(h_0^{-1}h)$.
Consider the action $\rho$ of $\bar N\times N$ on $\CS(H)$ defined by 
$$\rho_{\bar n_1,n_2}f(h)=(l(\bar n_1)r(n_2)f)(h)=f(\bar n_1^{-1}hn_2), \bar n_1\in \bar N, n_2\in N.$$
For all $\bar n_1\in \bar N,n_2\in N$, we have 
\begin{align*}T(\rho_{\bar n_1,n_2}f)&=\lambda\circ \int_{H}f(\bar n_1^{-1}h n_2)\pi(h)dh \circ \mu^*\\
&=\lambda\circ \int_H f(h) \pi(\bar n_1)\circ \pi(h) \circ \pi(n_2^{-1}) dh \circ \mu^*\\
&=\lambda\circ \pi(\bar n_1)\circ \pi(f)\circ \pi(n_2^{-1})\circ \mu^*.\end{align*} 
Since $\lambda\circ \pi(\bar n_1)=\psi(\bar n_1)\lambda$ and $\pi(n_2^{-1})\circ \mu^*=\psi_\kappa(n_2)\mu^*$, we get
\begin{equation}{\label{eq29}}
T(\rho_{\bar n_1,n_2}f)=\psi(\bar n_1)\psi_\kappa(n_2)T(f), \forall \bar n_1\in \bar N,n_2\in N.
\end{equation}
 
 We claim that 
 $$(*) \quad T(f)=T(f^\theta), \textrm{ for all }f\in \CS(H),$$ where $f^\theta(h)=f(h^\theta)$. Let $A$ be the torus of $H$ temporarily. By the exact sequence
 $$0\ra \CS(\bar NAN)\ra \CS(H)\ra \CS(\bar N wAN)\ra 0,$$
to prove Claim $(*)$, it suffices to consider $f\in \CS(\bar NAN)$ and $f\in \CS(\bar NwAN)$ separately. 
 
 We first assume that $f\in \CS(\bar NAN)$. We define a function $G_f\in \CS(A)$ by 
\begin{equation}G_f(a)=\int_{F\times F}f(\bar n(-x_1)an(x_2))\psi^{-1}(x_1)\psi^{-1}_\kappa(x_2)dx_1dx_2.\end{equation}
 The assignment $f\mapsto G_f$ defines a surjection $\CS(\bar NAN)\ra \CS(A)$. We define a distribution $\tau$ on $\CS(A)$ by 
 $$\tau(G)=T(f)$$
 if $G=G_f$ for some $f\in \CS(\bar NAN)$. To show that $\tau$ is well-defined, we need to check that if $G_f=0$ then $T(f)=0$. By (\ref{eq29}), we have $T(f)=\psi^{-1}(n_1)\psi^{-1}_\kappa(n_2)T(\rho_{\bar n_1,n_2}f)$ for any $\bar n_1\in \bar N, n_2\in N$, thus for any compact subsets $C_1,C_2\subset F$, we have 
 $$T(f)=\frac{1}{\Vol(C_1\times C_2)}\int_{C_1\times C_2}\psi^{-1}(x_1)\psi^{-1}_\kappa(x_2)T(\rho_{\bar n(x_1),n(x_2)}f)dx_1dx_2=\frac{1}{\Vol(C_1\times C_2)}T(f'),$$
 where $f'\in \CS(\bar NAN)$ is defined by $$f'(h)=\int_{C_1\times C_2}\psi^{-1}(n_1)\psi^{-1}_\kappa(n_2)f(\bar n(-x_1)hn(x_2))dx_1dx_2. $$
 If $C_1,C_2$ are large enough, we have $f'(a)=G_f(a)=0, \forall a\in A$ by assumption. Thus $f'=0$ and $T(f)=0$. Then $\tau$ is well-defined. To show $T(f)=T(f^\theta)$ it suffices to show that $G_f=G_{f^\theta}$. We have
 \begin{align*}
 G_{f^\theta}(a)&=\int_{F\times F}f^\theta(\bar n(-x_1)an(x_2))\psi^{-1}(x_1)\psi_\kappa^{-1}(x_2)dx_1dx_2\\
 &=\int_{F^\times F}f(n(x_2)^\theta a^\theta \bar n(-x_1)^\theta)\psi^{-1}(x_1)\psi^{-1}_\kappa(x_2)dx_1dx_2\\
 &=\int_{F^\times F}f(\bar n(-\kappa x_2)a^\theta \bar n(\kappa^{-1}x_1 ))\psi^{-1}(x_1)\psi^{-1}_\kappa(x_2)dx_1dx_2
 \end{align*}
 Let $x_1'=\kappa x_2$ and $x_2'=\kappa^{-1} x_1$, we see that $dx_1dx_2=dx_1'dx_2'$, and the last expression of the above integral becomes
 $$ \int_{F^\times F}f(\bar n(-x_1')a^\theta \bar n(x_2'))\psi^{-1}_\kappa(x_2')\psi^{-1}(x_1')dx_1'dx_2'=F_f(a^\theta).$$
 Thus we get
 $$G_{f^\theta}(a)=G_f(a^\theta).$$
 Since for $a\in A$, we have $a=a^\theta$, we get $G_f=G_{f^\theta}$. This completes the proof of $T(f)=T(f^\theta)$ when $f\in \CS(\bar NAN)$. 
 
 Next we consider the case $f\in \CS(\bar NwAN)=\CS(wB)$, where $B$ is the Borel subgroup of $H$. Similarly as above, we have a surjection map $\CS(\bar NwAN)\mapsto \CS(wA)$, $f\mapsto G_f$, where $G_f$ is defined by
 $$G_f(wa)=\int_N \psi_\kappa^{-1}(n)f(wan)dn.$$
 
  We can define a distribution $\tau$ on $\CS(wA)$ by $\tau(G)=T(f)$ if $G=G_f$ for some $f\in \CS(\bar NwAN)$. Since $T(r(n)f)=\psi(n)T(f)$, a similar argument as above will show that $\tau$ is well-defined. We claim that $\tau\equiv 0$ on $\CS(wA)$. 
  
 Since $T(l(\bar n)f)=\psi(\bar n)T(f)$, we get $G_{l(\bar n)f}-\psi(\bar n)G_f\in \Ker(\tau)$ for any $\bar n\in \bar N$. For $f\in \CS(\bar NwAN)$, and $\bar n\in \bar N$, we define $\Phi_{f,\bar n}=G_{l(\bar n)f}-\psi(\bar n)G_f$.  We have 
 \begin{align*}
 \Phi_{f,\bar n}(wa)&=\int_{N}\psi^{-1}_\kappa(n')f(\bar n^{-1}wa n')dn'-\psi(\bar n)G_f(wa)\\
 &=\int_N \psi_\kappa^{-1}(n')f(wa (wa)^{-1}\bar n^{-1}wa n')dn'-\psi(\bar n)G_f(wa)\\
 &=(\psi_\kappa((wa)^{-1}\bar n^{-1}wa)-\psi(\bar n))G_f(wa).
 \end{align*}
 We suppose that
 $$wa=\begin{pmatrix} & b\\-\bar b^{-1} & \end{pmatrix}, b\in E^\times, \bar n= \begin{pmatrix} 1& \\x & 1 \end{pmatrix},$$
 then 
 $$(wa)^{-1}\bar n^{-1}wa= \begin{pmatrix} 1& b\bar b x \\& 1 \end{pmatrix} .$$
 Thus 
 $$(\psi_\kappa((wa)^{-1}\bar n^{-1}wa)-\psi(\bar n))=\psi(\kappa b\bar bx)-\psi(x).$$
 Since $\kappa\notin \Nm(E^\times)$, we have $\kappa b\bar b\ne 1$. Since $\psi$ is continuous, for a small open compact neighborhood $D$ of $wa$, we can find an $\bar n=\bar n(x)$ such that $ (\psi_\kappa((wa)^{-1}\bar n^{-1}wa)-\psi(\bar n))$ is a nonzero constant $c_D$, for all $wa\in D$. Let $f_D\in \CS(\bar NwAN)$ such that $G_{f_D}$ is the characteristic function of $D$, then we have 
 $$\Phi_{f,\bar n}|_D=c_DG_{f_D}.$$
 The space $\CS(wA)$ is spanned by $G_{f_D}$, and thus can be spanned by $\Phi_{f,\bar n}|_D$. This shows that $\CS(wA)\subset \Ker(\tau)$, i.e., $T(f)=\tau(G_f)=0$ for all $f\in \CS(\bar NwAN)$. A similar consideration will show that $T(f^\theta)=0$. Thus $T(f)=T(f^\theta)$ for $f\in \CS(wB)$.
 
 This finishes the proof of the claim $(*)$.

 We define a bilinear form $\bB$ on $\CS(H)$ by 
 $$\bB(f*\phi)=T(f*\check \phi),$$
 where $\check \phi(h)=\phi(h^{-1})$, and $*$ means convolution. By Claim $(*)$, we get
\begin{equation}{\label{eq211}}\bB(f,\phi)=T(f*\check\phi)=T((f*\check \phi)^\theta)=T(\check\phi^\theta*f^\theta)=\bB(\phi^\beta,f^\beta).\end{equation}
 
 Define a map 
 $$\lambda: \CS(H)\ra \tilde\pi$$
 $$f\mapsto \lambda_f$$
 where $\lambda_f\in \tilde V$ is defined by $\lambda_f(v)=\lambda(\pi(f)v)$. Let  $J(\lambda)$ be the kernel of the map $\lambda$, i.e., $J(\lambda)=\wpair{f\in \CS(H): \lambda_f=0}$
 
 Similarly, we define $\mu: \CS(H)\ra \tilde {\tilde \pi} \cong \pi$ by $\mu_f(\tilde v)=\mu(\tilde \pi(f)v)$ and $J(\mu)$ to be the kernel of $\mu$. 
 
It is easy to see that
$$J(\lambda)=\wpair{f\in \CS(H)| \bB(f,\phi)=0,\forall \phi\in \CS(H)},$$
and 
$$J(\mu)=\wpair{\phi\in \CS(H)| \bB(f,\phi)=0,\forall f\in \CS(H)},$$

By (\ref{eq211}), we have 
$$J(\lambda)=\wpair{f\in \CS(H)| B(\phi^\beta, f^\beta)=0, \forall \phi\in \CS(H)}=J(\mu)^\beta.$$

Let $(r,\CS(H))$ be the right translation action of $H$ on $\CS(H)$, then $\lambda:(r,\CS(H))\ra \tilde \pi$ is an intertwining surjection with kernel $J(\lambda)$. Let $(r^\beta,\CS(H))$ be the representation of $H$ on $\CS(H)$ defined by $r^\beta(h)f=r(h^\beta)f$. Then the map $\mu:\CS(H)\ra \pi$ defines an intertwining surjection $\mu:(r^\beta,\CS(H))\ra \pi^\beta$, with kernel $J(\mu)$.

The assignment $f\mapsto f^\beta$ defines an isomorphism $(r,\CS(H))\ra (r^\beta,\CS(H))$. Since $J(\mu)=J(\lambda)^\beta$, we have the following commutative diagram
$$\xymatrix{0\ar[r]&J(\lambda)\ar[d]\ar[r]&S(H)\ar[r]^{\lambda} \ar[d]&\tilde \pi\ar@{-->}[d]^{\beta}\ar[r]& 0\\
0\ar[r]&J(\mu)\ar[r]&S(H)\ar[r]^{\mu}&  \pi^\beta\ar[r]&0
}$$ 
and thus we get an isomorphism $\tilde \pi\ra \pi^\beta$. This completes the proof.
 \end{proof}

\begin{cor}{\label{cor213}}
 Let $\pi$ be an irreducible smooth representation of $H$ which is both $\psi$- and $\psi_\kappa$-generic. For any $W\in \CW(\pi,\psi)$, define a function $W^\kappa$ on $H$ by $W^\kappa(h)=W(h^\kappa)$. Then $W^\kappa\in \CW(\pi,\psi_\kappa)$. Moreover, the assignment $W\mapsto W^\kappa$ defines a bijection from $\CW(\pi,\psi)$ to $\CW(\pi,\psi_\kappa)$. 
\
\end{cor}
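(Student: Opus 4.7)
The strategy is to realize $W \mapsto W^\kappa$ as producing an irreducible $H$-submodule of $\Ind_N^H(\psi_\kappa)$ isomorphic to $\pi$, and then invoke the uniqueness of the Whittaker model together with the preceding theorem.

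First I verify that $M := \{W^\kappa : W \in \CW(\pi, \psi)\}$ sits inside $\Ind_N^H(\psi_\kappa)$. Since $n(b)^\kappa = \alpha(\kappa) n(b) \alpha(\kappa)^{-1} = n(\kappa b)$, a direct computation gives
\begin{equation*}
W^\kappa(n(b)h) = W(n(\kappa b) h^\kappa) = \psi(\kappa b) W(h^\kappa) = \psi_\kappa(b) W^\kappa(h).
\end{equation*}
Linearity of $W \mapsto W^\kappa$ is clear, and injectivity follows because $h \mapsto h^\kappa$ is a bijection of $H$.

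Next I endow $M$ with the right-translation action of $H$ and relate it to the source. The identity
\begin{equation*}
(g \cdot W^\kappa)(h) = W^\kappa(hg) = W(h^\kappa g^\kappa) = \bigl(\pi(g^\kappa) W\bigr)^\kappa(h)
\end{equation*}
shows that $W \mapsto W^\kappa$ transports the twisted action $g : W \mapsto \pi(g^\kappa) W$ on $\CW(\pi, \psi)$ to the ordinary right translation on $M$. Thus $M$ is an irreducible $H$-submodule of $\Ind_N^H(\psi_\kappa)$ isomorphic to $\pi^\kappa$. Invoking the preceding theorem, $\pi^\kappa \cong \pi$, so $M \cong \pi$ as $H$-modules. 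By the uniqueness of the Whittaker model (Proposition \ref{prop11}(3) applied to the character $\psi_\kappa$), any embedded copy of $\pi$ inside $\Ind_N^H(\psi_\kappa)$ must coincide with $\CW(\pi, \psi_\kappa)$; hence $M = \CW(\pi, \psi_\kappa)$. This simultaneously establishes $W^\kappa \in \CW(\pi, \psi_\kappa)$ and the surjectivity of $W \mapsto W^\kappa$ onto $\CW(\pi, \psi_\kappa)$, yielding the asserted bijection. The only nontrivial ingredient is the preceding theorem; everything else is a direct consequence of the definitions and the uniqueness of the Whittaker functional.
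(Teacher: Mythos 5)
Your argument is correct and relies on the same two ingredients as the paper's proof: the preceding theorem establishing $\pi\cong\pi^\kappa$, and the one-dimensionality of the $\psi_\kappa$-Whittaker functional space (Proposition \ref{prop11}(3)). The presentation differs slightly in style: you work directly with Whittaker functions, verify the left $\psi_\kappa$-equivariance and the intertwining property $(g\cdot W^\kappa)=(\pi(g^\kappa)W)^\kappa$, and then invoke uniqueness of the Whittaker model to identify the image; the paper instead pulls back the $\psi$-Whittaker functional $\lambda$ through the intertwiner $v\mapsto v^\kappa$ to construct an explicit $\psi_\kappa$-Whittaker functional $\lambda^\kappa$ and checks that $W^\kappa$ is the associated Whittaker function of $v$. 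The two arguments are essentially equivalent, your version being marginally more ``model-theoretic'' in that it never has to name the intertwining map on $V$.
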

\begin{proof} 
 By the above theorem, we have an isomorphism $^\kappa: (\pi,V)\ra (\pi^\kappa, V)$. Since the map $^\kappa$ is intertwining, we have
\begin{equation}{\label {kappa is intertwining}}
(\pi(h)v)^\kappa=\pi^\kappa(h)v^\kappa, h\in H, v\in V.
\end{equation}
 Let $\lambda\in \Hom_N(\pi, \psi)$ be a nonzero element. We define $\lambda^\kappa$ by $\lambda^\kappa(v)=\lambda(v^\kappa)$. For $n\in N$, by Eq.(\ref{kappa is intertwining}) we have
$$\lambda^\kappa(\pi(n)v)=\lambda((\pi(n)v)^\kappa)=\lambda(\pi^\kappa(n)v^\kappa)=\lambda(\pi(n^\kappa)v^\kappa)=\psi_\kappa(n)\lambda(v^\kappa)=\psi_\kappa(n)\lambda^\kappa(v),$$
thus $\lambda^\kappa\in \Hom_N(\pi,\psi_\kappa)$.

Since ${}^\kappa$ is an isomorphism, for each $W\in \CW(\pi, \psi)$, we can take $v\in V$ such that $W(h)=\lambda(\pi(h)v^\kappa)$. Then
$$W^\kappa(h)=\lambda(\pi(h^\kappa)v^\kappa)=\lambda((\pi(h)v)^\kappa)=\lambda^\kappa(\pi(h)v).$$
Thus $W^\kappa\in \CW(\pi,\psi_\kappa)$. The ``moreover" part follows from the fact that ${}^\kappa$ is an isomorphism.
\end{proof}

 \begin{prop}{\label{prop214}}
 Let $(\pi,V)$ be an irreducible admissible representation of $H$ such that $\CW(\pi,\psi^{-1})\ne 0$ and $\CW(\pi,\psi_\kappa^{-1})\ne 0$. The notations $\mu,\chi,\psi,\eta$ are as usual. Let $\kappa$ be an element of $F^\times-\Nm(E^\times)$. 
 Then $$L(s,\pi,\omega_{\mu,\chi,\psi_\kappa},\eta)=L(s,\pi,\omega_{\mu,\chi,\psi},\eta)$$
 and 
 $$\gamma(s,\pi,\omega_{\mu,\chi,\psi_\kappa},\eta,\psi')=\eta(\kappa)|\kappa|_F^{2s-1}\gamma(s,\pi,\omega_{\mu,\chi,\psi},\eta,\psi').$$
 \end{prop}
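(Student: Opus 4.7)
The plan is to exploit the Weil-representation isomorphism $(\omega_{\mu,\psi^{-1},\chi})^\kappa \cong \omega_{\mu,\psi_\kappa^{-1},\chi}$ (given by the identity on $\CS(E,\chi)$) together with the bijection $W \mapsto W^\kappa$ from $\CW(\pi,\psi)$ to $\CW(\pi,\psi_\kappa)$ of Corollary \ref{cor213}, and to perform the change of variable $g = h^\kappa = \alpha(\kappa)h\alpha(\kappa)^{-1}$ inside the zeta integral for $\omega_{\mu,\psi_\kappa^{-1},\chi}$. After substitution three simplifications occur at once: $W(h^\kappa) = W(g)$; by the cited isomorphism, $(\omega_{\mu,\psi_\kappa^{-1},\chi}(h)\phi)(1) = (\omega_{\mu,\psi^{-1},\chi}(g)\phi)(1)$; and writing $g = t(a)g_0$ with $g_0 \in \SL_2(F)$, since $t(a)$ commutes with $\alpha(\kappa)$ and $(0,r)\alpha(\kappa)^{-1} = (0,r)$, the definition of $f$ gives $f(s,\alpha(\kappa)^{-1}g\alpha(\kappa),\Phi,\eta) = f(s,g,\alpha(\kappa)\Phi,\eta)$. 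The remaining ingredient is the Jacobian of $\sigma\colon h \mapsto h^\kappa$ on $R\backslash H$: since $\Ad(\alpha(\kappa))$ acts on $\Lie(H) = \bar\fn \oplus \ft \oplus \fn$ with determinant $\kappa^{-1}\cdot 1\cdot \kappa = 1$ it preserves Haar on $H$, while on $R = ZN$ it scales Haar by $|\kappa|_F$ (trivially on $Z$, by $\kappa$ on $N \cong F$); the induced modulus on $R\backslash H$ is therefore $|\kappa|_F^{-1}$, yielding $dh = |\kappa|_F\, dg$.

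Putting these together gives the key identity
\[
\Psi(s,W^\kappa,\phi,\Phi,\eta,\omega_{\mu,\psi_\kappa^{-1},\chi}) = |\kappa|_F\,\Psi(s,W,\phi,\alpha(\kappa)\Phi,\eta,\omega_{\mu,\psi^{-1},\chi}).
\]
The equality of $L$-factors is then an ideal-comparison argument: since $\kappa^2 \in \Nm_{E/F}(E^\times)$ (squares are norms), applying the identity twice together with Lemma \ref{lem28} shows that the fractional ideals $I(s,\pi,\omega_{\mu,\psi_\kappa^{-1},\chi},\eta)$ and $I(s,\pi,\omega_{\mu,\psi^{-1},\chi},\eta)$ agree. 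For the $\gamma$-factor, I would next substitute $(s,\Phi,\eta) \mapsto (1-s,\hat\Phi,\eta^*)$ in the same identity and then apply the functional equation on each side; the factor $|\kappa|_F$ cancels, leaving
\[
\gamma(s,\pi,\omega_{\mu,\psi_\kappa^{-1},\chi},\eta,\psi')\,\Psi(s,W,\phi,\alpha(\kappa)\Phi,\eta,\omega_{\mu,\psi^{-1},\chi}) = \Psi(1-s,W,\phi,\alpha(\kappa)\hat\Phi,\eta^*,\omega_{\mu,\psi^{-1},\chi}),
\]
while the functional equation for $\omega_{\mu,\psi^{-1},\chi}$ with input $\alpha(\kappa)\Phi$ produces the same left-hand side with $\gamma_\kappa$ replaced by $\gamma := \gamma(s,\pi,\omega_{\mu,\psi^{-1},\chi},\eta,\psi')$ and $\alpha(\kappa)\hat\Phi$ replaced by $\widehat{\alpha(\kappa)\Phi}$.

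Finally I compare $\widehat{\alpha(\kappa)\Phi}$ with $\alpha(\kappa)\hat\Phi$. The formula $\widehat{g\Phi} = |\det g|_F^{-1}g'\hat\Phi$ with $g' = (\det g)^{-1}g$ gives, for $g = \alpha(\kappa)$, $g' = \diag(1,\kappa^{-1})$, which I rewrite as $(\kappa^{-1}I)\cdot\alpha(\kappa)$. A direct change of variable $r \mapsto \kappa^{-1}r$ in the multiplicative $z$-integral then shows $z(1-s,g_0,(\kappa^{-1}I)\Psi,\eta^*) = \eta^*(\kappa)|\kappa|_F^{2(1-s)}z(1-s,g_0,\Psi,\eta^*)$, hence $f(1-s,h,\widehat{\alpha(\kappa)\Phi},\eta^*) = \eta^*(\kappa)|\kappa|_F^{1-2s}f(1-s,h,\alpha(\kappa)\hat\Phi,\eta^*)$. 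Matching the two versions of the functional equation yields $\gamma(s,\pi,\omega_{\mu,\psi_\kappa^{-1},\chi},\eta,\psi') = \eta^*(\kappa)^{-1}|\kappa|_F^{2s-1}\gamma$, and since $\bar\kappa = \kappa$ gives $\eta^*(\kappa) = \eta(\bar\kappa^{-1}) = \eta(\kappa)^{-1}$, this is exactly $\eta(\kappa)|\kappa|_F^{2s-1}\gamma$. The main obstacle is the careful bookkeeping of three independent $|\kappa|_F$-contributions — the Jacobian on $R\backslash H$, the $|\det\alpha(\kappa)|_F^{-1}$ from the Fourier-transform formula, and the effect of the scalar $\kappa^{-1}I$ on $z$ — which must combine to produce precisely $|\kappa|_F^{2s-1}$.
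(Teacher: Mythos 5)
Your proof is correct and follows essentially the same strategy as the paper's: conjugate the zeta integral by $\alpha(\kappa)$, use the bijection $W\mapsto W^\kappa$ from Corollary \ref{cor213} together with the Weil-representation isomorphism $(\omega_{\mu,\psi^{-1},\chi})^\kappa\cong\omega_{\mu,\psi_\kappa^{-1},\chi}$, and track the effect of conjugation on $f(s,\cdot,\Phi,\eta)$ and on the Fourier transform $\widehat{\alpha(\kappa)\Phi}$. You are more careful about the Jacobian $|\kappa|_F$ arising from the substitution $h\mapsto h^\kappa$ on $R\backslash H$, which the paper's corresponding key identity silently omits; as you observe, this nonzero constant cancels in the $\gamma$-factor comparison and is a unit in $\BC[q_E^s,q_E^{-s}]$ for the fractional-ideal comparison (so the $\kappa^2\in\Nm_{E/F}(E^\times)$ detour is actually unnecessary), and the discrepancy is immaterial.
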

 \begin{proof}
 We fix an isomorphism $\phi\mapsto \phi^\kappa$ of $(\omega_{\mu,\psi^{-1},\chi})^{\kappa}\ra \omega_{\mu,\psi^{-1}_\kappa,\chi} .$ Then we have 
 $$\omega_{\mu,\psi^{-1},\chi}(h^\kappa)\phi=\omega_{\mu,\psi^{-1}_\kappa,\chi}(h)\phi^\kappa.$$
 
 For $W\in \CW(\pi,\psi)$, by Corollary \ref{cor213}, we have $W^\kappa\in \CW(\pi,\psi_\kappa)$. For $\Phi\in \CS(F^2)$, and $\Re(s)>>0$, we have
\begin{align*}& \quad \Psi(s,W^\kappa, \phi^\kappa, \Phi,\eta,\omega_{\mu,\psi^{-1}_\kappa,\chi})\\
&=\int_{R\setminus H}W^\kappa(h)(\omega_{\mu,\psi^{-1}_\kappa,\chi}(h)\phi^\kappa)(1)f(s,h,\Phi)dh\\
&=\int_{R\setminus H}W(h^\kappa)(\omega_{\mu,\psi^{-1},\chi}(h^\kappa)\phi)(1)f(s,h,\Phi)dh\\
&=\int_{R\setminus H}W(h)(\omega_{\mu,\psi^{-1},\chi}(h)\phi)(1)f(s,h^{\kappa^{-1}},\Phi)dh.
 \end{align*}
 Write $h=t(a)g$ with $g\in \SL_2(F)$, we have $h^{\kappa^{-1}}=t(a)g^{\kappa^{-1}}$. Denote $\Phi^\kappa:=\diag(\kappa,1)\Phi$. Then
  \begin{align*}&\quad f(s,h^{\kappa^{-1}}, \Phi)\\
  &=\eta(a)|a|^s\int_{F^\times}[\left(\diag(\kappa^{-1},1)g\diag(\kappa,1) \right)\Phi](0,r)\eta(r)|r|_E^s d^*r\\
  &=\eta(a)|a|^s\int_{F^\times}[\left(g\diag(\kappa,1) \right)\Phi](0,r)\eta(r)|r|_E^s d^*r\\
  &=f(s,h, \Phi^{\kappa}).\end{align*}
  Thus 
  \begin{equation}{\label{eq213}}\Psi(s,W^\kappa,\phi^\kappa,\Phi,\eta,\omega_{\mu,\psi^{-1}_\kappa,\chi})=\Psi(s,W,\theta,\Phi^\kappa,\eta,\omega_{\mu,\psi^{-1},\chi}).\end{equation}
 By Corollary \ref{cor213} and the fact that $\phi\mapsto \phi^\kappa$ is an isomorphism, $I(s,\pi,\omega_{\mu,\psi_\kappa^{-1},\chi},\eta)$ is generated by $\Psi(s,W^\kappa,\phi^\kappa,\Phi,\eta,\omega_{\mu,\psi^{-1}_{\kappa a\bar a},\chi})$, for $W\in \CW(\pi,\psi_{ a\bar a}), \phi\in \CS(E,\chi), a\in E^\times$, $\Phi\in \CS(F^2)$. Since $\Phi\mapsto \Phi^\kappa$ is an isomorphism, by Eq.(\ref{eq213}), we have 
 $I(s,\pi,\omega_{\mu,\psi^{-1}_\kappa,\chi},\eta)=I(s,\pi,\omega_{\mu,\psi^{-1},\chi},\eta)$. Consequently, we get
 $$ L(s,\pi,\omega_{\mu,\psi^{-1}_\kappa,\chi},\eta)=L(s,\pi,\omega_{\mu,\psi^{-1},\chi},\eta).$$
 Since $\widehat{\Phi^\kappa}=|\kappa|_F^{-1}\diag(\kappa^{-1},\kappa^{-1})\hat\Phi^{\kappa}$, we get
  $$f(1-s,h,\widehat{\Phi^\kappa},\eta^*)=\eta(\kappa)^{-1}|\kappa|_F^{1-2s}f(1-s,h,\hat \Phi^\kappa,\eta^*).$$
 From the functional equation, we have
 $$\gamma(s,\pi,\omega_{\mu,\psi^{-1}_\kappa,\chi},\eta)=\eta(\kappa)|\kappa|_F^{2s-1}\gamma(s,\pi,\omega_{\mu,\psi^{-1},\chi},\eta).$$
 This finishes the proof.
 \end{proof}
 We can combine Lemma \ref{lem28} and Proposition \ref{prop214} together to get
  \begin{cor}
 For $a\in F^\times$,  suppose that both $L(s,\pi,\omega_{\mu,\psi^{-1},\chi},\eta)$ and $L(s,\pi,\omega_{\mu,\psi^{-1}_a,\chi})$ are defined, then we have
  $$L(s,\pi,\omega_{\mu,\psi^{-1}_a,\chi},\eta)=L(s,\pi,\omega_{\mu,\psi^{-1},\chi},\eta)$$
  and $$\gamma(s,\pi,\omega_{\mu,\psi^{-1}_a,\chi},\eta)=\eta(a)|a|_F^{2s-1}\gamma(s,\pi,\omega_{\mu,\psi^{-1},\chi},\eta).$$
  \end{cor}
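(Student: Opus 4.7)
The plan is to deduce the corollary from Lemma \ref{lem28} and Proposition \ref{prop214} by splitting $F^\times$ according to the norm subgroup. Concretely, the index $[F^\times : \Nm_{E/F}(E^\times)] = 2$, so every $a \in F^\times$ can be written as either $a = c\bar{c}$ for some $c \in E^\times$, or $a = \kappa\, c\bar{c}$ for some $c \in E^\times$, where $\kappa \in F^\times - \Nm_{E/F}(E^\times)$ is the fixed element from before. In each case the desired identity is obtained by a short chain of substitutions.

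First I would dispose of the $L$-factor statement: both Lemma \ref{lem28} and Proposition \ref{prop214} assert that twisting $\psi$ by $c\bar c$ or by $\kappa$ leaves the $L$-factor unchanged. Thus in either of the two cases above, one applies the appropriate identity (once or in sequence) and immediately obtains $L(s,\pi,\omega_{\mu,\psi^{-1}_a,\chi},\eta)=L(s,\pi,\omega_{\mu,\psi^{-1},\chi},\eta)$.

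For the $\gamma$-factor, in Case 1 ($a = c\bar c$) I would invoke Lemma \ref{lem28} directly, noting that $\eta(c\bar c) = \eta(a)$ and $|c\bar c|_F = |a|_F$, giving
\[
\gamma(s,\pi,\omega_{\mu,\psi^{-1}_a,\chi},\eta) \;=\; \eta(a)|a|_F^{2s-1}\gamma(s,\pi,\omega_{\mu,\psi^{-1},\chi},\eta).
\]
In Case 2 ($a = \kappa c\bar c$) I would apply Proposition \ref{prop214} to pass from $\psi^{-1}$ to $\psi^{-1}_\kappa$, producing a factor $\eta(\kappa)|\kappa|_F^{2s-1}$, and then apply Lemma \ref{lem28} with $\pi$ replaced in effect by its $\psi_\kappa$-realization to pass from $\psi^{-1}_\kappa$ to $\psi^{-1}_{\kappa c \bar c} = \psi^{-1}_a$, producing an additional factor $\eta(c\bar c)|c\bar c|_F^{2s-1}$. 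Multiplying the two factors gives $\eta(\kappa c\bar c)|\kappa c\bar c|_F^{2s-1} = \eta(a)|a|_F^{2s-1}$, as required.

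There is no substantive obstacle here; the work is entirely bookkeeping, and the only mild subtlety is to check that the hypotheses needed for Proposition \ref{prop214} (namely that $\pi$ be both $\psi^{-1}$- and $\psi^{-1}_\kappa$-generic) are supplied by the assumption that both $L$-factors in the statement are defined, which forces the relevant Whittaker models to be nonzero. With this observation recorded, the two cases combine to yield the claimed equalities of $L$- and $\gamma$-factors.
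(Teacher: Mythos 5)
Your proposal is correct and follows exactly the route the paper intends: the paper simply says the corollary is obtained "by combining Lemma \ref{lem28} and Proposition \ref{prop214}," and your case split along the norm subgroup $\Nm_{E/F}(E^\times) \subset F^\times$ is precisely that combination. Your added remark on why the genericity hypotheses of Proposition \ref{prop214} are available (both $L$-factors being defined forces the relevant Whittaker models to be nonzero, and genericity depends only on the norm coset of the twisting parameter by Proposition \ref{prop11}) is a correct and useful observation that the paper leaves implicit.
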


\subsection{Unramified caculation} The unramified calculation is done in \cite{GRS} and we include their result here for completeness.

Let $E/F$ be unramified, $\psi$ be an additive character with conductor $\CO_F$. Let $p$ be a prime element of $F$. Let $(\pi,V)$ be an irreducible unramified representation of $H$ such that $\CW(\pi,\psi)\ne 0$. We can assume that $\pi$ is an irreducible unramified component of $\Ind_B^H(\nu)$ for an unramified quasi-character $\nu$ of $E^\times$. Let $W$ be the Whittaker function associated with the spherical vector, normalized by the condition $W(k)=1$ for $k\in K$. The explicit Casselman-Shalika formula reads: $$W\left(t(a)\right)=0, ~ |a|_E >1,$$
and 
$$W_\pi(t(p^n))=|p^n|_F\frac{\nu(p)^n-\nu(p)^{-n-1}}{1-\nu(p)^{-1}}, n\ge 0,$$
where $p$ is a prime element in $E$.

 Let $\mu$ be the unique unramified character of $E^\times$ such that $\mu|_{F^\times}=\epsilon_{E/F}$. Note that $\mu(p)=-1$. Let $\sigma=\omega_{\mu,\psi^{-1},1}$. Then $\sigma$ is a component of $\Ind_B^H(\mu)$. If we take $\phi$ to be the characteristic function of $\CO_E$ and let $W_\sigma(h)=(\omega_{\mu,\psi^{-1},1}(h)\phi)(e)$. Then a direct calculation or an application of the above Casselman-Shalika formula show that
 $$W_\sigma(t(a))=0, |a|> 1$$
 and 
 $$W_\sigma(t(p^n))=(-1)^n|p|_F^n$$
 
 Let $\eta$ be an unramified character of $E^\times$. Then the condition $\omega_\pi\omega_\sigma\eta|_{E^1}=\nu\mu\eta|_{E^1}=1$ is automatic. Let $\Phi\in\CS(F^2)$ be the characteristic function of $\CO_F\oplus \CO_F$, let $f(s,h,\Phi,\eta)$ be the element in $\Ind_B^H(\eta|~|^{s-1/2})$ as before. 
 
 Write $H=BK$. For $h=um(a)k$, we have $dh=|a|_E^{-1}du dadk$. We have 
 \begin{align*}
&\quad \Psi(s,W_\pi,W_\sigma,\Phi,\eta)\\
&=\int_{K}\int_{E^1\setminus E^\times}W_\pi(t(a)k)W_\sigma(t(a)k)f(t(a)k)|a|^{-1}dadk\\
&=\int_{E^1\setminus E^\times} W_\pi(t(a))W_\sigma(t(a))\eta(a)|a|^{s-1}f(1)da\\
&=\frac{f(s,1,\Phi,\eta)}{1-\nu(p)^{-1}}\Vol(E^1\setminus \CO_E^\times)\left(\sum_{n\ge 0} (-\eta\nu(p))^nq_E^{-ns}-\sum_{n\ge 0} \nu(p)^{-1}(-\eta\nu^{-1}(p))^nq_E^{-ns}\right)\\
&=\Vol(E^1\setminus \CO_E^\times) \frac{f(s,1,\Phi,\eta)}{1-\nu(p)^{-1}}\left( \frac{1}{1-\mu\eta\nu(p)q_E^{-s}}-\frac{\nu(p)^{-1}}{1-\mu\eta\nu^{-1}(p)q_E^{-s}}\right)\\
&=\Vol(E^1\setminus \CO_E^\times) \frac{f(s,1,\Phi,\eta)}{1-\nu(p)^{-1}}\frac{(1-\nu(p)^{-1})(1-\eta(p)q_E^{-s})}{(1-\mu\eta\nu(p)q_E^{-s})(1-\mu\eta\nu^{-1}(p)q_E^{-s})}\\
&=\Vol(E^{1}\setminus \CO_E^\times)f(s,1,\Phi,\eta) L_E(s,\eta)^{-1}L_E(s,\mu\eta \nu)L_E(s,\mu\eta\nu^{-1}).
\end{align*}
By definition, we have 
\begin{align*}
f(s,1,\Phi,\eta)&=\int_{F^\times}\Phi(0,a)|a|^s_E\eta(a)d^\times a\\
&=\sum_{n\ge 0}|p|_E^{ns}\eta(a)^n\Vol(\CO_F^\times)\\
&=\Vol(\CO_F^\times)L_E(s,\eta).
\end{align*}

Thus, we have
$$\Psi(s,W_\pi,\phi,\Phi,\eta)=cL_E(s,\mu\eta\nu)L_E(s,\mu\eta\nu^{-1}),$$
where $c$ is a nonzero constant which depends on the measure.
\begin{prop} Let the notations be as above, we have 
$$L(s,\pi,\omega_{\mu,\psi^{-1},1},\eta)=L_E(s,\mu\eta\nu)L_E(s,\mu\eta\nu^{-1}),$$
and
$$\epsilon(s,\pi,\omega_{\mu,\psi^{-1},1},\eta)=1.$$
\end{prop}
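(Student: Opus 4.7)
The plan is to read the $L$-factor and $\epsilon$-factor directly off the explicit calculation already carried out above, once two auxiliary points are settled: that the particular integral computed is in fact a generator of the fractional ideal $I(s,\pi,\omega_{\mu,\psi^{-1},1},\eta)$, and that the functional equation identifies the quotient unambiguously.

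For the $L$-factor, the displayed computation shows that
\[
\Psi(s,W_\pi,\phi,\Phi,\eta)=c\cdot L_E(s,\mu\eta\nu)L_E(s,\mu\eta\nu^{-1}),
\]
so $L_E(s,\mu\eta\nu)L_E(s,\mu\eta\nu^{-1})\in I(s,\pi,\omega_{\mu,\psi^{-1},1},\eta)$. The remaining task is the reverse divisibility: no zeta integral can have a pole outside those coming from $L_E(s,\mu\eta\nu)L_E(s,\mu\eta\nu^{-1})$. The standard approach is to exploit the fact that $\pi\hookrightarrow \Ind_B^H(\nu)$ and expand a general Whittaker function via the Iwasawa decomposition $H=BK$, thereby reducing the $a$-integral to a Mellin-type sum whose poles are controlled precisely by $(1-\mu\eta\nu(p)q_E^{-s})(1-\mu\eta\nu^{-1}(p)q_E^{-s})$; the Bruhat-Schwartz data $\phi,\Phi$ contribute only polynomials in $q_E^{\pm s}$ after integrating over $K$ and the compact unit group. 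This forces $L(s,\pi,\omega_{\mu,\psi^{-1},1},\eta)=L_E(s,\mu\eta\nu)L_E(s,\mu\eta\nu^{-1})$, which has the required form $P(q_E^{-s})^{-1}$ with $P(1)=1$.

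For the $\epsilon$-factor, I would apply the functional equation
\[
\gamma(s,\pi,\omega_{\mu,\psi^{-1},1},\eta)\,\Psi(s,W_\pi,\phi,\Phi,\eta)=\Psi(1-s,W_\pi,\phi,\hat\Phi,\eta^{*}),
\]
with the unramified test data. Since $\psi$ has conductor $\CO_F$ and $\Phi$ is the characteristic function of $\CO_F\oplus \CO_F$, the Fourier transform $\hat\Phi$ equals $\Phi$ (up to the normalization of the self-dual measure), so the right side is computed by exactly the same unramified calculation with $(s,\eta,\nu)$ replaced by $(1-s,\eta^{-1},\nu^{-1})$. Using $\nu\mu\eta|_{E^1}=1$ and $\mu|_{F^\times}=\epsilon_{E/F}$, one finds the right side equals the same nonzero constant $c$ times $L_E(1-s,\mu\eta^{-1}\nu^{-1})L_E(1-s,\mu\eta^{-1}\nu)$, i.e.\ $cL(1-s,\widetilde\pi,\omega_{\mu,\psi^{-1},1},\eta^{*})$. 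Dividing then yields $\gamma=L(1-s,\widetilde\pi,\omega_{\mu,\psi^{-1},1},\eta^{*})/L(s,\pi,\omega_{\mu,\psi^{-1},1},\eta)$, and by definition $\epsilon(s,\pi,\omega_{\mu,\psi^{-1},1},\eta)=1$.

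The main subtlety will be confirming that the zeta integral built from the spherical data genuinely generates the whole fractional ideal (rather than a proper submodule). This is the only point where some work beyond the displayed computation is needed; it can be handled either by a direct Whittaker-expansion argument as above, or equivalently by observing that the two geometric series which appeared in the explicit computation, coming from the two characters in the Jacquet module of $\Ind_B^H(\nu)$, already saturate all the poles that the general gauge estimate allows.
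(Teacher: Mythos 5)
Your approach is essentially the paper's: the spherical calculation produces the candidate $L$-factor, and applying the functional equation to the same unramified data (using that $\hat\Phi=\Phi$ for unramified $\psi$ and self-dual measure) yields $\gamma(s,\pi,\omega_{\mu,\psi^{-1},1},\eta)=L(1-s,\pi,\omega_{\mu,\psi^{-1},1},\eta^*)/L(s,\pi,\omega_{\mu,\psi^{-1},1},\eta)$, hence $\epsilon=1$. The paper gives no proof beyond the displayed computation (it cites \cite{GRS} for the unramified calculation), so you are right to single out the reverse divisibility---that the spherical zeta integral already generates the full fractional ideal $I(s,\pi,\omega_{\mu,\psi^{-1},1},\eta)$, so that no general $\Psi(s,W,\phi,\Phi,\eta)$ acquires extra poles---as the one substantive point not addressed by the calculation itself; the Whittaker-asymptotics argument you sketch, using the exponents $\nu,\nu^{-1}$ of $\Ind_B^H(\nu)$ to bound the poles of the Mellin-type $a$-integral, is the standard way to settle it. One small notational slip: the paper's $\epsilon$-factor is defined with $L(1-s,\pi,\omega_{\mu,\psi^{-1},1},\eta^*)$ in the denominator, attached to $\pi$ rather than $\widetilde\pi$; your writing $\widetilde\pi$ there is harmless in this unramified setting (where $\eta^*=\eta^{-1}$ and the $L$-product is symmetric in $\nu\leftrightarrow\nu^{-1}$), but it should read $\pi$ to match the paper's definition.
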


\section{A local converse theorem for $\RU(1,1)$}
In this section, we will slightly modify Baruch's method, see \cite{Ba1} and \cite{Ba2}, to give a proof of the local converse theorem for $\RU(1,1)$, when $E/F$ is unramified or $E/F$ is ramified but the characteristic of the residue field is not 2,  using the $\gamma$-factors $\gamma(s,\pi,\omega_{\mu,\psi^{-1},\chi},\eta)$ we defined in the last section. 

For the field extension $E/F$, there is an associated integer $i=i_{E/F}$ defined as follows. If $E/F$ is unramified, then $i_{E/F}=0$. For $E/F$ ramified, take an element $x\in \CO_E$ such that $\CO_E=\CO_F[x]$, and define $i=v_E(\bar x-x)$, where $v_E$ is the valuation of $E$. The integer $i$ is the smallest integer such that the ramification group $G_i$ is trivial, see Chapter IV, $\S1$ of \cite{Se}. It is known that $i\ne 1$ if and only if the residue field $\CO_F/\CP_F$ has characteristic $ 2$, see Chapter IV, $\S2$ of \cite{Se}.

\subsection{Howe vectors}
The Howe vectors for the groups $\GL_n$, $\RG\Sp_4$ and $\RU(2,1)$ are defined in Baruch's thesis \cite{Ba2}, and the Howe vectors for $\RU(2,1)$ can also be found in \cite{Ba1}. The $\RU(1,1)$ version we will use can be defined similarly.

Let $p_E$ be a prime element of $E$, $\CO_E$ (resp. $\CO_F$) be the integer ring of $E$ (resp. $F$), and $\CP_E$ (resp. $\CP_F$) be the maximal ideal of $E$ (resp. $F$). Let $q_F=\# (\CO_F/\CP_F)$ and $q_E=\#(\CO_E/\CP_E)$.
 
Thus, if $E/F$ is unramified, we can take $p_E=p_F$ and we have $q_E=q_F^2$ and $\CP_F=\CP_E\cap \CO_F$. If $E/F$ is ramified, we have $p_F=p_E^2 u$ for some $u\in \CO_E^\times$, $q_E=q_F$ and $\CP_F=\CP_E^2\cap \CO_F$.

Let $\psi$ be an unramified additive character of $F$ and let $\psi_E$ be the additive character of $E$ defined by $\psi_E=\psi\circ(\frac{1}{2} \tr_{E/F})$. Thus for $x\in F\subset E$, we have $\psi_E(x)=\psi(x)$. 

For a positive integer $m$, let $K_m=(1+\textrm{Mat}_{2\times 2}(\CP_E^m))\cap \RU(1,1)$ if $E/F$ is unramified, and $K_m=(1+\textrm{Mat}_{2\times 2}(\CP_E^{2m}))\cap \RU(1,1)$. We can write $K_m=(1+\textrm{Mat}_{2\times 2}((\CP_F\CO_E)^m))$ uniformly. Set
$$d_m=\begin{pmatrix} p_F^{-m}&\\ & p_F^{m}\end{pmatrix}.$$
Let $J_m=d_mK_md_m^{-1}$. For $k=(k_{il})\in K$, we have
\begin{equation}{\label {kj}} j:=d_mkd_m^{-1}=\begin{pmatrix} k_{11}& p_F^{-2m}k_{12}\\ \bar p_F^{2m}k_{21} & k_{22}\end{pmatrix}.\end{equation}
 Thus
$$J_m=\begin{pmatrix} 1+\CP_E^m& \CP_E^{-m}\\ \CP_E^{3m}& 1+\CP_E^m\end{pmatrix}\cap \RU(1,1), \textrm{ if } E/F \textrm{ is unramified},$$
and 
$$J_m=\begin{pmatrix} 1+\CP_E^{2m}& \CP_E^{-2m}\\ \CP_E^{6m}& 1+\CP_E^{2m}\end{pmatrix}\cap \RU(1,1), \textrm{ if } E/F \textrm{ is ramified}.$$

For a subgroup $A$ of $H=\RU(1,1)$, denote $A_m=A\cap J_m$. Note that, in any case, we have
$$N_m=\begin{pmatrix}1& \CP_F^{-m}\\ & 1 \end{pmatrix}.$$

Let $\tau_m$ be the character of $K_m$ defined by
$$\tau_m(k)=\psi_E( p_F^{-2m}k_{1,2}), k=(k_{i,l})\in K_m.$$
By our assumption on $\psi_E$, it is easy to see that $\tau_m$ is indeed a character on $K_m$. Define a character $\psi_m$ on $J_m$ by
$$\psi_m(j)=\tau_m(d_m^{-1}jd_m), j\in J_m.$$
We have $\psi_m(j)=\psi_E(j_{1,2})$ for $j=(j_{i,l})\in J_m$. Thus $\psi_m$ and $\psi$ agree on $N_m$.

Let $(\pi,V)$ be a $\psi$-generic representation. We fix a Whittaker functional. Let $v\in V$ be such that $W_v(1)=1$. For $m\ge 1$, as \cite{Ba1} and \cite{Ba2}, we define
\begin{equation}{\label{eq32}}v_m=\frac{1}{\Vol(N_m)}\int_{N_m}\psi(n)^{-1}\pi(n)vdn.\end{equation}
Let $L\ge 1$ be an integer such that $v$ is fixed by $K_L$.

\begin{lem}{\label {lem31}} We have
\begin{enumerate}
\item $W_{v_m}(1)=1.$
\item If $m\ge L$, $\pi(j)v_m=\psi_m(j)v_m$ for all $j\in J_m$.
\item If $k\le m$, then
$$v_m=\frac{1}{\Vol(N_m)}\int_{N_m}\psi(n)^{-1}\pi(n)v_kdn.$$
\end{enumerate}
\end{lem}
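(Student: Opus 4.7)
The plan is to establish the three claims in order; (1) and (3) are essentially formal consequences of the definition (\ref{eq32}), while (2) is the substantive claim and requires a case analysis based on the Iwahori-type decomposition $J_m = \bar N_m T_m N_m$.

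For (1), I would apply the chosen Whittaker functional $\lambda$ (so that $W_v(h) = \lambda(\pi(h)v)$) to the defining integral of $v_m$ and use $\lambda(\pi(n)v) = \psi(n) W_v(1) = \psi(n)$ for $n \in N_m$; the integrand collapses to the constant $1$ and so $W_{v_m}(1) = W_v(1) = 1$. For (3), I would substitute the definition of $v_k$ into the right-hand side, apply Fubini, and perform the change of variable $n \mapsto n {n'}^{-1}$ in the outer integral. The identity $\psi(n{n'}^{-1})\psi(n') = \psi(n)$ eliminates the $n'$-dependence in the character, and since $k \le m$ forces $N_k \subseteq N_m$ the outer range is unchanged; the $n'$ integral then contributes exactly $\Vol(N_k)$, which cancels the prefactor in $v_k$ and leaves $v_m$.

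The real content is in (2). For $j \in J_m$, I would decompose $j = \bar n_0 t_0 n_0$ according to the Iwahori factorization and verify the eigenequation factor by factor. For $n_0 \in N_m$, the substitution $n \mapsto n_0^{-1} n$ in the defining integral of $v_m$ immediately gives $\pi(n_0) v_m = \psi(n_0) v_m = \psi_m(n_0) v_m$, since $\psi$ and $\psi_m$ agree on $N_m$. For $t_0 = t(a) \in T_m$, the key observation is that $t_0$ commutes with $d_m$, so $t_0 \in K_m \subseteq K_L$ and $\pi(t_0) v = v$; conjugating $n$ by $t_0$ inside the defining integral and noting that $t_0 N_m t_0^{-1} = N_m$ with $\psi$ unchanged (because $(a\bar a - 1) y \in \CO_F$ for $a \in 1 + \CP_E^m$ and $y \in \CP_F^{-m}$) yields $\pi(t_0) v_m = v_m = \psi_m(t_0) v_m$. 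For $\bar n_0 = \bar n(x_0) \in \bar N_m$, I would use the explicit Bruhat-type identity
\begin{equation*}
\bar n(x_0)\, n(y) \;=\; n\!\left(\tfrac{y}{1 + x_0 y}\right) t\!\left(\tfrac{1}{1 + x_0 y}\right) \bar n\!\left(\tfrac{x_0}{1 + x_0 y}\right),
\end{equation*}
valid because $1 + x_0 y$ is a unit for $x_0 \in \CP_F^{3m}$ and $y \in \CP_F^{-m}$. The middle factor lies in $T_m$ and the right-hand $\bar n$-factor lies in $\bar N \cap K_L$ (using $m \ge L$), so both act trivially on $v$; after the measure-preserving change of variable $y' = y/(1 + x_0 y)$ on $\CP_F^{-m}$ (which preserves $\psi$ modulo $\CO_F$ since the discrepancy $y - y'$ lies in $x_0 y^2 \cdot (1+\CP_F^{2m}) \subseteq \CO_F$), the remaining integral collapses to $v_m$. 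Since $\psi_m(\bar n_0) = 1$, this concludes the case.

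The main obstacle is the $\bar N_m$ analysis: one must keep careful bookkeeping of which parameters live in which congruence lattices so that every stray factor either lies in $\ker \psi$ after pairing, acts trivially on $v$ by being in $K_L$, or evaluates trivially against $\psi_m$. The unramified and ramified cases are handled uniformly through the description $J_m = (1 + M_2((\CP_F\CO_E)^m)) \cap H$ recorded earlier, since the relevant valuation bounds line up in both settings.
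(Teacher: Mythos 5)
Your proof is correct, but your approach to part (2) is genuinely different from (and more laborious than) the paper's. The paper introduces the auxiliary vector
$$\tilde v_m=\frac{1}{\vol(J_m)}\int_{J_m}\psi_m(j)^{-1}\pi(j)v\,dj,$$
for which the eigenequation $\pi(j)\tilde v_m=\psi_m(j)\tilde v_m$ is \emph{automatic} from the invariance of the Haar measure on the group $J_m$; all that remains is to check $\tilde v_m=v_m$, which it does in one stroke by factoring $J_m=N_m\cdot \bar B_m$ and integrating out $\bar B_m$ using $\bar B_m\subset K_m\subset K_L$ and the triviality of $\psi_m$ on $\bar B_m$. You instead work directly with the definition of $v_m$ and verify the eigenequation separately on each of the three Iwahori factors $\bar N_m$, $T_m$, $N_m$: the $N_m$ case is a change of variables, the $T_m$ case a conjugation argument, and the $\bar N_m$ case requires the Bruhat-type identity $\bar n(x_0)n(y)=n(y/(1+x_0y))\,t(1/(1+x_0y))\,\bar n(x_0/(1+x_0y))$ together with careful bookkeeping that the extraneous $t$- and $\bar n$-factors lie in $K_L$ and that the substitution $y\mapsto y/(1+x_0y)$ preserves the domain, the measure, and $\psi$ up to $\ker\psi$. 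Your route makes every valuation estimate explicit and is self-contained, which has pedagogical value; but you pay for it by having to re-derive, inside each case, pieces of what the single identity $\tilde v_m=v_m$ packages once and for all. Both arguments ultimately rest on the same two structural facts --- the Iwahori decomposition of $J_m$ and the inclusion of its lower-triangular part into $K_L$ --- so the difference is one of organization rather than of underlying mechanism.
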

\begin{proof}
The proof is the same as in the $\RU(2,1)$ case, see Lemma 5.2 of \cite{Ba1}. We give some details of the proof of (2) here. Let 
$$\tilde v_m=\frac{1}{\vol(J_m)}\int_{J_m}\psi_m(j)^{-1} \pi(j)v dj.$$
We have the Iwahori decomposition $J_m= N_m \cdot \bar B_m$ with unique expressions, where $\bar B_m=\bar B\cap J_m$. For $j=n \bar b$, then $dj=d\bar b d n$. Note that $ \bar B_m \subset K_m\subset K_L$ for $m\ge L$, and thus $v$ is fixed by $\bar B_m$. Then
\begin{align*}
\tilde v_m&=\frac{1}{\vol(N_m)}\frac{1}{\vol (\bar B_m)}\int_{N_m}\int_{\bar B_m} \psi_m(n\bar b)^{-1}\pi(n \bar b)v d\bar bdn\\
 &=\frac{1}{\vol(N_m)}\int_{N_m} \psi_m(n)\pi(n)v dn\\
 &=\tilde v_m.
\end{align*}
It is clear that $\pi(j)\tilde v_m=\psi_m(j)\tilde v_m$. The assertion follows.
\end{proof}
 The vectors $\wpair{v_m}_{m\ge L}$ are called Howe vectors.

Let $w=\begin{pmatrix} &1\\ -1& \end{pmatrix}$ be the unique nontrivial Weyl element of $\RU(1,1)$.
\begin{lem}{\label{lem32}}
For $m\ge L$, let $v_m$ be Howe vectors defined as above, and let $W_{v_m}$ be the Whittaker functions associated to $v_m$. Then
\begin{enumerate}
\item $W_{v_m}(t(a))\ne 0 \textrm{ implies } a\bar a\in 1+\CP_F^m.$
\item  $W_{v_m}(t(a)w)\ne 0$ implies $a\bar a\in \CP_F^{-3m}$.
\end{enumerate}
\end{lem}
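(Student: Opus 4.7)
\textbf{Proof plan for Lemma \ref{lem32}.} The idea is to exploit the quasi-invariance $\pi(j)v_m=\psi_m(j)v_m$ for $j\in J_m$ (Lemma \ref{lem31}(2)) and the Whittaker transformation law on the left, by evaluating $W_{v_m}$ at an element of the form $t(a)\cdot j$ or $t(a)w\cdot j$ in two different ways and comparing the resulting characters. Since $\psi$ is unramified with conductor $\CO_F$, the comparison yields a conductor condition on $a\bar a$.

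For part (1), I would take $j=n(x)\in N_m$, so that $x\in\CP_F^{-m}$ and $\psi_m(n(x))=\psi(x)$. On one hand, $\pi(n(x))v_m=\psi(x)v_m$ gives
\[
W_{v_m}(t(a)n(x))=\psi(x)W_{v_m}(t(a)).
\]
On the other hand, $t(a)n(x)t(a)^{-1}=n(a\bar a x)$, so the Whittaker property of $W_{v_m}$ with respect to the left action of $N$ yields
\[
W_{v_m}(t(a)n(x))=\psi(a\bar a x)W_{v_m}(t(a)).
\]
Assuming $W_{v_m}(t(a))\ne 0$, equating the two expressions gives $\psi((a\bar a-1)x)=1$ for every $x\in\CP_F^{-m}$, and the unramifiedness of $\psi$ forces $a\bar a-1\in\CP_F^m$, i.e.\ $a\bar a\in 1+\CP_F^m$.

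For part (2), I would instead take $j=\bar n(y)\in\bar N_m$. A direct check of the shape of $J_m$ in either the unramified or the ramified case (using $\CP_E^{3m}\cap F=\CP_F^{3m}$ resp.\ $\CP_E^{6m}\cap F=\CP_F^{3m}$) gives $\bar N_m=\{\bar n(y):y\in\CP_F^{3m}\}$, and $\psi_m(\bar n(y))=\psi_E(0)=1$, so $\pi(\bar n(y))v_m=v_m$. Hence
\[
W_{v_m}(t(a)w\bar n(y))=W_{v_m}(t(a)w).
\]
A short matrix computation shows $w\bar n(y)=n(-y)w$, so $t(a)w\bar n(y)=n(-a\bar a y)\,t(a)w$, and the Whittaker property gives
\[
W_{v_m}(t(a)w\bar n(y))=\psi(-a\bar a y)W_{v_m}(t(a)w).
\]
If $W_{v_m}(t(a)w)\ne 0$, comparing the two expressions forces $\psi(a\bar a y)=1$ for every $y\in\CP_F^{3m}$, and the unramifiedness of $\psi$ gives $a\bar a\in\CP_F^{-3m}$.

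The plan is essentially routine once one has the quasi-invariance $\pi(j)v_m=\psi_m(j)v_m$ in hand; the only point that requires a little bookkeeping is the identification of $\bar N_m$ (and hence the exponent $3m$), which has to be done in the unramified and ramified cases separately but gives the same answer $\CP_F^{3m}$ in both. The identity $w\bar n(y)=n(-y)w$ is the geometric input that converts the lower unipotent invariance into a condition on the upper unipotent direction after conjugation by $w$.
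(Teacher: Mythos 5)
Your proposal is correct and follows essentially the same argument as the paper: in both parts one compares the Whittaker transformation law on the left (after commuting the unipotent element past $t(a)$ or $t(a)w$) with the $J_m$-quasi-invariance $\pi(j)v_m=\psi_m(j)v_m$ from Lemma \ref{lem31}(2), and the unramifiedness of $\psi$ converts the resulting character identity into a conductor condition on $a\bar a$. The only cosmetic differences are a sign convention in part (2) (the paper conjugates $\bar n(-x)$, you conjugate $\bar n(y)$) and your explicit verification that $\bar N_m=\{\bar n(y):y\in\CP_F^{3m}\}$ in both the unramified and ramified cases, which the paper leaves implicit.
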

\begin{proof}
(1) For $x\in P_F^{-m}$, we have $n(x)\in  N_m$. On the other hand, we have
$$t(a)n(x)=n(a\bar ax)t(a).$$
By Lemma \ref{lem31}, we have
$$\psi_m(n(x))W_{v_m}(t(a))=\psi(a\bar ax)W_{v_m}(t(a)).$$
Since $\psi_m(n(x))=\psi(x)$, if $W_{v_m}(t(a))\ne 0$, we have $(1-a\bar a)x\in \Ker(\psi)$ for any $x\in \CP_F^{-m}$. Thus $a\bar a\in 1+\CP_F^m$.  

(2) For $x\in \CP_F^{3m}$, we have $\bar n(-x)\in \bar N_m:=\bar N\cap J_m$. From the relation
$$t(a)w\bar n(-x)=n(a\bar a x)t(a)w,$$
we have
$$W_{v_m}(t(a)w)=\psi(a\bar a x)W_{v_m}(t(a)w).$$
Thus $W_{v_m}(t(a)w)\ne 0$ implies $\psi(a\bar a x)=1$ for all $x\in \CP_F^{3m}$, i.e., $a\bar a\in \CP_F^{-3m}$.
\end{proof}
Denote the central character of $\pi$ by $\omega_\pi$.
\begin{lem}{\label{lem33}}
\begin{enumerate}
\item Suppose that $E/F$ is unramified. For $a\in E^\times$, $\Nm_{E/F}(a)=a\bar a\in 1+\CP_F^m$ if and only if $a\in E^1(1+\CP_E^m)$. Thus, for $m\ge L$
$$ W_{v_m}(t(a))=\left\{\begin{array}{lll}\omega_\pi(z), &\textrm{ if } a=zu, \textrm{ for }z\in E^1, u\in 1+\CP_E^m, \\ 0, & \textrm{ otherwise}.    \end{array}\right. $$
\item Suppose that $E/F$ is ramified and $m\ge i_{E/F}$. For $a\in E^\times$, $\Nm_{E/F}(a)=a\bar a\in 1+\CP_F^m$ if and only if $a\in E^1(1+\CP_E^{2m-i_{E/F}+1})$. 
Thus, if the residue characteristic is not $2$, then for $m\ge L\ge i_{E/F}=1$, we have
$$ W_{v_m}(t(a))=\left\{\begin{array}{lll}\omega_\pi(z), &\textrm{ if } a=zu, \textrm{ for }z\in E^1, u\in 1+\CP_E^{2m}, \\ 0, & \textrm{ otherwise}.    \end{array}\right. $$
\end{enumerate}
\end{lem}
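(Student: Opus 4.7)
The plan is to prove, in both parts, the arithmetic equivalence characterizing the set $\{a\in E^\times:a\bar a\in 1+\CP_F^m\}$ as the coset $E^1\cdot(1+\CP_E^\ell)$ (with $\ell=m$ in (1) and $\ell=2m-i_{E/F}+1$ in (2)), and then deduce the Whittaker formula by combining the equivalence with Lemmas \ref{lem31} and \ref{lem32}. The containment $E^1(1+\CP_E^\ell)\subset\{a:a\bar a\in 1+\CP_F^m\}$ is a direct calculation: for $u=1+\beta$ with $\beta\in\CP_E^\ell$ expand $u\bar u=1+\tr_{E/F}(\beta)+\Nm_{E/F}(\beta)$ and verify that both correction terms lie in $\CP_F^m$, via the standard filtration formulas for the trace and norm and, in case (2), the hypothesis $m\ge i_{E/F}$.

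For the reverse direction, given $a$ with $a\bar a\in 1+\CP_F^m$, it suffices to produce $u\in 1+\CP_E^\ell$ with $u\bar u=a\bar a$, since then $z:=a/u$ automatically lies in $E^1$. Thus the crux is surjectivity of the norm map $\Nm:1+\CP_E^\ell\to 1+\CP_F^m$, which I would prove by Hensel-style successive approximation. The exponent $\ell$ is chosen precisely so that $\tr_{E/F}:\CP_E^\ell\to\CP_F^m$ is surjective; given $\alpha\in\CP_F^m$, pick $\beta_1\in\CP_E^\ell$ with $\tr_{E/F}(\beta_1)=\alpha$, observe that the residual error $-\Nm_{E/F}(\beta_1)$ lies in a strictly deeper power of $\CP_F$, and correct iteratively. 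The resulting series converges in the complete field $E$ to the desired $\beta$.

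Once the equivalence is in hand, the Whittaker formula is immediate. Writing $a=zu$ with $z\in E^1$ and $u\in 1+\CP_E^\ell$, split $t(a)=t(z)t(u)$: the central factor $t(z)$ contributes $\omega_\pi(z)$, while $t(u)$ is diagonal, commutes with $d_m$, and lies in $K_m\subset J_m$---here the residue-characteristic hypothesis enters, since only when $i_{E/F}=1$ does $\ell=2m$ match the condition $u-1\in\CP_E^{2m}$ that defines $K_m$ in the ramified case. Since the $(1,2)$-entry of $t(u)$ vanishes, $\psi_m(t(u))=1$, and Lemma \ref{lem31}(2) then gives $\pi(t(u))v_m=v_m$, whence $W_{v_m}(t(a))=\omega_\pi(z)W_{v_m}(1)=\omega_\pi(z)$. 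Independence of the decomposition reduces to $\omega_\pi|_{E^1\cap(1+\CP_E^\ell)}\equiv 1$, which is the same central-character computation. Finally, if $a\notin E^1(1+\CP_E^\ell)$ then $a\bar a\notin 1+\CP_F^m$ by the equivalence, and Lemma \ref{lem32}(1) gives $W_{v_m}(t(a))=0$. The main obstacle is the Hensel surjectivity of the norm map together with the exponent bookkeeping in the ramified case; the latter is also what forces the residue-characteristic restriction in the stated Whittaker formula.
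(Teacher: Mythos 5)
Your proof is correct and follows essentially the same route as the paper: both reduce to the norm-filtration identity $\Nm_{E/F}(1+\CP_E^\ell)=1+\CP_F^m$, then factor $a=zu$ with $z=a/u\in E^1$ and use the $J_m$-invariance of the Howe vector from Lemma \ref{lem31}(2) together with the vanishing criterion from Lemma \ref{lem32}(1). The only difference is that the paper cites Serre (Chapter V) for the norm statement, whereas you reprove it via a Hensel-style iteration starting from the surjectivity of the trace on the relevant filtration step and verify the easy containment by expanding $u\bar u=1+\tr(\beta)+\Nm(\beta)$; both of these are valid and amount to a self-contained rederivation of the cited fact.
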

\begin{proof}
We first assume that $E/F$ is unramified, it is known that $\Nm_{E/F}(1+\CP_E^m )=1+\CP_F^m$, see \cite{Se} Chapter V $\S2$, for example. It is clear that $\Nm(a)\in 1+\CP_F^m$ for $a\in E^1(1+\CP_E^m)$. On the other hand, if $a\bar a\in 1+\CP_F^m$, we can find $b\in 1+\CP_E^m$ such that $a\bar a= \Nm_{E/F}(b)=b\bar b$. Thus $b/a\in E^1$ and $a=b\cdot (b/a)\in E^1(1+\CP_E^m)$. If $a\notin E^1(1+\CP_E^m)$, then $a\bar a \notin 1+\CP_F^m$, and thus $W_{v_m}(t(a))=0$ by Lemma \ref{lem32}. If $a\in E^1(1+\CP_E^m)$, we write $a=zu$. Then $t(z)$ is in the center of $H$ and $t(u)\in J_m$ by the definition of $J_m$. By Lemma \ref{lem31} (2), we get
$$W_{v_m}(t(a))=\omega_{\pi}(z)W_{v_m}(t(u))=\omega_{\pi}(z).$$

If $E/F$ is ramified, then it is known that, for $m\ge i_{E/F}$, we have $\Nm_{E/F} (1+\CP_E^{2m-i_{E/F}+1})=1+\CP_F^m,$ see Corollary 3 of $\S3$, Chapter V of \cite{Se}. The same argument as above shows that $a\bar a\in 1+\CP_F^{m}$ if and only if $a\in E^1( 1+\CP_E^{2m-i_{E/F}+1})$. Now we assume the residue characteristic is not 2, and hence $i_{E/F}=1$. Thus $a\bar a\in 1+\CP_F^{m}$ if and only if $a\in E^1( 1+\CP_E^{2m})$. If $a\notin E^1(1+\CP_E^{2m})$, then $a\bar a \notin 1+\CP_F^m$, and thus $W_{v_m}(t(a))=0$ by Lemma \ref{lem32} (2). If $a\in E^1(1+\CP_E^{2m})$, write $a=zu$ for $z\in E^1 $ and $u\in 1+\CP_E^{2m}$. By the definition of $J_m$, we have $t(u)\in J_m$. Thus  we get $$W_{v_m}(t(a))=\omega_\pi(z)W_{v_m}(t(u))=\omega_\pi(z),$$ by Lemma \ref{lem31} (2) again.
\end{proof}

In the following of this section, we will assume that $E/F$ is unramified, or $E/F$ is ramified, but the residue characteristic is not 2. We fix two irreducible smooth $\psi$-generic representations $(\pi,V_\pi)$ and $(\pi',V_{\pi'})$ of $\RU(1,1)$ with the same central character. Fix $v\in V_\pi$ and $v'\in V_{\pi'}$ with $W_v(1)=1=W_{v'}(1)$ and positive integers $m$, we can define Howe vectors $v_m$ and $v_m'$. Let $L\ge 1$ be an integer such that $v$ and $v'$ are fixed by $K_L$ under the action of $\pi$ and $\pi'$ respectively. 

\begin{prop}{\label{prop34}}
Let $n_0\in N$. Then
\begin{enumerate}
\item $W_{v_m}(g)=W_{v_m'}(g)$ for all $g\in B, m\ge L;$
\item If $n_0\in N_m$, then $W_{v_m}(twn_0)=\psi(n_0)W_{v_m}(tw)$ for all $t\in T, m\ge L;$
\item If $n_0\notin N_m$, then $W_{v_m}(twn_0)=W_{v'_m}(twn_0)$ for all $t\in T$, $m\ge 3L.$
\end{enumerate}
\end{prop}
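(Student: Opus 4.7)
The plan is to handle the three parts in sequence, with only (3) requiring real work. For (1), I would factor any $g \in B$ as $g = t(a)n(c)$ with $a \in E^\times$, $c \in F$. The Whittaker transformation reduces the claim to showing $W_{v_m}(t(a)) = W_{v'_m}(t(a))$ for all $a \in E^\times$, and Lemma \ref{lem33} gives an explicit formula for each side purely in terms of the central character: both vanish unless $a \in E^1\cdot(1+\CP_E^m)$ (respectively $E^1\cdot(1+\CP_E^{2m})$ in the ramified case), in which case both equal $\omega_\pi(z) = \omega_{\pi'}(z)$ for the unique $z \in E^1$ in the decomposition. For (2), I would apply the defining Howe-vector identity $\pi(j)v_m = \psi_m(j)v_m$ for $j \in J_m$. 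For $n_0 = n(c) \in N_m \subset J_m$ with $c \in \CP_F^{-m}$, the character value $\psi_m(n_0) = \psi_E(c) = \psi(c) = \psi(n_0)$, whence $W_{v_m}(twn_0) = \lambda(\pi(tw)\pi(n_0)v_m) = \psi(n_0)W_{v_m}(tw)$.

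Part (3) is the substantive assertion. Write $n_0 = n(b)$ with $v_F(b) \leq -m-1$ and set $t = t(a)$. The key calculation is the Bruhat decomposition
\begin{equation*}
t(a)wn(b) = n(-a\bar a\, b^{-1})\, t(-a\, b^{-1})\, \bar n(b^{-1}),
\end{equation*}
valid for $b \neq 0$ (direct verification using $t(a)n(x) = n(a\bar a x)t(a)$). From the matrix description of $J_L$ one sees that $\bar n(y) \in \bar N \cap J_L$ if and only if $y \in \CP_F^{3L}$, and in that case $\psi_L(\bar n(y)) = 1$ since the $(1,2)$-entry vanishes. Applied directly to $\bar n(b^{-1})$, the resulting $\psi_L$-invariance would peel off the lower-unipotent factor and reduce $W_{v_L}(t(a)wn(b))$ to a value of $W_{v_L}$ on $T$, which by Lemma \ref{lem33} is determined by $\omega_\pi$, hence equals its $\pi'$-analog. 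The obstruction is that this peeling only works when $v_F(b) \leq -3L$, whereas at level $m$ one only has $v_F(b) \leq -m-1$; applied directly to $v_m$ the argument would require $v_F(b) \leq -3m$, which is much stronger than the hypothesis. This is the main difficulty the proof must circumvent.

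The way out is to interpolate through $v_L$ using Lemma \ref{lem31}(3). Writing
\begin{equation*}
v_m = \frac{1}{\Vol(N_m)} \int_{N_m} \psi(n)^{-1}\pi(n)v_L\, dn,
\end{equation*}
one obtains
\begin{equation*}
W_{v_m}(twn(b)) = \frac{1}{\Vol(N_m)} \int_{\CP_F^{-m}} \psi(-x)\, W_{v_L}(twn(b+x))\, dx.
\end{equation*}
For $x \in \CP_F^{-m}$ one has $v_F(x) \geq -m > v_F(b)$, hence $v_F(b+x) = v_F(b) \leq -m-1 \leq -3L - 1$ under the hypothesis $m \geq 3L$; in particular $\bar n((b+x)^{-1}) \in \bar N \cap J_L$. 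Applying the Bruhat decomposition with $b$ replaced by $b+x$, together with the $\psi_L$-invariance of $v_L$ under $\bar N \cap J_L$, yields
\begin{equation*}
W_{v_L}(twn(b+x)) = \psi(-a\bar a(b+x)^{-1})\, W_{v_L}(t(-a(b+x)^{-1})),
\end{equation*}
and by Lemma \ref{lem33} together with $\omega_\pi = \omega_{\pi'}$ the right-hand side is unchanged under replacing $v_L$ by $v'_L$. The integrands for $\pi$ and $\pi'$ therefore agree pointwise in $x$, and integrating yields $W_{v_m}(twn_0) = W_{v'_m}(twn_0)$, completing the proof.
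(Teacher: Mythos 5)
Your proof is correct and follows essentially the same strategy as the paper: parts (1) and (2) are identical reductions via Lemma \ref{lem33} and Lemma \ref{lem31}, and for part (3) both arguments interpolate through $v_L$ via Lemma \ref{lem31}(3), observe that under $m\ge 3L$ the lower-unipotent factor $\bar n((b+x)^{-1})$ in the Bruhat decomposition lands in $J_L$, and then invoke part (1) (equivalently Lemma \ref{lem33} with $\omega_\pi=\omega_{\pi'}$) at level $L$. You write out the Bruhat factorization $t(a)wn(b+x)=n(-a\bar a(b+x)^{-1})t(-a(b+x)^{-1})\bar n((b+x)^{-1})$ explicitly where the paper merely records $wn(x)=b\cdot j$ with $b\in B$, $j=\bar n(x^{-1})\in J_L$; this is an expositional rather than a logical difference.
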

\begin{proof}

(1) Since $B=NT$, it suffices to show that $W_{v_m}(t)=W_{v_m'}(t)$ for all $t\in T$. Write $t=t(a)$ with $a\in E^\times$. This follows from Lemma \ref{lem33} directly.

(2) For $n_0\in N_m$, we have $\pi(n)v_m=\psi(n_0)v_m$ by Lemma \ref{lem31}. The assertion is clear.

(3) By Lemma \ref{lem31} (3), we have
\begin{equation}{\label{eq34}}W_{v_m}(twn_0)=\Vol(N_m)^{-1}\int_{N_m}W_{v_L}(twn_0n)\psi^{-1}(n)dn.\end{equation}
We have a similar relation for $W_{v_m'}$. Thus it suffices to show $W_{v_L}(twn_0n)=W_{v_L'}(twn_0n)$ for all $n\in N_m$. Let $n\in N_m$ and $n'=n_0n$. Since $n_0\notin N_m,n\in N_m$, we get $n'\notin N_m$.

 Suppose $n'=n(x)$. Then $n'\notin N_m$ is equivalent to $x\notin \CP_F^{-m}$ or $x^{-1}\in \CP_F^m$. 
We have
$$\begin{pmatrix} &1 \\ -1& \end{pmatrix}\begin{pmatrix} 1&x \\ &1 \end{pmatrix}=\begin{pmatrix}- x^{-1}& 1\\ &-x \end{pmatrix}\begin{pmatrix} 1& \\ x^{-1} &1\end{pmatrix}$$
Let $b=\begin{pmatrix}- x^{-1}& 1\\ &-x \end{pmatrix},j=\begin{pmatrix} 1& \\ x^{-1} &1\end{pmatrix}$. Then $b\in B$. Since $m\ge 3L$, we have $x^{-1}\in \CP_F^m\subset \CP_F^{3L}$, i.e., $ j\in J_L$. Thus by Lemma \ref{lem31}, we get
$$W_{v_L}(twn')=W_{v_L}(tbj)=W_{v_L}(tb).$$
Since $tb\in B$, we have $W_{v_L}(tb)=W_{v_L'}(tb)$ by Part (1). Thus $W_{v_L}(twn')=W_{v_L'}(twn')$. This completes the proof.
\end{proof}

\subsection{Howe vectors for Weil representations}{\label{sec32}} 

Let $\chi$ be a character of $E^1$, we define $\deg(\chi)$ to be the smallest integer $i$ such that $\chi|_{E^1\cap(1+\CP_E^i)}= 1$, i.e., the integer $i$ such that $\chi|_{E^1\cap(1+\CP^i_E)}=1$ but $\chi|_{E^1\cap (1+\CP_E^{i-1})}\ne 1$. If $\chi=\chi_0$ is the trivial character, we define $\deg(\chi_0)=0$.
Let $\chi$ be a character of $E^1$, $a\in E^\times$, we consider the integral
\begin{equation}F_\chi(a)=\int_{E^1}\psi(\tr_{E/F}(au))\chi(u)du.\label{eq34*}\end{equation}
Note that as a character on $E$, $\psi(\tr_{E/F}(\cdot)) $ is unramified if $E/F$, and has conductor $\fD_{E/F}^{-1}$ if $E/F$ is ramified. In any case, $\psi(\tr_{E/F}(\cdot))$ is trivial on $\CO_E$.
\begin{lem}{\label{lem35}}
Let $\chi_0$ be the trivial character of $E^1$, and $a\in E^\times$ with $|a|\le q_E^n$. If $n\le 0$, then 
$$F_{\chi_0}(a)\ne 0.$$
\end{lem}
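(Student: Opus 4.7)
The plan is to observe that the hypothesis on $a$ forces $a\in\CO_E$, and then invoke the remark immediately preceding the lemma, which says that $\psi\circ\tr_{E/F}$ is trivial on $\CO_E$ regardless of whether $E/F$ is ramified or not. Concretely, the inequality $|a|_E\le q_E^n$ with $n\le 0$ means $v_E(a)\ge -n\ge 0$, so $a\in\CO_E$. Any $u\in E^1$ satisfies $u\bar u=1$, hence $|u|_E^2=|u\bar u|_E=1$, so $u\in\CO_E^\times$; consequently $au\in\CO_E$ for every $u\in E^1$.

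With these observations, $\psi(\tr_{E/F}(au))=1$ uniformly in $u\in E^1$, and the defining integral \eqref{eq34*} collapses to
$$F_{\chi_0}(a)=\int_{E^1}1\,du=\Vol(E^1),$$
which is strictly positive since $E^1$ is a nontrivial compact group with positive Haar measure. This gives $F_{\chi_0}(a)\ne 0$ as asserted, and there is no real obstacle: the lemma is a bookkeeping step recording the range of $a$ on which $F_{\chi_0}$ is automatically nonvanishing, needed later for the Howe-vector analysis of the Weil representation.
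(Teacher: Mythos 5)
Your argument is essentially the paper's own proof: both note that $n\le 0$ forces $a\in\CO_E$, that then $au\in\CO_E$ for $u\in E^1$, that $\psi\circ\tr_{E/F}$ is trivial on $\CO_E$, and hence $F_{\chi_0}(a)=\vol(E^1)\ne 0$. You simply spell out the intermediate step that $E^1\subset\CO_E^\times$, which the paper leaves implicit.
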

\begin{proof}
 If $n\le 0$, we get $a\in \CO_E$, thus $\psi(\tr_{E/F}(au))=1$ for all $u\in E^1$. Then $$F_{\chi_0}(a)=\int_{E^1}\psi(\tr_{E/F}(au))du=\vol(E^1)\ne 0.$$
\end{proof}

\begin{lem}{\label{lem36}} Suppose that $a\in E^\times$ with $|a|_E=q_E^n$, and $\chi$ is a character of $E^1$ with $h=\deg(\chi)\ge 1.$
\begin{enumerate}
\item  If $h> \max\{n,1\}$, we have  $F_\chi(a)=0$.
\item  If $n>0$, there is a character $\chi$ of $E^1$ with $\deg(\chi)\le n$ such that $F_\chi(a)\ne 0$. 
\end{enumerate}
\end{lem}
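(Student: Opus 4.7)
The plan is to exploit the interaction between the character $\chi$ of the compact group $E^1$ and the additive character $\psi\circ\tr_{E/F}$ of $E$, proving both parts by elementary manipulations of $F_\chi(a)$.

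For Part $(1)$, I would use translation invariance of the Haar measure on $E^1$. Since $h=\deg(\chi)\geq 1$, by definition of $\deg(\chi)$ there exists $u_0\in E^1\cap (1+\CP_E^{h-1})$ with $\chi(u_0)\neq 1$. Writing $u_0=1+\epsilon$ with $\epsilon\in\CP_E^{h-1}$ and substituting $u\mapsto u_0u$ in the definition of $F_\chi(a)$ would yield
$$F_\chi(a)=\chi(u_0)\int_{E^1}\psi(\tr_{E/F}(au))\psi(\tr_{E/F}(a\epsilon u))\chi(u)\,du.$$
It therefore suffices to show $\psi(\tr_{E/F}(a\epsilon u))=1$ for every $u\in E^1\subseteq\CO_E^\times$, because then $F_\chi(a)=\chi(u_0)F_\chi(a)$, and $\chi(u_0)\neq 1$ would force $F_\chi(a)=0$. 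The additive character $\psi\circ\tr_{E/F}$ has conductor equal to the inverse different $\fD_{E/F}^{-1}$, which is $\CO_E$ when $E/F$ is unramified and $\CP_E^{-1}$ in the tame ramified case. Since $v_E(a\epsilon u)\geq -n+(h-1)$, the hypothesis $h>n$ (implied by $h>\max\{n,1\}$) would place $a\epsilon u$ inside $\CO_E\subseteq\fD_{E/F}^{-1}$, giving the desired triviality.

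For Part $(2)$, I would Fourier-expand the function $f\colon E^1\to\BC$ defined by $f(u)=\psi(\tr_{E/F}(au))$ on the compact abelian group $E^1$. A calculation analogous to the one above would show $f(uv)=f(u)$ whenever $v\in E^1\cap(1+\CP_E^n)$: indeed $v-1\in\CP_E^n$ forces $au(v-1)\in\fD_{E/F}^{-1}$ in both the unramified and tame ramified cases. Hence $f$ would descend to a function on the finite abelian group $E^1/(E^1\cap(1+\CP_E^n))$, whose Pontryagin dual consists precisely of characters $\chi$ of $E^1$ with $\deg(\chi)\leq n$. Fourier inversion would then express $f$ as a linear combination of such characters with coefficients proportional to the $F_\chi(a)$; since $|f|\equiv 1$ on $E^1$, $f$ is not identically zero, so at least one coefficient $F_\chi(a)$ with $\deg(\chi)\leq n$ must be nonzero.

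The main technical point is the uniform description of the conductor of $\psi\circ\tr_{E/F}$ across the unramified and tame ramified settings; once $\fD_{E/F}^{-1}$ is identified correctly, both parts fall out cleanly. A mild subtlety is that in the ramified case Part $(2)$ in fact produces invariance on the slightly larger group $E^1\cap(1+\CP_E^{n-1})$, because of the extra factor of $\CP_E^{-1}$ coming from the codifferent; however, the weaker conclusion $\deg(\chi)\leq n$ is what is stated and is sufficient for both cases.
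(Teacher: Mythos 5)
Your proof is correct and matches the paper's strategy: both parts rest on the periodicity of $u\mapsto\psi(\tr_{E/F}(au))$ on $E^1$ against the nontriviality of $\chi$, using that $\psi\circ\tr_{E/F}$ is trivial on $\CO_E$, and Part (2) is deduced from Part (1) by Fourier analysis on the compact abelian group $E^1$. The only difference is cosmetic: in Part (1) you translate by a single element $u_0\in E^1\cap(1+\CP_E^{h-1})$ with $\chi(u_0)\neq 1$, which handles $n\le 0$ and $n\ge 1$ in one stroke, whereas the paper treats $n\le 0$ separately and, for $1\le n<h$, decomposes $E^1$ into cosets of $E^1\cap(1+\CP_E^n)$ and factors out $\int_{E^1\cap(1+\CP_E^n)}\chi$ --- the two arguments are equivalent.
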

\begin{proof}
(1) We first suppose that $n\le 0$, then $\psi(\tr_{E/F}(au))=1$ for all $u\in E^1$ and thus $F_\chi(a)=\int_{E^1}\chi(u)du=0$ since $\chi$ is nontrivial on $E^1$. Next, we assume that $1\le n \le\deg(\chi)$. We have
$$F_\chi(a)=\sum_{u_0\in E^1/(E^1\cap (1+\CP_E^n))}\int_{u_1\in E^1\cap(1+\CP_E^n)}\psi(\tr_{E/F}(au_0u_1))\chi(u_0u_1)du_1.$$
For $u_0\in E^1,u_1\in E^1\cap(1+\CP_E^n)$, we have $au_0u_1-au_0=au_0(u_1-1)\in \CO_E$, since $a\in \CP_E^{-n}$. Thus $\psi(\tr_{E/F}(au_0u_1))=\psi(\tr_{E/F}(au_0))$. Hence
$$F_\chi(a)=\sum_{u_0\in E^1/(E^1\cap(1+\CP_E^n))}\psi(\tr_{E/F}(au_0))\chi(u_0)\int_{E^1\cap (1+\CP_E^n)}\chi(u_1)du_1.$$
Since $\deg(\chi)> n$, $\chi|_{E^1\cap(1+\CP_E^n)}\ne 1$, we get $\int_{E^1\cap (1+\CP_E^n)}\chi(u_1)du_1=0 $. Thus $F_\chi(a)=0$.

(2) Consider the function $u\mapsto\psi(\tr_{E/F}(au))$ on $E^1$. Since this function is nonzero, there must be a character $\chi$ of $E^1$ such that its $\chi$-th Fourier coefficient
$$F_\chi(a)=\int_{E^1}\psi(\tr_{E/F}(au))\chi(u)du$$
is nonzero. By (1), such a character $\chi$ must satisfy $\deg(\chi)\le n$.
\end{proof}

We will consider the Howe vectors for the representation $(\omega_{\mu,\psi^{-1},\chi},\CS(E,\chi))$ for a character $\mu$ of $E^\times$ such that $\mu|_{F^\times}=\epsilon_{E/F}$, and a character $\chi$ of $E^1$. If $E/F$ is unramified and $m$ is an integer such that $m\ge \deg(\chi)$, we define $\phi^{m,\chi}$ by $\supp(\phi^{m,\chi})=E^1(1+\CP_E^m)$, and 
$$\phi^{m,\chi}(zu)=\chi(z), z\in E^1, u\in 1+\CP_E^m.$$
Note that $\phi^{m,\chi}$ is well-defined since $\chi_{E^1\cap(1+\CP_E^m)}=1.$ 

If $E/F$ is ramified with the residue characteristic not 2, for an integer $m$ with $2m\ge \deg(\chi)$, we define $\phi^{m,\chi}\in \CS(E,\chi)$ by $\supp(\phi^{m,\chi})=E^1(1+\CP_E^{2m})$, and 
$$\phi^{m,\chi}(zu)=\chi(z), z\in E^1, u\in 1+\CP_E^{2m}.$$
Note that $\phi^{m,\chi}$ is well-defined since $\chi_{E^1\cap(1+\CP_E^{2m})}=1.$ 

Let $\fD_{E/F}=\CP_E^d$ be the different of $E/F$, for some $d\ge 0$. 
\begin{prop}{\label{prop37}}  Let $m$ be an integer such that $m\ge d$.
\begin{enumerate}
\item For $n\in N_m$, we have
$$\omega_{\mu,\psi^{-1},\chi}(n)\phi^{m,\chi}=\psi^{-1}(n)\phi^{m,\chi}.$$
\item  For $\bar n \in \bar N_{m}=\bar N\cap J_m$, we have $$\omega_{\mu,\psi^{-1},\chi}(\bar n)\phi^{m,\chi}=\phi^{m,\chi}.$$
\end{enumerate}
\end{prop}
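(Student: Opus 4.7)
The plan is to handle the two parts in sequence, treating (1) as a direct consequence of the unipotent-action formula for the Weil representation and (2) as a Weyl-conjugate of (1). For (1), the formula (1.3), with $\psi$ replaced by $\psi^{-1}$, reads
$$\omega_{\mu,\psi^{-1},\chi}(n(b))\phi(x) \;=\; \psi^{-1}(bx\bar x)\phi(x),$$
so the statement amounts to checking $\psi\bigl(b(x\bar x-1)\bigr)=1$ for every $b\in\CP_F^{-m}$ and every $x\in\supp(\phi^{m,\chi})$. Writing $x=zu$ in the canonical form from the definition of $\phi^{m,\chi}$, we have $x\bar x=\Nm_{E/F}(u)$, and the classical local-norm facts already used in the proof of Lemma \ref{lem33} (i.e.\ $\Nm_{E/F}(1+\CP_E^{m})\subset 1+\CP_F^{m}$ in the unramified case and $\Nm_{E/F}(1+\CP_E^{2m})\subset 1+\CP_F^{m}$ in the ramified case with residue characteristic $\neq 2$, using $i_{E/F}=1$) give $x\bar x-1\in\CP_F^{m}$. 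Hence $b(x\bar x-1)\in\CO_F$, which is killed by the unramified character $\psi$.

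For (2), I would use the identity $\bar n(y)=w\,n(-y)\,w^{-1}$ and the factorization
$$\omega_{\mu,\psi^{-1},\chi}(\bar n(y))\phi^{m,\chi} \;=\; \omega(w)\,\omega(n(-y))\,\omega(w^{-1})\phi^{m,\chi}.$$
Setting $\phi_1:=\omega(w^{-1})\phi^{m,\chi}$, it suffices by the formula from part (1) to show that $yx\bar x\in\CO_F$ for every $y\in\CP_F^{3m}$ (the condition defining $\bar N_m$) and every $x\in\supp(\phi_1)$, for then $\omega(n(-y))\phi_1=\phi_1$ and applying $\omega(w)$ returns $\phi^{m,\chi}$. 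Since $w^2=-I$ acts on $\CS(E,\chi)$ by the scalar $\chi(-1)$, formula (\ref{eq15}) with $\psi$ replaced by $\psi^{-1}$ gives an explicit Fourier-integral expression for $\omega(w^{-1})\phi^{m,\chi}$. Substituting $y=z(1+v)$ in that integral with $z\in E^{1}$ and $v\in\CP_E^{m_0}$ (where $m_0=m$ in the unramified case and $m_0=2m$ in the ramified case), the inner integral over $v$ vanishes unless the additive character $v\mapsto\psi(\tr_{E/F}(x\bar z\,\bar v))$ is trivial on $\CP_E^{m_0}$. Using that $\psi\circ\tr_{E/F}$ has conductor $\fD_{E/F}^{-1}=\CP_E^{-d}$, together with $|\bar z|_E=1$, this forces
$$\supp(\phi_1)\;\subset\;\CP_E^{-m_0}\fD_{E/F}^{-1}\;=\;\CP_E^{-m_0-d}.$$

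The main obstacle is this support computation; once it is in hand, the rest is a bookkeeping of valuations. For any $x\in\CP_E^{-m_0-d}$ one has $|x\bar x|_F=|\Nm_{E/F}(x)|_F=|x|_E\leq q_E^{m_0+d}$, which equals $q_F^{2m}$ in the unramified case ($m_0=m$, $d=0$) and $q_F^{2m+1}$ in the ramified case ($m_0=2m$, $d=1$). Combined with $|y|_F\leq q_F^{-3m}$ this yields $|yx\bar x|_F\leq q_F^{-m+d}$, which lies in $\CO_F$ precisely when $m\geq d$. This is where the hypothesis enters, and it is sharp: the conductor of $\psi\circ\tr_{E/F}$ (namely $\fD_{E/F}^{-1}$) couples with the congruence depth $m_0$ defining $\phi^{m,\chi}$, and the threshold $m\geq d$ is the exact balance that makes the Weyl-conjugation argument close.
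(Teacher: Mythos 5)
Your proof is correct and follows essentially the same route as the paper: part (1) is the unipotent formula plus the norm estimate from Lemma \ref{lem33}, and part (2) reduces via the Weyl-conjugate decomposition to showing the Fourier transform $\omega(w^{\mp 1})\phi^{m,\chi}$ has support in $\CP_E^{-m_0-d}$, after which $m\ge d$ closes the valuation count. Your treatment makes the conductor calculation behind the support bound slightly more explicit than the paper's "it is clear," but it is the same argument.
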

\begin{proof} 
(1) For $n=n(b)\in N_m$, we have
\begin{align*}
\quad\omega_{\mu,\psi^{-1},\chi}(n(b))\phi^{m,\chi}(x)
=\psi^{-1}(bx\bar x))\phi^{m,\chi}(x).
\end{align*}
For $x\in \supp(\phi^{m,\chi})$, we have $x\bar x\in 1+\CP_F^m$ in either case by Lemma \ref{lem33}. For $n(b)\in N_m$, we have $b\in \CP_F^{-m}$, and thus $bx\bar x -b\in \CO_F$. Since $\psi$ is unramified, we get $\psi^{-1}(bx\bar x)=\psi^{-1}(b)$.
Thus $$\omega_{\mu,\psi^{-1},\chi}(n(b))\phi^{m,\chi}=\psi^{-1}(b)\phi^{m,\chi}.$$

(2) For simplicity, we will write $\omega$ for $\omega_{\mu,\psi^{-1},\chi}$. For $\bar n\in \bar N_m$, we can write $\bar n= w^{-1}n(b)w$ with $b\in \CP_F^{3m}$.
Let $\phi'=\omega(w)\phi^{m,\chi}$. We have
\begin{align*}
\phi'(x)=\gamma_{\psi^{-1}}\int_{E}\phi^{m,\chi}(y)\psi^{-1}(\tr(\bar x y))dy.
\end{align*}
It is clear that $\supp \phi' \subset \CP_E^{-m}$ in the unramified case and $\supp \phi'\subset \CP_E^{-2m-d}$ in the ramified case. For $x\in \supp \phi'$, we have $bx\bar x\in \CO_F$ in either case since $m\ge d$. Thus
\begin{align*}
\omega(n(b))\phi'(x)&=\psi(b x\bar x)\phi'(x)=\phi'(x),
\end{align*}
i.e., $\omega(n(b))$ fixes $\phi'$. Then
$$\omega(\bar n)\phi^{m,\chi}=\omega(w^{-1})\omega(n(b))\omega(w)\phi^{m,\chi}$$
$$=\omega(w^{-1}) \omega(n(b)) \phi'=\omega(w^{-1}) \phi'=\omega(w^{-1})\omega(w)\phi^{m,\chi}=\phi^{m,\chi}.$$
This completes the proof.
\end{proof}

\begin{prop}{\label{prop38}}
Given a positive integer $m$. 
\begin{enumerate}
\item Suppose that $E/F$ is unramified. Then for $a\in \CP_E^{-m}$, we have
$$(\omega_{\mu,\psi^{-1},\chi}(t(a)w)\phi^{m,\chi})(1)=\mu(a)|a|^{1/2}\gamma_{\psi^{-1}}\vol (1+\CP_E^m)\vol(E^1\cap (1+\CP_E^m)) F_\chi(\bar a),$$
where $\gamma_{\psi^{-1}}$ is the Weil index, and $F_\chi(\bar a)= \int_{E^1} \chi(z)\psi(\tr( \bar a z ))dz$, which is defined in Eq.$(\ref{eq34*})$.
\item Suppose that $E/F$ is ramified. Then for $a\in \CP_E^{-2m}$, we have
$$(\omega_{\mu,\psi^{-1},\chi}(t(a)w)\phi^{m,\chi})(1)=\mu(a)|a|^{1/2}\gamma_{\psi^{-1}}\vol (1+\CP_E^{2m})\vol(E^1\cap (1+\CP_E^{2m})) F_\chi(\bar a),$$
where $\gamma_{\psi^{-1}}$ is the Weil index, and $F_\chi(\bar a)= \int_{E^1} \chi(z)\psi(\tr( \bar a z ))dz$, which is defined in Eq.$(\ref{eq34*})$.
\end{enumerate}
\end{prop}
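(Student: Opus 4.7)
The proof is a direct explicit computation using the formulas for the Weil representation action of $t(a)$ and $w$. First, I would unwind
\begin{align*}
(\omega_{\mu,\psi^{-1},\chi}(t(a)w)\phi^{m,\chi})(1)
&=\mu(a)|a|^{1/2}\bigl(\omega_{\mu,\psi^{-1},\chi}(w)\phi^{m,\chi}\bigr)(a)\\
&=\mu(a)|a|^{1/2}\gamma_{\psi^{-1}}\int_{E}\psi(\tr_{E/F}(a\bar y))\,\phi^{m,\chi}(y)\,dy,
\end{align*}
using the formula $\omega(w)\phi(x)=\gamma_{\psi}\int\psi(-\tr(x\bar y))\phi(y)dy$ on the 1-dimensional Hermitian space $V_1$, and its $\psi^{-1}$-variant.

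Next, I would analyze the integral by exploiting the explicit support and invariance of $\phi^{m,\chi}$. In the unramified case, decompose the support as a disjoint union
\[
E^1(1+\CP_E^m)=\bigsqcup_{[z]\in E^1/U_m^{(1)}}\bigl(z+\CP_E^m\bigr),\qquad U_m^{(1)}:=E^1\cap(1+\CP_E^m),
\]
using that $z(1+\CP_E^m)=z+\CP_E^m$ (since $|z|_E=1$). On each piece $z+\CP_E^m$, the function $\phi^{m,\chi}$ equals $\chi(z)$ (well-defined because $m\ge \deg(\chi)$), and the phase $\psi(\tr_{E/F}(a\bar y))$ is \emph{constant} equal to $\psi(\tr_{E/F}(a\bar z))$: indeed, for $y=z+w$ with $w\in\CP_E^m$ and $a\in\CP_E^{-m}$, we have $a\bar w\in\CO_E$, and by Proposition 3.7 (or more precisely by the choice that $\psi\circ\tr_{E/F}$ is trivial on $\CO_E$ in this range of $m$), $\psi(\tr(a\bar w))=1$. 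Therefore the integral becomes
\[
\mathrm{vol}(\CP_E^m)\sum_{[z]\in E^1/U_m^{(1)}}\chi(z)\psi(\tr_{E/F}(a\bar z)).
\]

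Finally, I would rewrite this sum as an integral over $E^1$. Since the summand is well-defined on $E^1/U_m^{(1)}$ (using $m\ge\deg\chi$ for the $\chi$-factor and the same $\CP_E^{-m}\cdot\CP_E^m\subset\CO_E$ argument for the $\psi$-factor),
\[
\sum_{[z]\in E^1/U_m^{(1)}}\chi(z)\psi(\tr_{E/F}(a\bar z))=\frac{1}{\mathrm{vol}(U_m^{(1)})}\int_{E^1}\chi(z)\psi(\tr_{E/F}(a\bar z))\,dz.
\]
Using the identity $\tr_{E/F}(a\bar z)=\tr_{E/F}(\bar a z)$ (both are in $F$, hence self-conjugate), the remaining integral is exactly $F_\chi(\bar a)$. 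Reassembling all factors and matching normalizations of Haar measures yields the stated formula. The ramified case (with odd residue characteristic, so $i_{E/F}=1$) runs identically with $m$ replaced by $2m$ everywhere: the support is $E^1(1+\CP_E^{2m})$, one decomposes into cosets $z+\CP_E^{2m}$, and for $a\in\CP_E^{-2m},\,w\in\CP_E^{2m}$ one again has $a\bar w\in\CO_E$, so $\psi\circ\tr$ is trivial on this piece.

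The computation is essentially bookkeeping; the only subtle step is verifying that the phase $\psi(\tr_{E/F}(a\bar\cdot))$ is actually locally constant on each coset with the announced value, which is precisely where the hypothesis $a\in\CP_E^{-m}$ (resp.\ $\CP_E^{-2m}$) enters and where the exclusion of residue characteristic $2$ in the ramified case is used through $i_{E/F}=1$. Everything else is tracking Haar-measure normalizations to produce the explicit constant $\mathrm{vol}(1+\CP_E^{m\text{ or }2m})\mathrm{vol}(E^1\cap(1+\CP_E^{m\text{ or }2m}))$.
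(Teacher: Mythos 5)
Your computation is correct and follows essentially the same route as the paper: unwind $\omega(t(a)w)$ via the Schr\"odinger-model formulas, exploit the support of $\phi^{m,\chi}$, observe that for $a\in\CP_E^{-m}$ (resp.\ $\CP_E^{-2m}$) the phase $\psi(\tr_{E/F}(\bar a\,\cdot))$ is constant on each $1+\CP_E^m$-coset, and convert the resulting sum over $E^1/(E^1\cap(1+\CP_E^m))$ into an integral over $E^1$. One detail: both your derivation and the paper's own proof actually produce the constant $\vol(1+\CP_E^m)\,\vol(E^1\cap(1+\CP_E^m))^{-1}$, so the claim at the end that you ``yield the stated formula'' is slightly off --- the displayed statement in the proposition has the factor $\vol(E^1\cap(1+\CP_E^m))$ without the inverse, which appears to be a typo in the paper rather than an error in your argument.
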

\begin{proof}
We only consider the unramified case. The ramified case can be done similarly.

We have
\begin{align*}
&\quad \omega(t(a)w)\phi^{m,\chi}(1)\\
&=\mu(a)|a|^{1/2}\omega(w)\phi^{m,\chi}(a)\\
&= \mu(a)|a|^{1/2}\gamma_{\psi^{-1}}\int_E \phi^{m,\chi}(y)\psi^{-1}(-\tr(\bar a y))dy\\
&=\mu(a)|a|^{1/2}\gamma_{\psi^{-1}}\int_{(E^1\cap (1+\CP_E^m))\setminus (1+\CP_E^m)} \int_{E^1} \chi(z)\psi(\tr( \bar a z u))dz du,
\end{align*}
Since $a\in \CP_E^{-m}$, for $z\in E^1, u\in 1+\CP_E^m$, we have $ \bar a z u-\bar a z\in \CO_E$. Thus
\begin{align*}
&\quad \omega(t(a)w)\phi^{m,\chi}(1)\\
&=\mu(a)|a|^{1/2}\gamma_{\psi^{-1}}\vol (1+\CP_E^m)\vol(E^1\cap (1+\CP_E^m))^{-1} \int_{E^1} \chi(z)\psi(\tr( \bar a z ))dz \\
&=\mu(a)|a|^{1/2}\gamma_{\psi^{-1}}\vol (1+\CP_E^m)\vol(E^1\cap (1+\CP_E^m))^{-1} F_\chi(\bar a),
\end{align*}
where $F_\chi(\bar a)= \int_{E^1} \chi(z)\psi^{-1}(\tr( \bar a z ))dz$. This completes the proof of (1).\end{proof}

\subsection{A local converse theorem}\label{sec33}
The following functions and computations are done by Baruch, see page 335 of \cite{Ba1}.

For a set $A\subset F$, let $\Upsilon_A$ the characteristic function of $A$. Let 
$$\Phi_{i,l}(x,y)=\Upsilon_{P_F^i}(x)\Upsilon_{1+P_F^l}(y).$$
Then
$$\hat \Phi_{i,l}(x,y)=q_F^{-i-l}\psi(-x)\Upsilon_{P_F^{-l}}(x)\Upsilon_{P_F^{-i}}(y).$$
For $x\in F$, recall we denote 
$$n(x)=\begin{pmatrix} 1& x\\ & 1\end{pmatrix},$$
and $$\bar n(x):=\begin{pmatrix} 1& \\ x& 1\end{pmatrix}.$$
Let $\eta$ be a quasi-character of $E^\times$, and $l=l(\eta)=\deg(\eta)$ be the conductor of $\eta$, i.e., $\eta(1+P_E^{\ell})=1$ but $\eta(1+P_E^{l-1})\ne 1$. We have
\begin{equation}{\label{eq34}}f(\bar n(x),\Phi_{i,l},\eta,s)=\left\{\begin{array}{lll} q_F^{-l}& \textrm{ if } |x|_F\le q_F^{-i}, \\ 0 & \textrm{ otherwise;}    \end{array} \right.\end{equation}
and 
\begin{equation}{\label{eq35}}
f(wn(x), \hat \Phi_{i,l},\eta^*, 1-s)=\left\{\begin{array}{lll} q_F^{-l-i}\int_{|y|_F\le q_F^l}\psi_F(y)\eta^{-1}(y)|y|^{1-s} d^*y& \textrm{ if } |x|_F\le q_F^{i-l}, \\  q_F^{-l-i}\int_{|y|_F\le q_F^i|x|_F}\psi_F(y)\eta^{-1}(y)|y|^{1-s} d^*y, &\textrm{ if } |x|_F> q_F^{i-l}.  \end{array} \right.
\end{equation}
where the equality in (\ref{eq35}) is in the sense of continuation.
Let $$c(s,\eta,\psi)=\int_{|y|\le q_F^{l(\eta)}}\eta^{-1}(y)\psi_F(y)|y|^sd^*y,$$
which has no zeros or poles except for a finite number of $q_F^{-s}$, see \cite{Ba1} and the references given there.

\begin{thm}{\label{thm39}}
Let $E/F$ be a quadratic extension of $p$-adic fields and $H=\RU(1,1)(F)$. We exclude the case when $E/F$ is ramified and the residue characteristic is $2$. Let $\psi$ be an additive unramified character of $F$ and $\mu$ of a character of $E^\times$ such that $\mu|_{F^\times}=\epsilon_{E/F}$. Let $\pi,\pi'$ be two $\psi$-generic irreducible representations of $H$ with the same central character.
\begin{enumerate}
\item If $\gamma(s,\pi, \omega_{\mu,\psi^{-1},\chi},\eta)=\gamma(s,\pi', \omega_{\mu,\psi^{-1},\chi},\eta)$ for all characters $\chi$ of $E^1$, and all quasi-characters $\eta$ with 
$$\omega_\pi\cdot \mu\chi\cdot \eta|_{E^1}=1,$$
where $\omega_\pi$ is the central character of $\pi$, then $\pi$ is isomorphic to $\pi'$.
\item  For a character $ \chi$ of $E^1$, there exists an integer $l_0=l_0(\pi,\pi', \mu,\chi)$ such that for any quasi-character $\eta$ of $E^\times$ which satisfies $$\omega_\pi\cdot \mu\chi\cdot \eta|_{E^1}=1,$$ and $l(\eta)>l(\eta_0)$, then $\gamma(s,\pi, \omega_{\mu,\psi^{-1},\chi}, \eta)=\gamma(s,\pi',\omega_{\mu,\psi^{-1},\chi}, \eta)$.
\end{enumerate}
\end{thm}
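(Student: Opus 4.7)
The plan is to follow Baruch's Howe-vector strategy. Fix vectors $v\in V_\pi$, $v'\in V_{\pi'}$ with $W_v(1)=W_{v'}(1)=1$ and a level $L$ fixing both, and form the Howe vectors $v_m, v_m'$ of Section 3.1 together with the Weil-representation Howe vector $\phi^{m,\chi}$ of Section 3.2. By Proposition 3.4, once $m\ge 3L$ the functions $W_{v_m}$ and $W_{v_m'}$ already agree on $B$ and on $Twn_0$ for every $n_0\in N\setminus N_m$, and they share the same right $N_m$-transformation behavior along the cell $TwN_m$. Consequently, the entire burden of part~(1) is to upgrade this to the pointwise equality $W_{v_m}(t(a)w)=W_{v_m'}(t(a)w)$ for every $a\in E^\times$. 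Once that is established, $W_{v_m}\equiv W_{v_m'}$ on $H$; letting $v$ and $m$ vary and invoking uniqueness of the Whittaker model yields $\pi\cong\pi'$.

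The equality of the Weyl-cell values is forced by the functional equation. Substituting $W=W_{v_m}$ and $W=W_{v_m'}$ into
\[
\Psi(1-s, W, \phi^{m,\chi}, \hat\Phi_{i,l}, \eta^*) = \gamma(s, \pi, \omega_{\mu,\psi^{-1},\chi}, \eta)\,\Psi(s, W, \phi^{m,\chi}, \Phi_{i,l}, \eta),
\]
using Iwasawa coordinates on $R\setminus H$, the support (3.4) of $f(s,\cdot,\Phi_{i,l},\eta)$, and the Howe-vector control provided by Lemmas 3.2, 3.3 and Proposition 3.7, the right-hand integral unfolds to a finite sum $A_m(s)+B_m(s;W_{v_m})$: the piece $A_m(s)$ depends only on $W_{v_m}|_T$ and is therefore common to $\pi$ and $\pi'$, while $B_m(s;W_{v_m})$ is a finite linear combination of the desired Weyl-cell values $W_{v_m}(t(a)w)$. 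The Fourier-transformed side $\Psi(1-s,\cdots,\hat\Phi_{i,l},\eta^*)$ is computed analogously via (3.5), picking up the Gauss sum $c(1-s,\eta^{-1},\psi)$. Inserting Proposition 3.8 for $(\omega_{\mu,\psi^{-1},\chi}(t(a)w)\phi^{m,\chi})(1)$ and using the hypothesis $\gamma(s,\pi,\omega,\eta)=\gamma(s,\pi',\omega,\eta)$, after a standard cancellation one arrives at an identity of shape
\[
\sum_{a}\bigl[W_{v_m}(t(a)w)-W_{v_m'}(t(a)w)\bigr]\,\mu\eta(a)\,|a|_E^{s}\,F_\chi(\bar a)\,\equiv 0,
\]
valid for every compatible pair $(\chi,\eta)$ and every $s$ outside a finite exceptional set.

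Individual Weyl-cell values are then isolated by a two-stage inversion. First, vary $\eta|_{F^\times}$ freely (the compatibility constraint $\omega_\pi\mu\chi\cdot\eta|_{E^1}=1$ only pins down $\eta|_{E^1}$) and perform Mellin inversion in $|a|_E$ to separate cosets of distinct absolute value; second, vary $\chi\in\widehat{E^1}$ and invoke Lemma 3.6(2), which furnishes enough characters with $F_\chi(\bar a)\ne 0$ on each coset $aE^1$ to invert on $E^1$. This forces $W_{v_m}(t(a)w)=W_{v_m'}(t(a)w)$ for every $a$ in the support window of $B_m$, and enlarging $m$ covers all of $E^\times$. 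Part~(2) is obtained by rerunning the same unfolding for a single representation with $l(\eta)\gg m$: the high ramification of $\eta|_{F^\times}$ makes both $A_m(s)$ and $B_m(s;W_{v_m})$ collapse, via the oscillatory integrals over $\CO_F^\times$ in $c(1-s,\eta^{-1},\psi)$, to a single term whose only representation-theoretic input is the normalization $W_{v_m}(1)=1$. The resulting explicit formula for $\gamma$ depends only on $(\eta,\chi,\mu,\psi)$, which is stability, and at the same time produces vectors $(W,\phi,\Phi)$ with $\Psi(s,W,\phi,\Phi,\eta)=1$ as demanded by Propositions 2.2 and 2.5.

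The main obstacle is the joint inversion in $\chi$ and $\eta$. Because the compatibility condition couples $\chi$ to $\eta|_{E^1}$, one cannot treat them as independent Fourier parameters; one must first peel off the $|a|_E$-dependence using the free direction $\eta|_{F^\times}$ and only then invert in $\chi$. The latter step requires the nonvanishing provided by Lemma 3.6(2), whose clean statement rests on Lemma 3.3(2); in the ramified case this in turn requires $i_{E/F}=1$, which is exactly why the theorem excludes residue characteristic~$2$. A secondary technical point is to verify that, for each $m$, a sufficiently ramified $\eta|_{F^\times}$ resolves all finitely many cosets in the support window $|a\bar a|_F\le q_F^{3m}$ supplied by Lemma 3.2, and that the constants appearing in Proposition 3.8 and in $c(1-s,\eta^{-1},\psi)$ stay nonzero throughout the inversion.
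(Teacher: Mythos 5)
Your proposal is essentially the paper's proof: the same Howe-vector test data $(v_m,\phi^{m,\chi},\Phi_{i,l})$, the local functional equation reduced to an identity of the form $\int_{E^1\setminus E^\times}\bigl(W_{v_L}(t(a)w)-W_{v_L'}(t(a)w)\bigr)\mu(a)\eta^*(a)|a|^{1/2-s}F_\chi(\bar a)\,d^*a = 0$, followed by Mellin inversion on $E^1\setminus E^\times$ for each fixed $\chi$ and Lemmas 3.5--3.6 to locate a $\chi$ with $F_\chi(\bar a)\ne 0$, with part (2) obtained by making $\eta$ highly ramified so the (finite) integral vanishes. The one inaccuracy is in locating the Weyl-cell contribution: with $i\ge 3m$ the untransformed integral $\Psi(s,\cdot,\Phi_{i,l},\eta)$ collapses to a pure $T$-integral (a nonzero constant, equal for $\pi$ and $\pi'$), and the split into a common piece and a piece carrying the values $W_{v_L}(t(a)w)$ takes place on the transformed side $\Psi(1-s,\cdot,\hat\Phi_{i,l},\eta^*)$ via Eq.\ (3.5) and Propositions 3.4 and 3.8, not on $\Psi(s,\cdot,\Phi_{i,l},\eta)$ as your $A_m+B_m$ description suggests.
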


\begin{proof}
In the following, we assume that $E/F$ is unramified. The ramified case can be dealt with similarly. Write $\omega_{\mu,\psi^{-1},\chi}$ as $\omega$ for simplicity.

We fix vectors $v\in V, v'\in V'$ such that $W_{v}(1)=1=W_{v'}(1)$. Let $L_1$ be an even integer such that $v,v'$ are fixed by $K_{L_1}$.  Let $L=3L_1$, $C=3L/2$,  $\eta$ be a quasi-character of $E^\times$ with $l=\deg(\eta)$, $\chi$ be a character of $E^1$ and $m$ be an integer such that $m\ge \max\wpair{C,\deg(\mu), l=\deg( \eta), \deg(\chi)}$.  Since $\deg(\chi)\le m$,  we can consider the function $\phi^{m,\chi}\in \CS(E,\chi)$ defined in $\S$\ref{sec32}, i.e.,  $\phi^{m,\chi}(zu)=\chi(z)$ for $z\in E^1$ and $u\in 1+\CP_E^m$, and zero otherwise. Let $i$ be an integer such that $i\ge 3m> m+l,$ we then have a function $\Phi_{i,l}\in \CS(F^2)$.

We now compute the integral
$$\Psi(s,W_{v_m},\phi^{m,\chi},\Phi_{i,l},\eta)=\int_{R\setminus H}W_{v_m}(h)\omega(h)\phi^{m,\chi}(1)f(s,h,\Phi_{i,l},\eta)dh.$$

We will take the integral $\Psi(s,W_{v_m}, \phi^{m,\chi},\Phi_{i,l})$ on the open dense set $N\setminus NT\bar N$ of $N\setminus H$. For $h=n(y)t(a)\bar n(x)\in NT\bar N$, the Haar measure can be taken as $dh=|a|_E^{-1}dyd^*a dx$. Thus by Eq.(\ref{eq34}), we get
\begin{align*}
&\qquad \Psi(s,W_{v_m},\phi^{m,\chi},\Phi_{i,l},\eta)\\
&=\int_{E^1\setminus E^\times}\int_F W_{v_m}(t(a)\bar n(x))(\omega(t(a)\bar n(x))\phi^{m,\chi})(1)\eta(a)|a|^{s-1}f(s,\bar n(x),\Phi_{i,l},\eta)dxd^*a\\
&=q_{F}^{-l}\int_{E^1\setminus E^\times}\int_{|x|_F\le q_F^{-i}}W_{v_m}(t(a)\bar n(x))(\omega(t(a)\bar n(x))\phi^{m,\chi})(1)\eta(a)|a|^{s-1}dxd^*a.
\end{align*}

Since $i\ge 3m$,  $|x|_F\le q_F^{-i}$ implies that $\bar n(x)\in J_m$. Thus by Lemma \ref {lem31} and Proposition \ref{prop37}, we have 
$$W_{v_m}(t(a)\bar n(x))= W_{v_m}(t(a)), \omega(t(a)\bar n(x))\phi^{m,\chi}=\omega(t(a))\phi^{m,\chi}.$$
On the other hand, by the definition of $\phi^{m,\chi}$, we have
$$\omega(t(a))\phi^{m,\chi}(1)=\mu(a)|a|^{1/2}\phi^{m,\chi}(a)=\left\{\begin{array}{lll}\mu(zu)\chi(z), & \textrm{ if } a=zu,\textrm{ for } z\in E^1, u\in 1+\CP_E^m, \\ 0, & \textrm { otherwise }\end{array} \right. $$
Combining these facts and Lemmas \ref{lem33}, we get
\begin{align*}
&\quad \Psi(s,W_{v_m},\phi^{m,\chi},\Phi_{i,l})\\
&=q_F^{-l-i} \int_{E^1\setminus E^\times}W_{v_m}(t(a))(\omega(t(a))\phi^{m,\chi})(1)\eta(a)|a|^{s-1}d^*a\\
&=q_F^{-l-i}\int_{E^1\setminus E^1(1+\CP_E^m)}W_{v_m}(t(a))(\omega(t(a))\phi^{m,\chi})(1)\eta(a)d^*a\\
&=q_F^{-l-i} \int_{E^1\cap(1+\CP_E^m)\setminus (1+\CP_E^m)}W_{v_m}(t(a))\omega(t(a))\phi^{m,\chi}(1)\eta(a)d^*a\\
&=q_F^{-l-i} \int_{E^1\cap(1+\CP_E^m)\setminus (1+\CP_E^m)}\mu(a)\eta(a)d^*a.
\end{align*}
Since $m\ge \deg(\mu\eta)$, we have
\begin{align}
&\quad \Psi(s,W_{v_m},\phi^{m,\chi},\Phi_{i,l})\label{eq36}\\
&=q_F^{-l-i} \int_{E^1\cap (1+\CP_E^m)\setminus (1+\CP_E^m)}\mu(a)\eta(a)d^*a \nonumber\\
&=q_F^{-l-i}\vol(1+\CP_E^m)\vol(E^1\cap (1+\CP_E^m))^{-1},\nonumber
\end{align}
which is a nonzero constant. Note that this finishes the proof of Proposition \ref{prop22} when $E/F$ is unramified.

The same calculation works for $\Psi(s,W_{v'_m},\phi^{m,\chi},\Phi_{i,l})$. Thus, we get
\begin{equation}{\label{eq37}}\Psi(s,W_{v_m'},\phi^{m,\chi},\Phi_{i,l},\eta)=\Psi(s,W_{v_m},\phi^{m,\chi},\Phi_{i,l})=q_F^{-l-i}\vol(1+\CP_E^m)\vol(E^1\cap (1+\CP_E^m))^{-1},\end{equation}
for $ i\ge 3m, m\ge \max\wpair{C,\deg(\mu), l=\deg(\eta),\deg(\chi)}.$

Let $W=W_{v_m}$ or $W_{v_m'}$. We compute the integral $\Psi(1-s,W,\phi^{m,\chi}, \hat \Phi_{i,l})$ on the dense set $N\setminus NTwN$ of $N\setminus H$.  Since $i\ge m+l $, then $|x|_F\le q_E^{m}$ implies that $|x|_F\le q^{i-l}_F$. Then by Eq.(\ref{eq35}), we have
\begin{align}
&\quad\Psi(1-s,W,\phi^{m,\chi}, \hat \Phi_{i,l}) \nonumber\\
&=\int_{E^1\setminus E^\times}\int_{F }W(t(a)wn(x))(\omega(t(a)wn(x) )\phi^{m,\chi})(1) f_s(t(a)wn(x), \hat \Phi_{i,l}, \eta^*)\nonumber\\
&=q_F^{-l-i}c(1-s, \eta, \psi)\int_{E^1\setminus E^\times}\int_{|x|\le q_F^m}W(t(a)wn(x) )\omega(t(a)wn(x))\phi^{m,\chi}(1)\eta^*(a)|a|^{-s}dxd^*a \label{eq38}\\
&+\int_{E^1\setminus E^\times}\int_{|x|_F>q_F^m} W(t(a)wn(x) )\omega(t(a)wn(x))\phi^{m,\chi}(1) f(1-s, t(a)wn(x), \hat \Phi_{i,l})dxd^*a \label{eq39}
\end{align}
By Proposition \ref{prop34}, we have $W_{v_m}(t(a)wn(x))=W_{v_m'}(t(a)wn(x))$ for $|x|>q_F^m$. Thus the expressions (\ref{eq39}) for $W=W_{v_m}$ and $W=W_{v'_m}$ are the same.

 For $|x|_F\le q_F^m$, we have $n(x)\in N_m$. By Lemma \ref{lem32} and Proposition \ref{prop37},  we have
$$W_{v_m}(t(a)wn(x))=\psi(x)W_{v_m}(t(a)w), \textrm{ and }\omega(t(a)wn(x))\phi^{m,\chi}(1)=\psi^{-1}(x)\omega(t(a)w)\phi^{m,\chi}(1).$$

Thus the expression (\ref{eq38}) can be simplified to
$$q_F^{-l-i+m}c(1-s, \eta, \psi)\int_{E^1\setminus E^\times}W(t(a)w)\omega(t(a)w)\phi^{m,\chi}(1)\eta^*(a)|a|^{-s}d^*a.  $$
By the above discussion, we get
\begin{align}
&\Psi(s,W_{v_m},\phi^{m,\chi},\hat \Phi_{i,l})-\Psi(s,W_{v_m'},\phi^{m,\chi},\hat \Phi_{i,l} )\label{eq310}\\
=&q_F^{-l-i+m}c(1-s, \eta^*, \psi)\int_{E^1\setminus E^\times}(W_{v_m}(t(a)w)-W_{v_m'}(t(a)w))\omega(t(a)w)\phi^{m,\chi}(1)\eta^*(a)|a|^{-s}d^*a. \nonumber
\end{align}
By the local functional equation, Eq.(\ref{eq37}) and (\ref{eq310}), we get
\begin{align}
&d_m(s,\eta,\psi)(\gamma(s,\pi,\omega_{\mu,\psi^{-1},\chi},\eta)-\gamma(s,\pi',\omega_{\mu,\psi^{-1},\chi},\eta)) \label{eq311}\\
=&\int_{E^1\setminus E^\times}(W_{v_m}(t(a)w)-W_{v_m'}(t(a)w))\omega(t(a)w)\phi^{m,\chi}(1)\eta^*(a)|a|^{-s}d^*a,\nonumber
\end{align}
where $d_m(s,\eta,\psi)=q_F^{-m}c(1-s,\eta,\psi)^{-1}\vol(1+\CP_E^m)\vol(E^1\cap (1+\CP_E^m))^{-1}$.
Since $m\ge C\ge L$, then by Lemma \ref{lem31} (3), we have 
$$W_{v_m}(t(a)w)-W_{v_m'}(t(a)w)=\frac{1}{\vol(N_m)}\int_{N_m}\psi^{-1}(n)(W_{v_L}(t(a)wn)-W_{v'_L}(t(a)wn))dn.$$
By Proposition \ref{prop34}, if $n\in N_m-N_L$, we have $W_{v_L}(t(a)wn)=W_{v_L'}(t(a)wn)$. Thus 
\begin{align}
&\quad W_{v_m}(t(a)w)-W_{v_m'}(t(a)w)\label{eq312}\\
&=\frac{1}{\vol(N_m)}\int_{N_m}\psi^{-1}(n)(W_{v_L}(t(a)wn)-W_{v'_L}(t(a)wn))dn \nonumber\\
&=\frac{1}{\vol(N_m)}\int_{N_L}\psi^{-1}(n)(W_{v_L}(t(a)wn)-W_{v'_L}(t(a)wn))dn \nonumber\\
&=\frac{\vol(N_L)}{\vol(N_m)}(W_{v_L}(t(a)w)-W_{v_L'}(t(a)w)).\nonumber
\end{align}

If we combine Eq.(\ref{eq311}) and Eq.(\ref{eq312}), we get
\begin{align}
&d_{m,L}(s,\eta,\psi)(\gamma(s,\pi,\omega_{\mu,\psi^{-1},\chi},\eta)-\gamma(s,\pi',\omega_{\mu,\psi^{-1},\chi},\eta)) \label{eq313}\\
=&\int_{E^1\setminus E^\times}(W_{v_L}(t(a)w)-W_{v_L'}(t(a)w))\omega(t(a)w)\phi^{m,\chi}(1)\eta^*(a)|a|^{-s}d^*a,\nonumber
\end{align}
with $d_{m,L}(s,\eta,\psi)=c(s,\eta,\psi)^{-1}\vol(N_L)^{-1}$, for $m\ge \max\wpair{C,\deg(\mu),l=\deg(\eta), \deg(\chi)}$.

 By Lemma \ref{lem32} (2), we have 
$$\supp(W_{v_L}(t(a)w)-W_{v'_L}(t(a)w)\subset \CP_E^{-C}.$$
By Proposition \ref{prop38}, for $a\in \CP_E^{-C}\subset \CP_E^{-m}$, (note that $m\ge C$ by assumption), we have
$$\omega(t(a)w)\phi^{m,\chi}(1)=\mu(a)|a|^{1/2}\vol(1+\CP_E^{m})\vol(E^1\cap (1+\CP_E^m))^{-1}\gamma_{\psi^{-1}}F_\chi(\bar a).$$
Thus Eq.(\ref{eq313}) can be written as 
\begin{align}
&D_L(s,\eta,\psi)(\gamma(s,\pi,\omega_{\mu,\psi^{-1},\chi},\eta)-\gamma(s,\pi',\omega_{\mu,\psi^{-1},\chi},\eta) )\label{eq314}\\
=& \int_{E^1\setminus E^\times}(W_{v_L}(t(a)w)-W_{v_L'}(t(a)w))\mu(a)\eta^*(a)|a|^{-s+1/2}F_\chi(\bar a)d^*a,\nonumber
\end{align}
where $D_L(s,\eta,\psi)=c(s,\eta,\psi)^{-1}\vol(N_L)^{-1}\gamma_{\psi^{-1}}^{-1}$. Note that Eq.(\ref{eq314}) is independent of $m$, and thus it is true for all choices of compatible $\chi, \eta$.

Now we can prove our theorem.

To prove (1), we will show that $W_{v_L}(h)=W_{v_L'}(h)$ for all $h\in H=U(1,1)(F)$. By Proposition \ref{prop34}, it suffices to show that $W_{v_L}(t(a)w)=W_{v_L'}(t(a)w)$ for all $a\in E^\times$. By Lemma \ref{lem32} (2), it suffices to show that
$$W_{v_L}(t(a)w)-W_{v_L'}(t(a)w)=0, \forall a\in E^\times, \textrm{ with }|a|_E\le q_E^C. $$
By assumption of (1) and Eq.(\ref{eq314}), we have
$$\int_{E^1\setminus E^\times}(W_{v_L}(t(a)w)-W_{v_L'}(t(a)w))F_\chi(\bar a) \mu(a)\eta^*(a)|a|^{-s+1/2}d^*a=0,$$
for all compatible $\chi, \eta$. For a fixed $\chi$, if we vary $\eta$, then by inverse Mellin transform on the group $E^1\setminus E^\times$, we get 
$$(W_{v_L}(t(a)w)-W_{v_L'}(t(a)w))F_\chi(\bar a)=0, \forall \chi. $$
By Lemma \ref{lem35} and Lemma \ref{lem36}, for each $a\in \CP_E^{-C}$, we can find a character $\chi$ of $E^1$ with $\deg(\chi)\le C$ such that $F_\chi(\bar a)\ne 0$. Thus we get
\begin{equation}{\label{eq311}}W_{v_L}(t(a)w)=W_{v_L'}(t(a)w), \forall a\in \CP_E^{-C}.\end{equation}
This completes the proof of (1).

Now we prove (2). Let $l_0=l_0(\pi, \pi',\mu,\chi)$ be an integer such that $l_0>\deg(\mu)$, $l_0>C=3L/2$ and
\begin{equation}\label{eq317}W_{v_L}(t(a_0a)w)=W_{v_L}(t(a)w), W_{v_L'}(t(a_0a)w)=W_{v_L'}(t(a)w), \forall a_0\in 1+\CP_E^{l_0}.\end{equation} 
Such $l_0$ exists since the above functions on $a$ are continuous. In fact, if $l_0>L$, the condition Eq.(\ref{eq317}) holds. By definition of $F_\chi$, from $l_0>C$, we can check that 
\begin{equation}F_\chi(\bar a_0 \bar a)=F_\chi(\bar a), \forall a_0\in 1+\CP_E^{l_0} \textrm{ and } a\in \CP_E^{-C}.\end{equation}

 Note that $l_0$ is in fact independent of $\chi$. 
If $l(\eta)>l_0$, from our choice of $l_0$, the right side of Eq.(\ref{eq314}) vanishes. Since $D_L(s,\eta,\psi)$ has no zeros or poles outside of a finite number of $q_E^{-s}$, we get
$$ \gamma(s,\pi, \omega_{\mu,\psi^{-1},\chi},\eta)=\gamma(s,\pi', \omega_{\mu,\psi^{-1},\chi}, \eta).$$
This completes the proof of (2).
\end{proof}
\begin{rem}\upshape \label{rem310}
In the above proof, we omit the details when $E/F$ is ramified. In the ramified case, the analogue of Eq.(\ref{eq36}) is
$$ \Psi(s,W_{v_m}, \phi^{m,\chi}, \Phi_{i,l})=q_F^{-i-l}\vol(1+\CP_E^{2m})\vol(E^1\cap (1+\CP_E^{2m}))^{-1}.$$
It is easy to see that in calculation to derive this formula, we do not have to require that the residue characteristic is not 2. Thus Proposition \ref{prop22} is fully proved.
\end{rem}

\section{ Local Zeta integrals for $\GL(2,F)$}
In this section, we assume$F$ is a $p$-adic local field, and let $H=\GL(2,F)$. We will use the following notations. Let $B$ be the upper triangular subgroup of $H$. Let $B=TN$, with $T$ the torus and $N$ the unipotent subgroup. Let $\bar B$ (resp. $\bar N$) be the opposite of $B$ (resp. $N$). Let $Z$ be the center of $H$ and $R=ZN$.
For $x\in F$, let $$n(x)=\begin{pmatrix}1& x\\ &1 \end{pmatrix}\in N, \bar n(x)=\begin{pmatrix}1&\\ x &1 \end{pmatrix}\in \bar N.$$ For $a,b\in F^\times$, let $t(a,b)=\diag(a,b)$. For $a\in F^\times$, let $t(a)=m(a,1)$. Let $w=\begin{pmatrix}&1\\ -1& \end{pmatrix}$.

\subsection{Weil representation of $\GL_n(F)$}
This section follows from \cite{GR}. Let $X=F^n$ be an $n$-dimensional vector space over $F$ with a fixed choice of basis, and let $W=X\oplus X^*$, where $X^*$ is the dual space of $X$. We also identify $X$ and $X^*$ with $F^n$ and denote the standard paring between $F^n$ by $\pair{~,~}$, where $\pair{v,w}=\sum v_jw_j$ for $v=(v_j)$ and $w=(w_j)$. Let $A((v,v^*),(w,w^*))=\pair{v,w^*}-\pair{v^*,w}$ be the symplectic form on $W$. The group $G=\GL_n(F)$ can be identified with the stabilizer of $X$ and $X^*$ in $\Sp(W)$. The (Schrodinger model of the) Weil representation $\omega_\psi$ of $\Mp(W)$ is realized on $\CS(X)=\CS(F^n)$. The subgroup $G\subset \Sp(W)$ has a lifting to $\Mp(W)$ which is uniquely determined by the following formula:
$$\omega_\psi(g)f(x)=|\det(g)|^{1/2}f({}^t\!g x),~g\in G,x\in F^n.$$ 
We call the representation given by this formulae the standard oscillator representation of $\GL_n(F)$. Any other oscillator representations differ from it by a twist. Let $P$ be the parabolic subgroup with the last row $(0,0,\dots,0,*)$ and $P_n$ be the subgroup of $P$ with the last row $(0,0,\dots,0,1)$. Then there is an isomorphism
$$(\omega_\psi,S(F^n))\ra \ind_{P_n}^G(|\textrm{det}|^{1/2})$$
$$f\mapsto \varphi_f,$$ where $\varphi_f(g)=|\det(g)|^{1/2}f({}^t\!ge_n)=(\omega(g)f)(e_n)$, with $e_n=(0,0,\dots,0,1)^t$. Let $Z$ be the center of $\GL_n(F)$ and $\chi$ be a character of $Z$, let $\omega_{\psi,\chi}$ be the largest quotient of $\omega_\psi$ on which $Z$ acts via $\chi$. To determine $\omega_{\psi, \chi}$, we define 
$$\varphi_{\chi}(g)=\int_{Z}\chi^{-1}(z)\varphi(zg)dz$$
for $\varphi\in \ind_{P_n}^G(|\det|^{1/2})$. Then $(r(z)\varphi)_\chi(g)=\chi(z)\varphi_\chi(g)$, for $z\in Z$, where $r$ is the right translation. 

 The Levi subgroup $M$ of $P$ is isomorphic to $\GL(n-1)\times \GL(1)$. Let $1\otimes \chi$ be the character of $M$ whose restriction to $\GL(n-1)$ is trivial and restriction to $\GL(1)$ is $\chi$. For $h\in \GL(n-1), a\in \GL(1)$, let $p=\diag(h,a)\in P$, we have
$$\varphi_\chi(pg)=|\det(h)|^{1/2}|a|^{-(n-1)/2}\chi(a)\varphi_\chi(g)=(1\otimes \chi)\delta_{P}^{1/2}(p)\varphi_\chi(g),$$
i.e., $\varphi_\chi\in \Ind_{P}^G(1\otimes \chi)$, where $\Ind$ denotes the normalized induced representation.
\begin{prop}
There is an isomorphism $\omega_{\psi,\chi}\cong \Ind_P^G(1\otimes \chi)$, and the map $f\mapsto \varphi_f\mapsto (\varphi_f)_\chi$ defines the projection $(\omega_\psi, \CS(F^n))\cong (r, \ind_{P_n}^G(|\det|^{1/2}))\ra (\omega_{\psi,\chi}\cong \Ind_P^G(1\otimes \chi)).$
\end{prop}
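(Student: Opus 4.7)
The plan is to identify $f \mapsto \varphi_f \mapsto (\varphi_f)_\chi$ as a $G$-equivariant surjection from $\omega_\psi$ onto $\Ind_P^G(1 \otimes \chi)$ whose kernel is exactly the subspace $\langle \omega_\psi(z) f - \chi(z) f : z \in Z, f \in \CS(F^n)\rangle$, so that the induced map $\omega_{\psi, \chi} \to \Ind_P^G(1 \otimes \chi)$ is the desired isomorphism. The left $P$-covariance computation displayed just before the statement already places $(\varphi_f)_\chi$ inside $\Ind_P^G(1 \otimes \chi)$, and convergence of the defining integral holds because $Z \cap P_n = \{1\}$ combined with the compact support of $\varphi_f$ modulo $P_n$ forces $z \mapsto \varphi_f(zg)$ to be compactly supported on $Z$. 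A direct change of variables gives $(r(z_0)\varphi)_\chi = \chi(z_0)\,\varphi_\chi$ for $z_0 \in Z$, so the map annihilates $\omega_\psi(z_0) f - \chi(z_0) f$ and descends to a $G$-intertwining morphism $\Theta: \omega_{\psi, \chi} \to \Ind_P^G(1 \otimes \chi)$.

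For surjectivity, let $\Phi \in \Ind_P^G(1 \otimes \chi)$ be right-invariant under a compact open subgroup $K_0 \subset G$. By the Iwasawa decomposition $G = PK$ (with $K$ maximal compact), $\Phi$ is determined by its values on finitely many double-coset representatives $k_1, \ldots, k_r \in (P \cap K) \backslash K / K_0$. For each $k_j$, pick a compact open neighborhood $W_j$ of $k_j$ and define $\varphi$ near $P_n k_j$ as the $P_n$-covariant extension of a scalar $c_j$ times the characteristic function of $W_j$, choosing $c_j$ so that $\int_Z \chi^{-1}(z)\varphi(zk_j) dz = \Phi(k_j)$. Summing over $j$ produces $\varphi \in \ind_{P_n}^G(|\det|^{1/2})$ with $\varphi_\chi = \Phi$. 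This is the main obstacle: the $W_j$ must be shrunk so that the $Z$-translates $zW_j$ are mutually disjoint in $P_n \backslash G$ outside a small compact neighborhood of the identity in $Z$, ensuring that the $Z$-integrals on different patches do not produce spurious contributions at $k_i \neq k_j$.

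For injectivity, suppose $\varphi \in \ind_{P_n}^G(|\det|^{1/2})$ satisfies $\varphi_\chi \equiv 0$. For each fixed $g$, the function $z \mapsto \varphi(zg)$ lies in $C_c^\infty(Z)$ and has vanishing $\chi$-Fourier coefficient; a standard fact on locally profinite abelian groups says every such function is a finite sum $\sum_i (h_i - \chi(z_i)\, r(z_i) h_i)$ with $h_i \in C_c^\infty(Z)$ and $z_i \in Z$. Using the smoothness and compact support of $\varphi$ to globalize this decomposition uniformly in $g$ across a compact transversal for the $P_n$-action yields $\varphi = \sum_i (\omega_\psi(z_i)\varphi_i - \chi(z_i)\varphi_i)$, so $\varphi$ vanishes in $\omega_{\psi, \chi}$ and $\Theta$ is injective.
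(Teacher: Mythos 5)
The paper itself does not prove this proposition: it is stated as following from \cite{GR} and is immediately followed by an (also unproved) corollary, so there is no in-paper argument against which to compare yours. Your approach is the correct and natural one: you identify the kernel of $f\mapsto(\varphi_f)_\chi$ with the subspace $\langle \omega_\psi(z)f-\chi(z)f\rangle$ defining the $(Z,\chi)$-coinvariants $\omega_{\psi,\chi}$, and the two facts you invoke — that $Z\cap P_n=\{1\}$ makes the $Z$-action on $P_n\backslash G$ free and proper, and that $(C^\infty_c(Z))_{Z,\chi}\cong\BC$ via $\int_Z\chi^{-1}$ — are exactly what is needed. The one place where your write-up is thinner than it should be is the globalization step in the injectivity argument (passing from the $C^\infty_c(Z)$-decomposition on a single $Z$-fiber to a decomposition of $\varphi$ itself in $\ind_{P_n}^G$), and the ad hoc shrinking of the $W_j$ in the surjectivity argument. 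Both points are handled more cleanly, and simultaneously, by the functorial route: since $P=P_nZ$ with $P_n\cap Z=\{1\}$, induction in stages gives $\ind_{P_n}^G(|\det|^{1/2})=\ind_P^G\bigl(\ind_{P_n}^P(|\det|^{1/2})\bigr)$; as a $Z$-module $\ind_{P_n}^P(|\det|^{1/2})\cong C^\infty_c(Z)$ (right translation), so its $(Z,\chi)$-coinvariants is one-dimensional and, as a $P$-module, is $(1\otimes\chi)\delta_P^{1/2}$; and because $Z$ is central in $G$, taking $(Z,\chi)$-coinvariants commutes with $\ind_P^G$. This yields the isomorphism and the identification with $\varphi\mapsto\varphi_\chi$ at once, without any patch-by-patch uniformity to verify.
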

\begin{cor}
Suppose that $n=2$. If $\chi\ne |~|^{\pm 1}$, then $\omega(\psi,\chi)$ is irreducible.
\end{cor}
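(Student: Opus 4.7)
The plan is to deduce this immediately from the preceding proposition together with the classical irreducibility criterion for principal series of $\GL_2(F)$. By the proposition, for $n=2$ we have an isomorphism
$$\omega_{\psi,\chi}\cong \Ind_P^G(1\otimes \chi),$$
where $P$ is the standard upper triangular Borel subgroup of $G=\GL_2(F)$ and $1\otimes \chi$ is the character of the Levi $M=\GL_1\times \GL_1$ sending $\diag(a_1,a_2)$ to $\chi(a_2)$. So the question reduces to irreducibility of this normalized principal series.

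The key step is then to invoke the well-known criterion of Jacquet--Langlands: for two characters $\chi_1,\chi_2$ of $F^\times$, the induced representation $\Ind_B^{\GL_2(F)}(\chi_1\otimes \chi_2)$ is irreducible if and only if $\chi_1\chi_2^{-1}\ne |~|_F^{\pm 1}$. Applying this with $\chi_1=1$ and $\chi_2=\chi$, the ratio $\chi_1\chi_2^{-1}=\chi^{-1}$ fails to be $|~|_F^{\pm 1}$ precisely when $\chi\ne |~|_F^{\pm 1}$, which is the hypothesis. Hence $\Ind_P^G(1\otimes \chi)$ is irreducible, and therefore so is $\omega_{\psi,\chi}$.

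There is no real obstacle here; the statement is essentially a translation of the preceding proposition into classical language. If one wished to be self-contained rather than quoting Jacquet--Langlands, the mild work required would be to check the two potentially reducible points: at $\chi=|~|_F$ the induced representation has the trivial character of $G$ as a quotient (with the Steinberg as a subrepresentation), while at $\chi=|~|_F^{-1}$ the roles are reversed. For all other $\chi$ one can verify irreducibility either via Jacquet module considerations (compute the Jacquet module along $N$ using the geometric lemma, obtaining $\chi_1\otimes \chi_2$ and its Weyl conjugate, which fail to generate any proper subrepresentation when the two characters are non-equivalent in the above sense) or by the standard intertwining operator argument. Either route gives the conclusion, and the bulk of the content is already contained in the identification with $\Ind_P^G(1\otimes \chi)$ supplied by the previous proposition.
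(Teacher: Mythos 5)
Your proof is correct and takes essentially the same route as the paper, which simply cites the well-known irreducibility criterion for principal series (via \cite{BZ2}) together with the identification $\omega_{\psi,\chi}\cong\Ind_P^G(1\otimes\chi)$ from the preceding proposition. You have merely unwound the reference and applied the Jacquet--Langlands criterion explicitly with $\chi_1=1$, $\chi_2=\chi$, which is exactly what is intended.
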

This is well-known, see \cite{BZ2} for example.

\subsection{A different model of the Weil representation for $\GL(2)$} In this section, we consider a different model of the Weil representation for $\GL_2$, which will give us the familiar Weil representation formulas.
 
Let $X=F^2$ and let $e_1,e_2$ be the standard basis of $X$, let $e_1^*,e_2^*$ be the corresponding dual basis. Let $Y=Fe_1\oplus Fe_2^*$ and $Y^*=Fe_1^*\oplus Fe_2$. Then $W=Y\oplus Y^*$ is a complete polarization of $W$. There is an isomorphism 
$$I: \CS(X)\ra \CS(Y)$$
defined by the partial Fourier transform
$$I(f)(xe_1+ye_2^*)=\int_{F}\psi^{-1}(yz)f(xe_1+ze_2)dz.$$

From the isomorphism, the Weil representation $(\omega_\psi, \CS(X))$ can be realized on $\CS(Y)$, which is also denoted by $\omega_\psi$ by ambiguity notation.

For $\phi=I(f)\in \CS(Y)=\CS(F^2)$, we can check that the following formulas hold:
\begin{align}
(\omega(n(b))\phi)(x,y)&=\psi(bxy)\phi(x,y), b\in F,\label{eq41}\\
(\omega(t(a_1,a_2))\phi)(x,y)&=|a_1a_2^{-1}|^{1/2}\phi(a_1x,a_2^{-1}y), a_1, a_2\in F^\times, \label{eq42}\\
(\omega(w)\phi)(x,y)&=\int_{F\times F}\psi(xv-yu)\phi(u,v)dudv. \label{eq43}
\end{align}

For a quasi-character $\chi$ of $F^\times$, we define 
\begin{equation}\label{eq44}\phi_\chi(x,y)=\int_{F^\times}\chi^{-1}(a)\phi(ax,a^{-1}y)da.\end{equation} Then $(\omega(z)\phi)_\chi=\chi(z)\phi_\chi,$ for $z\in Z\cong F^\times$. The projection map $\omega_\psi\ra \omega_{\psi,\chi}$ can be identified with
$$(\omega, \CS(Y))\ra \Ind_B^H(1\otimes \chi)$$
$$\phi\mapsto (\omega(h)\phi)_\chi(0,1).$$

\begin{lem}{\label{lem43}}
Let $f\in \Ind_B^H(1\otimes \chi)$ be the function defined by $f(h)=(\omega (h)\phi)_\chi(0,1)$, for some $\phi\in \CS(Y)=\CS(F^2)$. We define $\lambda(f)=\phi_\chi(1,1)$. Then $\lambda$ is well-defined and defines a nontrivial Whittaker functional on $\omega_{\psi,\chi}$.
\end{lem}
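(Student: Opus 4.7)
The plan has three parts: well-definedness, Whittaker equivariance, and nontriviality.

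For well-definedness, I would appeal to the discussion preceding Lemma \ref{lem43}. The assignment $\phi \mapsto f$ factors as
$$\CS(F^2) = \CS(Y) \twoheadrightarrow V_\chi \xrightarrow{\sim} \Ind_B^H(1\otimes \chi),$$
where the first map is $\phi \mapsto \phi_\chi$ (the projection onto $\omega_{\psi,\chi}$, i.e., the largest quotient on which $Z$ acts via $\chi$), and the second is the isomorphism realizing $\omega_{\psi,\chi}$ as $\Ind_B^H(1\otimes \chi)$ via evaluation of $(\omega(h)\phi)_\chi$ at $(0,1)$. Consequently, $f$ determines $\phi_\chi \in V_\chi$ uniquely, and in particular $\phi_\chi(1,1)$ depends only on $f$. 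Equivalently, if $\phi_1,\phi_2 \in \CS(F^2)$ produce the same $f$, then $(\phi_1)_\chi = (\phi_2)_\chi$, so $(\phi_1)_\chi(1,1) = (\phi_2)_\chi(1,1)$.

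For the Whittaker equivariance, right translation on $f$ corresponds to the $\omega$-action on $\phi$: for any $x \in F$,
$$(r(n(x))f)(h) = f(h\,n(x)) = (\omega(h)\,\omega(n(x))\phi)_\chi(0,1),$$
so the preimage of $r(n(x))f$ under the isomorphism is $\omega(n(x))\phi$. By definition of $\lambda$ this gives
$$\lambda(r(n(x))f) = (\omega(n(x))\phi)_\chi(1,1).$$
Using formula \eqref{eq41}, $\omega(n(x))\phi(u,v) = \psi(xuv)\phi(u,v)$, and at $(u,v) = (a,a^{-1})$ the factor $\psi(xuv) = \psi(x)$ is independent of $a$. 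Therefore
$$(\omega(n(x))\phi)_\chi(1,1) = \int_{F^\times}\chi^{-1}(a)\psi(x)\phi(a,a^{-1})\,d^\times a = \psi(x)\,\phi_\chi(1,1) = \psi(x)\,\lambda(f),$$
establishing the Whittaker property.

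For nontriviality, I would choose $\phi$ to be the characteristic function of a small product neighborhood $U \times V \subset F^\times \times F^\times$ containing $(1,1)$, chosen so that the curve $\{(a,a^{-1}) : a \in F^\times\} \cap (U \times V)$ is a neighborhood of $a = 1$ on which $\chi^{-1}$ is constant. Then $\phi_\chi(1,1) = \int_{F^\times}\chi^{-1}(a)\phi(a,a^{-1})\,d^\times a$ reduces to a nonzero positive integral. The main (though mild) obstacle is well-definedness, but it is resolved by the identification of $\omega_{\psi,\chi}$ with $\Ind_B^H(1\otimes \chi)$ already in hand; the Whittaker equivariance and nontriviality are then straightforward direct verifications.
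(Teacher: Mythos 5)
Your argument is correct and follows essentially the same route as the paper: the paper's proof verifies well-definedness by checking directly that $\lambda$ vanishes on the span of $\omega(z)\phi - \chi(z)\phi$ (equivalent to your factorization argument, which also silently invokes the isomorphism $\omega_{\psi,\chi}\cong\Ind_B^H(1\otimes\chi)$ of Proposition 4.1), and it establishes the Whittaker property by the same computation using \eqref{eq41} and \eqref{eq44}. Your explicit construction of a $\phi$ with $\phi_\chi(1,1)\ne 0$ is a small but welcome addition, as the paper treats nontriviality as evident.
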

\begin{proof}
The kernel of the map $\omega_\psi \ra \omega_{\psi,\chi}$ consists functions of the form $\omega(z)\phi-\chi(z)\phi, z\in Z, \phi \in \CS(Y)$. To show $\lambda$ is well-defined, it suffices to show that $ \lambda$ vanishes on the kernel, i.e., 
$$(\omega(z)\phi)_\chi(1,1)=(\chi(z)\phi)_\chi(1,1).$$
This is clear. For $n=n(b)\in N$, one preimage of $r(n)f$ is $\omega(n)\phi$. Thus 
$$\lambda(r(n)f)=(\omega(n)\phi)_\chi(1,1)=\psi(b)\phi_\chi(1,1),$$
by Eq.(\ref{eq41}) and Eq.(\ref{eq44}). This shows that $\lambda$ is a Whittaker functional.
\end{proof}

\subsection{The Local Zeta Integral}
Recall that $T$ is the diagonal subgroup of $H=\GL_2(F)$ and we have $T\cong F^\times \times F^\times$. A quasi-character $\eta$ of $T$ consists a pair of characters $(\eta_1,\eta_2)$ of $F^\times$: $\eta(t(a,b))=\eta_1(a)\eta_2(b)$.
 Let $||~||$ be the character of $T$ defined by 
$$||t(a,b)||=|ab^{-1}|_F.$$
Let $\eta=(\eta_1, \eta_2)$ be a quasi-character of $T$. We consider the induced representation $\Ind_B^H(\eta|~|^{s-1/2})$ of $H$.

Let $(\pi,V)$ be an infinite-dimensional irreducible admissible representation of $\GL_2(F)$ with central character $\pi$, let $\eta=(\eta_1,\eta_2)$ be a quasi-character of $T$ and $ \chi$ be quasi-characters of $F^\times$. We require that
$$\omega_\pi\cdot \chi \cdot \eta_1 \eta_2=1.$$

 For $W\in \CW(\pi,\psi)$, $\theta\in \CW(\omega( \psi^{-1},\chi), \psi^{-1})$, and $f_s\in \Ind_B^H(\eta|~|^{s-1/2})$, similar to the $\RU(1,1)$ case, we consider the following local zeta integra
$$\Psi(W,\theta,f_s)=\int_{R\setminus H}W(h)\theta(h)f_s(h)dh.$$
\textbf{Remark:} This is the local zeta integral at the split place of the global zeta integral for $\RU(1,1)$ defined in \cite{GRS}, which is also a special case of the Rankin-Selberg type local zeta integral of $\GL_2\times \GL_2$ as defined by Jacquet in \cite{J}. For the Rankin-Selberg integral for general $\GL_n\times \GL_m$, see \cite{JPSS}.

For $\Phi\in \CS(F^2)$,  we define
\begin{equation}{\label{eq45}}f(h,s,\Phi,\eta)=\eta_1(\det(h))|\det(h)|_F^s\int_{F^\times}\Phi((0,r)h)\eta_1(r)\eta_2^{-1}(r)|r|_F^{2s}d^*r.\end{equation}
The integral in the definition of $f(s,h,\Phi,\eta)$ converges for $\Re(s)$ large enough and defines a meromorphic function of $s$.
It is easy to check that $f(s,\cdot,\Phi,h)\in \Ind_B^H(\eta|~|^{s-1/2})$. For $\Re(s)$ large enough, every element in $\Ind_B^H(\eta|~|^{s-1/2})$ is of the form $f(h,s, \Phi, \eta)$ for some $\Phi\in \CS(F^2)$, see \cite{JL} for example.
We denote
$$\Psi(s,W,\theta,\Phi, \eta_1)=\Psi(s,W,\theta,f(~, s,\Phi,\eta)).$$
The above notation makes sense because $\eta_2$, and hence $\eta$ is uniquely determined by the other parameters.
It is standard to show that $\Psi(s,W,\theta,\Phi,\eta_1)$ converges absolutely for $\Re(s)>>0$ and defines a meromorphic function of $s$, see \cite{J}.

Plugin the definition of $f(h,s,\Phi, \eta)$, it is easy to see that
$$\Psi(s,W,\theta,\Phi,\eta_1)=\int_{N\setminus H} W(h)\theta(h)\eta_1(\det(h))|\det(h)|^s \Phi((0,1)h)dh.$$
Note that in this expression, $\eta_2$ is hidden.

Next, we suppose that the Whittaker function $\theta\in \CW(\omega_{\psi^{-1},\chi},\psi^{-1})$ is associated with $\phi\in \CS(F^2)$, i.e., we fix a nonzero $\lambda\in \Hom_N(\omega_{\psi^{-1},\chi},\psi^{-1})$, then $\theta(h)=\lambda( \omega(h)\phi )$. By Lemma \ref{lem43}, we have
\begin{equation}\label{eq46}\theta(h)=\int_{F^\times}\chi^{-1}(a)(\omega(h)\phi)(a,a^{-1})d^*a=\int_{Z}\chi^{-1}(z)(\omega(zh)\phi)(1,1)dz.\end{equation}
We will use this expression later.

\subsection{The local $\gamma$-factor and local functional equation}
We recall the local functional equation of the Rankin-Selberg zeta integral for $\GL_2$, see \cite{J}, Theorem 14.7.

For an irreducible smooth representation $\pi$ of $\GL_2(F)$, we have an isomorphism $\tilde \pi \cong \pi\otimes \omega_\pi^{-1}$. For $W\in \CW(\pi,\psi)$, if we define
$$\widetilde W(h)=W(h)\omega_\pi^{-1}(\det(h)),$$
then $\widetilde W\in \CW(\tilde \pi, \psi)$. For $\Phi\in \CS(F^2)$, we define the Fourier transform $\hat \Phi$ by 
$$\hat \Phi(x,y)=\int \Phi(u,v)\psi(uy-vx)dudv.$$
\begin{thm}[Jacquet, Theorem 14.7 of \cite{J}]
There is a meromorphic function $\gamma(s,\pi,\omega_{\psi^{-1},\chi}, \eta_1)$ such that
$$\Psi(1-s,\widetilde W, \tilde \theta, \hat \Phi, \eta_1^{-1})=\gamma(s,\pi,\omega_{\psi^{-1},\chi}, \eta_1)\Psi(s,W,\theta, \Phi, \eta_1).$$
\end{thm}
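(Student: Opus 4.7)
The plan is to prove this functional equation by the standard uniqueness-of-invariant-trilinear-forms argument, following Jacquet \cite{J} and paralleling the $\RU(1,1)$ development in Section 2. First I would establish that $\Psi(s,W,\theta,\Phi,\eta_1)$ is absolutely convergent for $\Re(s)\gg 0$ and defines a rational function of $q_F^{-s}$, and similarly that $\Psi(1-s,\widetilde W,\tilde\theta,\hat\Phi,\eta_1^{-1})$ is absolutely convergent for $\Re(s)\ll 0$ and rational. This is routine via a gauge estimate on the product of the Whittaker functions restricted to the diagonal torus, exactly as in Lemma \ref{lem21} and as carried out in \cite{JPSS}.

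Next I would recognize both sides as $H$-invariant trilinear forms on the space
$\CW(\pi,\psi)\times\CW(\omega_{\psi^{-1},\chi},\psi^{-1})\times\Ind_B^H(\eta|~|^{s-1/2})$ by setting
$$\beta_s(W,\theta,f)=\Psi(s,W,\theta,\Phi,\eta_1),\qquad \beta_s'(W,\theta,f)=\Psi(1-s,\widetilde W,\tilde\theta,\hat\Phi,\eta_1^{-1}),$$
where $f=f(\cdot,s,\Phi,\eta)$. The $H$-invariance is a direct change-of-variable computation; see Corollary \ref{cor24} for the $\RU(1,1)$ analogue. For $\beta_s'$ to descend from $\CS(F^2)$ to the induced representation, one invokes the standard intertwining operator $M(s)\colon\Ind_B^H(\eta|~|^{s-1/2})\to\Ind_B^H(\eta^w|~|^{1/2-s})$, where $\eta^w=(\eta_2,\eta_1)$, together with the identity (Lemma 14.7.1 of \cite{J}) sending $f(\cdot,s,\Phi,\eta)$ to a meromorphic scalar multiple of $f(\cdot,1-s,\hat\Phi,\eta^w)$. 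Under the compatibility relation $\omega_\pi\chi\eta_1\eta_2=1$ and the twist by $\eta_1(\det h)|\det h|^s$ built into formula (\ref{eq45}), the Weyl-flipped parameter $\eta^w$ matches the $\eta_1^{-1}$ appearing in the statement.

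The crux of the argument is to show that the space $\CT_s$ of such $H$-invariant trilinear forms is at most one-dimensional for all but finitely many values of $q_F^{-s}$. By Frobenius reciprocity,
$$\CT_s\cong\Hom_B\bigl(\pi\otimes(\eta|~|^{s-1/2})|_B,\,\widetilde{\omega}_{\psi^{-1},\chi}|_B\bigr),$$
and since $\omega_{\psi^{-1},\chi}\cong\Ind_B^H(1\otimes\chi)$ is a genuine principal series, one can apply the Bernstein--Zelevinsky geometric lemma to filter its restriction to $B$ by Bruhat cells, then analyze each piece using the $N$- and $\bar N$-Jacquet modules of $\pi$. This is precisely the $\GL_2\times\GL_2$ uniqueness due to Jacquet and is the main obstacle in the argument: unlike the $\RU(1,1)$ case treated in Lemma \ref{lem26}, there is no exceptional-representation dichotomy to shortcut the stratum-by-stratum analysis, so one must check directly that generic values of the central character on the torus prevent any extra invariant forms from appearing. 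Once $\dim\CT_s\le 1$ is established, the two forms $\beta_s$ and $\beta_s'$ are automatically proportional, which yields the meromorphic function $\gamma(s,\pi,\omega_{\psi^{-1},\chi},\eta_1)$. Non-triviality of both sides (so that $\gamma$ is not identically $0$ or $\infty$) is verified by exhibiting explicit test data for which $\Psi(s,W,\theta,\Phi,\eta_1)$ is a nonzero rational function of $q_F^{-s}$; this can be done by a Howe-vector computation analogous to that in the proof of Theorem \ref{thm39}.
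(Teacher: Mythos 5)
This statement is not proved in the paper: it is quoted verbatim from Jacquet \cite{J}, Theorem 14.7, so there is no internal argument to compare your proposal against. Nonetheless, your sketch is a faithful reconstruction of the route Jacquet actually takes: convergence and rationality via gauge estimates on the Whittaker functions, reinterpretation of both sides as $H$-invariant trilinear forms on $\CW(\pi,\psi)\times\CW(\omega_{\psi^{-1},\chi},\psi^{-1})\times\Ind_B^H(\eta|~|^{s-1/2})$, descent of the dual-side integral onto the induced model through the intertwining operator identity $M(s)f(\cdot,s,\Phi,\eta)=c_0(s)f(\cdot,1-s,\hat\Phi,\eta^w)$, and finally a generic multiplicity-one statement for invariant trilinear forms. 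You also correctly identify why the paper's Lemma \ref{lem26} cannot be quoted here: that lemma leans on the exceptionality of $\omega_{\mu,\psi^{-1},\chi}$ for $\RU(1,1)$, specifically on $V_\sigma(N)$ being an irreducible $B$-module, and a full principal series $\Ind_B^H(1\otimes\chi)$ of $\GL_2$ has no such structure, forcing the stratum-by-stratum Bruhat analysis. Two small cautions: first, at the Frobenius-reciprocity step you keep the normalized exponent $\eta|~|^{s-1/2}|_B$, whereas passing to non-normalized induction (as the paper does in the proof of Proposition \ref{prop25}) produces $\eta|~|^{s-1}|_B$; this is a convention slip, not a substantive error, but it should be made consistent. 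Second, the multiplicity-one step is the entire mathematical content of the theorem and you state it rather than carry it out; as a proposal this is acceptable, but be aware that no shortcut via $\RU(1,1)$-style exceptionality is available, so any write-up must actually execute the cell-by-cell Jacquet-module analysis or cite Jacquet's Theorem 14.7 (or its later reformulations) directly.
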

It is clear that $\Psi(1-s,\widetilde W, \tilde \theta, \hat \Phi, \eta_1^{-1})=\Psi(1-s, W,\theta, \hat \Phi, \eta_2)$, where $\eta_2=\omega_\pi^{-1}\omega_\theta^{-1}\eta_1^{-1}$.
\begin{thm}[Jacquet, Theorem 15.1 of \cite{J}, Multiplicativity of $\gamma$-factors]\label{thm44}
If $\chi\ne |~|^{\pm 1}$ so that $\omega_{\psi,\chi}\cong \Ind_B^H(1\otimes \chi)$ is irreducible, then $$\gamma(s,\pi, \omega_{\psi^{-1},\chi}, \eta)=\gamma(s,\pi, \eta_1)\gamma(s,\pi, \chi \eta_1),$$
where $\gamma(s,\pi, \eta_1)=\gamma(s,\pi\otimes \eta_1)$ is the $\gamma$-factor of $\pi\otimes \eta_1$ defined in \cite{JL}.
\end{thm}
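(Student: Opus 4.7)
The plan is to deduce the factorization by recognizing the zeta integral $\Psi(s,W,\theta,\Phi,\eta_1)$ as a Rankin--Selberg integral of Jacquet--Langlands type for $\pi \times \sigma$, where $\sigma = \Ind_B^H(1\otimes \chi)$, and then invoking the standard multiplicativity argument for Rankin--Selberg $\gamma$-factors when the second factor is an induced representation of principal-series type. The isomorphism $\omega_{\psi^{-1},\chi} \cong \Ind_B^H(1\otimes \chi)$ from the corollary to Proposition~4.1 is what makes this identification possible, and the hypothesis $\chi \ne |~|^{\pm 1}$ guarantees irreducibility so that the Whittaker model is well-defined.

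First I would rewrite the Whittaker function $\theta$ of $\omega_{\psi^{-1},\chi}$ as a Jacquet integral attached to a section $f_\sigma$ of $\Ind_B^H(1\otimes\chi)$, namely
$$\theta(h) \;=\; \int_F f_\sigma(wn(x)h)\,\psi^{-1}(x)\,dx,$$
and verify that this is compatible with the formula from Lemma~\ref{lem43}. Inserting this expression into the definition of $\Psi$ turns the integral over $N\setminus H$ into a double integral over $N\setminus H \times F$, which, after the change of variables $h \mapsto wn(x)h$ and using the Bruhat decomposition $H = BwN \sqcup B$, collapses onto an integral over the big cell.

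Next I would use the Iwasawa decomposition $H = NTK$ to separate the torus variable. The integral over $T \cong F^\times\times F^\times$ factors as a product of two one-dimensional Mellin-type integrals, each of which can be recognized as the classical Jacquet--Langlands local zeta integral $Z(s, W, \Phi_i, \mu_i)$ for $\pi \otimes \mu_i$ with $\mu_1 = \eta_1$ and $\mu_2 = \chi\eta_1$. Applying the $\GL_2 \times \GL_1$ local functional equation
$$Z(1-s,\widetilde W,\hat\Phi_i,\mu_i^{-1}) \;=\; \gamma(s,\pi,\mu_i)\,Z(s,W,\Phi_i,\mu_i)$$
to each factor separately and comparing with the functional equation that \emph{defines} $\gamma(s,\pi,\omega_{\psi^{-1},\chi},\eta_1)$ yields the desired product $\gamma(s,\pi,\eta_1)\gamma(s,\pi,\chi\eta_1)$.

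The hard part will be the bookkeeping in the unfolding step: one must carefully choose the section $f_\sigma$ (or rather the Schwartz function $\phi$) so that the contribution from the small cell vanishes or is absorbed into a regularization, and one must track how the partial Fourier transform intertwining $\CS(X)$ with $\CS(Y)$ in Section~4.2 interacts with the $\Phi$-section $f(s,h,\Phi,\eta_1)$ of the induced representation. A density argument (passing from a dense subspace of nice sections to all of them) and continuity of $\gamma$ as a meromorphic function of $s$ will then promote the identity from the nice case to all Whittaker functions and all $\Phi$. Since the resulting identity is a rational function identity in $q^{-s}$, proving it on a Zariski-dense subset suffices.
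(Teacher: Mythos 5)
The paper does not prove this statement; it is quoted directly from Jacquet's monograph \cite{J} (Theorem~15.1 there) and used as a black box to deduce the classical $\GL_2$ converse theorem from Theorem~\ref{thm511}. There is therefore no internal proof in the paper to compare your sketch against.

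That said, your proposed route---realize $\omega_{\psi^{-1},\chi}\cong\Ind_B^H(1\otimes\chi)$, express its Whittaker function by a Jacquet integral over the big cell, unfold the Rankin--Selberg integral, and reduce to $\GL_2\times\GL_1$ Tate-type zeta integrals---is the standard strategy and is, in broad outline, what Jacquet does. One point needs tightening: the assertion that the integral over $T$ ``factors as a product of two one-dimensional Mellin-type integrals'' is not literally correct, since $W$ involves only one effective torus variable (the center acts by $\omega_\pi$) and the unfolded Jacquet integral produces a nested, not a split, integral. The factorization $\gamma(s,\pi,\omega_{\psi^{-1},\chi},\eta_1)=\gamma(s,\pi,\eta_1)\gamma(s,\pi,\chi\eta_1)$ emerges only after applying the $\GL_2\times\GL_1$ functional equation to the inner variable and tracking how the intertwining operator $M(s)$ on $\Ind_B^H(1\otimes\chi)$---whose local coefficient is itself a product of Tate $\gamma$-factors---acts on the outer variable, together with a choice of section making the small-cell contribution vanish and a density/rationality argument in $q^{-s}$. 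These are precisely the issues you flagged as ``the hard part,'' so your plan correctly locates the difficulty; but the factoring claim, as written, would not hold and must be replaced by this iterated argument.
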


\section{A new proof of the local converse theorem for $\GL(2)$}
\subsection{Howe vector for GL(2,F)}
In this section, we consider the Howe vectors for representations of $\GL_2(F)$. The theory is parallel to $\S$3.1. We omit some details, which can be found in \cite{Ba2}.

Let $p=p_F$ be a prime element of $F$. Let $\CP=p_F\CO_F$ be the maximal ideal of $F$. Let $\psi$ be an additive character on $F$ such that $\psi(\CO_F)=1$ and $\psi(p^{-1}\CO)\ne 1$.

Let $K_m\subset \GL_2(F)$ be a congruence subgroup defined by $1+M_2(P^m)$.  Set
$$d_m=\begin{pmatrix} p^{-m}&\\ &  p^{m}\end{pmatrix}.$$
Let $J_m=d_mK_md_m^{-1}$. Explictly, $$J_m=\begin{pmatrix} 1+P^m& P^{-m}\\ P^{3m}& 1+P^m\end{pmatrix}\cap H.$$

Let $\tau_m$ be the character of $K_m$ defined by
$$\tau_m(k)=\psi(p^{-2m}k_{1,2}), k=(k_{i,l})\in K_m.$$
By our assumption on $\psi$, it is easy to see that $\tau_m$ is indeed a character on $K_m$. Define $\psi_m$ on $J_m$ by
$$\psi_m(j)=\tau_m(d_m^{-1}jd_m), j\in J_m.$$
One can check that $\psi_m$ and $\psi$ agree on $N_m:=J_m\cap N$.

Let $(\pi,V)$ be a $\psi$ generic representation. We fix a Whittaker functional. Let $v\in V$ be such that $W_v(1)=1$. Let $N_m=J_m\cap N$. For $m\ge 1$, define
$$v_m=\frac{1}{\Vol(N_m)}\int_{N_m}\psi(n)^{-1}\pi(n)vdn.$$
Let $L$ be an integer such that $v$ is fixed by $K_L$.

\begin{lem}{\label {lem51}} We have
\begin{enumerate}
\item $W_{v_m}(1)=1.$
\item If $m\ge L$, $\pi(j)v_m=\psi_m(j)v_m$ for all $j\in J_m$.
\item If $k\le m$, then
$$v_m=\frac{1}{\Vol(N_m)}\int_{N_m}\psi(n)^{-1}\pi(n)v_kdn.$$
\end{enumerate}
\end{lem}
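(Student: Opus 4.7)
The construction of the Howe vectors $v_m$ for $\GL_2(F)$ is formally identical to the $\RU(1,1)$ construction in Section 3.1, and the proof will mirror that of Lemma \ref{lem31} step by step (which in turn follows Baruch \cite{Ba2}). In what follows I will use repeatedly that $\psi_m$ agrees with $\psi$ on $N_m$, and that the definition of $\psi_m$ only sees the upper-right entry: explicitly $\psi_m(j)=\psi(j_{1,2})$ for $j=(j_{i,l})\in J_m$. In particular, $\psi_m$ is trivial on $\bar B_m:=\bar B\cap J_m$.

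For (1), I would simply unfold the definition of $v_m$: since $W_v(n)=\psi(n)$ for $n\in N$, one has
$$W_{v_m}(1)=\frac{1}{\Vol(N_m)}\int_{N_m}\psi(n)^{-1}W_v(n)\,dn=\frac{1}{\Vol(N_m)}\int_{N_m}1\,dn=1.$$

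For (2), following the proof of Lemma \ref{lem31}(2), I introduce the vector
$$\tilde v_m=\frac{1}{\Vol(J_m)}\int_{J_m}\psi_m(j)^{-1}\pi(j)v\,dj$$
and check two things: first, $\pi(j)\tilde v_m=\psi_m(j)\tilde v_m$ for all $j\in J_m$, which is immediate from the change of variables $j\mapsto j^{-1}j'$ in the defining integral; second, $\tilde v_m=v_m$. For the second point I use the Iwahori-style factorization $J_m=N_m\cdot\bar B_m$ with unique expression, where $\bar B_m\subset K_m\subset K_L$ (this is where the hypothesis $m\ge L$ enters). Since $v$ is fixed by $K_L$, we have $\pi(\bar b)v=v$ for $\bar b\in \bar B_m$, and since $\psi_m$ is trivial on $\bar B_m$ the $\bar B_m$-integral contributes only a volume, leaving exactly the definition of $v_m$.

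For (3), using $k\le m$ and the explicit form of $N_m$ (namely $N_m=\{n(x):x\in \CP^{-m}\}$), the inclusion $N_k\subset N_m$ is clear. Substituting the definition of $v_k$ into the right-hand side gives a double integral over $N_m\times N_k$, and with $N$ abelian the change of variables $u=nn'$ (keeping $n'\in N_k$ fixed) turns this into $\frac{\Vol(N_k)}{\Vol(N_m)\Vol(N_k)}\int_{N_m}\psi(u)^{-1}\pi(u)v\,du=v_m$, as required. The only mildly non-trivial ingredient is the Iwahori decomposition in step (2), but everything is standard; I do not anticipate any real obstacle here.
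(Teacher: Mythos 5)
Your proof is correct and follows exactly the route the paper intends: the paper states Lemma~\ref{lem51} without proof, remarking only that the $\GL_2$ theory "is parallel to \S3.1," and your argument is precisely the $\GL_2$ transcription of the sketch the paper gives for Lemma~\ref{lem31}(2) (Iwahori factorization $J_m=N_m\bar B_m$, triviality of $\psi_m$ on $\bar B_m$, and $K_L$-fixedness of $v$), together with the routine unfoldings for parts (1) and (3).
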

The vectors $v_m$ are called Howe vectors.

Let $w=\begin{pmatrix} &1\\ -1& \end{pmatrix}$ be the nontrivial Weyl element of $H$. Recall that we use $t(a)$ to denote the element $\diag(a,1)$ for $a\in F^\times$. \begin{lem}{\label{lem52}}
Let $v_m,m\ge L$, be Howe vectors defined as above, and let $W_{v_m}$ be the Whittaker functions associated to $v_m$. Then
\begin{enumerate}
\item $W_{v_m}(t(a))\ne 0 \textrm{ implies } a\in 1+\CP_F^m.$
\item  $W_{v_m}(t(a)w)\ne 0$ implies $a\in \CP_F^{-3m}$.
\end{enumerate}
\end{lem}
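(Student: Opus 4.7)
The plan is to imitate, essentially verbatim, the proof of Lemma \ref{lem32} from the $\RU(1,1)$ setting, exploiting the $(J_m,\psi_m)$-equivariance of $v_m$ established in Lemma \ref{lem51}(2). The underlying principle for both parts is: pick an element $g\in J_m$ (of unipotent type, upper or lower), compare the two ways of computing $W_{v_m}(t(a)w^{\epsilon} g)$, namely via the Whittaker property on the left and via $\pi(g)v_m=\psi_m(g)v_m$ on the right, and read off a character constraint on $a$ from the nonvanishing hypothesis.

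For part (1), I would take $g=n(x)$ with $x\in \CP_F^{-m}$, so $g\in N_m\subset J_m$ and $\psi_m(n(x))=\psi(x)$. The identity $t(a)n(x)=n(ax)t(a)$ yields
\[
W_{v_m}(t(a)n(x))=\psi(ax)W_{v_m}(t(a)),
\]
while the right-translation action of $g$ on $v_m$ gives $W_{v_m}(t(a)n(x))=\psi(x)W_{v_m}(t(a))$. If $W_{v_m}(t(a))\neq 0$, equating forces $\psi((a-1)x)=1$ for every $x\in \CP_F^{-m}$, i.e.\ $a-1\in\CP_F^m$ by the assumed conductor of $\psi$.

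For part (2), I would take $g=\bar n(x)$ with $x\in \CP_F^{3m}$, the minimal condition for $\bar n(x)\in \bar N\cap J_m$ by the explicit description of $J_m$. A short conjugation calculation shows $d_m^{-1}\bar n(x)d_m=\bar n(p^{-2m}x)$, whose $(1,2)$-entry vanishes, so $\psi_m(\bar n(x))=1$ for such $x$. On the other hand $w\bar n(x)=n(-x)w$ gives $t(a)w\bar n(x)=n(-ax)t(a)w$, hence
\[
W_{v_m}(t(a)w\bar n(x))=\psi(-ax)W_{v_m}(t(a)w),
\]
while the $J_m$-invariance says this equals $W_{v_m}(t(a)w)$. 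Nonvanishing then forces $\psi(-ax)=1$ for all $x\in\CP_F^{3m}$, whence $a\in\CP_F^{-3m}$.

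There is no real obstacle: the combinatorial computation of $J_m$ for $\GL_2$ (identical in shape to the $\RU(1,1)$ case, with $\CP_F$ in place of $\CP_F\CO_E$) already makes the commutation identities $t(a)n(x)=n(ax)t(a)$ and $w\bar n(x)w^{-1}=n(-x)$ available, and the only verification needed beyond Lemma \ref{lem51} is that $\psi_m(\bar n(x))=1$ on $\bar N\cap J_m$, which is a one-line matrix conjugation.
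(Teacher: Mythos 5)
Your proof is correct and follows essentially the same argument as the paper's: part (1) is identical, and part (2) is the same computation with the cosmetic replacement of $\bar n(-x)$ by $\bar n(x)$, both relying on the $(J_m,\psi_m)$-equivariance from Lemma \ref{lem51}(2) and the commutation identity with $t(a)$. The one thing you make explicit that the paper leaves implicit is the verification that $\psi_m$ is trivial on $\bar N_m$ via the conjugation $d_m^{-1}\bar n(x)d_m=\bar n(p^{-2m}x)$, which is a welcome clarification.
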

\begin{proof}
(1) For $x\in \CP_F^{-m}$, we have $n(x)\in N_m$. From Lemma \ref{lem51} and the relation
$$t(a)n(x)=n(ax)t(a),$$
we get
$$\psi_m(x)W_{v_m}(m(a))=\psi(ax)W_{v_m}(m(a)).$$
If $W_{v_m}(m(a))\ne 0$, we have $(1- a)x\in \Ker(\psi)$ for any $x\in \CP^{-m}$. Thus $a\in 1+\CP^m$.  

(2) For $x\in \CP_F^{3m}$, we have $\bar n(-x)\in \bar N_m:=J_m\cap \bar N$. It is easy to check the relation
$$t(a)w\bar n(-x)=n(a x)t(a)w.$$
By Lemma \ref{lem51}, we have
$$W_{v_m}(t(a)w)=\psi(a x)W_{v_m}(t(a)w).$$
Thus $W_{v_m}(m(a)w)\ne 0$ implies $\psi(a x)=1$ for all $x\in \CP_F^{3m}$, i.e., $a\in \CP_F^{-3m}$.
\end{proof}

\begin{prop}{\label{prop53}}
Let $(\pi,V)$ and $(\pi',V')$ be two $\psi$-generic representation of $H$ with the same central character. Choose $v\in V,v'\in V'$ so that $W_v(1)=1=W_{v'}(1)$ and define $v_m\in V,v_m'\in V'$ as above. Let $L$ be an integer such that $v,v'$ are fixed by $K_L$. Let $m\ge 3L$ and $n_0\in N$. Then
\begin{enumerate}
\item $W_{v_m}(t(a))=1=W_{v'_m}(t(a))$ for all $a\in 1+\CP^m$.
\item $W_{v_m}(b)=W_{v_m'}(b)$ for all $b\in B$.
\item If $n_0\in N_m$, then $W_{v_m}(twn_0)=\psi(n_0)W_{v_m}(tw)$ for all $t\in T=M$.
\item If $n_0\notin N_m$, then $W_{v_m}(twn_0)=W_{v'_m}(twn_0)$ for all $t\in T$.
\end{enumerate}
\end{prop}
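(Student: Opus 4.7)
The plan is to imitate the proof of Proposition 3.4 almost verbatim, since the definitions of $J_m$, $\psi_m$, and the Howe vectors for $\GL_2(F)$ are formally the same as in the unramified $\RU(1,1)$ case. The only inputs needed are Lemmas 5.1 and 5.2.

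For (1), I observe that $t(a) = \diag(a,1)$ lies in $J_m$ whenever $a \in 1 + \CP^m$, and $\psi_m(t(a)) = 1$ since the $(1,2)$-entry of $t(a)$ is zero; Lemma 5.1(2) then gives $W_{v_m}(t(a)) = W_{v_m}(1) = 1$, and the same argument works for $v_m'$. For (2), since Whittaker functions satisfy $W(nh) = \psi(n) W(h)$ on the left, it suffices to check equality on $T$. Writing $t = z\, t(a)$ with $z \in Z$ central, the common central character lets me pull $\omega_\pi(z) = \omega_{\pi'}(z)$ out of both sides, reducing to comparing $W_{v_m}(t(a))$ and $W_{v_m'}(t(a))$. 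Lemma 5.2(1) forces both values to vanish unless $a \in 1 + \CP^m$, in which case (1) supplies the common value $1$. Part (3) is immediate: $W_{v_m}(twn_0) = \lambda(\pi(tw)\pi(n_0)v_m) = \psi_m(n_0)\, W_{v_m}(tw)$ by Lemma 5.1(2), and $\psi_m$ agrees with $\psi$ on $N_m$.

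The genuinely interesting step is (4). Given $n_0 = n(x) \notin N_m$, we have $x^{-1} \in \CP^m \subset \CP^{3L}$, and the Bruhat-style identity
\begin{equation*}
w\, n(x) = \begin{pmatrix} -x^{-1} & 1 \\ 0 & -x \end{pmatrix} \bar n(x^{-1})
\end{equation*}
decomposes $w n_0 = b \cdot j$ with $b \in B$ and $j = \bar n(x^{-1}) \in J_L$. Since $j$ has zero $(1,2)$-entry, $\psi_L(j) = 1$, so Lemma 5.1(2) at level $L$ yields $W_{v_L}(t b j) = W_{v_L}(tb)$, and Part (2) applied at level $L$ (which is fine, as its proof used only Lemmas 5.1 and 5.2, not (4)) gives $W_{v_L}(tb) = W_{v_L'}(tb)$. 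Hence $W_{v_L}(t w n_0) = W_{v_L'}(t w n_0)$ for every $n_0 \notin N_m$. To promote this from level $L$ to level $m$, I would invoke Lemma 5.1(3) to write
\begin{equation*}
W_{v_m}(t w n_0) = \frac{1}{\vol(N_m)} \int_{N_m} \psi^{-1}(n)\, W_{v_L}(t w n_0 n)\, dn,
\end{equation*}
and observe that, because $N_m$ is a subgroup and $n_0 \notin N_m$, the translate $n_0 n$ is never in $N_m$, so the level-$L$ identity applies pointwise and integrates to the claimed equality.

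The main obstacle is purely bookkeeping: one must keep track of the condition $m \ge 3L$ so that the Iwahori-type factor $\bar n(x^{-1})$ in the Bruhat decomposition actually lands in $J_L$, and so that $N_m \cdot n_0 \subset N \setminus N_m$ for $n_0 \notin N_m$. No new structural ingredients beyond Lemmas 5.1 and 5.2 are required.
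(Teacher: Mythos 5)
Your proof is correct and follows essentially the same route as the paper's: the decomposition $wn(x)=\begin{pmatrix}-x^{-1}&1\\ &-x\end{pmatrix}\bar n(x^{-1})$, the $J_L$-invariance via $\psi_L(\bar n(x^{-1}))=1$, reduction to part (2) at level $L$, and promotion from level $L$ to level $m$ by Lemma \ref{lem51}(3) using that $n_0n\notin N_m$ for $n\in N_m$ are all exactly the paper's moves (the paper cites part (1) where it should cite (2), a typo you implicitly correct). One cosmetic slip: $n(x)\notin N_m$ forces $x^{-1}\in\CP_F^{m+1}$ rather than $\CP_F^m$, but either way $\CP_F^{m+1}\subset\CP_F^{3L}$ under $m\ge 3L$, so the argument is unaffected.
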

\begin{proof}
(1) For $a\in 1+P^m,$ we have $t(a)\in J_m$. Thus $W_{v_m}(t(a))=\psi_m(t(a))W_{v_m}(1)=1$.

(2) Since $B=NT$, it suffices to check that $W_{v_m}(t)=W_{v_m'}(t)$ for all $t\in T$. From (1) and the above lemma, we have $W_{v_m}(t(a))=W_{v'_m}(t(a))$ for all $a\in F^\times$. For $t=\diag(a,b)=\diag(ab^{-1},1)\diag(b,b)$, thus 
$$W_{v_m}(t)=\omega_{\pi}(b)W_{v_m}(ab^{-1})=\omega_{\pi'}(b)W_{v_m'}(t(ab^{-1}))=W_{v_m'}(t).$$ 

(3) For $n_0\in N_m$, we have $\pi(n)v_m=\psi(n_0)v_m$. The assertion is clear.

(4) We have
$$W_{v_m}(twn_0)=\Vol(N_m)^{-1}\int_{N_m}W_{v_L}(twn_0n)\psi^{-1}(n)dn.$$
Let $n'=n_0n$. Since $n_0\notin N_m,n\in N_m$, we get $n'\notin N_m$.

 Suppose $n'=n(x)$. Then $n\notin N_m$ is equivalent to $x\notin P^{-m}$. In particular, $x\ne 0$.
We have
$$\begin{pmatrix} &1 \\ -1& \end{pmatrix}\begin{pmatrix} 1&x \\ &1 \end{pmatrix}=\begin{pmatrix}- x^{-1}& 1\\ &-x \end{pmatrix}\begin{pmatrix} 1& \\ x^{-1} &1\end{pmatrix}$$
Let $b=\begin{pmatrix}- x^{-1}& 1\\ &-x \end{pmatrix},j=\begin{pmatrix} 1& \\ x^{-1} &1\end{pmatrix}$. Then $b\in B_0, j\in J_L$ by assumption. Thus
$$W_{v_L}(twn')=W_{v_L}(tbj)=W_{v_L}(tb).$$
By (1), we have $W_{v_L}(tb)=W_{v_L'}(tb)$. Now it is clear that $W_{v_m}(twn_0)=W_{v'_m}(twn_0)$.
\end{proof}
\subsection{Howe vectors for Weil representations}\label{sec52}
We first quote two lemmas on Fourier analysis on $p$-adic fields. Before that, we recall some notations. Let $\chi$ be a character of $F^\times$. If $\chi$ is unramified, we denote $\deg(\chi)=0$. If $\chi$ us ramified, we denote $\deg(\chi)$ the integer $m$ such that $\chi|_{1+\CP^m}=1$ but $\chi|_{1+\CP^{m-1}}\ne 1$. Recall that we fixed an unramified additive character $\psi$ of $F$. In the integrals of the following lemmas, we use the standard Haar measure, i.e., we fix a Haar measure $dx$ on $F$ such that $\Vol(\CO_F, dx)=1$, and we denote $d^* x=\frac{dx}{|x|}$ the multiplicative Haar measure on $F^\times$. Note that we have $\Vol(\CO_F^\times, d^*x)=1-q_F^{-1}$ and $\vol(1+\CP_F^m,d^*x)=q_F^{-m}$.

\begin{lem}{\label{lem54}}
\begin{enumerate}
\item We have
$$\int_{\CO_F^\times}\psi(p^ku)d^*u=\left\{\begin{array}{lll}1-q_F^{-1}=\Vol(\CO_F^\times, d^*u):=c_0, & k\ge 0 \\ 
 q_F^{-1}(\sum_{a\in \CO^\times/(1+\CP_F)}\psi(p^{-1}a))=-q_F^{-1}:=c_1, & k=-1,\\
0,& k\le -2.\end{array}\right.$$
\item Let $\chi$ be a ramified character of $F^\times$ of degree $h$, $h\ge 1$. Then
$$\int_{\CO_F^\times}\chi(u)\psi(p^ku)d^*u=\left\{\begin{array}{lll}0 & k\ne -h \\ 
\ne 0, & k=-h.\\\end{array}\right.$$
\end{enumerate}
\end{lem}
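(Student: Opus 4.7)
The plan is to compute both integrals by reducing them to elementary Haar measure computations on additive cosets, using the fact that $\psi$ has conductor $\CO_F$. Throughout, the key tool is the standard formula
\begin{equation*}
\int_{\CP_F^m}\psi(p^k u)\,du=\begin{cases} q_F^{-m}, & k+m\ge 0,\\ 0,& k+m<0,\end{cases}
\end{equation*}
which follows because $u\mapsto \psi(p^k u)$ has conductor $\CP_F^{-k}$ and integrates to zero over any $\CP_F^m$ on which it is a nontrivial additive character. Since $d^*u=du$ on $\CO_F^\times$ (where $|u|=1$), the multiplicative/additive distinction does not intervene.

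For Part (1), I would write $\CO_F^\times=\CO_F\setminus \CP_F$ and split the integral accordingly. The three cases $k\ge 0$, $k=-1$, $k\le -2$ then follow immediately from the formula above: for $k\ge 0$ the function is identically $1$ on $\CO_F$ giving $1-q_F^{-1}$; for $k=-1$ the integral over $\CO_F$ vanishes while the integral over $\CP_F$ gives $q_F^{-1}$, so the difference is $-q_F^{-1}$; and for $k\le -2$ both pieces vanish.

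For Part (2), I would partition $\CO_F^\times=\bigsqcup_{u_0}u_0(1+\CP_F^h)$ over coset representatives, and on each coset write $u_0 u_1$ with $u_1=1+v$, $v\in \CP_F^h$. Since $\chi$ is trivial on $1+\CP_F^h$, the integral becomes
\begin{equation*}
\sum_{u_0\in \CO_F^\times/(1+\CP_F^h)}\chi(u_0)\psi(p^k u_0)\int_{\CP_F^h}\psi(p^k u_0 v)\,dv.
\end{equation*}
The inner integral equals $q_F^{-h}$ when $k\ge -h$ and $0$ when $k<-h$; this disposes of the case $k<-h$. For $k\ge -h$ it remains to analyze the Gauss-type sum $S_k=\sum_{u_0}\chi(u_0)\psi(p^k u_0)$ over the finite group $(\CO_F/\CP_F^h)^\times$ (note that $u_0\mapsto\psi(p^k u_0)$ descends to this quotient precisely when $k\ge -h$). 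For $k\ge 0$ the character $\psi(p^k\,\cdot)$ is trivial, so $S_k=\sum_{u_0}\chi(u_0)=0$ by orthogonality, since $\chi$ is a nontrivial character of $(\CO_F/\CP_F^h)^\times$ (because $h\ge 1$). For $-h<k<0$, I would group terms by the image of $u_0$ in $(\CO_F/\CP_F^{-k})^\times$, obtaining an inner sum $\sum_{w\in (1+\CP_F^{-k})/(1+\CP_F^h)}\chi(w)$; since $-k\le h-1$ and $\chi|_{1+\CP_F^{h-1}}$ is nontrivial by the exact-conductor hypothesis, this subgroup sum vanishes by orthogonality, forcing $S_k=0$.

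The only slightly substantive step is the case $k=-h$, where one must show the Gauss sum $S_{-h}$ is nonzero. This is the only real obstacle, and it is classical: $|S_{-h}|^2$ can be computed directly by expanding $S_{-h}\overline{S_{-h}}$ and using orthogonality to collapse the double sum to $q_F^h\cdot(1-q_F^{-1})\ne 0$. Alternatively one can simply invoke the standard fact that a Gauss sum attached to a character of exact conductor $\CP_F^h$ against an additive character of conductor $\CP_F^{-(-h)}=\CP_F^h$ (after rescaling) has absolute value $q_F^{h/2}$. Either way the nonvanishing of $S_{-h}$ completes the proof.
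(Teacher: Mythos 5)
Your argument is correct. One remark worth making: the paper does not actually prove this lemma — it simply cites Taibleson (\cite{Ta}, p.~21 for part (1) and Lemma 5.1, p.~49 for part (2)) — so you have supplied a self-contained proof where the paper offers only a reference. Your Part (1) (split $\CO_F^\times=\CO_F\setminus\CP_F$ and use that $\psi$ has conductor $\CO_F$) and the partition of Part (2) into cosets of $1+\CP_F^h$ to reduce to a Gauss sum over $(\CO_F/\CP_F^h)^\times$ are exactly the standard route and match what one finds in Taibleson. The case analysis for $k\ge 0$ (orthogonality of the nontrivial $\chi$) and for $-h<k<0$ (grouping by residue mod $\CP_F^{-k}$, using that $\chi$ restricted to $1+\CP_F^{-k}\supseteq 1+\CP_F^{h-1}$ is nontrivial because $h$ is the exact conductor) is correct and cleanly stated. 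The only place where you wave your hands is the nonvanishing of the Gauss sum $S_{-h}$; this is indeed classical and the $|S_{-h}|^2=q_F^h$ computation you indicate is the standard one, but be aware that the double sum over $(\CO_F/\CP_F^h)^\times$ does not collapse quite as trivially as in the residue-field case — after substituting $u=vt$ one must still evaluate $\sum_v \psi(p^{-h}v(t-1))$ for $t\ne 1$ with $t-1$ possibly a nonunit, and one uses the exact-conductor hypothesis on $\chi$ again to kill those terms. Spelling that out would close the one loose thread.
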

\begin{proof}
(1) is easy and can be found in Page 21 of \cite{Ta}. (2) is Lemma 5.1 of \cite{Ta}, Page 49.
\end{proof}

Let $k$ be a positive integer, $\chi$ be a character of $F^\times$ and $a\in F^\times$, we consider the following integral
$$F_\chi(k,a)=\int_{|x|=q^k}\psi(x+ax^{-1})\chi(x)d^*x.$$
\begin{lem}{\label{lem55}}
Suppose that $|a|=q_F^n$ with $1\le k<n$.
\begin{enumerate}
\item If $\chi$ is unramified, then $F_\chi(k,a)\ne 0$ if and only if $n$ is even and $k=n/2$.
\item If $\chi$ is ramified of degree $h\ge 1$, $F_\chi(k,a)\ne 0$ if and only if one of the following holds:
\begin{enumerate}
\item[(a)] $n$ is even, $n\ge 2h$ and $k=n/2$,
\item [(b)] $n$ is even, $h<n<2h$ and $k=n/2$, $k=h$ or $k=n-h$,
\item[(c)] $n$ is odd, $h<n<2h$ and $k=h$ or $k=n-h$.
\end{enumerate}
\end{enumerate}
\end{lem}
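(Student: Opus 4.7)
The plan is to analyze $F_\chi(k,a)$ by $p$-adic stationary phase. After substituting $x = p^{-k}u$ with $u \in \CO_F^\times$ and writing $a = p^{-n}b$ with $b \in \CO_F^\times$, the integral becomes
$$F_\chi(k,a) = \chi(p^{-k})\int_{\CO_F^\times}\psi\bigl(p^{-k}u + p^{k-n}bu^{-1}\bigr)\chi(u)\, d^*u.$$
I would then partition $\CO_F^\times$ into cosets of $1+\CP^j$ for a suitably chosen integer $j$, and on each coset $u_0(1+\CP^j)$ linearize both the phase and the character $\chi$. Writing $u = u_0(1+p^j t)$ with $t \in \CO_F$, the quadratic and higher-order terms in the expansion of $p^{k-n}bu^{-1}$ lie in $\CO_F$ once $2j \ge n-k$. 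For ramified $\chi$ with $h := \deg(\chi) \ge 2$, one has $\chi(1+p^j t) = \psi(c_\chi p^j t)$ with $v(c_\chi) = -h$ as soon as $j \ge \lceil h/2\rceil$, while the tame case $h = 1$ is handled directly by partitioning along $1+\CP$ and invoking Lemma \ref{lem54}(2). The inner $t$-integral collapses to $\int_{\CO_F}\psi(c(u_0)t)\, dt$, which is $1$ when $c(u_0) \in \CO_F$ and $0$ otherwise, with
$$c(u_0) = p^{j-k}u_0 - p^{j+k-n}bu_0^{-1} + c_\chi p^j$$
(taking $c_\chi = 0$ in the unramified case).

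The three monomials comprising $c(u_0)$ have respective valuations $j-k$, $j+k-n$, and $j-h$. Choosing $j$ strictly below $\max(k,n-k,h)$ but at least $\max(\lceil(n-k)/2\rceil,\lceil h/2\rceil)$, at least one monomial has strictly negative valuation, so the coset integral vanishes unless two or all three monomials attain the minimum valuation simultaneously and cancel. Equating valuations in pairs yields the balance equations $k = n-k$, $k = h$, and $k = n-h$, giving candidate stationary values $k = n/2$, $k = h$, and $k = n-h$. One then verifies case by case that the balancing pair must dominate the remaining third monomial: in the unramified case only the first balance is possible, forcing $n$ even and $k = n/2$, which is case (1); in the ramified case with $n \ge 2h$ one has $h \le n/2$, so the $c_\chi p^j$ term remains subdominant and again only $k = n/2$ survives, yielding case (a); for $h < n < 2h$, all three balances are admissible, producing $k \in \{n/2, h, n-h\}$ with $k = n/2$ available only when $n$ is even, yielding cases (b) and (c). At each surviving stationary value of $k$, localization concentrates the integral on the critical $u_0$'s, producing a finite Gauss-type character sum which a direct computation shows is nonzero.

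The main obstacle will be the careful bookkeeping of the three competing valuations and verifying at each candidate balance that the remaining term is strictly subdominant, so that no additional cancellation invalidates the stationary-phase heuristic. A secondary technicality is the rigorous treatment of the tame character case $h = 1$, where the linearization $\chi(1+p^j t) = \psi(c_\chi p^j t)$ is not valid for $j < 1$, and the analysis must instead invoke Lemma \ref{lem54}(2) via coset decomposition along $1+\CP$. Finally, establishing the non-vanishing at each listed $k$ requires evaluating the residual finite character sum at each critical point, which reduces to a standard Gauss sum computation on the residue field $\CO_F/\CP$.
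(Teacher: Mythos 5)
The paper's proof of this lemma consists of a citation to Lemma~5.23, page~67, of Taibleson's book \cite{Ta}, so any self-contained argument is necessarily a different route. Your stationary-phase plan --- substituting $x = p^{-k}u$, decomposing $\CO_F^\times$ into $1+\CP^j$-cosets, and reading off vanishing from the three competing monomial valuations $j-k$, $j+k-n$, $j-h$ --- is the standard way to attack such an integral from scratch, and the vanishing direction is carried by exactly the mechanism you describe. But the sketch has two genuine gaps.

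First, an internal inconsistency. You correctly flag as the crux ``verifying at each candidate balance that the remaining term is strictly subdominant,'' yet for $h<n<2h$ you conclude that ``all three balances are admissible, producing $k\in\{n/2,h,n-h\}$.'' Running the subdominance test at $k=n/2$: the constraint $n<2h$ gives $h>n/2$, so $j-h<j-n/2$ and the $\chi$-monomial $c_\chi p^j$ strictly \emph{dominates} the two $\psi$-monomials instead of being subdominant, forcing every coset integral to vanish. In fact one sees $F_\chi(n/2,a)=0$ here directly: for $k=n/2$ the phase $\psi(p^{-n/2}(u+bu^{-1}))$ factors through $\CO_F^\times/(1+\CP^{n/2})$, while $\chi$ does not once its conductor $h$ exceeds $n/2$, so averaging over $1+\CP^{n/2}$-cosets annihilates the integral. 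Thus the entry $k=n/2$ in case~(b) fails your own criterion; either your candidate list must drop it, or the statement of case~(b) should be re-checked against Taibleson's original. A proof cannot simply list a balance that its own subdominance test rules out.

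Second, the nonvanishing half of the equivalence is the half actually invoked later (in Proposition~\ref{prop58}), and you dispatch it with ``a direct computation shows is nonzero.'' At $k=n/2$ with $\chi$ unramified the residual sum is a Kloosterman sum over the residue field, whose nonvanishing is a theorem rather than a one-line calculation; at $k=h$ or $k=n-h$ the residual sum is a twisted Gauss-type sum that likewise needs explicit evaluation. Stationary phase over a $p$-adic field does concentrate the integral on the critical locus, but whether the resulting finite character sum is nonzero is a separate computation that degenerate critical points can fail; isolating the critical $u_0$'s does not by itself settle it. That step has to be supplied in full, or one should simply keep the reference to Taibleson as the paper does.
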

\begin{proof} This is Lemma 5.23 of \cite{Ta}, page 67.
\end{proof}

For a positive integer $m$, we consider the function $\phi^m\in \CS(F^2)$ defined by $$\phi^m(x,y)=\Upsilon_{1+\CP^m}(x)\cdot \Upsilon_{1+\CP_F^m}(y),$$
where for a subset $A\subset F$, $\Upsilon_A(x)$ denote the characteristic function of $A$. 
\begin{prop}{\label{prop56}}
\begin{enumerate}
\item For $n\in N_m$, we have
$$\omega_{\psi^{-1}}(n)\phi^{m}=\psi^{-1}(n)\phi^{m}.$$
\item For $\bar n \in \bar N_{m}=\bar N\cap J_m$, we have $$\omega_{\psi^{-1}}(\bar n)\phi^{m,\chi}=\phi^{m,\chi}.$$
\end{enumerate}
\end{prop}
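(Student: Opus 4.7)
The plan is to mirror the proof of Proposition~\ref{prop37} in the $\RU(1,1)$ setting, applying the explicit $\GL_2$ Weil representation formulas (\ref{eq41}) and (\ref{eq43}). The only data needed are the support of $\phi^m=\Upsilon_{1+\CP_F^m}\otimes \Upsilon_{1+\CP_F^m}$, the shape of the congruence groups $N_m$ and $\bar N_m$ read off from the explicit description of $J_m$, and the fact that $\psi$ is unramified.

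For part (1), formula (\ref{eq41}) (with $\psi$ replaced by $\psi^{-1}$) gives
$$\omega_{\psi^{-1}}(n(b))\phi^m(x,y)=\psi^{-1}(bxy)\phi^m(x,y).$$
On $\supp(\phi^m)$ we have $x,y\in 1+\CP_F^m$, hence $xy\in 1+\CP_F^m$. For $n(b)\in N_m$ we have $b\in \CP_F^{-m}$, so $b(xy-1)\in \CO_F$, and unramifiedness of $\psi$ gives $\psi^{-1}(bxy)=\psi^{-1}(b)$. This immediately yields $\omega_{\psi^{-1}}(n(b))\phi^m=\psi^{-1}(n(b))\phi^m$.

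For part (2), I would write $\bar n=w^{-1}n(b)w$ with $b\in \CP_F^{3m}$ (this is exactly the condition for $\bar n\in \bar N_m$ obtained from the explicit form of $J_m$), and set $\phi'=\omega_{\psi^{-1}}(w)\phi^m$. Using (\ref{eq43}), the integral for $\phi'$ factors as a product of two one-variable Fourier transforms of $\Upsilon_{1+\CP_F^m}$; since $\psi$ is unramified, each factor is supported in $\CP_F^{-m}$, so $\supp(\phi')\subset \CP_F^{-m}\times \CP_F^{-m}$. On this support $bxy\in \CP_F^{3m}\cdot \CP_F^{-2m}=\CP_F^m\subset \CO_F$, so (\ref{eq41}) gives $\omega_{\psi^{-1}}(n(b))\phi'=\phi'$. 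Applying $\omega_{\psi^{-1}}(w^{-1})$ then recovers $\omega_{\psi^{-1}}(\bar n)\phi^m=\phi^m$. If the $\chi$-decorated statement of (2) is intended with $\phi^{m,\chi}=(\phi^m)_\chi$ as in (\ref{eq44}), both identities descend through this projection, because averaging against $\chi^{-1}$ over the center commutes with the $N$- and $\bar N$-actions in $\omega_{\psi^{-1}}$.

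No genuine obstacle is expected: the only technical point is the support computation for $\phi'$, which is a routine $p$-adic Fourier calculation, and all the ``unramified $\psi$'' reductions parallel those already used in the proof of Proposition~\ref{prop37}.
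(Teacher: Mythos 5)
Your proof is correct and follows the same route as the paper: part (1) is the same direct computation using the unramified character, and part (2) uses the identical decomposition $\bar n = w^{-1}n(b)w$ with $b\in\CP_F^{3m}$, the same computation of $\supp(\phi')=\CP_F^{-m}\times\CP_F^{-m}$ via partial Fourier transforms, and the same conjugation argument. You also correctly read the $\phi^{m,\chi}$ in part (2) as a typo for $\phi^m$ (the $\chi$-invariant statement is the content of Corollary~\ref{cor57}); your note on descent through the projection matches how the paper passes to that corollary.
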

\begin{proof}
(1) For $n=n(b)\in N_m$, we have
\begin{align*}
\quad\omega_{\psi^{-1}}(n(b))\phi^{m}(x,y)
=\psi^{-1}(bxy))\phi^{m}(x,y).
\end{align*}
For $(x,y)\in \supp(\phi^{m,\chi})$, i.e., $x,y\in 1+\CP_F^m$ and $n(b)\in N_m$, i.e., $b\in \CP_F^{-m}$, we have $bxy -b\in \CO_F$. Since $\psi$ is unramified, we get $\psi^{-1}(bxy)=\psi(b)$.
Thus $$\omega_{\psi^{-1}}(n(b))\phi^{m}=\psi^{-1}(b)\phi^{m}.$$

(2) 
For $\bar n\in \bar N_m$, we can write $\bar n= w^{-1}n(b)w$ with $b\in \CP_F^{3m}$.
Let $\phi'=\omega(w)\phi^{m}$. We have
\begin{align*}
\phi'(x,y)&=\int_{F^2}\phi(u,v)\psi^{-1}(xv-yu)dudv\\
&=\int_{1+\CP_F^m}\psi^{-1}(xv)dv \int_{1+\CP^m}\psi(yu)du\\
&=\psi^{-1}(x) \Upsilon_{\CP_F^{-m}}(x)\cdot \psi(y)\Upsilon_{\CP_F^{-m}}(y).
\end{align*}
For $(x,y)\in \supp \phi'=\CP_F^{-m}\times \CP_F^{-m}$, we have $bxy\in \CO_F$, and thus
\begin{align*}
\omega(n(b))\phi'(x,y)&=\psi^{-1}(b xy)\phi'(x)=\phi'(x,y),
\end{align*}
i.e., $\omega(n(b))$ fixes $\phi'$. Then
$$\omega(\bar n)\phi^{m}=\omega(w^{-1})\omega(n(b))\omega(w)\phi^{m}$$
$$=\omega(w^{-1}) \omega(n(b)) \phi'=\omega(w^{-1}) \phi'=\omega(w^{-1})\omega(w)\phi^{m}=\phi^{m}.$$
This completes the proof.
\end{proof}
Recall that, for a character $\chi$ of $F^\times$, we have a projection map $\omega_{\psi^{-1}}\ra \omega_{\psi^{-1}, \chi}$, defined by $\phi\mapsto \phi_\chi$. The following corollary follows from Proposition \ref{prop56} directly. 
\begin{cor}{\label{cor57}}
\begin{enumerate}
\item For $n\in N_m$, we have
$$\omega_{\psi^{-1},\chi}(n)\phi_\chi^{m}=\psi^{-1}(n)\phi_\chi^{m}.$$
\item  For $\bar n \in \bar N_{m}=\bar N\cap J_m$, we have $$\omega_{\psi^{-1},\chi}(\bar n)\phi^{m}_\chi=\phi^{m}_\chi.$$
\end{enumerate}
\end{cor}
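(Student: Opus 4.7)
The plan is short because the statement is flagged as a direct corollary of Proposition~\ref{prop56}. The essential point is that the map $\phi \mapsto \phi_\chi$ defined in Eq.~(\ref{eq44}) realizes the projection $\omega_{\psi^{-1}} \twoheadrightarrow \omega_{\psi^{-1},\chi}$ as an $H$-equivariant quotient. Concretely, for every $h \in H$ and $\phi \in \CS(F^2)$,
$$\bigl(\omega_{\psi^{-1}}(h)\phi\bigr)_\chi = \omega_{\psi^{-1},\chi}(h)\phi_\chi.$$
This equivariance is built into the construction: the averaging is taken over the central subgroup $Z \cong F^\times$ against the character $\chi^{-1}$, and since $Z$ commutes with every element of $H$, the Weil representation action commutes with the integral in $a$.

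Granting this, both parts of the corollary follow immediately. For part (1), fix $n \in N_m$. Proposition~\ref{prop56}(1) gives the identity $\omega_{\psi^{-1}}(n)\phi^m = \psi^{-1}(n)\phi^m$ in $\CS(F^2)$, so applying $\phi \mapsto \phi_\chi$ and using linearity of the projection yields
$$\omega_{\psi^{-1},\chi}(n)\phi_\chi^m = \bigl(\omega_{\psi^{-1}}(n)\phi^m\bigr)_\chi = \bigl(\psi^{-1}(n)\phi^m\bigr)_\chi = \psi^{-1}(n)\phi_\chi^m.$$
Part (2) is the identical argument with Proposition~\ref{prop56}(2) in place of (1): $\omega_{\psi^{-1}}(\bar n)\phi^m = \phi^m$ projects to $\omega_{\psi^{-1},\chi}(\bar n)\phi_\chi^m = \phi_\chi^m$.

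There is no real obstacle here; all the analytic content has been absorbed into Proposition~\ref{prop56}, and the corollary just transports those identities across the canonical equivariant surjection. If a reader prefers a direct verification avoiding the language of quotients, one can instead compute with the Weil representation formulas on $\CS(F^2)$ applied to the function $\phi_\chi^m$ itself: from Eq.~(\ref{eq44}) one checks that $\supp(\phi_\chi^m) \subset \{(x,y) : xy \in 1+\CP_F^m\}$, so for $n(b) \in N_m$ (i.e.\ $b \in \CP_F^{-m}$) the factor $\psi^{-1}(bxy)$ appearing in Eq.~(\ref{eq41}) collapses to $\psi^{-1}(b)$ on the support, yielding part (1); and the same manipulation used in the proof of Proposition~\ref{prop56}(2) then handles part (2) by conjugation through $w$.
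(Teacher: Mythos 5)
Your proof is correct and follows the paper's intent exactly: the paper simply states that the corollary ``follows from Proposition \ref{prop56} directly,'' and the content of that assertion is precisely the $H$-equivariance of the projection $\phi \mapsto \phi_\chi$ that you make explicit. The alternative direct verification you sketch is also sound (the support computation $\supp(\phi^m_\chi) \subset \{(x,y): xy \in 1+\CP_F^m\}$ is correct), but it is not needed and the paper does not use it.
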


By Lemma \ref{lem43}, the Whittaker function $\theta^{m,\chi}$ associated with $\phi^m_\chi$ for the representation $\omega_{\psi^{-1},\chi}$ is 
\begin{equation}\label{eq51}\theta^{m,\chi}(h)=(\omega_{\psi^{-1}}(h)\phi^m)_\chi(1,1)=\int_{F^\times}\chi^{-1}(u)(\omega_{\psi^{-1}}(h)\phi^m)(u,u^{-1})d^*u.\end{equation}
\begin{prop}\label{prop58}
Given positive integers $N, m$, with $m\ge N$. Then there exist a finite number of characters $\wpair{\chi_i}$ of $F^\times$ satisfying the following conditions:
\begin{enumerate}
\item $\chi_i\ne |~|^{\pm 1}$, 
\item $\deg(\chi_i)\le N$, and
\item for all $a\in F^\times$ with $|a|\le q_F^N$, there exists a character $\chi_i$ such that
$$\theta^{m,\chi_i}(t(a)w)\ne 0.$$
\end{enumerate}
\end{prop}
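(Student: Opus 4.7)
The plan is to recognize $\theta^{m,\chi}(t(a)w)$ as a Mellin transform on $F^\times$, isolate a single shell of $|u|$ where Fourier inversion on $\CO_F^\times/(1+\CP_F^N)$ forces the non-vanishing of some $\theta^{m,\chi_i}(t(a)w)$ with $\deg(\chi_i)\le N$, and then use the $J_m$-invariance of $\theta^{m,\chi}$ in the variable $a$ to collect only finitely many characters.

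First, applying formulas (\ref{eq41})--(\ref{eq43}) to $\phi^m$ yields
$$f_a(u):=(\omega_{\psi^{-1}}(t(a)w)\phi^m)(u,u^{-1})=|a|^{1/2}q_F^{-2m}\,\psi(u^{-1}-au)\,\Upsilon_{q_F^{-m}\le |u|\le q_F^m/|a|}(u),$$
so that $\chi\mapsto\theta^{m,\chi}(t(a)w)=\int_{F^\times}\chi^{-1}(u)f_a(u)\,d^*u$ is the Mellin transform of $f_a$ at $\chi^{-1}$; crucially, $|f_a|$ is the positive constant $|a|^{1/2}q_F^{-2m}$ on its entire support. Fix $a$ with $|a|=q_F^n\le q_F^N$ and set $\ell^*=\lfloor n/2\rfloor$. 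Because $m\ge N$, one has $n-m\le\ell^*\le m$, so the shell $\{|u|=q_F^{-\ell^*}\}$ meets $\supp(f_a)$; and a first-order estimate of $u\mapsto\psi(u^{-1}-au)$ shows $f_a$ is invariant under $u\mapsto u(1+\CP_F^N)$ on this shell, because $|u^{-1}+au|\le q_F^{\max(\ell^*,n-\ell^*)}\le q_F^N$ there.

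For existence of a suitable $\chi$ for a given $a$, I argue by contradiction: suppose $\theta^{m,\chi}(t(a)w)=0$ for every character $\chi$ of $F^\times$ of degree $\le N$ with $\chi\ne|\cdot|^{\pm 1}$. Fix $\xi$ a character of $\CO_F^\times$ of degree $\le N$ and let $\chi_s:=\xi\cdot|\cdot|^s$. Then $s\mapsto\theta^{m,\chi_s}(t(a)w)$ is the Laurent polynomial $\sum_k q_F^{ks}\xi^{-1}(p_F)^kT_k(\xi)$ in $q_F^s$, with $T_k(\xi):=\int_{\CO_F^\times}\xi^{-1}(u_0)f_a(p_F^k u_0)\,d^*u_0$. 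The hypothesis forces this polynomial to vanish at all $s$ except the at-most-two making $\chi_s=|\cdot|^{\pm 1}$, hence identically, so $T_k(\xi)=0$ for every $k$ and every $\xi$ of degree $\le N$. Taking $k=\ell^*$ and varying $\xi$, Fourier inversion on the finite group $\CO_F^\times/(1+\CP_F^N)$ (justified by the invariance of $f_a$ on the shell) forces $f_a|_{|u|=q_F^{-\ell^*}}\equiv 0$, contradicting $|f_a|=|a|^{1/2}q_F^{-2m}\ne 0$.

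For finiteness, the identity $t(a')w=t(a)w\cdot\diag(1,a'/a)$ together with $\omega_{\psi^{-1},\chi}(j)\phi^m_\chi=\psi_m^{-1}(j)\phi^m_\chi$ for $j\in J_m$ (from Corollary \ref{cor57} combined with a direct check on $T\cap J_m$ using the shape of $\phi^m$) yields $\theta^{m,\chi}(t(a')w)=\theta^{m,\chi}(t(a)w)$ whenever $a'/a\in 1+\CP_F^m$. Partition $\{|a|\le q_F^N\}$ into the finite piece $\{1\le|a|\le q_F^N\}$, which decomposes into $N\cdot|\CO_F^\times/(1+\CP_F^m)|$ cosets and is covered by one character per coset from the existence step, and the infinite tail $\{|a|\le 1\}$. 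For the tail, a direct evaluation via Lemma \ref{lem54} gives $\theta^{m,\chi_0}(t(a)w)=|a|^{1/2}q_F^{-2m}(A+B\cdot q_F^{s_0(n-1)})$ for any unramified $\chi_0=|\cdot|^{s_0}$ and $n=v_F(a)\ge 0$, with $A,B$ depending on $s_0$ but not on $n$; choosing $s_0\ne\pm 1$ generic so that the scalar $-A/B$ is not $q_F^{s_0(n-1)}$ for any $n\ge 0$ makes a single $\chi_0$ handle the entire tail. The main obstacle is this last non-cancellation claim across an infinite family of $n$: it is resolved because the set of bad $s_0$ (those for which $-A(s_0)/B(s_0)=q_F^{s_0 k}$ for some $k\ge -1$) is a countable union of algebraic loci in the $s_0$-plane, hence a generic $s_0$ avoids them all.
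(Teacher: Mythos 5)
Your proof is correct, and it takes a genuinely different route from the paper's. The paper's proof is a hands-on case analysis on $n=v_F(a)$: it shows via the Gauss-sum evaluations in Lemmas \ref{lem54} and \ref{lem55} that unramified characters suffice for $n\le 1$ and $n\ge 2$ even, and that ramified characters of degree exactly $n$ (together with unramified twists to kill a possible cancellation) suffice for $n\ge 3$ odd; finiteness in the odd case is then extracted from the observation that $\theta^{m,\chi}(t(p^{-n}a_0)w)$ depends only on $a_0\bmod(1+\CP_F^n)$. You avoid the parity case-split entirely by a "stationary phase plus finite Fourier inversion" device: you observe that $f_a=\omega_{\psi^{-1}}(t(a)w)\phi^m(\cdot,\cdot^{-1})$ has constant absolute value on its compact open support, that the middle shell $|u|=q_F^{-\lfloor n/2\rfloor}$ of this support is exactly the place where the phase $u\mapsto\psi(u^{-1}-au)$ is $(1+\CP_F^N)$-stable (this is the correct stationarity estimate, $\max(\ell^*,n-\ell^*)=\lceil n/2\rceil\le N$), and that the shell lies inside the support precisely because $m\ge N$. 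Vanishing of every $\theta^{m,\chi}(t(a)w)$ with $\deg\chi\le N$ then forces a Laurent polynomial in $q_F^s$ to vanish identically for each restriction $\xi$ to $\CO_F^\times$, so each shell-coefficient $T_k(\xi)$ vanishes, and finite Fourier inversion on $\CO_F^\times/(1+\CP_F^N)$ kills $f_a$ on the middle shell, contradicting $|f_a|\ne 0$ there. Your finiteness argument is structurally the same as the paper's on the compact region $1\le|a|\le q_F^N$ (cosets of $1+\CP_F^m$ via the $J_m$-equivariance of $\phi^m_\chi$, and the $T\cap J_m$ check you flag is indeed immediate from the shape of $\phi^m$), but you replace the paper's explicit treatment of $n\le 0$ with a generic-unramified-twist argument across the whole tail. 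The upshot: your argument is uniform and conceptual and would transport more readily to other settings, while the paper's tells you precisely which character works for each $a$ (unramified for even $n$, level-$n$ ramified for odd $n$), which is occasionally useful.

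A few small slips, none of which affects correctness: the count for $\{1\le|a|\le q_F^N\}$ should be $(N+1)\cdot|\CO_F^\times/(1+\CP_F^m)|$ cosets, not $N\cdot|\cdots|$; the tail formula should read $A+B\cdot q_F^{-s_0\,v_F(a)}$ (with $A,B$ the rational functions of $q_F^{-s_0}$ obtained from $c_0,c_1$) rather than $A+B\cdot q_F^{s_0(n-1)}$, and the formula as written only makes literal sense after one clears the $(z-1)$ denominator; and in the stationarity estimate the relevant quantity is $|u^{-1}/(1+v)+au|$, which of course has the same size as your $|u^{-1}+au|$. The "countable union of algebraic loci" justification for choosing $s_0$ is sound: for each $k\ge 0$ the equation $A(z)+B(z)z^k=0$ (after clearing the pole at $z=1$) has finitely many roots, the union over $k$ is countable, and one also removes $z\in\{0,1,q_F^{\pm 1}\}$.
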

Recall that $w=\begin{pmatrix}&1\\ -1& \end{pmatrix}$. This proposition is the key to prove the local converse theorem.
\begin{proof}
By the Whittaker function formula of Weil representation, Eq.(\ref{eq46}) or Eq.(\ref{eq51}), and the Weil representation formulas, Eq.(\ref{eq42}) and Eq.(\ref{eq43}), we have 
\begin{align*}
&\quad \theta^{m,\chi}(t(a)w)\\
&=\int_{F^\times}\chi^{-1}(u)(\omega_{\psi^{-1}}(t(a)w)\phi^m)(u,u^{-1})d^*u\\
&=|a|^{1/2}\int_{F^\times}\chi^{-1}(u)\omega(w)\phi^m(au,u^{-1})d^*u\\
&=|a|^{1/2}\int_{F^\times}\chi^{-1}(u)\int_{F\times F}\psi^{-1}(auy-u^{-1}x)\phi^m(x,y)dxdyd^*u\\
&=|a|^{1/2}\int_{F^\times}\chi^{-1}(u)\left(\int_{1+\CP_F^m} \psi^{-1}(-u^{-1}x)dx \int_{1+\CP_F^m}\psi^{-1}(auy)dy\right) d^*u.
\end{align*}
 It is easy to see that the inner integral 
 $$ \int_{1+\CP_F^m} \psi^{-1}(-u^{-1}x)dx \int_{1+\CP_F^m}\psi^{-1}(auy)dy$$
is $\psi(u^{-1}-ua)$ if $ u\in a^{-1}\CP^{-m},$ and $ u^{-1}\in \CP^{-m}$ and zero otherwise. Suppose that $|a|=q^n$, with $n\le N$. We then have
\begin{align*}\theta^{m,\chi}(t(a)w)&=q_F^{n/2}\int_{q^{-m}\le |u|\le q^{m-n}}\chi^{-1}(u)\psi(u^{-1}-au)d^*u
\end{align*}
Note that $2m\ge N\ge n$, thus the domain $\wpair{u\in F^\times| q^{-m}\le|u|\le q^{m-n}}$ is nonempty. We first assume that $n\le 0$, i.e., $|a|\le 1$. Let $\chi_0$ be an unramified character. By Lemma \ref{lem54}, we have
\begin{align*}
q_F^{-n/2}\theta^{m,\chi_0}(t(a)w)&=\int_{q^{-m}\le |u|\le 1}\chi_0^{-1}(u)\psi(u^{-1})du+\int_{1<|u|\le q^{m-n}}\chi_0^{-1}(u)\psi(-ua)d^*u\\
&=\sum_{k=-m}^0 \chi_0(p^{k})\int_{\CO_F^\times} \psi(p^ku_0^{-1})d^*u_0+\sum_{k=1}^{m-n}\chi_0(p^k)\int_{\CO_F^\times}\psi(-p^{-k-n}u_0)d^*u_0\\
&= \chi_0(p^{-1})c_1+c_0+\chi_0(p^{1-n})c_1+\sum_{k=1}^{-n}\chi_0(p^k)c_0, 
\end{align*}
where the sum $ \sum_{k=1}^{-n}\chi_0(p^k)$ should be interpreted as 0 if $n=0$. Since the character $\chi_0$ is determined by $\chi_0(p)$, it is easy to choose $\chi_0(p)$ and hence $\chi_0$ which might depend on $|a|$  such that $\chi_0\ne |~|^{\pm 1}$ and $$\theta^{m,\chi_0}(t(a)w)\ne 0, \textrm{ for } |a|\le 1. $$ For example, one can take $\chi_0$ to be the trivial character if $q_F\ne 2, 3$. If $q_F=2$ and $n\ne -1$, or $q_F=3, n\ne 0$, one can still take $\chi_0$ to be the trivial character. If $q_F=2$ and $n=-1$ or $q_F=3$ and $n=0$, one can take $\chi_0=|~|^{-2}$. From this discussion, we know that at most two characters will work for all $a$ with $|a|\le 1$.

Now suppose that $n\ge 1$. We have
\begin{align}
&|a|^{-1/2}\theta^{\chi}_{\phi_m}(t(a)w)\label{eq52}\\
&=\int_{q^{-m}\le |u|\le q^{-n}}\chi^{-1}(u)\psi(u^{-1})d^*u \label{eq53}\\
&+\int_{ q^{-n}< |u|<1}\chi^{-1}(u)\psi(u^{-1}-au)d^*u \label{eq54}\\
&+\int_{1\le |u|\le q^{m-n} }\chi^{-1}(u)\psi(-au)d^*u. \label{eq55}
\end{align}
If $n=1$, the term Eq.(\ref{eq54}) is zero. Let $\chi=\chi_0$ be an unramified character, by Lemma \ref{lem54}, the term Eq.(\ref{eq53})
becomes $\chi_0(p)c_1$, and the term Eq. (\ref{eq55}) becomes $c_1$. Thus
$$q_F^{-1/2}\theta^{m,\chi}(t(a)w)=(\chi_0(p)+1 )c_1, \textrm{ if } |a|=q_F.$$
Thus we can choose a single $\chi_0\ne |~|^{\pm1}$ such that $$\theta^{m,\chi_0}(t(a)w)\ne 0, \forall a \textrm{ with } |a|= q_F.$$

Next we consider the case $n\ge 2$ and $n$ is even. We still take an unramified character $\chi_0\ne |~|^{\pm1}$. Then by Lemma \ref{lem54}, the term Eq.(\ref{eq53}) and the term Eq.(\ref{eq55}) vanish.
By Lemma \ref{lem55} and Eq.(\ref{eq52}), we have
\begin{align*}
&\quad q_F^{-n/2}\theta^{m,\chi}(t(a)w)\\
&= \int_{q^{-n}<|u|<1}\chi_0^{-1}(u)\psi(u^{-1}-au)d^*u\\
&=\sum_{k=1}^{n-1}\int_{|u|=q^k}\chi_0(u)\psi(u-\frac{a}{u})d^*u\\
&=\sum_{k=1}^{n-1}F_{\chi_0}(k,-a)\\
&=F_{\chi_0}(n/2, -a)\ne 0.
\end{align*}

Next we suppose that $|a|=q^n$ with $n$ odd and $n\ge 3$. Let $\chi$ be a ramified character of degree $n$. By Lemma \ref{lem55}, the term Eq.(\ref{eq54}) vanishes. In fact, we have
\begin{align*}
& \int_{q^{-n}<|u|<1}\chi^{-1}(u)\psi(u^{-1}-au)d^*u\\
&=\sum_{k=1}^{n-1}\int_{|u|=q^k}\chi(u)\psi(u-\frac{a}{u})d^*u\\
&=\sum_{k=1}^{n-1}F_{\chi}(k,-a).
\end{align*}
By Lemma \ref{lem55}, each term $F_\chi(k,-a)$ vanishes.  By Lemma \ref{lem54}, the term Eq.(\ref{eq53}) is 
\begin{align*}
&\sum_{k=-m}^{-n}\chi(p)^k\int_{\CO_F^\times}\chi(u_0)\psi(p^ku_0)d^*u_0\\
=&\chi(p)^{-n}\int_{\CO_F^\times}\chi(u_0)\psi(p^{-n}u_0) d^*u_0 \ne 0.
\end{align*}

If we suppose $a=p^{-n}a_0$ with $a_0\in \CO_F^\times$, then the term Eq (\ref{eq55}) is 
\begin{align*}
&\sum_{k=0}^{m-n}\chi(p)^k\int_{\CO_F^\times}\chi^{-1}(u_0)\psi(-a_0u_0p^{-k-n})d^*u_0\\
=&\sum_{k=0}^{m-n}\chi(p)^k \chi(a_0)\int_{\CO_F^\times} \chi^{-1}(u_0)\psi(-p^{-k-n}u_0)d^*u_0\\
=&\chi(-a_0)\int_{\CO_F^\times}\chi^{-1}(u_0)\psi(p^{-n}u_0)d^*u_0.
\end{align*}
Thus we have
\begin{align}\label{eq56}
q_F^{-n/2}\theta^{m,\chi}(t(a)w)&=\chi^{-1}(p^n)\int_{\CO_F^\times}\chi(u)\psi(p^{-n}u)d^*u+\chi(-a_0)\int_{\CO_F^\times}\chi^{-1}(u)\psi(p^{-n}u)d^*u.
\end{align}
For a character $\chi$ of degree $n$, write $c(\chi)=\int_{\CO_F^\times}\chi(u)\psi(p^{-n}u)d^*u\ne 0$ temporarily, then Eq.(\ref{eq56}) can be written as
\begin{equation}\label{eq57} q_F^{-n/2}\theta^{m,\chi}(t(a)w)= \chi(p)^{-n}c(\chi)+\chi(-a_0)c(\chi^{-1}).\end{equation}

We claim that, for $n\ge 2$ with $n$ odd, we can find a character $\chi$ such that $\theta^{m,\chi}(t(a)w)\ne 0$. In fact, if exists a character $\chi_0$ of degree $n$ such that $\theta^{m,\chi_0}(t(a)w)=0$, which is equivalent to $\chi_0(-a_0)c(\chi_0^{-1})=-\chi_0(p)^{-n}c(\chi_0)$ by Eq.(\ref{eq57}). Let $\chi_s$ be the character of $F^\times$ defined by $\chi_0|~|^s$. Note that $\deg( \chi_s)=n$, $\chi_s(-a_0)=\chi_0(a_0)$ and $c(\chi_s)=c(\chi_0)$. Then 
\begin{align*}q_F^{-n/2}\theta^{m,\chi_s}(t(a)w)&=q_F^{ns}\chi_0^{-1}(p^n)c(\chi_0)+\chi_0(-a_0)c(\chi_0^{-1})\\
&=(q^{ns}-1)\chi_0^{-1}(p^n)c(\chi_0).
 \end{align*}
If $s\ne 0$, i.e., $q^{ns}\ne 1$, then $$\theta^{m,\chi_s}(t(a)w)\ne 0.$$ 
This shows that for any $a$ with $a\in \CP_F^{-N}$, we can find a character $\chi$ such that $\theta^{m,\chi}(t(a)w)\ne 0$. We also need to prove the ``finiteness" part. By the above discussion, it suffices to show that for each odd $n$ with $n\ge 2$, there exists a finite number of characters $\wpair{\chi_{i,n}}$ such that for each $a$ with $|a|=q_F^n$, we can find a character $\chi_{i,n}$ such that $\theta^{m,\chi_{i,n}}(t(a)w)\ne 0$.  

In fact, for a character $\chi$ of degree $n$, since it is trivial on $1+\CP^n$, for any $u\in a_0(1+\CP^n)$, Eq.(\ref{eq56}) shows that
$$\theta^{m,\chi}(t(p^{-n}u)w)=\theta^{m,\chi}(t(p^{-n}a_0)w).$$
This means that if a character $\chi$ works for $p^{-n}a_0$, then it works for $p^{-n}u$ for all $u\in a_0(1+\CP^n)$.
Since $\CO_F^\times/(1+\CP^n)$ is finite, the assertion follows.
\end{proof}

\begin{cor}{\label{cor59}}
Given positive integers $m,N$ with $m\ge N$. For any character $\chi$ of $F^\times$, there is an integer $d=d(N,\chi)$ such that 
$$\theta^{m,\chi}(t(a_0a)w)=\theta^{m,\chi}(t(a)w), \forall a_0\in 1+\CP_F^d, a\in \CP_F^{-N}.$$
In fact, we can take $d=\max\wpair{\deg(\chi),N}$.
\end{cor}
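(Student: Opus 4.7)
The plan is to work with the explicit integral formula for $\theta^{m,\chi}(t(a)w)$ extracted at the start of the proof of Proposition~\ref{prop58}. Writing $|a|=q_F^n$ (so $n\le N$ since $a\in\CP_F^{-N}$), that formula reads, up to a nonzero constant $C_m$ depending only on $m$,
\[
\theta^{m,\chi}(t(a)w)=C_m\,q_F^{n/2}\!\int_{q^{-m}\le |u|\le q^{m-n}}\!\chi^{-1}(u)\psi(u^{-1}-au)\,d^*u.
\]
Because $a_0\in 1+\CP_F^d$ has $|a_0|=1$, the same formula with the same domain of integration applies to $a_0a$, so the difference $\theta^{m,\chi}(t(a_0a)w)-\theta^{m,\chi}(t(a)w)$ equals $C_m q_F^{n/2}$ times
\[
\int_{q^{-m}\le |u|\le q^{m-n}}\chi^{-1}(u)\psi(u^{-1})\bigl[\psi(-a_0au)-\psi(-au)\bigr]\,d^*u.
\]

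First I would isolate the support of the bracket. Since $\psi$ is unramified and $|(a_0-1)au|\le q^{n-d}|u|$, the bracket vanishes whenever $|u|\le q^{d-n}$. If $d\ge m$, this interval already contains the entire integration domain and the difference is zero outright. Otherwise the effective range is $q^{d-n+1}\le |u|\le q^{m-n}$, on which $|u^{-1}|\le q^{n-d-1}\le q^{-1}$ (using $d\ge N\ge n$), hence $u^{-1}\in\CP_F$ and $\psi(u^{-1})=1$. The difference therefore reduces to
\[
C_m q_F^{n/2}\!\int_{q^{d-n+1}\le |u|\le q^{m-n}}\!\chi^{-1}(u)\bigl[\psi(-a_0au)-\psi(-au)\bigr]\,d^*u.
\]

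Finally I would perform the substitution $u\mapsto a_0^{-1}v$ in the $\psi(-a_0au)$ summand; this preserves $d^*u$ and the annulus because $|a_0^{-1}|=1$, and turns $\chi^{-1}(a_0^{-1}v)$ into $\chi(a_0)\chi^{-1}(v)$. The condition $d\ge\deg(\chi)$ gives $\chi(a_0)=1$, so the two summands cancel exactly and the difference vanishes. The computation is essentially bookkeeping; the only substantive observation—and the step most likely to require care—is that $d=\max\{\deg(\chi),N\}$ is precisely what aligns the two regions of the decomposition: a ``small $|u|$'' region (bounded by $q^{d-n}$) where the two exponentials agree pointwise, and a ``large $|u|$'' region where $\psi(u^{-1})$ is already trivial, freeing up a translation invariance against which $\chi$ is unramified.
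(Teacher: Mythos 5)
Your argument is correct and is essentially the same as the paper's: both rely on the explicit integral formula from the proof of Proposition~\ref{prop58}, use the unramifiedness of $\psi$ to show the integrand is insensitive to $a_0\in 1+\CP_F^N$ on the small-$|u|$ region, and use the substitution $u\mapsto a_0^{-1}u$ together with $\chi(a_0)=1$ for $a_0\in 1+\CP_F^{\deg(\chi)}$ on the large-$|u|$ region where $\psi(u^{-1})=1$. The only difference is organizational: the paper verifies invariance of the three pieces $G_1,G_2,G_3$ (split at $|u|=q^{-n}$ and $|u|=1$) separately, whereas you study the difference directly and split at $|u|=q^{d-n}$, which folds $G_1$, $G_2$, and part of $G_3$ into a single region where the bracket vanishes pointwise.
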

\begin{proof}
The function $\theta^{m,\chi}(t(a)w)$ with variable $a$ is continuous, thus there is an integer $d$ such that $\theta^{m,\chi}(t(a_0a)w)=\theta^{m,\chi}(t(a)w)$ for all $a\in 1+\CP_F^d$. We want to show that such an integer $d$ can be chosen independent of $m$.

From the proof of Proposition {\ref{prop58}}, the term $|a|^{-1/2}\theta^{m,\chi}(t(a)w)$ have the expression Eq.(\ref{eq52}). If $n\le 0$, i.e., $q^{-n}\ge 1$, we should view the term Eq.(\ref{eq54}) as zero. Denote the term Eq.(\ref{eq53}) (resp. Eq.(\ref{eq54}), Eq.(\ref{eq55})) by $G_1(a)$ (resp. $G_2(a)$, $G_3(a)$). The term $G_1(a)$ only depends on $|a|$, and thus $G_1(a_0a)=G_1(a)$ for all $a_0\in \CO_F^\times$. For $u$ with $q_F^{-n}<|u|<1$, we have $p^{N}au\in \CO_F$. From this, one can check that $G_2(a_0a)=G_2(a)$ for all $a_0\in 1+\CP_F^N$ easily. 

We have $$G_3(a)=\int_{1\le |u|\le q^{m-n} }\chi^{-1}(u)\psi(-au)d^*u.$$ 
Thus for $a_0\in \CO_F^\times$, by changing variable, we have
\begin{align*}
G_3(a_0a)&=\int_{1\le |u|\le q^{m-n} }\chi^{-1}(u)\psi(-a_0au)d^*u\\
&=\chi(a_0)\int_{1\le |u|\le q^{m-n} }\chi^{-1}(u)\psi(-au)d^*u\\
&=\chi(a_0)G_3(a).
\end{align*}
Thus we have $G_3(a_0a)=G_3(a)$ for $a\in 1+\CP_F^{\deg(\chi)}$.
\end{proof}

\subsection{A new proof of the local converse theorem for $\GL_2$} Recall the local converse theorem for $\GL_2$:
\begin{thm}[Corollary 2.19 of \cite{JL}]\label{thm510}
Let $\pi,\pi'$ be two infinite dimensional irreducible smooth representation of $\GL_2(F)$ with the same central character. If $\gamma(s,\pi,\eta)=\gamma(s,\pi',\eta)$ for all quasi-characters $\eta$ of $F^\times$, then $\pi\cong \pi'$.
\end{thm}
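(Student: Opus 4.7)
The plan is to derive Theorem~\ref{thm510} as a formal corollary of two ingredients already available (or about to be): Jacquet's multiplicativity of $\gamma$-factors (Theorem~\ref{thm44}) together with the $\GL_2$-analogue of Theorem~\ref{thm39}, namely the forthcoming Theorem~\ref{thm511}, which characterizes $\pi$ by the twisted $\gamma$-factors $\gamma(s,\pi,\omega_{\psi^{-1},\chi},\eta_1)$. No new harmonic analysis on $\GL_2$ is required at this point; everything reduces to bookkeeping.

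Concretely, fix irreducible smooth $\pi,\pi'$ with the common central character $\omega_\pi=\omega_{\pi'}$ and satisfying $\gamma(s,\pi,\eta)=\gamma(s,\pi',\eta)$ for every quasi-character $\eta$ of $F^\times$. For any quasi-character $\chi$ of $F^\times$ with $\chi\neq |\cdot|^{\pm 1}$, the representation $\omega_{\psi^{-1},\chi}$ is irreducible and Theorem~\ref{thm44} applies. For an arbitrary quasi-character $\eta_1$ of $F^\times$, set $\eta_2=\omega_\pi^{-1}\chi^{-1}\eta_1^{-1}$ and $\eta=(\eta_1,\eta_2)$; this automatically satisfies the compatibility condition $\omega_\pi\cdot\chi\cdot\eta_1\eta_2=1$ that is needed to define the twisted $\gamma$-factor. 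Theorem~\ref{thm44} then gives
$$\gamma(s,\pi,\omega_{\psi^{-1},\chi},\eta_1)=\gamma(s,\pi,\eta_1)\gamma(s,\pi,\chi\eta_1),$$
and the same identity with $\pi'$ in place of $\pi$. Since $\eta_1$ and $\chi\eta_1$ are both quasi-characters of $F^\times$, the hypothesis of Theorem~\ref{thm510} forces the right-hand sides to agree, so
$$\gamma(s,\pi,\omega_{\psi^{-1},\chi},\eta_1)=\gamma(s,\pi',\omega_{\psi^{-1},\chi},\eta_1)$$
for every compatible pair $(\chi,\eta_1)$ with $\chi\neq|\cdot|^{\pm 1}$. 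This is a sufficiently rich family to trigger Part~(1) of Theorem~\ref{thm511}, which then yields $\pi\cong\pi'$.

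The hard part is not this deduction, which is purely formal; it is the forthcoming Theorem~\ref{thm511} itself. That theorem is to be proved by transplanting the Howe-vector argument of Section~\ref{sec33} to $\GL_2$, using the Weil representation material of Section~\ref{sec52}. The essential analogue of Lemma~\ref{lem36}, needed at the inverse Mellin transform step of the $\GL_2$ converse argument, is Proposition~\ref{prop58}: it supplies, for each $a\in\CP_F^{-N}$, a character $\chi$ of controlled conductor with $\chi\neq|\cdot|^{\pm 1}$ such that $\theta^{m,\chi}(t(a)w)\neq 0$. The constraint $\chi\neq|\cdot|^{\pm 1}$ is precisely what one needs here so that the final deduction of Theorem~\ref{thm510} through Theorem~\ref{thm44} goes through without obstruction. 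Once Theorem~\ref{thm511} is in hand, the derivation of Theorem~\ref{thm510} sketched above is a one-line corollary.
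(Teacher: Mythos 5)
Your deduction is exactly the paper's own: after stating Theorem~\ref{thm511}, the paper remarks that its hypothesis is implied by that of Theorem~\ref{thm510} via the multiplicativity identity $\gamma(s,\pi,\omega_{\psi^{-1},\chi},\eta_1)=\gamma(s,\pi,\eta_1)\gamma(s,\pi,\chi\eta_1)$ of Theorem~\ref{thm44}, and hence Theorem~\ref{thm511}(1) implies Theorem~\ref{thm510}. You have simply spelled out this implication (including the restriction $\chi\neq|\cdot|^{\pm1}$ needed for $\omega_{\psi^{-1},\chi}$ to be irreducible and the choice of $\eta_2$ to satisfy compatibility), which the paper leaves implicit; the real work, as you correctly observe, lies in proving Theorem~\ref{thm511} itself.
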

We are going to give a new proof of this theorem. In fact, we will prove the following

\begin{thm}{\label{thm511}}
Let $\pi,\pi'$ be two $\psi$-generic irreducible smooth infinite dimensional representations of $\GL_2(F)$ with the same central character $\omega_\pi=\omega_{\pi'}$.
\begin{enumerate}
\item If $\gamma(s,\pi, \omega_{\psi^{-1}, \chi},\eta_1)=\gamma(s,\pi', \omega_{\psi^{-1},\chi},\eta_1)$ for all quasi-characters $\chi,\eta_1,$ of $F^\times$ with $\chi\ne |~|^{\pm1}$, 
then $\pi$ is isomorphic to $\pi'$.

\item For a fixed quasi-character $\chi$ of $F^\times$, there is an integer $l_0=l_0(\pi,\pi',\chi)$ such that for all quasi-characters $\eta_1,$ of $F^\times$ with 
 $\deg(\eta_1)>l_0$, we have
$$\gamma(s,\pi, \omega_{\psi^{-1}, \chi},\eta)=\gamma(s,\pi', \omega_{\psi^{-1},\chi},\eta). $$
\end{enumerate}
\end{thm}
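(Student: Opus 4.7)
The proof will closely parallel that of Theorem \ref{thm39}, with Proposition \ref{prop58} playing the role served by Lemmas \ref{lem35} and \ref{lem36} in the $\RU(1,1)$ setting. I will fix vectors $v\in V_\pi$, $v'\in V_{\pi'}$ with $W_v(1)=1=W_{v'}(1)$, let $L$ be a positive integer so that both $v,v'$ are fixed by $K_L$, and set $C=3L$. For every integer $m\ge \max\{3L,\deg(\omega_\pi),\deg(\chi),\deg(\eta_1)\}$ and every $i\ge 3m$, I will consider the pair of zeta integrals
$$\Psi(s,W_{v_m},\theta^{m,\chi},\Phi_{i,l(\eta_1)},\eta_1) \quad \text{and} \quad \Psi(1-s,W_{v_m},\theta^{m,\chi},\hat\Phi_{i,l(\eta_1)},\eta_2),$$
where $\theta^{m,\chi}$ is the Whittaker function for $\omega_{\psi^{-1},\chi}$ associated with the Howe-type vector $\phi^m_\chi$ from Section \ref{sec52}, and where $\eta_2=\omega_\pi^{-1}\chi^{-1}\eta_1^{-1}$ is forced by compatibility.

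On the $B$-side, the calculation follows exactly the derivation of Eq.\eqref{eq36}. Integrating over the open cell $NT\bar N$, the constraint $|x|_F\le q_F^{-i}$ places $\bar n(x)$ inside $J_m$, so Lemma \ref{lem51}(2) and Corollary \ref{cor57}(2) trivialize the $\bar n$-action; the remaining $t(a)$-integral collapses via Lemma \ref{lem52}(1) onto a single coset, yielding a nonzero constant $c(i,l(\eta_1))$ which is the same for $W_{v_m}$ and for $W_{v_m'}$ (this also completes the missing verification that $\Psi$ is nonzero, settling the analogue of Proposition \ref{prop22} for $\GL_2$). On the $wB$-side, I will split the $n(x)$-integration at $|x|_F=q_F^m$. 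For $|x|_F\le q_F^m$ the element $n(x)$ lies in $N_m$, so by Lemma \ref{lem51}(2) and Corollary \ref{cor57}(1) the factors $W_{v_m}(t(a)wn(x))=\psi(x)W_{v_m}(t(a)w)$ and $\theta^{m,\chi}(t(a)wn(x))=\psi^{-1}(x)\theta^{m,\chi}(t(a)w)$ combine into an $x$-free quantity; the $x$-integral then produces a nonzero constant. For $|x|_F>q_F^m$, Proposition \ref{prop53}(4) gives $W_{v_m}(t(a)wn(x))=W_{v_m'}(t(a)wn(x))$, so this contribution cancels in the difference of the two zeta integrals.

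Applying the local functional equation and using Proposition \ref{prop53}(3)(4) to reduce $W_{v_m}(t(a)w)-W_{v_m'}(t(a)w)$ to a multiple of $W_{v_L}(t(a)w)-W_{v_L'}(t(a)w)$ (the analogue of Eq.\eqref{eq312}), I expect an identity of the shape
$$D(s,\eta_1,\psi)\bigl(\gamma(s,\pi,\omega_{\psi^{-1},\chi},\eta_1)-\gamma(s,\pi',\omega_{\psi^{-1},\chi},\eta_1)\bigr)=\int_{F^\times}\bigl(W_{v_L}(t(a)w)-W_{v_L'}(t(a)w)\bigr)\theta^{m,\chi}(t(a)w)\eta_2(a)|a|_F^{-s}d^*a,$$
where $D(s,\eta_1,\psi)$ has only finitely many zeros and poles in $q_F^{-s}$, and where by Lemma \ref{lem52}(2) the integrand is supported on $|a|_F\le q_F^C$ and is in particular independent of $m$.

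For part (1), the hypothesis forces the right-hand side to vanish for every $\chi\ne|\cdot|^{\pm1}$ and every compatible $\eta_1$. Fixing $\chi$ and varying $\eta_1$, Mellin inversion on $Z\setminus T\cong F^\times$ yields $(W_{v_L}(t(a)w)-W_{v_L'}(t(a)w))\theta^{m,\chi}(t(a)w)=0$ for every $a$. Proposition \ref{prop58}, applied with $N=C$, then furnishes, for each $a\in\CP_F^{-C}$, a character $\chi\ne|\cdot|^{\pm1}$ with $\theta^{m,\chi}(t(a)w)\ne 0$; combined with Proposition \ref{prop53}(2) and Lemma \ref{lem52}(2), this gives $W_{v_L}=W_{v_L'}$ on all of $H$, whence $\pi\cong\pi'$. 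For part (2), Corollary \ref{cor59} provides an integer $d=d(C,\chi)$, independent of $\eta_1$, such that $\theta^{m,\chi}(t(\cdot)w)$ is invariant under $a\mapsto a_0 a$ for $a_0\in 1+\CP_F^d$, and by continuity of the Whittaker function the same holds for $W_{v_L}(t(a)w)-W_{v_L'}(t(a)w)$ once $a_0$ lies in a sufficiently small neighborhood of $1$. Choosing $l_0$ larger than $d$, $L$, $\deg(\omega_\pi)$ and $\deg(\chi)$, any $\eta_1$ with $\deg(\eta_1)>l_0$ forces $\eta_2$ to have conductor exceeding this invariance level, so integrating $\eta_2$ over cosets of $1+\CP_F^{l_0}$ in the support of $|a|_F\le q_F^C$ annihilates the right-hand side, giving the stability. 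The main obstacle is the bookkeeping needed to verify the $B$-side computation produces a genuinely nonzero constant (the analogue of Eq.\eqref{eq36}); the rest is a transparent transplant of the $\RU(1,1)$ argument, with Proposition \ref{prop58} supplying the essential ``test family'' of characters $\chi$ that detects every $a$ in the required support.
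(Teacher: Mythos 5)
Your proposal is correct and follows the paper's own proof of Theorem \ref{thm511} essentially step for step: the same computation of the two zeta integrals $\Psi(s,W_{v_m},\theta^{m,\chi},\Phi_{i,l},\eta)$ and $\Psi(1-s,\cdot,\hat\Phi_{i,l},\eta^*)$ on the opposite Bruhat cells, the same use of Lemma \ref{lem51}, Proposition \ref{prop53} and Corollary \ref{cor57} to collapse the integrals and cancel the large-$|x|$ contributions, the same reduction from $W_{v_m}$ to $W_{v_L}$, the same Mellin inversion, and the same invocation of Proposition \ref{prop58} (for part (1)) and Corollary \ref{cor59} (for part (2)). Your choice $C=3L$ is the correct support radius from Lemma \ref{lem52}(2), and is arguably a small repair of the paper's $C=3L/2$, which appears to be carried over from the $\RU(1,1)$ case where that factor of $1/2$ reflects $q_E=q_F^2$.
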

\begin{rem}\upshape
(1) Note that, the condition of (1) of Theorem \ref{thm511}  is weaker than that of Theorem \ref{thm510} by the multiplicativity theorem of $\gamma$-factors, Theorem \ref{thm44}. Thus it is clear that part (1) of Theorem \ref{thm511} implies Theorem \ref{thm510}. \\
(2) The conclusion of Part (2) of Theorem \ref{thm511} is weaker than the stability of gamma factors for $\GL_2$.

\end{rem}

Before the proof, we recall that in $\S$\ref{sec33}, we defined a function $\Phi_{i,l}\in \CS(F^2)$. We need to compute $f(s,h,\Phi_{i,l}, \eta)$ in our case. The function $f(s,h,\Phi_{i,l},\eta)$ in the $\GL_2$-case is defined in $\S$4, Eq.(\ref{eq45}). For quasi-characters $\chi, \eta_1$ of $F^\times$, we define $\eta_2=\omega_\pi^{-1}\chi^{-1}\eta_1^{-1}$. Let $l=\deg(\eta_1\eta_2^{-1})=\deg(\eta_1^2 \cdot\omega_\pi\cdot \chi)$, $\eta=(\eta_1,\eta_2)$ and $\eta^*=(\eta_2,\eta_1)$. It is easy to see that
\begin{equation}{\label{eq58*}}
f(\bar n(x), \hat \Phi_{i,l},\eta, s)=\left\{\begin{array}{lll} q_F^{-l}& \textrm{ if } |x|_F\le q_F^{-i}, \\  0, &\textrm{otherwise}  \end{array} \right.
\end{equation}
and
\begin{equation}{\label{eq59*}}
f(wn(x), \hat \Phi_{i,l},\eta^*, 1-s)=q_F^{-l-i}c(s,\eta,\psi), \textrm{ if } |x|_F\le q_F^{i-l},
\end{equation}
where $ c(s,\eta,\psi)=\int_{|y|_F\le q_F^l}\psi_F(y)\eta_1^{-1}\eta_2(y)|y|^{1-s} d^*y$, which has no zeros or poles except for a finite number of $q_F^{-s}$.

\begin{proof}[Proof of Theorem $\ref{thm511}$]
The proof is almost identical to the proof of Theorem \ref{thm39}.

We fix vectors $v\in V_\pi, v'\in V_{\pi'}$ such that $W_{v}(1)=1=W_{v'}(1)$. Let $L_1$ be an even integer such that $v,v'$ are fixed by $K_{L_1}$.  Let $L=3L_1$ and let  $m$ be an integer such that $L\le m$. Recall that we have defined $\phi^m\in \CS(F^2)$, the space of $\omega_{\psi^{-1}}$ in $\S$\ref{sec52}. Let $\chi$ be a character of $F^\times$ with $\deg(\chi)\le m$, recall that $\theta^{m,\chi}$ is the Whittaker function associated with $\phi^m_\chi\in \omega_{\psi^{-1},\chi}$, see Eq.(\ref{eq51}).
 
For a quasi-character $\eta_1$ of $F^\times$, we take $\eta_2$ such that  $\omega_\pi\cdot \chi\cdot \eta_1\eta_2=1$.  Let $l=\deg(\eta_1\eta_2^{-1})$.  We consider the integral
$$\Psi(s,W_{v_m},\theta^{m,\chi},\Phi_{i,l},\eta)=\int_{R\setminus H}W_{v_m}(h)\theta^{m,\chi}(h)f(s,h,\Phi_{i,l},\eta)dh.$$

We will take the integral $\Psi(s,W_{v_m}, \theta^{m,\chi},\Phi_{i,l})$ on the open dense set $R\setminus NT\bar N$ of $R\setminus H$. For $h=n(y)t(a,b)\bar n(x)\in NT\bar N$, the Haar measure will be $dh=|a/b|_F^{-1}dyd^*a dx$. Thus by Eq.(\ref{eq58*}), we get
\begin{align*}
&\qquad \Psi(s,W_{v_m},\theta^{m,\chi},\Phi_{i,l},\eta)\\
&=\int_{F^\times}\int_F W_{v_m}(t(a)\bar n(x))\theta^{m,\chi}(t(a)\bar n(x))\eta_1(a)|a|^{s-1}f(s,\bar n(x),\Phi_{i,l},\eta)dxd^*a\\
&=q_{F}^{-l}\int_{F^\times}\int_{|x|_F\le q_F^{-i}}W_{v_m}(t(a)\bar n(x))\theta^{m,\chi}(t(a)\bar n(x))\eta_1(a)|a|^{s-1}dxd^*a.
\end{align*}

 Let $i\ge 3m$, then $|x|_F\le q_F^{-i}$ implies that $\bar n(x)\in J_m$. Thus by Lemma \ref {lem51} and Proposition \ref{prop56}, we have 
$$W_{v_m}(t(a)\bar n(x))= W_{v_m}(t(a)),\theta^{m,\chi}(t(a)\bar n(x))=\theta^{m,\chi}(t(a)) \textrm{ for } |x|_F\le q_F^{-i}.$$
Combining these facts and Lemmas \ref{lem51}, \ref{lem52}, we get
\begin{align*}
&\quad \Psi(s,W_{v_m},\theta^{m,\chi},\Phi_{i,l})\\
&=q_F^{-l-i} \int_{F^\times}W_{v_m}(t(a))\theta^{m,\chi}(t(a))\eta_1(a)|a|^{s-1}d^*a\\
&=q_F^{-l-i}\int_{1+\CP_F^m}W_{v_m}(t(a))\theta^{m,\chi}(t(a)\eta_1(a)d^*a\\
&=q_F^{-l-i} \int_{1+\CP_F^m}\theta^{m,\chi}(t(a))\eta_1(a)d^*a.
\end{align*}
 By definition of $\theta^{m,\chi}$, see Eq.(\ref{eq51}), for $a\in 1+\CP_E^m$, we have 
\begin{align*}&\theta^{m,\chi}(t(a))\\
&=\int_{F^\times}\chi^{-1}(u)(\omega_{\psi^{-1}}(t(a))\phi^m)(u,u^{-1})d^*u\\
&=\int_{F^\times}\chi^{-1}(u)|a|^{1/2}\phi^{m}(au,u^{-1})d^*u\\
&=\int_{1+\CP_F^m}\chi^{-1}(u)d^*u\\
&=\vol(1+\CP_F^m, d^*u),
\end{align*}
where in the last step we used the fact that $\deg(\chi)\le m$.
Thus if $m\ge l(\eta_1)$, we have
\begin{align}
&\quad \Psi(s,W_{v_m},\theta^{m,\chi},\Phi_{i,l}) \nonumber\\
&=q_F^{-l-i} \vol(1+\CP_F^m, d^*a)\int_{1+\CP_F^m}\eta_1(a)d^*a \nonumber\\
&=q_F^{-l-i} \vol(1+\CP_F^m, d^*a)^2.\label{eq58}
\end{align}

The same calculation works for $\Psi(s,W_{v'_m},\theta^{m,\chi},\Phi_{i,l})$. In particular, we get
\begin{equation}{\label{eq59}}\Psi(s,W_{v_m'},\phi^{m,\chi},\Phi_{i,l},\eta)=\Psi(s,W_{v_m},\phi^{m,\chi},\Phi_{i,l})=q_F^{-l-i} \vol(1+\CP_F^m, d^*a)^2,\end{equation}
for $i\ge 3m, m\ge \max\wpair{L, l(\eta_1),\deg(\chi)}$.

Let $W=W_{v_m}$ or $W_{v_m'}$. We compute the integral $\Psi(1-s,W,\phi^{m,\chi}, \hat \Phi_{i,l})$ on the dense set $N\setminus NTwN$ of $N\setminus H$.  Let $i\ge m+l $, then $|x|_F\le q_E^{m}$ implies that $|x|_F\le q^{i-l}_F$. Then by Eq.(\ref{eq59*}), we have
\begin{align}
&\quad\Psi(1-s,W,\phi^{m,\chi}, \hat \Phi_{i,l}) \nonumber\\
&=\int_{F^\times}\int_{F }W(t(a)wn(x))\theta^{m,\chi}(t(a)wn(x)) f_s(t(a)wn(x), \hat \Phi_{i,l}, \eta^*)\nonumber\\
&=q_F^{-l-i}c(1-s, \eta, \psi)\int_{F^\times}\int_{|x|\le q_F^m}W(t(a)wn(x) )\theta^{m,\chi}(t(a)wn(x))\eta_2(a)|a|^{-s}dxd^*a \label{eq510}\\
&+\int_{F^\times}\int_{|x|_F>q_F^m} W(t(a)wn(x) )\theta^{m,\chi}(t(a)wn(x)) f(1-s, t(a)wn(x), \hat \Phi_{i,l})dxd^*a \label{eq511}
\end{align}
By Proposition \ref{prop53}, we have $W_{v_m}(t(a)wn(x))=W_{v_m'}(t(a)wn(x))$ for $|x|>q_F^m$. Thus the expressions (\ref{eq511}) for $W=W_{v_m}$ and $W=W_{v'_m}$ are the same.

 For $|x|_F\le q_F^m$, we have $n(x)\in N_m$. By Lemma \ref{lem51} and Proposition \ref{prop56},  we have
$$W_{v_m}(t(a)wn(x))=\psi(x)W_{v_m}(t(a)w), \textrm{ and }\theta^{m,\chi}(t(a)wn(x))=\psi^{-1}(x)\omega^{m,\chi}.$$

Thus the expression (\ref{eq510}) can be simplified to
\begin{equation}{\label{eq512}}q_F^{-l-i+m}c(1-s, \eta, \psi)\int_{F^\times}W(t(a)w)\theta^{m,\chi}(t(a)w)\eta_2(a)|a|^{-s}d^*a.  \end{equation}
Thus we get
\begin{align}
&\Psi(1-s,W_{v_m},\phi^{m,\chi},\hat \Phi_{i,l})-\Psi(1-s,W_{v_m'},\phi^{m,\chi},\hat \Phi_{i,l}) \label{eq513}\\
=&q_F^{-l-i+m}c(1-s, \eta^*, \psi)\int_{F^\times}(W_{v_m}(t(a)w)-W_{v_m'}(t(a)w))\theta^{m,\chi}(t(a)w)\eta_2(a)|a|^{-s}d^*a\nonumber
\end{align}
By Eq.(\ref{eq59}), Eq.(\ref{eq513}) and the local functional equation, we get
\begin{align}
&d_{m}(s,\eta,\psi)(\gamma(s,\pi, \omega_{\psi^{-1},\chi}, \eta)-\gamma(s,\pi',\omega_{\psi^{-1}},\eta)) \label{eq514}\\
=&\int_{F^\times}(W_{v_m}(t(a)w)-W_{v_m'}(t(a)w))\theta^{m,\chi}(t(a)w)\eta_2(a)|a|^{-s}d^*a \nonumber
\end{align}
for $m\ge\max\wpair{L, l(\eta_1)}$, where $d_m(s,\eta,\psi)=\vol(1+\CP_F^m,d^*a)^2q_F^{-m}c(1-s,\eta,\psi)^{-1}$.
By Proposition \ref{prop53}, for $n\in N_m-N_L$, we have $W_{v_L}(t(a)wn)=W_{v_L'}(t(a)wn)$. Thus by Lemma (\ref{lem51}), we have
\begin{align}
&\quad W_{v_m}(t(a)w)-W_{v_m'}(t(a)w)\label{eq515}\\
&=\frac{1}{\vol(N_m)}\int_{N_m}\psi^{-1}(n)(W_{v_L}(t(a)wn)-W_{v'_L}(t(a)wn))dn \nonumber\\
&=\frac{1}{\vol(N_m)}\int_{N_L}\psi^{-1}(n)(W_{v_L}(t(a)wn)-W_{v'_L}(t(a)wn))dn \nonumber\\
&=\frac{\vol(N_L)}{\vol(N_m)}(W_{v_L}(t(a)w)-W_{v_L'}(t(a)w)).\nonumber
\end{align}

If we combine Eq.(\ref{eq514}) and Eq.(\ref{eq515}), we get
\begin{align}
&d_{m,L}(s,\eta,\psi)(\gamma(s,\pi,\omega_{\psi^{-1},\chi},\eta)-\gamma(s,\pi',\omega_{\psi^{-1},\chi},\eta)) \label{eq516}\\
=&\int_{F^\times}(W_{v_L}(t(a)w)-W_{v_L'}(t(a)w))\theta^{m,\chi}(t(a)w)\omega_\pi^{-1}\chi^{-1}\eta_1^{-1}(a)|a|^{-s}d^*a,\nonumber
\end{align}
with $d_{m,L}(s,\eta,\psi)=d_m(s,\eta,\psi)\vol(N_m)\vol(N_L)^{-1}$, for $m\ge \max\wpair{L,l(\eta_1)}$.

Now we are ready to prove our theorem. To prove (1), it suffices to show that $W_{v_L}(h)=W_{v_L'}(h)$ for all $h\in H=\GL_2(F)$. By Proposition \ref{prop53}, it suffices to show that $W_{v_L}(t(a)w)=W_{v_L'}(t(a)w)$ for all $a\in E^\times$. Let $C=3L/2>L$. By Lemma \ref{lem52}, we have 
$$W_{v_L}(t(a)w)=W_{v'_L}(t(a)w)=0, \textrm{ if }a\notin \CP_F^{-C}.$$
Thus it suffices to show that 
$$W_{v_L}(t(a)w)-W_{v_L'}(t(a)w)=0, \forall a\in E^\times, \textrm{ with }|a|_F\le q_F^C. $$

By our assumption on $\gamma$-factors and Eq.(\ref{eq516}), we have
$$\int_{F^\times}(W_{v_L}(t(a)w)-W_{v_L'}(t(a)))\theta^{m,\chi}(t(a)w)\eta_2(a)|a|^{-s}d^*a=0. $$
By the inverse Mellin transform, we get
$$ (W_{v_L}(t(a)w)-W_{v_L'}(t(a)w))\theta^{m,\chi}(t(a)w), \forall a\in F^\times.$$

We take $m\ge \max\wpair{C,l(\eta_1), \deg(\chi)}$, then by Proposition \ref{prop58}, for each $a\in \CP_F^{-C}$, there exists a character $\chi\ne |~|^{\pm1}$ with $\deg(\chi)\le C$ such that $\theta^{m,\chi}(t(a)w)\ne 0$. Thus we get 
$$ W_{v_L}(t(a)w)=W_{v_L'}(t(a)w, \forall a\in \CP_F^{-C}.$$
This completes the proof of (1). 

To prove (2), we take an integer $l_1$ such that 
$$W_{v_L}(t(a_0a)w)=W_{v_L}(t(a_0a)w),  \textrm{ and }W_{v_L'}(t(a_0a)w)=W_{v_L'}(t(a_0a)w),$$
for all $a_0\in 1+\CP_F^{l_1}$. Note that $l_1$ only depends on $\pi,\pi'$.  By Corollary (\ref{cor59}), for $m\ge C$, we have 
$$\theta^{m,\chi}(t(a_0a)w)=\theta^{m,\chi}(t(a)w), \forall a_0\in 1+\CP^{\deg(\chi)+C}.$$
Let $l_0=\max\wpair{\deg(\chi)+C, l_1, \deg(\omega_\pi), \deg(\chi)}$. Note that $l_1, L, C=3L/2$ are only depend on $\pi,\pi'$, thus $l_0$ only depends on $\pi,\pi',\chi$. Then it is clear that for $\deg(\eta_1)>l_0$, the right side of Eq.(\ref{eq516}) vanishes. Since $d_{m,L}(s,\eta,\psi)$ does not have zeros or poles other than finite number of $q_F^{-s}$, we get 
$$ \gamma(s,\pi,\omega_{\psi^{-1},\chi},\eta_1)=\gamma(s,\pi',\omega_{\psi^{-1},\chi},\eta_1), \textrm{ if } \deg(\eta_1)>l_0.$$
This completes the proof of (2).
\end{proof}

\end{document}